\newtheorem{theorem}{Theorem}[section]
\newtheorem*{theorem*}{Theorem}
\newtheorem{lemma}[theorem]{Lemma}
\newtheorem*{lemma*}{Lemma}
\newtheorem{proposition}[theorem]{Proposition}
\newtheorem*{proposition*}{Proposition}
\newtheorem{corollary}[theorem]{Corollary}
\theoremstyle{definition}
\newtheorem{definition}[theorem]{Definition}
\newtheorem{example}[theorem]{Example}
\theoremstyle{remark}
\newtheorem{remark}[theorem]{Remark}
\numberwithin{equation}{section}
\newcommand{\id}{\mathrm {id}}
\newcommand{\tops}{\mathbf{Top}}
\newcommand{\ktop}{\mathbf{kTop}}
\newcommand{\kotop}{\mathbf{k_\omega Top}}
\newcommand{\fktop}{\mathrm{k}}
\newcommand{\tot}{\mathrm{Tot}}
\DeclareMathOperator{\colim}{colim}
\DeclareMathOperator{\res}{Res}
\DeclareMathOperator{\supp}{supp}
\begin{document}
\title{Cohomology of Local Cochains}
\author[M. Fuchssteiner]{Martin Fuchssteiner}
\email{martin@fuchssteiner.net}
\subjclass[2010]{Primary 55N99; Secondary 57N65,55T99}
\keywords{Alexander-Spanier Cohomology, Singular Cohomology, \v{C}ech
  Cohomology, Topological Group, Lie Group}

\begin{abstract}
We prove that for generalised partitions of unity $\{ \varphi_i \mid i \in I\}$ 
and coverings 
$\mathfrak{U}:=\{ \varphi_i^{-1} (R \setminus \{ 0 \}) \mid i \in I\}$ of a 
topological space $X$ the cohomology of abstract $\mathfrak{U}$-local 
cochains coincides with the cohomology of continuous $\mathfrak{U}$-local 
cochains, provided the coefficients are loop contractible.  
Furthermore we show that for each locally contractible group $G$ and loop 
contractible coefficient group $V$ the complex of germs of continuous 
functions on left-invariant diagonal neighbourhoods computes 
the Alexander-Spanier and singular cohomology; 
Similar results are obtained for $k$-groups and for 
germs of smooth functions on Lie groups.
\end{abstract}

\maketitle

\section*{Introduction}
It is well known that the Alexander-Spanier cohomology $H_{AS} (X;V)$ of a 
topological space $X$ with coefficients in a real topological vector space $V$ 
coincides with its continuous version, provided the space $X$ is paracompact. 
Analogously for smoothly paracompact manifolds $M$ the Alexander-Spanier 
cohomology $H_{AS} (M;V)$ coincides with the smooth Alexander-Spanier 
cohomology $H_{AS,s} (M;V)$. 
The standard proof thereof uses sheaf theory and is not suited to show that 
one may in fact restrict oneself to diagonal neighbourhoods of a certain kind,
e.g. left-invariant ones in topological groups. In the first part we give an
alternate and more generally applicable proof which also demonstrates that for 
loop contractible coefficients and coverings $\mathfrak{U}$ of a topological 
space $X$ by cozero sets of a generalised partition of unity the cohomology 
of abstract $\mathfrak{U}$-local cochains coincides with the cohomology of 
continuous $\mathfrak{U}$-local cochains, and for manifolds also coincides 
with the cohomology of smooth $\mathfrak{U}$-local cochains, 
if $\mathfrak{U}$ consists of cozero sets of a smooth generalised partition 
of unity and the coefficients are smoothly loop contractible. 
Passage to the colimit over all (numerable) coverings yields the classical 
result for paracompact spaces or smoothly paracompact manifolds. 

We also relate the different cohomology concepts(Alexander-Spanier, singular 
and \v{C}ech cohomology) in locally contractible topological groups and Lie 
groups.  Van Est has already shown in \cite{vE62b} that for locally 
contractible topological groups $G$ on may compute one can compute 
the Alexander-Spanier cohomology by considering left-invariant neighbourhoods 
of the diagonals in $G^{*+1}$ only. We extend his result to continuous and
smooth cochains.

\section{Local, \v{C}ech, Alexander-Spanier and Singular Cochains} 
\label{seccas}

Let $X$ be a topological space and $V$ be an abelian topological group. 
For each open covering $\mathfrak{U}$ of $X$ and each $n \in \mathbb{N}$ 
one can define an open neighbourhood $\mathfrak{U} [n]$ of the diagonal 
in $X^{n+1}$ via
\begin{equation*}
 \mathfrak{U} [n] := \bigcup_{U \in \mathfrak{U}} U^{n+1} \, .
\end{equation*}
These neighbourhoods of the diagonals in $X^{*+1}$ form an open simplicial 
subspace of $X^{*+1}$ which allows us to consider complexes of cochains 
defined on them. We define abelian groups of $n$-cochains and continuous 
$n$-cochains with values in $V$: 
\begin{equation*}
A^n (\mathfrak{U};V) := \left\{ f : \mathfrak{U}[n] \rightarrow V \right\} 
\qquad \text{and} \qquad A_c^n (\mathfrak{U};V) := C (\mathfrak{U} [n] ,V) 
\end{equation*}
The elements of $A^n (\mathfrak{U};V)$ and $A_c^n (\mathfrak{U};V)$ are called 
\emph{$\mathfrak{U}$-local $n$-cochains} and 
\emph{continuous $\mathfrak{U}$-local $n$-cochains} respectively. 
The abelian groups $A^n (\mathfrak{U};V)$ and $A_c^n (\mathfrak{U};V)$ 
form cochain complexes with the usual differential given by 
\begin{equation} \label{stdiff}
df(u_0,\ldots,u_{n+1}):=
\sum_i (-1)^i f(u_0,\ldots,\hat{u}_i,\ldots,u_{n+1}) \, . 
\end{equation} 
The cohomologies of these complexes are denoted by $H (\mathfrak{U};V)$ and 
$H_c (\mathfrak{U};V)$ respectively; they are called the 
\emph{$\mathfrak{U}$-local cohomology} and the 
\emph{continuous $\mathfrak{U}$-local cohomology}. 
The colimit complex $A_{AS}^* (X;V):=\colim_\mathfrak{U}  A^* (\mathfrak{U};V)$
where $\mathfrak{U}$ ranges over all open coverings of $X$ is the complex of 
Alexander-Spanier cochains which computes the Alexander-Spanier cohomology 
$H_{AS} (X;V)$ of $X$. The cohomology of the continuous version 
$A_{AS,c}^* (X;V):=\colim_\mathfrak{U}  A^* (\mathfrak{U};V)$ is the
continuous Alexander-Spanier cohomology $H_{AS,c} (X;V)$ of $X$.

We will show (in Section \ref{secclc}) that the inclusion 
$A_c^* (\mathfrak{U};V) \hookrightarrow A^* (\mathfrak{U};V)$ of cochain
complexes induces an isomorphism in cohomology if $\mathfrak{U}$ is a covering
by cozero sets of a generalised partition of unity. For this purpose we 
consider the \v{C}ech-Alexander-Spanier double complex 
$\check{C}^* ( \mathfrak{U}, A^*)$ for open coverings 
$\mathfrak{U} =\{ U_i \mid \, i \in I \}$ of $X$ whose groups are given by
\begin{equation*}
\check{C}^p ( \mathfrak{U}, A^q) := \left\{ f \in 
\prod_{i_0,\ldots,i_p \in I} A^q (U_{i_0 \ldots i_p};V) \mid \, \forall
\sigma \in S_p : f_{i_0,\ldots,i_p} = \mathrm{sign}
  (\sigma) f_{i_{\sigma (0)} \ldots i_{\sigma (p)} } \right\}
\end{equation*}
and whose horizontal and vertical differentials $d_h$, $d_v$ 
on $\check{C}^p ( \mathfrak{U}, A^q)$ are given by the \v{C}ech coboundary 
operator $\delta$ and $(-1)^p$ times the products of the differentials $d$ 
of the standard complexes $A^* (U_{i_0\ldots i_p};V)$ (cf. Eq. \ref{stdiff}) 
respectively. 
The rows of the double complex 
$\check{C}^p ( \mathfrak{U}, A^q)$ can be augmented by the complex 
$A^* (\mathfrak{U};V)$ of $\mathfrak{U}$-local cochains and the columns can 
be augmented by the \v{C}ech-complex $\check{C}^* (\mathfrak{U};V)$ for the 
covering $\mathfrak{U}$:

\begin{equation*}
  \vcenter{
  \xymatrix{
\vdots & \vdots & \vdots & \vdots \\ 
A^2 (\mathfrak{U};V) \ar[r] \ar[u]_{d_{v}} 
& \check{C}^0 ( \mathfrak{U}, A^2) \ar[r]^{d_{h}}\ar[u]_{d_{v}} 
&  \check{C}^1 ( \mathfrak{U}, A^2) \ar[r]^{d_{h}}\ar[u]_{d_{v}} 
& \check{C}^2 ( \mathfrak{U}, A^2) \ar[r]^{d_{h}}\ar[u]_{d_{v}} & \cdots \\
A^1 (\mathfrak{U};V) \ar[r] \ar[u]_{d_{v}} 
& \check{C}^0 ( \mathfrak{U}, A^1) \ar[r]^{d_{h}}\ar[u]_{d_{v}} 
& \check{C}^1 ( \mathfrak{U}, A^1) \ar[r]^{d_{h}}\ar[u]_{d_{v}} 
& \check{C}^2 ( \mathfrak{U}, A^1) \ar[r]^{d_{h}}\ar[u]_{d_{v}} 
& \cdots \\
A^0 (\mathfrak{U} ;V) \ar[r] \ar[u]_{d_{v}} 
& \check{C}^0 ( \mathfrak{U}, A^0) \ar[r]^{d_{h}}\ar[u]_{d_{v}} 
& \check{C}^1 ( \mathfrak{U}, A^0) \ar[r]^{d_{h}}\ar[u]_{d_{v}} 
& \check{C}^2 ( \mathfrak{U}, A^0) \ar[r]^{d_{h}}\ar[u]_{d_{v}} & \cdots \\
& \check{C}^0 (\mathfrak{U};V) \ar[r]^{d_{h}}\ar[u] 
& \check{C}^1 (\mathfrak{U};V) \ar[r]^{d_{h}}\ar[u] 
& \check{C}^2 (\mathfrak{U};V) \ar[r]^{d_{h}} \ar[u] 
& \cdots
}}
\end{equation*}
We denote the total complex of the double complex 
$\check{C}^* (\mathfrak{U}, A^*)$ by $\tot \check{C}^* (\mathfrak{U}, A^*)$. 
The augmentations of the rows and columns of the double complex 
$\check{C}^* (\mathfrak{U}, A^*)$ induce homomorphisms 
$i^*:A^* (\mathfrak{U};V) \rightarrow \tot \check{C}^* (\mathfrak{U}, A^*)$ and 
$j^*:\check{C}^* (\mathfrak{U};V) \rightarrow 
\tot \check{C}^* (\mathfrak{U}, A^*)$ 
of cochain complexes respectively. 
\begin{lemma} \label{columnsexact}
  The homomorphism $j^*:\check{C}^* (\mathfrak{U};V) \rightarrow 
\tot \check{C}^* (\mathfrak{U}, A^*)$ induces an isomorphism in cohomology.
\end{lemma}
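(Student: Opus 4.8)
The plan is to run the classical \textquotedblleft exact columns\textquotedblright\ argument for double complexes. Recall that if the columns of a first-quadrant double complex become exact after being augmented at the bottom by the corresponding entries of an auxiliary complex, then that bottom augmentation is a quasi-isomorphism onto the total complex. Here the columns are $\check{C}^p(\mathfrak{U},A^*)$ for fixed $p$, the bottom augmentation is the \v{C}ech complex $\check{C}^*(\mathfrak{U};V)$, and $j^*$ is precisely this augmentation map. So it suffices to prove that for every $p$ the sequence
\begin{equation*}
0 \longrightarrow \check{C}^p(\mathfrak{U};V) \longrightarrow \check{C}^p(\mathfrak{U},A^0) \xrightarrow{\; d_v \;} \check{C}^p(\mathfrak{U},A^1) \xrightarrow{\; d_v \;} \check{C}^p(\mathfrak{U},A^2) \longrightarrow \cdots
\end{equation*}
is exact.

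The first step is to understand a single factor. For a nonempty $U$ the complex $\bigl(A^*(U;V),d\bigr)$ of \emph{all} $V$-valued functions on the powers $U^{*+1}$ is acyclic in positive degrees, and $\ker\bigl(d\colon A^0(U;V)\to A^1(U;V)\bigr)$ consists exactly of the globally constant functions, i.e.\ is a copy of $V$ (no topology is used, so \textquotedblleft locally\textquotedblright\ would be \textquotedblleft globally\textquotedblright). This is the usual fact that the \v{C}ech nerve of $U\to\mathrm{pt}$ is simplicially contractible, witnessed by the front-face homotopy $(hf)(u_0,\dots,u_{q-1}):=f(u_\ast,u_0,\dots,u_{q-1})$ for any chosen $u_\ast\in U$; if $U=\varnothing$ all the groups in sight are trivial and nothing is to prove. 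Taking products over index tuples, this identifies the bottom of each column with the \v{C}ech complex $\check{C}^p(\mathfrak{U};V)$ (the augmentation $\check{C}^p(\mathfrak{U};V)\hookrightarrow\check{C}^p(\mathfrak{U},A^0)$ being, factorwise, the inclusion of constants).

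To obtain exactness of the whole augmented column, not merely of its factors, the homotopies above must respect the alternating (sign) condition in the \v{C}ech direction. The device is to fix \emph{once and for all}, for every finite $S\subseteq I$ with $U_S:=\bigcap_{i\in S}U_i\neq\varnothing$, a point $u_S\in U_S$, and to use $u_S$ as the base point of the front-face homotopy on every factor $A^q(U_{i_0\dots i_p};V)$ whose index tuple has underlying set $S$. Since $u_S$ does not depend on the ordering of $i_0,\dots,i_p$, these factorwise homotopies assemble to a homomorphism $h$ on $\check{C}^p(\mathfrak{U},A^*)$ that preserves the subgroup of alternating cochains; a direct computation gives $d_v h + h d_v = (-1)^p\,\id$, so after rescaling $h$ by $(-1)^p$ we get a contraction of the augmented column, hence its exactness.

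Granting this, I would conclude in the standard way: adjoining the \v{C}ech row to $\check{C}^*(\mathfrak{U},A^*)$ yields a double complex whose columns are the (now acyclic) augmented columns, so its total complex is acyclic; the short exact sequence relating this total complex to $\tot\check{C}^*(\mathfrak{U},A^*)$ and to a shift of $\check{C}^*(\mathfrak{U};V)$ then gives a long exact cohomology sequence with acyclic middle term, which forces $j^*$ to be an isomorphism. Equivalently, in the spectral sequence of $\check{C}^*(\mathfrak{U},A^*)$ filtered by columns, taking $d_v$-cohomology first collapses $E_1$ onto the single row $\bigl(\check{C}^p(\mathfrak{U};V),\delta\bigr)$, whence $H^n\bigl(\tot\check{C}^*(\mathfrak{U},A^*)\bigr)\cong\check{H}^n(\mathfrak{U};V)$ with edge homomorphism $j^*$. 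I do not expect a genuine obstacle here; the only point needing care is the compatibility of the contracting homotopy with the sign condition in the \v{C}ech direction, which is exactly why the base points $u_S$ are chosen to depend only on the set $S$ --- the homotopy identity, the vanishing of $E_1$, and the zig-zag are all routine.
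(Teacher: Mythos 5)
Your proposal is correct and follows essentially the same route as the paper: the augmented columns are exact because the standard complex $A^*(U_{i_0\ldots i_p};V)$ of each (nonempty) intersection is contractible, and then the usual exact-columns/total-complex argument shows $j^*$ is a quasi-isomorphism. The paper states this in one line; your only added content is the careful choice of base points depending only on the underlying index set so that the factorwise contractions respect the alternating condition, a detail the paper leaves implicit but which works exactly as you describe.
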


\begin{proof}
The augmented columns of the double complex 
$\tot \check{C}^* (\mathfrak{U}, A^*)$ are exact, because the standard complex 
$A^* (U_{i_0 \ldots i_q};V)$ of a topological space $U_{i_0 \ldots i_q}$ is
always exact. Therefore the augmentation $j^*$ induces an isomorphism in 
cohomology.
\end{proof}

The augmented rows 
$A^q (\mathfrak{U};A) \hookrightarrow \check{C}^* ( \mathfrak{U}, A^q)$ 
are also exact; in fact, extending every function 
$f_{i_0 \ldots i_p} \in A^q (U_{i_0 \ldots i_p} ;V)$
to $U_{i_1 \ldots i_p}^q$ by requiring it to be zero outside 
$U_{i_0 \ldots i_p}^q$ we observe:

\begin{proposition} \label{propchechcontrhom}
For any point finite set $\{ \varphi_{q,i} \mid i \in I\}$ of (not necessarily 
continuous) $\mathbb{Z}$-valued functions on $\mathfrak{U}[q]$ satisfying 
$\sum_i \varphi_i =1$ and 
${\varphi_i }_{\mid \mathfrak{U}[q] \setminus U_i^{q+1}}=0$ the homomorphisms 
\begin{equation} \label{eqchechcontrhom}
  h^{p,q} : \check{C}^p ( \mathfrak{U}, A^q) \rightarrow 
\check{C}^{p-1} ( \mathfrak{U}, A^q), \quad
h^{p,q} ( f )_{i_0 \ldots i_{p-1}} = 
\sum_{i} \varphi_{q,i} \cdot f_{i i_0 \ldots i_{p-1}}
\end{equation}
form a row contraction of the augmented row 
$A^q (\mathfrak{U};A) \hookrightarrow \check{C}^* ( \mathfrak{U}, A^q)$.
\end{proposition}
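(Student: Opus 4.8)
The plan is to check directly that the maps $h^{p,q}$ of \eqref{eqchechcontrhom} are defined on the right groups and satisfy the contracting–homotopy identities for the augmented row; the argument is essentially bookkeeping, the only subtle point being consistency with the extend‑by‑zero convention introduced just before the statement. Throughout I write $\epsilon^q\colon A^q(\mathfrak{U};V)\hookrightarrow\check{C}^0(\mathfrak{U},A^q)$ for the row augmentation (restrict a function on $\mathfrak{U}[q]$ to each $U_i^{q+1}$), and I read the case $p=0$ of \eqref{eqchechcontrhom} with the empty multi-index, so that $h^{0,q}(f)=\sum_i\varphi_{q,i}\cdot f_i$ is a single function on $\mathfrak{U}[q]=\bigcup_i U_i^{q+1}$; setting $\check{C}^{-1}(\mathfrak{U},A^q):=A^q(\mathfrak{U};V)$ and $d_h:=\epsilon^q$ in degree $-1$, the identities to prove are $h^{0,q}\epsilon^q=\id$ on $A^q(\mathfrak{U};V)$ and $d_h\,h^{p,q}+h^{p+1,q}\,d_h=\id$ on $\check{C}^p(\mathfrak{U},A^q)$ for all $p\geq 0$.

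First I would verify well-definedness. Point finiteness of $\{\varphi_{q,i}\mid i\in I\}$ makes each sum $\sum_i\varphi_{q,i}\cdot f_{ii_0\ldots i_{p-1}}$ finite on every fixed product set. Using the convention that $f_{ii_0\ldots i_{p-1}}\in A^q(U_{ii_0\ldots i_{p-1}};V)$ is extended by zero off $U_{ii_0\ldots i_{p-1}}^{q+1}$, the product $\varphi_{q,i}\cdot f_{ii_0\ldots i_{p-1}}$ is a genuine function on all of $U_{i_0\ldots i_{p-1}}^{q+1}$, since wherever $\varphi_{q,i}\neq 0$ the point lies in $U_i^{q+1}$, hence in $U_i^{q+1}\cap U_{i_0\ldots i_{p-1}}^{q+1}=U_{ii_0\ldots i_{p-1}}^{q+1}$, where $f_{ii_0\ldots i_{p-1}}$ is defined. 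Thus $h^{p,q}(f)_{i_0\ldots i_{p-1}}\in A^q(U_{i_0\ldots i_{p-1}};V)$, and it is alternating because any permutation of $(i_0,\ldots,i_{p-1})$ extends to a permutation of $(i,i_0,\ldots,i_{p-1})$ of the same sign while $f$ is alternating. Hence $h^{p,q}$ maps $\check{C}^p(\mathfrak{U},A^q)$ into $\check{C}^{p-1}(\mathfrak{U},A^q)$.

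Then I would run the homotopy computation. For $f\in\check{C}^p(\mathfrak{U},A^q)$, expand $(d_hf)_{ii_0\ldots i_p}=\sum_{l=0}^{p+1}(-1)^l(\cdots)$ and separate the term $l=0$, which deletes the leading index $i$ and so equals $f_{i_0\ldots i_p}$ independently of $i$; the remaining terms are $\sum_{k=0}^p(-1)^{k+1}f_{ii_0\ldots\widehat{i_k}\ldots i_p}$. Multiplying by $\varphi_{q,i}$, summing over $i$, and using $\sum_i\varphi_{q,i}=1$ gives
\begin{equation*}
  h^{p+1,q}(d_hf)_{i_0\ldots i_p}=f_{i_0\ldots i_p}-\sum_{k=0}^p(-1)^k\sum_i\varphi_{q,i}\cdot f_{ii_0\ldots\widehat{i_k}\ldots i_p},
\end{equation*}
while directly $d_h(h^{p,q}f)_{i_0\ldots i_p}=\sum_{k=0}^p(-1)^k\sum_i\varphi_{q,i}\cdot f_{ii_0\ldots\widehat{i_k}\ldots i_p}$; adding the two yields $(d_hh^{p,q}+h^{p+1,q}d_h)(f)=f$ on $\check{C}^p(\mathfrak{U},A^q)$. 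For $p=0$ this is $\epsilon^q h^{0,q}+h^{1,q}d_h=\id$ on $\check{C}^0$, since there $(d_hf)_{ii_0}=f_{i_0}-f_i$ has leading term $f_{i_0}$. Finally, for $g\in A^q(\mathfrak{U};V)$ one has $h^{0,q}(\epsilon^qg)=\sum_i\varphi_{q,i}\cdot(g|_{U_i^{q+1}})=\sum_i\varphi_{q,i}\cdot g=g$, again because $\varphi_{q,i}$ vanishes off $U_i^{q+1}$ and $\sum_i\varphi_{q,i}=1$. Together the displayed identities exhibit $\{h^{p,q}\}$ as a row contraction.

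I do not expect a genuine obstacle: the statement is a direct verification. The one place that calls for care is the interplay between the suppressed restriction maps $A^q(U_{i_0\ldots i_p};V)\to A^q(U_{i_0\ldots i_{p+1}};V)$ and the extend‑by‑zero convention — one has to make sure every equation above is read pointwise on the correct product set, and that it is precisely multiplication by $\varphi_{q,i}$ (which kills exactly the locus where a restricted component would be ambiguous) that makes the various terms comparable. Once that is granted the computation goes through unchanged.
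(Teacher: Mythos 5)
Your proof is correct and follows essentially the same route as the paper: a direct check of the homotopy identity, expanding the \v{C}ech coboundary, pulling out the leading-index term, and using $\sum_i\varphi_{q,i}=1$ together with the extend-by-zero convention. The extra care you take with well-definedness, alternation, and the augmentation step only makes explicit what the paper leaves implicit.
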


\begin{proof}
This is similar to the row contractions of the \v{C}ech-deRham complex of a 
manifold using smooth partitions of unity $\varphi_i$ subordinate to 
$\mathfrak{U}$. 
For any cochain $f \in \check{C}^p (\mathfrak{U};A^q)$ of bidegree $(p,q)$ the
horizontal coboundary of $h^{p,q} (f)$ computes to 
\begin{eqnarray*}
  (\delta h^{p,q} (f))_{i_0 \ldots i_p} (\vec{x}) & = & 
\sum_{k=0}^p (-1)^k h^{p,q} (f)_{i_0 \ldots \hat{i}_k \ldots i_p} (\vec{x}) \\
& = & \sum_{k=0}^p (-1)^k \sum_{i} 
\varphi_{q,i} (\vec{x}) f_{i i_0 \ldots \hat{i}_k \ldots i_{p-1}} (\vec{x})\\
& = &  \sum_{i} \varphi_{q,i} (\vec{x}) \sum_{k=0}^p (-1)^k 
f_{i i_0 \ldots \hat{i}_k \ldots i_{p-1}} (\vec{x})\\
& = & \sum_{i} \varphi_{q,i} (\vec{x}) \left[ f_{i_0 \ldots i_p} (\vec{x}) - 
(\delta f)_{i i_0 \ldots i_p} (\vec{x}) \right] \\
& = & f_{i_0 \ldots i_p} (\vec{x}) - 
h^{p+1,q} (\delta f)_{i i_0 \ldots i_p} (\vec{x}) \, ,
\end{eqnarray*}
hence $h^{*,q}$ is a row contraction of the augmented row 
$A^q (\mathfrak{U};A) \hookrightarrow \check{C}^* ( \mathfrak{U}, A^q)$.
\end{proof}

\begin{corollary} \label{rowsalwexact}
  For any open covering $\mathfrak{U}=\{ U_i \mid i \in I \}$ 
of a topological space $X$ the homomorphism 
$i^*:A^* (\mathfrak{U};V) \rightarrow \tot \check{C}^* (\mathfrak{U}, A^*)$
induces an isomorphism in cohomology.
\end{corollary}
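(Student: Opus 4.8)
The plan is to mirror the proof of Lemma \ref{columnsexact}: there exactness of the augmented \emph{columns} forced $j^*$ to be a quasi-isomorphism, and here exactness of the augmented \emph{rows} will force $i^*$ to be one. By Proposition \ref{propchechcontrhom} the augmented rows are exact as soon as, for each $q \in \mathbb{N}$, we have a point finite family $\{\varphi_{q,i} \mid i \in I\}$ of $\mathbb{Z}$-valued functions on $\mathfrak{U}[q]$ with $\sum_i \varphi_{q,i} = 1$ and ${\varphi_{q,i}}_{\mid \mathfrak{U}[q]\setminus U_i^{q+1}} = 0$, so the only thing to do for an \emph{arbitrary} covering $\mathfrak{U}$ is to exhibit such families.

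Since no continuity is demanded, this costs nothing. First I would fix a well-ordering of the index set $I$. For each $q$ and each $\vec{x} \in \mathfrak{U}[q] = \bigcup_{i \in I} U_i^{q+1}$ there is a least index $i(\vec{x})$ with $\vec{x} \in U_{i(\vec{x})}^{q+1}$; set $\varphi_{q,i}(\vec{x}) := 1$ if $i = i(\vec{x})$ and $\varphi_{q,i}(\vec{x}) := 0$ otherwise. Then for every $\vec{x}$ exactly one of the values $\varphi_{q,i}(\vec{x})$ is nonzero, which yields point finiteness and the normalisation $\sum_i \varphi_{q,i} = 1$ simultaneously, and $\varphi_{q,i}$ visibly vanishes off $U_i^{q+1}$. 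Feeding these families into Proposition \ref{propchechcontrhom} gives, for every $q$, a row contraction $h^{*,q}$ of the augmented row $A^q(\mathfrak{U};V) \hookrightarrow \check{C}^*(\mathfrak{U},A^q)$; in particular all augmented rows of the double complex $\check{C}^*(\mathfrak{U},A^*)$ are exact.

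It remains to pass from exactness of the rows to the statement about $i^*$, which is the same piece of homological algebra used to deduce Lemma \ref{columnsexact} from exactness of the columns, with rows and columns interchanged. Concretely I would adjoin the column $A^*(\mathfrak{U};V)$ to the double complex at horizontal index $-1$; the resulting augmented double complex is supported in $p \geq -1$, $q \geq 0$ and hence has only finitely many nonzero terms in each total degree, so the spectral sequence of its row filtration has vanishing $E_1$-page (the rows being exact) and its total complex is acyclic. Since in total degree $n$ this complex equals $A^{n+1}(\mathfrak{U};V) \oplus (\tot\check{C}^*(\mathfrak{U},A^*))^n$, it fits into a short exact sequence of complexes with subcomplex $\tot\check{C}^*(\mathfrak{U},A^*)$ and quotient $A^{*+1}(\mathfrak{U};V)$; acyclicity of the middle term makes the connecting homomorphism an isomorphism, and it agrees up to sign with $i^*$. (Alternatively one invokes the tic-tac-toe lemma of Bott--Tu.) There is no real obstacle here; the point worth recording is that the corollary requires \emph{no} hypothesis on $\mathfrak{U}$, precisely because Proposition \ref{propchechcontrhom} needs only abstract $\mathbb{Z}$-valued functions, which a well-ordering of $I$ always supplies — continuity of the cochains plays no role and enters only in Section \ref{secclc}.
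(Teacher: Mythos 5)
Your proof is correct and follows essentially the same route as the paper: the paper also well-orders $I$ and defines $\varphi_{q,i}:=1-\max\{\varphi_j\mid j<i\}$ on $U_i^{q+1}$ and $0$ elsewhere, which is exactly your indicator-of-the-least-covering-index family, and then invokes Proposition \ref{propchechcontrhom} together with the standard exact-rows-imply-$i^*$-quasi-isomorphism argument that you spell out via the row filtration.
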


\begin{proof}
This follows from the fact that sets $\{ \varphi_{q,i} \}$ of functions as 
required in Proposition \ref{propchechcontrhom} always exist: 
Well order the index set $I$ and inductively define the functions 
$\varphi_{q,i}$ to be 
$\varphi_{q,i} :=1 -\max \{ \varphi_j \mid j < i \}$ on $U_i^{q+1}$ and zero on 
$X \setminus U_i$.
\end{proof}

\begin{corollary} \label{cechisolocal}
  For any open covering $\mathfrak{U}$ of a topological space $X$ the 
\v{C}ech cohomology $\check{H} (\mathfrak{U};V)$ for the covering
$\mathfrak{U}$ and the cohomology $H (\mathfrak{U};V)$ of $\mathfrak{U}$-local
cochains are isomorphic.Thus the \v{C}ech Cohomology 
$\check{H} (\mathfrak{U};V)$ can be computed from the complex 
$A^* (\mathfrak{U};V)$ of $\mathfrak{U}$-local cochains.
\end{corollary}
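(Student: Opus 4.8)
The plan is to read off the claimed isomorphism directly from the two comparison results already established for the \v{C}ech--Alexander-Spanier double complex, with no additional work. By definition the \v{C}ech cohomology $\check{H}(\mathfrak{U};V)$ for the covering $\mathfrak{U}$ is the cohomology of the complex $\check{C}^*(\mathfrak{U};V)$ that augments the columns of $\check{C}^*(\mathfrak{U}, A^*)$, while $H(\mathfrak{U};V)$ is the cohomology of the complex $A^*(\mathfrak{U};V)$ that augments the rows; both augmentation maps land in the total complex $\tot \check{C}^*(\mathfrak{U}, A^*)$ via $j^*$ and $i^*$ respectively.

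First I would invoke Lemma \ref{columnsexact}: since the augmented columns are exact, $j^*$ is a quasi-isomorphism and hence induces an isomorphism $\check{H}(\mathfrak{U};V) \xrightarrow{\ \cong\ } H^*(\tot \check{C}^*(\mathfrak{U}, A^*))$. Next I would invoke Corollary \ref{rowsalwexact}: the functions $\varphi_{q,i}$ constructed there give a row contraction of every augmented row, so $i^*$ is likewise a quasi-isomorphism and induces an isomorphism $H(\mathfrak{U};V) \xrightarrow{\ \cong\ } H^*(\tot \check{C}^*(\mathfrak{U}, A^*))$. Composing the first isomorphism with the inverse of the second gives the desired isomorphism $\check{H}(\mathfrak{U};V) \cong H(\mathfrak{U};V)$, which is exactly the statement that the complex $A^*(\mathfrak{U};V)$ of $\mathfrak{U}$-local cochains computes $\check{H}(\mathfrak{U};V)$.

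There is no genuine obstacle here; the corollary is a purely formal consequence of the two preceding results. The only points worth a sentence of care are bookkeeping: checking that the sign conventions defining $d_h$ and $d_v$ (the $(-1)^p$ twist on the vertical differential) are consistent with those making $i^*$ and $j^*$ honest chain maps into $\tot \check{C}^*(\mathfrak{U}, A^*)$, and remarking that the composite isomorphism is natural in $\mathfrak{U}$, so that it will pass to the colimit over coverings in the later sections. I would also stress in passing that no hypothesis on $\mathfrak{U}$ beyond being an open covering enters, since Corollary \ref{rowsalwexact} supplies the requisite (possibly discontinuous) functions $\varphi_{q,i}$ unconditionally.
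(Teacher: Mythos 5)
Your argument is exactly the one the paper intends: Lemma \ref{columnsexact} makes $j^*$ a quasi-isomorphism, Corollary \ref{rowsalwexact} makes $i^*$ one, and composing the first isomorphism with the inverse of the second gives $\check{H}(\mathfrak{U};V)\cong H(\mathfrak{U};V)$ with no hypotheses on $\mathfrak{U}$. This matches the paper's (implicit) proof, so there is nothing to add.
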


\begin{example}
  If $U$ is an open identity neighbourhood of a topological group $G$ then the
  open covering 
  $\mathfrak{U}_U:= \{ gU \mid \, g \in G \}$ of $G$ is left invariant and 
the \v{C}ech Cohomology for the covering $\mathfrak{U}_U$ can be computed 
from the complex $A^* (\mathfrak{U};V)$.
\end{example}

Passing to the colimit over all open coverings or all numerable open coverings 
yields the classical result: 
\begin{corollary}[Well known]
  For any topological space $X$ the \v{C}ech cohomology 
$\check{H} (X;V)$ and the Alexander-Spanier cohomology 
$H_{AS} (X;V)$ are isomorphic. The same is true for the \v{C}ech cohomology 
w.r.t. numerable coverings and the Alexander-Spanier cohomology
w.r.t. numerable coverings.
\end{corollary}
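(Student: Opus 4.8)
The plan is to obtain the statement by passing to the colimit, over the directed set of all open coverings of $X$ ordered by refinement, in the covering-wise isomorphism of Corollary \ref{cechisolocal}. All the real content has already been established; what remains is to check naturality in $\mathfrak{U}$ and to invoke exactness of filtered colimits.

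First I would record the naturality needed. If $\mathfrak{V}=\{V_j\}$ refines $\mathfrak{U}=\{U_i\}$ then $\mathfrak{V}[n]\subseteq\mathfrak{U}[n]$ for every $n$, so restriction of functions yields an honest, choice-free morphism of complexes $A^*(\mathfrak{U};V)\to A^*(\mathfrak{V};V)$; thus the groups $H(\mathfrak{U};V)$ together with these restriction maps form a directed system whose colimit is, by the definition $A_{AS}^*(X;V):=\colim_\mathfrak{U}A^*(\mathfrak{U};V)$, the cohomology of the Alexander--Spanier complex. Next, a choice of refinement function $\tau$ with $V_j\subseteq U_{\tau(j)}$ induces the usual morphism $\check{C}^*(\mathfrak{U};V)\to\check{C}^*(\mathfrak{V};V)$ of \v{C}ech complexes and, bidegree-wise by restriction of functions, a morphism of double complexes $\check{C}^*(\mathfrak{U},A^*)\to\check{C}^*(\mathfrak{V},A^*)$ compatible with both augmentations $i^*$ and $j^*$. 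On cohomology these refinement maps become independent of $\tau$, and the comparison isomorphism $\check{H}(\mathfrak{U};V)\xrightarrow{\ \sim\ }H(\mathfrak{U};V)$ of Corollary \ref{cechisolocal} --- obtained by inverting the isomorphism induced by $i^*$ and composing with the isomorphism induced by $j^*$ --- is therefore a morphism of directed systems.

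Now take colimits. Filtered colimits of abelian groups are exact, hence commute with passage to cohomology, so
\[
\colim_{\mathfrak{U}}H(\mathfrak{U};V)\;=\;\colim_{\mathfrak{U}}H^*\!\big(A^*(\mathfrak{U};V)\big)\;\cong\;H^*\!\Big(\colim_{\mathfrak{U}}A^*(\mathfrak{U};V)\Big)\;=\;H^*\!\big(A_{AS}^*(X;V)\big)\;=\;H_{AS}(X;V),
\]
while $\colim_{\mathfrak{U}}\check{H}(\mathfrak{U};V)=\check{H}(X;V)$ by definition of \v{C}ech cohomology. Combined with the stage-wise isomorphism of the previous paragraph this gives $\check{H}(X;V)\cong H_{AS}(X;V)$. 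For the numerable version one observes that the numerable open coverings form a directed subset under refinement --- a common refinement of two numerable coverings is again numerable, since the pointwise products of a partition of unity subordinate to the one with a partition of unity subordinate to the other form a partition of unity subordinate to the common refinement --- and repeats the colimit argument over this subsystem.

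I would expect the only real care to be the bookkeeping around \v{C}ech refinement, which is functorial only up to chain homotopy and depends on the choice of $\tau$. The clean way around this is exactly the one indicated above: keep $i^*$ and $j^*$ as genuine $\mathfrak{U}$-natural morphisms of complexes and transport to the colimit only the induced isomorphism on cohomology, where the choice of $\tau$ no longer matters; everything else (exactness of filtered colimits, the definitional identifications of $A_{AS}^*$ and of $\check{H}(X;V)$) is routine.
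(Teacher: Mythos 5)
Your argument is correct and is essentially the paper's: the paper obtains this corollary precisely by passing to the colimit over all (numerable) open coverings in the covering-wise isomorphism of Corollary \ref{cechisolocal}, and your write-up simply makes explicit the routine naturality and exactness-of-filtered-colimits bookkeeping that the paper leaves implicit. No gaps.
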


If we replace the pre-sheaf $A^q ( -;V)$ by the pre-sheaf $S^q ( -;V)$ of
singular $q$-cochains we obtain a double complex 
$\check{C}^p ( \mathfrak{U}, S^q)$ for every open cover $\mathfrak{U}$ of
$X$. The rows of this double complex can be augmented by the complex 
$S^* (\mathfrak{U};V)$ of cochains on $\mathfrak{U}$-small singular
simplices and the columns can be augmented by the \v{C}ech complex 
$\check{C}^p ( \mathfrak{U}, V)$. However the augmented columns need not be
exact; this only happens if each open set $U_{i_0 \ldots i_p}$ is $V$-acyclic,
(i.e. it has trivial reduced singular cohomology with coefficients $V$), 
e.g. if each open set $U_{i_0 \ldots i_p}$ is contractible. 

\begin{lemma}
 For any open covering $\mathfrak{U}=\{ U_i \mid i \in I\}$ of $X$ for which 
the sets $U_{i_0 \ldots i_p}$ are $V$-acyclic 
the \v{C}ech cohomology $\check{H} (\mathfrak{U};V)$ for the covering 
$\mathfrak{U}$ and the singular cohomology $H_{sing} (X;V)$ are isomorphic. 
In particular the \v{C}ech cohomology does not depend on the open cover
subject to the acyclicity condition chosen. 
\end{lemma}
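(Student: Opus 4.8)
The plan is to reuse the double complex $\check{C}^p(\mathfrak{U}, S^q)$ together with its two augmentations, exactly as in the proof of Corollary \ref{cechisolocal}, and then to relate the complex $S^*(\mathfrak{U};V)$ of $\mathfrak{U}$-small singular cochains to the full singular cochain complex. Denote by $i^*:S^*(\mathfrak{U};V)\to \tot\check{C}^*(\mathfrak{U},S^*)$ and $j^*:\check{C}^*(\mathfrak{U};V)\to \tot\check{C}^*(\mathfrak{U},S^*)$ the maps induced by the row- and column-augmentations. For $j^*$ I would argue as in Lemma \ref{columnsexact}: up to a well-ordering of $I$ and the usual signs, the augmented $p$-th column is the product over the nonempty intersections $U_{i_0\ldots i_p}$ of the complexes $0\to V\to S^*(U_{i_0\ldots i_p};V)$, where $V$ maps in by constant cochains. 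Since products of complexes have cohomology computed factorwise, and $0\to V\to S^*(U;V)$ is exact precisely when $\tilde{H}^*_{sing}(U;V)=0$, the hypothesis that every $U_{i_0\ldots i_p}$ is $V$-acyclic makes all columns exact, so $j^*$ induces an isomorphism in cohomology (with the convention, already implicit in Lemma \ref{columnsexact}, that the \v{C}ech cochains are supported on nonempty intersections, so that empty ones are harmless).

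For $i^*$ I would invoke Proposition \ref{propchechcontrhom} and Corollary \ref{rowsalwexact} essentially verbatim. Writing $S^q(U;V)=\operatorname{Map}(\operatorname{Sing}_q U,V)$ and using $\operatorname{Sing}_q U_{i_0\ldots i_p}=\bigcap_k \operatorname{Sing}_q U_{i_k}$, while the $\mathfrak{U}$-small $q$-simplices are exactly $\bigcup_i \operatorname{Sing}_q U_i$, the $q$-th augmented row $S^q(\mathfrak{U};V)\hookrightarrow\check{C}^*(\mathfrak{U},S^q)$ is in precisely the situation of Proposition \ref{propchechcontrhom}, with $\mathfrak{U}[q]$ replaced throughout by the set of $\mathfrak{U}$-small $q$-simplices and $U_i^{q+1}$ by $\{\,\tau\mid \operatorname{im}(\tau)\subseteq U_i\,\}$. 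A point-finite family of $\mathbb{Z}$-valued functions $\psi_{q,i}$ on the $\mathfrak{U}$-small $q$-simplices with $\sum_i\psi_{q,i}=1$ and $\psi_{q,i}(\tau)=0$ whenever $\operatorname{im}(\tau)\not\subseteq U_i$ exists by the well-ordering construction of Corollary \ref{rowsalwexact}, so the formula \eqref{eqchechcontrhom} provides a row contraction and $i^*$ is a quasi-isomorphism. Combining the two augmentations gives $\check{H}(\mathfrak{U};V)\cong H^*(S^*(\mathfrak{U};V))$.

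It remains to identify $H^*(S^*(\mathfrak{U};V))$ with $H_{sing}(X;V)$. The complex $S^*(\mathfrak{U};V)$ is $\operatorname{Hom}(S_*^{\mathfrak{U}}(X),V)$, where $S_*^{\mathfrak{U}}(X)\subseteq S_*(X)$ is the subcomplex spanned by the $\mathfrak{U}$-small simplices; by the small-simplices theorem (iterated barycentric subdivision) the inclusion $S_*^{\mathfrak{U}}(X)\hookrightarrow S_*(X)$ is a chain homotopy equivalence, and the contravariant additive functor $\operatorname{Hom}(-,V)$ turns it into a cochain homotopy equivalence $S^*(X;V)\to S^*(\mathfrak{U};V)$, hence an isomorphism in cohomology. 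This yields $\check{H}(\mathfrak{U};V)\cong H_{sing}(X;V)$, and independence of the chosen acyclic cover is then automatic. The only genuinely non-formal input is the small-simplices theorem; the main obstacle is expository, namely phrasing the row-contraction step so that it really does fall under Proposition \ref{propchechcontrhom} rather than repeating its computation, while tracking empty intersections and possibly infinite vertex sets $\{\,i\mid \operatorname{im}(\tau)\subseteq U_i\,\}$ exactly as in Corollary \ref{rowsalwexact}.
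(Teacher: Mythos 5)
Your argument is essentially the paper's own proof: the paper also uses the double complex $\check{C}^*(\mathfrak{U},S^*)$, gets exact columns from the $V$-acyclicity hypothesis, contracts the rows "as for local cochains" (which is exactly your adaptation of Proposition \ref{propchechcontrhom} with $\mathbb{Z}$-valued point-finite functions on the $\mathfrak{U}$-small simplices), and then identifies $H(S^*(\mathfrak{U};V))$ with $H_{sing}(X;V)$. You merely spell out the two steps the paper leaves implicit, namely the transport of the row contraction to the singular presheaf and the appeal to the small-simplices theorem, and both are handled correctly.
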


\begin{proof}
If each reduced singular cohomology $\tilde{H}_{sing} (U_{i_0 \ldots i_p};V)$ 
is trivial then the augmented columns of the double complex 
$\check{C}^p ( \mathfrak{U}, S^q)$ are exact. Proceeding as for local cochains 
yields an isomorphism 
$H ( S^* (\mathfrak{U};V)) \cong \check{H} (\mathfrak{U};V)$. 
The cohomology of the complex $S^* (\mathfrak{U};V)$ is the singular 
cohomology $H_{sing} (X;V)$ of $X$.
\end{proof}

\begin{example}
  If $\mathfrak{U}$ is a 'good' cover of a topological space $X$ then 
 the \v{C}ech cohomology $\check{H} (\mathfrak{U};V)$ for the covering 
$\mathfrak{U}$ is isomorphic to the singular cohomology $H_{sing} (X;V)$ of $X$.
\end{example}

\begin{example}
If $\mathfrak{U}$ is an open covering of a finite dimensional Riemannian 
manifold $M$ by geodetically convex sets, then the \v{C}ech cohomology 
$\check{H} (\mathfrak{U};V)$ for the covering $\mathfrak{U}$ is isomorphic to 
the singular cohomology $H_{sing} (M;V)$ of $M$. 
If $M$ is an infinite dimensional Riemannian manifold one has to require the 
local existence of geodesics for this argument to be applicable.
\end{example}

\begin{example}
  If $G$ is a Hilbert Lie group (i.e. a Lie group whose model space is a
  Hilbert space\footnote{We do not restrict ourselves to finite dimensional Lie groups.}), and $U$ a geodetically convex identity neighbourhood of $G$, 
then the the \v{C}ech cohomology $\check{H} (\mathfrak{U}_U;V)$ for the 
covering $\mathfrak{U}_U:=\{ gU \mid g \in G \}$ is isomorphic to the singular 
cohomology $H_{sing} (G;V)$ of $G$.
\end{example}

In this case the singular and $\mathfrak{U}$-local cohomologies also
coincide. We assert that this isomorphism is induced by a natural morphism 
$\lambda^* : A^* (\mathfrak{U};V) \rightarrow S^* (X,\mathfrak{U};V)$ of
cochain complexes whose construction we briefly recall:
Consider the singular semi-simplicial space $C (\varDelta , X)$ of $X$ and 
the vertex morphism $\lambda_X : C (\varDelta , X ) \rightarrow X^{*+1}$ of 
semi-simplicial spaces, which assigns to each singular $n$-simplex 
$\tau : \varDelta^n \rightarrow X$ its ordered set of vertices 
$(\tau ( \vec{e}_0) ,\ldots, \tau (\vec{e}_n))$. This morphism $\lambda$
induces a morphism 
$C ( \lambda_\mathfrak{U}^*,V) : A^* (\mathfrak{U};V) \rightarrow 
S^* (X,\mathfrak{U};V)$ of cochain complexes.

\begin{proposition}
For any open covering $\mathfrak{U}=\{ U_i \mid i \in I\}$ of $X$ for which 
the sets $U_{i_0 \ldots i_p}$ are $V$-acyclic the morphism 
$C (\lambda_\mathfrak{U}^* ,V) : 
A^* (\mathfrak{U};V) \rightarrow S^* (X,\mathfrak{U};V)$ 
induces an isomorphism $H (\mathfrak{U};V) \cong H_{sing} (X;V)$ in
cohomology. In particular the \v{C}ech and $\mathfrak{U}$-local cohomology do 
not depend on the open cover subject to the acyclicity condition chosen.
\end{proposition}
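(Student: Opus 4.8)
The strategy is to exploit the double complex $\check{C}^p(\mathfrak{U},S^q)$ together with the earlier double complex $\check{C}^p(\mathfrak{U},A^q)$ and the vertex morphism $\lambda_\mathfrak{U}$ connecting them, and to read off the claim from a comparison of the two augmented diagrams. First I would note that $\lambda_\mathfrak{U}$ induces a morphism of double complexes $\check{C}^\bullet(\mathfrak{U},A^\bullet)\to\check{C}^\bullet(\mathfrak{U},S^\bullet)$ which is compatible on the one hand with the row augmentations $A^*(\mathfrak{U};V)\to\check{C}^\bullet(\mathfrak{U},A^*)$ and $S^*(X,\mathfrak{U};V)\to\check{C}^\bullet(\mathfrak{U},S^*)$, and on the other with a common column augmentation by the \v{C}ech complex $\check{C}^*(\mathfrak{U};V)$ (the vertex map is the identity on $0$-simplices, so it intertwines the two augmentations by $\check{C}^*(\mathfrak{U};V)$). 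This yields, on passing to total complexes, a commuting square of cochain maps relating $A^*(\mathfrak{U};V)$, $S^*(X,\mathfrak{U};V)$, $\check{C}^*(\mathfrak{U};V)$ and the two totalizations.

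Next I would assemble the four comparison maps. By Corollary~\ref{rowsalwexact} the augmentation $i^*:A^*(\mathfrak{U};V)\to\tot\check{C}^*(\mathfrak{U},A^*)$ is a quasi-isomorphism for \emph{any} open cover. The analogous row augmentation $S^*(X,\mathfrak{U};V)\to\tot\check{C}^*(\mathfrak{U},S^*)$ is a quasi-isomorphism as well, by the very same argument: Proposition~\ref{propchechcontrhom} only used the existence of a point-finite family of $\mathbb{Z}$-valued functions with the stated support and partition-of-unity properties, together with the extension-by-zero of cochains along $U_{i_0\ldots i_p}\hookrightarrow U_{i_1\ldots i_p}$, and both ingredients are available verbatim for the presheaf $S^q(-;V)$ of singular cochains (extension by zero makes sense because a singular simplex lands in $U_{i_0\ldots i_p}$ iff it lands in all the $U_{i_j}$). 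For the column direction, the augmented columns of $\check{C}^p(\mathfrak{U},S^q)$ are exact precisely because each $U_{i_0\ldots i_p}$ is $V$-acyclic — that is exactly the hypothesis — so $j^*_S:\check{C}^*(\mathfrak{U};V)\to\tot\check{C}^*(\mathfrak{U},S^*)$ is a quasi-isomorphism, just as $j^*_A$ is by Lemma~\ref{columnsexact} unconditionally. Now chase the square: $\lambda^*=C(\lambda_\mathfrak{U}^*,V)$ fits into
\[
\xymatrix{
A^*(\mathfrak{U};V)\ar[r]^{i^*}_{\sim}\ar[d]_{\lambda^*} & \tot\check{C}^*(\mathfrak{U},A^*)\ar[d] & \check{C}^*(\mathfrak{U};V)\ar[l]_{\quad j^*_A}^{\quad\sim}\ar@{=}[d] \\
S^*(X,\mathfrak{U};V)\ar[r]^{i^*_S}_{\sim} & \tot\check{C}^*(\mathfrak{U},S^*) & \check{C}^*(\mathfrak{U};V)\ar[l]_{\quad j^*_S}^{\quad\sim}
}
\]
and three of the four outer maps are quasi-isomorphisms, whence so is the middle vertical map, and then $\lambda^*$ is a quasi-isomorphism by two-out-of-three applied to the left square. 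This gives $H(\mathfrak{U};V)\cong H_{sing}(X;V)$, and the independence of the cover follows since all identifications are with \v{C}ech cohomology $\check{H}(\mathfrak{U};V)$, which by the preceding Lemma is cover-independent under the acyclicity condition.

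The one point that needs genuine care — and which I expect to be the main obstacle — is verifying that the square actually commutes, i.e. that $\lambda^*$ is compatible with the augmentations in both directions and that the induced map on totalizations intertwines everything with the correct signs. Concretely one must check that applying $\lambda_\mathfrak{U}$ levelwise sends a $\mathfrak{U}$-local cochain, viewed inside $\check{C}^0(\mathfrak{U},A^*)$ via extension by zero, to the corresponding $\mathfrak{U}$-small singular cochain inside $\check{C}^0(\mathfrak{U},S^*)$ — this is essentially the statement that the vertices of a $\mathfrak{U}$-small simplex lie in a common $U_i$ — and that on the column side $\lambda_\mathfrak{U}$ restricted to $0$-simplices is literally the identity on $X$, so that it carries the \v{C}ech augmentation of the $A$-column complex to the \v{C}ech augmentation of the $S$-column complex. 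The sign bookkeeping in the totalization (the $(-1)^p$ twist on the vertical differentials) is routine but must be kept consistent across both double complexes; once that is in place, the diagram chase above is formal.
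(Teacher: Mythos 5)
Your proposal is correct and follows essentially the same route as the paper: the vertex morphism induces a morphism of double complexes intertwining both augmentations, the three augmentation maps $i^*$, $j^*_A$, $j^*_S$ (and $i^*_S$, by the same row-contraction argument adapted to singular cochains, just as the paper's earlier lemma asserts) are quasi-isomorphisms, and the acyclicity hypothesis enters only through the column exactness of the singular double complex, so $C(\lambda_\mathfrak{U}^*,V)$ is forced to be a quasi-isomorphism by the diagram chase. The only difference is that you spell out the commutativity and sign checks and the singular row contraction in more detail than the paper does.
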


\begin{proof}
Let $\mathfrak{U}$ be an open covering of $X$ satisfying 
$\tilde{H}_{sing} (U_{i_0 \ldots i_p};V)=0$ for all $i_0 \ldots i_p \in I$. 
The morphism $\lambda_X : C (\varDelta , X ) \rightarrow X^{*+1}$ of 
semi-simplicial not only induces a morphism 
$C (\lambda_\mathfrak{U}^* ,V) : 
A^* (\mathfrak{U};V) \rightarrow S^* (X,\mathfrak{U};V)$ of cochain complexes
but also a morphism 
$\check{C}^* (\mathfrak{U};C (\lambda^*;V) ) : \check{C}^* (\mathfrak{U};A^* )
\rightarrow \check{C}^* (\mathfrak{U};S^* )$ 
of double complexes. The morphisms $C (\lambda_\mathfrak{U}^* ,V)$ and 
$\check{C}^* (\mathfrak{U};C (\lambda^*;V) )$ intertwine the augmentations of
the double complexes $\check{C}^* (\mathfrak{U};A^* )$ and 
$\check{C}^* (\mathfrak{U};S^* )$, leading to the commutative diagram
\begin{equation*}
  \xymatrix{
 A^* (\mathfrak{U};V) \ar[r]^{i^*} \ar[d]_{C (\lambda_\mathfrak{U}^* ;V)} & 
\tot \check{C}^* ( \mathfrak{U}, A^*) 
\ar[d]|{\check{C}^* (\mathfrak{U};C (\lambda^*;V) )} & 
\check{C} (\mathfrak{U};V) \ar[l]_{j^*} \ar@{=}[d] \\
S^* (\mathfrak{U};V) \ar[r]^{i^*} & \tot \check{C}^* ( \mathfrak{U}, S^*) & 
\check{C} (\mathfrak{U};V) \ar[l]_{j^*} 
}
\end{equation*}
of cochain complexes, in which all but the left downward morphisms induce 
isomorphisms in cohomology. Thus the morphism 
$C (\lambda_\mathfrak{U}^* ,V) : 
A^* (\mathfrak{U};V) \rightarrow S^* (X,\mathfrak{U};V)$ 
of cochain complexes induces an isomorphism in cohomology as well.
\end{proof}

\begin{corollary}
For any open covering $\mathfrak{U}=\{ U_i \mid i \in I\}$ of $X$ for which 
the sets $U_{i_0 \ldots i_p}$ are $V$-acyclic the singular cohomology of $X$ 
and the \v{C}ech cohomology for the covering $\mathfrak{U}$ can be computed 
from the complex $A^* (\mathfrak{U};V)$ of $\mathfrak{U}$-local cochains.
\end{corollary}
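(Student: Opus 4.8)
The plan is to assemble the corollary from two identifications that are already in hand, so the real work has been done in the preceding results. First I would invoke the Proposition immediately above: under the standing $V$-acyclicity hypothesis on the finite intersections $U_{i_0 \ldots i_p}$, the morphism $C (\lambda_\mathfrak{U}^* ,V) : A^* (\mathfrak{U};V) \rightarrow S^* (X,\mathfrak{U};V)$ of cochain complexes induces an isomorphism $H (\mathfrak{U};V) \cong H_{sing} (X;V)$. This already shows that the complex $A^* (\mathfrak{U};V)$ of $\mathfrak{U}$-local cochains computes the singular cohomology of $X$.

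Second, I would bring in Corollary \ref{cechisolocal}, which holds for an arbitrary open covering $\mathfrak{U}$ of $X$ with no acyclicity assumption: it provides a canonical isomorphism $\check{H} (\mathfrak{U};V) \cong H (\mathfrak{U};V)$ between the \v{C}ech cohomology for the covering $\mathfrak{U}$ and the cohomology of $\mathfrak{U}$-local cochains. Composing this with the isomorphism of the previous paragraph shows that the single complex $A^* (\mathfrak{U};V)$ simultaneously computes $\check{H} (\mathfrak{U};V)$ and $H_{sing} (X;V)$, which is exactly the assertion.

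The only point requiring a moment's attention is that the hypotheses line up: the Proposition needs the $V$-acyclicity of all $U_{i_0 \ldots i_p}$, whereas Corollary \ref{cechisolocal} needs nothing beyond $\mathfrak{U}$ being an open cover, so under the corollary's standing assumption both inputs apply verbatim. Consequently I expect no genuine obstacle here; the statement is a bookkeeping consequence of the two previous results, and the proof amounts to chaining the two isomorphisms (and recording, as the Proposition already does, the resulting independence of the outcome from the particular cover chosen among those satisfying the acyclicity condition).
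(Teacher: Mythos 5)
Your proposal is correct and matches the paper's intent: the corollary is stated without proof precisely because it follows by chaining the preceding Proposition (giving $H(\mathfrak{U};V)\cong H_{sing}(X;V)$ under the $V$-acyclicity hypothesis) with Corollary \ref{cechisolocal} (giving $\check{H}(\mathfrak{U};V)\cong H(\mathfrak{U};V)$ for any open covering), exactly as you do. No gaps.
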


\begin{example}
  For any good covering $\mathfrak{U}$ of a topological space $X$ the morphism 
$C (\lambda_\mathfrak{U}^* ,V) : 
A^* (\mathfrak{U};V) \rightarrow S^* (X,\mathfrak{U};V)$ of cochain complexes
induces an isomorphism in cohomology and the cohomologies 
$\check{H} (\mathfrak{U};V)$, $H (\mathfrak{U};V)$ and $H_{sing} (X;V)$ are
isomorphic. 
\end{example}

\begin{lemma}
If the open coverings $\mathfrak{U}$ a topological space $X$ for which 
the sets $U_{i_0 \ldots i_p}$ are $V$-acyclic 
are cofinal in all open coverings, then for each such
covering $\mathfrak{U}$ the \v{C}ech cohomology $\check{H} (\mathfrak{U};V)$
for the covering $\mathfrak{U}$ coincides with the 
\v{C}ech cohomology $\check{H} (X;V)$ of $X$ and the cohomology 
$H (\mathfrak{U};V)$ of $\mathfrak{U}$-local cochains coincides with the
Alexander-Spanier cohomology $H_{AS} (X;V)$ of $X$. In particular the systems 
$H (\mathfrak{U};V)$ and $\check{H} (\mathfrak{U};V)$ of abelian groups are 
co-Mittag-Leffler.
\end{lemma}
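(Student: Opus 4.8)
The plan is to show that, among the coverings forming the given cofinal family, the refinement maps between the relevant cohomology groups are already isomorphisms, so that the colimits defining $\check H(X;V)$ and $H_{AS}(X;V)$ collapse to a single term.

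First I would fix notation: let $\mathcal{A}$ be the family of open coverings $\mathfrak{U}=\{U_i\mid i\in I\}$ of $X$ all of whose finite intersections $U_{i_0\ldots i_p}$ are $V$-acyclic. By hypothesis $\mathcal{A}$ is cofinal in the directed set of all open coverings of $X$, and is therefore directed in its own right: two members of $\mathcal{A}$ have a common refinement, and this common refinement is in turn refined by a member of $\mathcal{A}$. Since a directed colimit may be computed over any cofinal subsystem, and since cohomology commutes with filtered colimits of cochain complexes, one has $\check H(X;V)=\colim_{\mathfrak{U}\in\mathcal{A}}\check H(\mathfrak{U};V)$ and $H_{AS}(X;V)=\colim_{\mathfrak{U}\in\mathcal{A}}H(\mathfrak{U};V)$.

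The heart of the argument is the claim that for coverings $\mathfrak{V}\le\mathfrak{V}'$ in $\mathcal{A}$ the refinement homomorphisms $H(\mathfrak{V};V)\to H(\mathfrak{V}';V)$ and $\check H(\mathfrak{V};V)\to\check H(\mathfrak{V}';V)$ are isomorphisms. By Corollary \ref{cechisolocal}, whose isomorphism is induced by the natural augmentations $i^{*},j^{*}$ and hence is compatible with refinement, it suffices to treat the $\mathfrak{U}$-local version; there the refinement map is simply restriction of functions along the inclusion $\mathfrak{V}'[n]\subseteq\mathfrak{V}[n]$, so it involves no choices. I would then pass through the vertex morphism. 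The preceding Proposition (on $C(\lambda_{\mathfrak{U}}^{*};V)$) shows that $C(\lambda_{\mathfrak{U}}^{*};V)\colon A^{*}(\mathfrak{U};V)\to S^{*}(X,\mathfrak{U};V)$ induces an isomorphism $H(\mathfrak{U};V)\cong H_{sing}(X;V)$ for each $\mathfrak{U}\in\mathcal{A}$, and this morphism is natural in $\mathfrak{U}$: both the refinement map on $A^{*}(-;V)$ and that on $S^{*}(X,-;V)$ are restrictions and commute with pullback along $\lambda_{X}$. Moreover the restriction maps $S^{*}(X;V)\to S^{*}(X,\mathfrak{U};V)$ are quasi-isomorphisms by the theorem on small simplices and are compatible with refinement, so $H(S^{*}(X,\mathfrak{V};V))\to H(S^{*}(X,\mathfrak{V}';V))$ is an isomorphism. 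Transporting this along the vertical isomorphisms in the commutative square
\[
\xymatrix{
A^{*}(\mathfrak{V};V)\ar[r]\ar[d]_{C(\lambda_{\mathfrak{V}}^{*};V)} & A^{*}(\mathfrak{V}';V)\ar[d]^{C(\lambda_{\mathfrak{V}'}^{*};V)}\\
S^{*}(X,\mathfrak{V};V)\ar[r] & S^{*}(X,\mathfrak{V}';V)
}
\]
then yields the desired isomorphisms.

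With the claim in hand I would conclude formally. A directed colimit of a system all of whose transition maps are isomorphisms is canonically isomorphic, via each structure morphism, to every term of the system; hence $\check H(\mathfrak{U};V)\cong\check H(X;V)$ and $H(\mathfrak{U};V)\cong H_{AS}(X;V)$ for every $\mathfrak{U}\in\mathcal{A}$. For the co-Mittag-Leffler assertion: given an arbitrary covering $\mathfrak{W}$, choose $\mathfrak{U}\in\mathcal{A}$ refining it; for any $\mathfrak{V}\ge\mathfrak{U}$ choose $\mathfrak{V}'\in\mathcal{A}$ refining $\mathfrak{V}$, so that $\check H(\mathfrak{U};V)\to\check H(\mathfrak{V}';V)$ is an isomorphism factoring through $\check H(\mathfrak{V};V)$; then $\check H(\mathfrak{U};V)\to\check H(\mathfrak{V};V)$ is injective, so $\ker\!\big(\check H(\mathfrak{W};V)\to\check H(\mathfrak{V};V)\big)$ is independent of $\mathfrak{V}\ge\mathfrak{U}$, which is precisely the condition; the identical argument applies to $(H(\mathfrak{U};V))_{\mathfrak{U}}$ (equivalently, both ind-systems are isomorphic to a constant one). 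The only real obstacle is the middle step: the earlier results merely give cover-independence of the groups $\check H(\mathfrak{U};V)$ and $H(\mathfrak{U};V)$, so one must genuinely verify that the refinement maps themselves are isomorphisms, which forces careful bookkeeping with the naturality of the vertex morphism and of the small-simplices quasi-isomorphism.
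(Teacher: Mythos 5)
Your proof is correct and follows essentially the same route as the paper: both compute $\check{H}(X;V)$ and $H_{AS}(X;V)$ as colimits over the cofinal family of acyclic coverings and identify every term with $H_{sing}(X;V)$ via the vertex morphism $C(\lambda_{\mathfrak{U}}^{*};V)$ from the preceding proposition. The only difference is that you make explicit the naturality of these isomorphisms with respect to refinement (and spell out the co-Mittag-Leffler deduction), points the paper leaves implicit.
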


\begin{proof} 
If the open coverings $\mathfrak{U}$ satisfying 
$\tilde{H}_{sing} (U_{i_0 \ldots i_p};V)=0$ for all $i_0 \ldots i_p \in I$ of
a space $X$ are cofinal in all open coverings, then the 
Alexander-Spanier and \v{C}ech cohomologies of $X$ can be computed as the 
colimit over all such open covers $\mathfrak{U}$, hence 
$H_{AS} (X;V) = \colim_\mathfrak{U} H (\mathfrak{U};V) 
\cong \colim_\mathfrak{U} H_{sing} (X;V) =  H_{sing} (X;V)$   
and  
$\check{H} (X;V) = \colim_\mathfrak{U} \check{H} (\mathfrak{U};V) \cong 
\colim_\mathfrak{U} H_{sing} (X;V) =  H_{sing} (X;V)$, where the colimits are
taken over all open coverings $\mathfrak{U}$ subject to the condition 
$\tilde{H}_{sing} (U_{i_0 \ldots i_p};V)=0$ for all $i_0 \ldots i_p \in I$.
\end{proof}

\begin{example} \label{exugeodconv}
Every covering of a finite dimensional Riemannian manifold $M$ admits an open 
refinement $\mathfrak{U}$ by geodetically convex sets. Thus for any open
covering $\mathfrak{U}$ of a finite dimensional Riemannian manifold $M$ by 
geodetically convex sets the cohomology of the complexes  
$A^* (\mathfrak{U};V)$ and $\check{C}^* (\mathfrak{U};V)$ are isomorphic to 
the Alexander-Spanier, \v{C}ech and singular cohomologies of $M$. 
If $M$ is an infinite dimensional Riemannian manifold one has to require the 
local existence of geodesics for this argument to be applicable.
\end{example}

\begin{example} \label{exghlg}
  If $G$ is a Hilbert Lie group and $U$ a geodetically convex identity 
neighbourhood of $G$, then then the cohomology of the complexes 
$A^* (\mathfrak{U}_U;V)$  and $\check{C}^* (\mathfrak{U};V)$ are both isomorphic
to $H_{AS} (G;V)$, $\check{H} (\mathfrak{U};V)$ and $H_{sing} (G;V)$.
\end{example}

For some topological spaces the acyclicity condition on the covering 
$\mathfrak{U}$ is not necessary to obtain similar results. For topological 
groups one can also consider the complexes 
$\colim_{U \in \mathcal{U}_1} A^* (\mathfrak{U}_U ;V)$ and 
$\colim_{U \in \mathcal{U}_1} A_c^* (\mathfrak{U}_U ;V)$, where $U$ ranges
over all open identity neighbourhoods of $G$. For these colimit complexes we 
observe:

\begin{theorem} \label{colimnbhf}
For any locally contractible topological group $G$ with open identity
neighbourhood filterbase $\mathcal{U}_1$ the morphisms 
$C (\lambda_{\mathfrak{U}_U}^* ,V) : 
A^* (\mathfrak{U}_U ; V) \rightarrow S^* (\mathfrak{U};V)$ induce an
isomorphism 
$\colim_{U \in \mathcal{U}_1} H (\mathfrak{U}_U ;V) \cong H_{sing} (X;V)$
in cohomology and the cohomologies 
$\colim_{U \in \mathcal{U}_1} H (\mathfrak{U}_U ;V)$, 
$\colim_{U \in \mathcal{U}_1} \check{H} (\mathfrak{U}_U ;V)$, and 
$H_{sing} (G;V)$ coincide.
\end{theorem}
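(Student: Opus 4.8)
The plan is to route everything through the colimit, over the identity-neighbourhood base $\mathcal{U}_1$, of the \v{C}ech--singular double complexes $\check{C}^{*}(\mathfrak{U}_U,S^{*})$. Since $H(\mathfrak{U}_U;V)\cong\check{H}(\mathfrak{U}_U;V)$ naturally in $U$ by Corollary \ref{cechisolocal}, we have $\colim_{U}H(\mathfrak{U}_U;V)\cong\colim_{U}\check{H}(\mathfrak{U}_U;V)$, so it remains to identify $\colim_{U}\check{H}(\mathfrak{U}_U;V)$ with $H_{sing}(G;V)$ compatibly with $C(\lambda_{\mathfrak{U}_U}^{*},V)$. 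The vertex morphism $\lambda_G$ induces a morphism of \v{C}ech double complexes intertwining their augmentations, for every open cover; applying $\colim_{U\in\mathcal{U}_1}$ to the covers $\mathfrak{U}_U$ produces the commutative square
\[
\xymatrix{
\colim_U A^{*}(\mathfrak{U}_U;V)\ar[r]^{i^{*}}\ar[d]_{\colim_U C(\lambda_{\mathfrak{U}_U}^{*},V)}&\colim_U\tot\check{C}^{*}(\mathfrak{U}_U,A^{*})\ar[d]&\colim_U\check{C}^{*}(\mathfrak{U}_U;V)\ar[l]_{j^{*}}\ar@{=}[d]\\
\colim_U S^{*}(\mathfrak{U}_U;V)\ar[r]^{i^{*}}&\colim_U\tot\check{C}^{*}(\mathfrak{U}_U,S^{*})&\colim_U\check{C}^{*}(\mathfrak{U}_U;V)\ar[l]_{j^{*}}
}
\]
of cochain complexes. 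As filtered colimits are exact, the top $i^{*}$ and $j^{*}$ induce isomorphisms in cohomology by Corollary \ref{rowsalwexact} and Lemma \ref{columnsexact}; the bottom $i^{*}$ does too, since the rows of $\check{C}^{*}(\mathfrak{U}_U,S^{*})$ are exact for every cover by the row contraction of Proposition \ref{propchechcontrhom} applied to the pre-sheaf $S^{q}(-;V)$ in place of $A^{q}(-;V)$, with the combinatorial choice $\varphi_{q,i}(\tau)=1$ exactly when a fixed index of a member of $\mathfrak{U}_U$ containing $\tau(\varDelta^{q})$ equals $i$; and $H\bigl(\colim_U S^{*}(\mathfrak{U}_U;V)\bigr)\cong H_{sing}(G;V)$ by the small simplices theorem. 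Hence the theorem reduces to showing that the bottom $j^{*}$ induces an isomorphism in cohomology, i.e.\ that the augmented columns of $\colim_U\check{C}^{*}(\mathfrak{U}_U,S^{*})$ are exact; the square then identifies the resulting isomorphism $\colim_U\check{H}(\mathfrak{U}_U;V)\cong H_{sing}(G;V)$ with the one induced by $C(\lambda_{\mathfrak{U}_U}^{*},V)$.

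For the columns I would argue as follows. The augmentation of the $p$-th column of $\check{C}^{*}(\mathfrak{U}_U,S^{*})$ by $\check{C}^{p}(\mathfrak{U}_U;V)$ turns each fibre over a tuple $(g_0,\ldots,g_p)$ into the augmented singular cochain complex of $g_0U\cap\cdots\cap g_pU$, so the column cohomology is governed by the reduced singular cohomologies $\tilde{H}_{sing}(g_0U\cap\cdots\cap g_pU;V)$. It therefore suffices to prove: for every $U\in\mathcal{U}_1$ there is $U'\subseteq U$ in $\mathcal{U}_1$ such that \emph{every} restriction map $\tilde{H}_{sing}(g_0U\cap\cdots\cap g_pU;V)\to\tilde{H}_{sing}(g_0U'\cap\cdots\cap g_pU';V)$ vanishes; then every class in the colimit column cohomology dies, so the columns are exact. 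Using left invariance, $g_0U\cap\cdots\cap g_pU=g_0\cdot(U\cap h_1U\cap\cdots\cap h_pU)$ with $h_j:=g_0^{-1}g_j$ (which lies in $UU^{-1}$ whenever the intersection is non-empty), and left translations are homeomorphisms, so this reduces to the following local claim.

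\emph{Geometric step.} For every identity neighbourhood $N$ and every $p\ge 0$ there is an identity neighbourhood $U\subseteq N$ such that, with $h_0:=e$, for all $h_1,\ldots,h_p\in G$ with $U\cap h_1U\cap\cdots\cap h_pU\neq\emptyset$ the inclusion $U\cap h_1U\cap\cdots\cap h_pU\hookrightarrow N\cap h_1N\cap\cdots\cap h_pN$ is null-homotopic. I expect this to be the main obstacle; it is the only place local contractibility enters. To prove it, use local contractibility to obtain (inside a small enough $N$) an identity neighbourhood $W$ and a contraction $F\colon W\times[0,1]\to N$ with $F_0$ the inclusion and $F_1$ constant, and replace $F$ by the \emph{based} contraction $\tilde{F}(x,t):=F_t(e)^{-1}F_t(x)$: this still equals the inclusion at $t=0$ and the constant map $e$ at $t=1$, satisfies $\tilde{F}_t(e)=e$ for all $t$, and still has values in $N$. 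Because $(x,t)\mapsto x^{-1}\tilde{F}_t(x)$ sends the compact set $\{e\}\times[0,1]$ to $e$, the tube lemma gives an identity neighbourhood on which $\tilde{F}_t(x)\in xN'$ for all $t$, for any prescribed small $N'$; picking $U$ inside it with $UN'\subseteq N$ and $UU^{-1}\subseteq N^{-1}$, the restriction of $\tilde{F}$ sends $(U\cap h_1U\cap\cdots\cap h_pU)\times[0,1]$ into $N\cap h_1N\cap\cdots\cap h_pN$ — since $x\in h_jU$ forces $\tilde{F}_t(x)\in xN'\subseteq h_jUN'\subseteq h_jN$, and $e\in h_jN$ as $h_j\in UU^{-1}\subseteq N^{-1}$ — and contracts that intersection to $e$.

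Granting the Geometric step, the augmented columns of $\colim_U\check{C}^{*}(\mathfrak{U}_U,S^{*})$ are exact, so $\colim_U\tot\check{C}^{*}(\mathfrak{U}_U,S^{*})$ computes both $\colim_U\check{H}(\mathfrak{U}_U;V)$ (through $j^{*}$) and $H_{sing}(G;V)$ (through $i^{*}$ and the small simplices theorem). Together with the first paragraph, this shows that $\colim_U H(\mathfrak{U}_U;V)$, $\colim_U\check{H}(\mathfrak{U}_U;V)$ and $H_{sing}(G;V)$ all coincide, the isomorphism being induced by the morphisms $C(\lambda_{\mathfrak{U}_U}^{*},V)$, as asserted.
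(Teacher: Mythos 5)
Your argument is correct, but it takes a genuinely different route from the paper. The paper's proof of this theorem is two lines long: the isomorphism $\colim_{U}H(\mathfrak{U}_U;V)\cong H_{sing}(G;V)$ induced by the vertex morphisms is simply quoted from van Est \cite{vE62b}, and the identification with $\colim_{U}\check{H}(\mathfrak{U}_U;V)$ is then read off from Corollary \ref{cechisolocal}. You instead re-prove van Est's theorem from scratch: you run the \v{C}ech--singular double complex over the left-invariant covers $\mathfrak{U}_U$, pass to the filtered colimit over $\mathcal{U}_1$, handle the rows by Corollary \ref{rowsalwexact} and Lemma \ref{columnsexact} on the local side and by your indicator-function variant of Proposition \ref{propchechcontrhom} plus the small simplices theorem on the singular side, and reduce everything to exactness of the colimit columns, which you get from your Geometric step. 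That step is the real content and it is sound: rebasing a local contraction via $\tilde F_t(x)=F_t(e)^{-1}F_t(x)$ and applying the tube lemma to $(x,t)\mapsto x^{-1}\tilde F_t(x)$ produces a single $U$, independent of $p$ and of the translating elements, such that every non-empty intersection $U\cap h_1U\cap\cdots\cap h_pU$ contracts inside $N\cap h_1N\cap\cdots\cap h_pN$; this is precisely where local contractibility and left invariance enter. Two small caveats: $\tilde F$ takes values in $N^{-1}N$ rather than $N$ (harmless, since the containments you actually use all come from $\tilde F_t(x)\in xN'$ together with $UN'\subseteq N$), and when converting ``each restriction vanishes on reduced singular cohomology'' into exactness of the alternating column complexes you should fix the null-homotopy (equivalently the translating element $g_0$) per unordered tuple, so that the induced cochain homotopies commute with the permutation action and yield alternating primitives. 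In comparison, the paper's proof is short but defers the substance to the literature, whereas yours is self-contained, makes the role of local contractibility completely explicit, and in effect supplies the proof of the van Est result that the paper only cites.
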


\begin{proof}
The first statement has been shown by van Est in \cite{vE62b}.
The other isomorphisms then are a consequence of Corollary \ref{cechisolocal}.
\end{proof}

\begin{corollary}
  For Lie groups $G$ with open identity neighbourhood filterbase 
$\mathcal{U}_1$ the cohomologies 
$\colim_{U \in \mathcal{U}_1} H (\mathfrak{U}_U ;V)$, 
$\colim_{U \in \mathcal{U}_1} \check{H} (\mathfrak{U}_U ;V)$ and 
$H_{sing} (G;V)$ are isomorphic and the isomorphism 
$\colim_{U \in \mathcal{U}_1} H_c (\mathfrak{U}_U ;V) \cong H_{sing} (G;V)$ is
induced by the vertex morphism $\lambda$.
\end{corollary}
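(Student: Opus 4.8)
The plan is to read the corollary off Theorem~\ref{colimnbhf} and the continuous local-versus-abstract comparison announced in the introduction and carried out in Section~\ref{secclc}. A Lie group is locally contractible: a chart at the identity carries a neighbourhood basis of contractible sets --- for instance preimages of convex or star-shaped neighbourhoods of $0$ in the model space, or the geodesic balls of a left-invariant Riemannian metric in the finite-dimensional case --- so the identity neighbourhood filterbase $\mathcal{U}_1$ has a cofinal sub-filterbase of contractible $U$. Theorem~\ref{colimnbhf} therefore applies and yields
\[
\colim_{U\in\mathcal{U}_1} H(\mathfrak{U}_U;V)\ \cong\ \colim_{U\in\mathcal{U}_1}\check H(\mathfrak{U}_U;V)\ \cong\ H_{sing}(G;V),
\]
the first of these being induced by the vertex morphism $\lambda$. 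Thus only the continuous refinement is left to prove.

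For that I would invoke the comparison theorem of Section~\ref{secclc}: for a locally contractible group $G$ and loop contractible coefficients $V$ the inclusions $A_c^*(\mathfrak{U}_U;V)\hookrightarrow A^*(\mathfrak{U}_U;V)$ induce, after passing to the filtered colimit over $U\in\mathcal{U}_1$ (filtered colimits being exact), an isomorphism $\colim_U H_c(\mathfrak{U}_U;V)\xrightarrow{\ \cong\ }\colim_U H(\mathfrak{U}_U;V)$; informally, over a cofinal family of identity neighbourhoods the left-invariant cover $\mathfrak{U}_U$ is a cover by cozero sets of a (left-invariant) generalised partition of unity, and the mechanism of Section~\ref{secclc} then upgrades abstract primitives to continuous ones. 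Now the restriction of the vertex morphism $C(\lambda_{\mathfrak{U}_U}^*,V)$ to continuous local cochains factors as this inclusion followed by the vertex morphism $A^*(\mathfrak{U}_U;V)\to S^*(\mathfrak{U}_U;V)$ on abstract cochains; since the first map is an isomorphism in the colimit by the comparison theorem and the second induces $\colim_U H(\mathfrak{U}_U;V)\cong H_{sing}(G;V)$ by Theorem~\ref{colimnbhf}, the composite is an isomorphism $\colim_U H_c(\mathfrak{U}_U;V)\cong H_{sing}(G;V)$ induced by $\lambda$, as claimed. The entirely analogous argument with smooth local and smooth singular cochains, a smooth left-invariant Riemannian metric and the smooth form of the Section~\ref{secclc} comparison for smoothly loop contractible $V$ gives the smooth version as well; in the infinite-dimensional Hilbert--Lie setting one restricts, as in Examples~\ref{exugeodconv} and~\ref{exghlg}, to identity neighbourhoods supporting geodesics.

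The step that carries the weight is the middle one --- comparing continuous (resp.\ smooth) with abstract $\mathfrak{U}_U$-local cochains for the left-invariant covers --- but this is precisely the content postponed to Section~\ref{secclc}. Within the present corollary the only things needing checking are that a Lie group meets the hypotheses of the two cited results (local contractibility, and a cofinal supply of identity neighbourhoods for which the Section~\ref{secclc} comparison applies), and that the vertex morphism respects the continuous and smooth structures and commutes with the colimit over $\mathcal{U}_1$; both are routine.
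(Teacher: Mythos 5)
Your proposal is correct and follows essentially the paper's route: the corollary is an immediate specialization of Theorem~\ref{colimnbhf} to Lie groups, which are locally contractible, and the paper offers no further argument at this point. Your handling of the clause about $\colim_{U \in \mathcal{U}_1} H_c (\mathfrak{U}_U ;V)$ --- factoring the vertex morphism on continuous cochains through the inclusion $A_c^* (\mathfrak{U}_U ;V) \hookrightarrow A^* (\mathfrak{U}_U ;V)$ and invoking the Section~\ref{secclc} comparison (which requires $V$ to be a real topological vector space or loop contractible, a hypothesis the corollary leaves implicit) --- is precisely how the paper itself later obtains Theorem~\ref{colimnbhfcont} via Corollary~\ref{localfortopgrpsandvsv}.
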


\section{Continuous Local Cochains}
\label{secclc}

The preceding observations can -- in parts -- be generalised for continuous
cochains. 
Replacing the pre-sheaf $A^q (-;V)$ of $q$-cochains by the pre-sheaf 
$A_c^q (-;V)$ of continuous $q$-cochains we obtain a sub double complex 
$\check{C}^* ( \mathfrak{U}, A_c^*)$ of $\check{C}^* ( \mathfrak{U}, A^*)$ 
whose groups are given by
\begin{equation*}
\check{C}^p ( \mathfrak{U}, A_c^q) :=
\left\{ f \in \check{C}^p ( \mathfrak{U}, A^q) \mid \forall i_0 , \ldots i_p
  \in I: f_{i_0 \ldots i_p} \in C ( U_{i_0 \ldots i_p}^{q+1} ;V ) \right\} \, .
\end{equation*}
The rows of this sub double complex $\check{C}^* ( \mathfrak{U}, A_c^*)$ 
can be augmented by the complex $A_c^* (\mathfrak{U};V)$ of continuous 
$\mathfrak{U}$-local cochains and the columns can be augmented by the 
\v{C}ech-complex $\check{C}^* (\mathfrak{U};V)$ for the covering 
$\mathfrak{U}$.
These augmentations induce homomorphisms 
$i_c^*:A_c^* (\mathfrak{U};V) \rightarrow 
\tot \check{C}^* (\mathfrak{U},A_c^*)$ 
and 
$j_c^*:\check{C}^* (\mathfrak{U};V) \rightarrow 
\tot \check{C}^* (\mathfrak{U}, A_c^*)$ 
of cochain complexes respectively.

\begin{lemma} \label{jindiso2}
  The homomorphism $j_c^*:\check{C}^* (\mathfrak{U};V) \rightarrow 
\tot \check{C}^* (\mathfrak{U}, A_c^*)$ induces an isomorphism in cohomology.
\end{lemma}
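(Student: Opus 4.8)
The plan is to mimic the proof of Lemma \ref{columnsexact}, except that now the augmented columns of the double complex $\check{C}^* (\mathfrak{U}, A_c^*)$ need \emph{not} be exact: exactness of the column over the index tuple $(i_0,\ldots,i_p)$ would require the continuous standard complex $A_c^* (U_{i_0 \ldots i_p};V)$ to be exact, which is precisely the assertion that the inclusion $A_c^* \hookrightarrow A^*$ is a quasi-isomorphism on $U_{i_0 \ldots i_p}$ — the very statement we are building towards and cannot assume yet. So a direct column-exactness argument is unavailable, and the main obstacle is to circumvent it.

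The route I would take is to compare the two double complexes $\check{C}^* (\mathfrak{U}, A_c^*) \hookrightarrow \check{C}^* (\mathfrak{U}, A^*)$ together with their column augmentations by $\check{C}^* (\mathfrak{U};V)$, and show that $j_c^*$ induces an isomorphism by a direct chain-homotopy argument on the \emph{total} complexes rather than column by column. Concretely: a class in $H^n(\tot \check{C}^*(\mathfrak{U}, A_c^*))$ is represented by a tuple $(f^{0,n}, f^{1,n-1}, \ldots, f^{n,0})$ with each entry built from \emph{continuous} functions, satisfying the total-cocycle condition $d_v f^{p,q} \pm d_h f^{p-1,q+1} = 0$. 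I would run the standard "staircase" argument: starting from the bottom entry $f^{n,0} \in \check{C}^n(\mathfrak{U}, A_c^0)$, use that $A_c^0(U_{i_0\ldots i_n};V) = C(U_{i_0\ldots i_n};V)$ is the sheaf of continuous $V$-valued functions and that the \v{C}ech complex of a \emph{sheaf} has augmented columns which are exact \emph{in positive \v{C}ech degree} (the presheaf $A_c^0(-;V) = C(-;V)$ is a sheaf, hence has vanishing higher \v{C}ech cohomology on the covering — here one uses that $A_c^0$ satisfies the sheaf gluing axiom, unlike $A^0$). Wait — that is not quite enough either, since $A_c^0(-;V)$ as a presheaf on the poset of finite intersections need not have vanishing \v{C}ech cohomology without a partition-of-unity hypothesis. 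The clean fix is to note that the bottom \emph{row} of the augmented double complex, $A_c^q(\mathfrak{U};V) \hookrightarrow \check{C}^*(\mathfrak{U}, A_c^q)$, is exact for every $q$ by exactly the argument of Proposition \ref{propchechcontrhom} and Corollary \ref{rowsalwexact}: the row contraction $h^{p,q}(f)_{i_0\ldots i_{p-1}} = \sum_i \varphi_{q,i}\cdot f_{i i_0 \ldots i_{p-1}}$ built from the $\mathbb{Z}$-valued point-finite functions $\varphi_{q,i}$ preserves continuity, because multiplying a continuous function by the locally constant (indeed $\mathbb{Z}$-valued, hence on each $U_i^{q+1}$ locally constant off the boundary) function $\varphi_{q,i}$ and summing point-finitely keeps us inside $C(U_{i_0\ldots i_{p-1}}^{q+1};V)$ — one must check that the $\varphi_{q,i}$ produced in Corollary \ref{rowsalwexact} can be arranged to be continuous on the relevant sets, or argue that the product with a function supported in $U_i^{q+1}$ and vanishing outside is continuous on $U_{i_0\ldots i_{p-1}}^{q+1}$.

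Hence the argument is: the augmented rows of $\check{C}^*(\mathfrak{U}, A_c^*)$ are exact (row contraction, as above), so by the standard double-complex comparison the row augmentation $i_c^*: A_c^*(\mathfrak{U};V) \to \tot\check{C}^*(\mathfrak{U},A_c^*)$ is a quasi-isomorphism — but that is the statement of Lemma \ref{jindiso2}'s sibling (the continuous analogue of Corollary \ref{rowsalwexact}), not of Lemma \ref{jindiso2} itself. For $j_c^*$ I instead exploit that the \emph{columns} of $\check{C}^*(\mathfrak{U}, A_c^*)$ augmented by $\check{C}^*(\mathfrak{U};V)$ have, as their $q=0$ row, the Čech complex of the \emph{sheaf} $C(-;V)$, together with the fact that the kernel of $d_v: A_c^0 \to A_c^1$ is exactly the constant presheaf $V$ viewed inside $C(-;V)$ — so the augmented column is a complex computing, in \v{C}ech-degree $p$, the group $\check{C}^p(\mathfrak{U};V)$ in row $-1$ and $\ker(d_v)\big/\ldots$ above. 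The cleanest formulation: filter the total complex by \v{C}ech degree; the $E_1$-page has $E_1^{p,q} = \check{C}^p(\mathfrak{U}; \mathcal{H}^q)$ where $\mathcal{H}^q$ is the $q$-th cohomology presheaf of $A_c^*(-;V)$; since $A_c^*(U;V)$ has $\mathcal{H}^0(U) = V$ (constants) and we do \emph{not} know $\mathcal{H}^q = 0$ for $q>0$, this spectral sequence does not obviously collapse — confirming that the main obstacle is genuinely the possible non-exactness of the continuous standard complex.

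Therefore I expect the actual proof to take the following honest shape: show that $j_c^*$ is a quasi-isomorphism \emph{not} via column exactness but by producing an explicit contracting homotopy for the mapping cone of $j_c^*$, assembled from (i) the \v{C}ech row contraction $h^{*,q}$ of Proposition \ref{propchechcontrhom} in the continuous setting, which kills everything in positive \v{C}ech degree, reducing a total cocycle to one concentrated in \v{C}ech-degree $0$, i.e. to a compatible family $(f^{0,q})$ with $d_v f^{0,q} = 0$ in degree $0$ and $\delta f^{0,q} = 0$; compatibility under $\delta$ means $f^{0,q}$ glues to a global element, which for $q=0$ lands in $A_c^0(\mathfrak{U}[q];V) \cap \ker d_v$, and $\ker(d_v: A^0 \to A^1)$ on any space is the constants, so $f^{0,0}$ is a global locally constant function, i.e. an element of $\check{C}^0(\mathfrak{U};V)$ up to a coboundary — matching the image of $j_c^*$. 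The key step, and the one I expect to be the main obstacle, is verifying that the \v{C}ech row-contraction $h^{p,q}$ of Proposition \ref{propchechcontrhom} restricts to continuous cochains, i.e. that one can choose the point-finite $\mathbb{Z}$-valued functions $\varphi_{q,i}$ with the property that $\varphi_{q,i}\cdot f_{i i_0\ldots i_{p-1}}$ is continuous on $U_{i_0\ldots i_{p-1}}^{q+1}$ whenever $f_{i i_0 \ldots i_{p-1}}$ is; the functions produced in Corollary \ref{rowsalwexact} by the well-ordering trick are in general only piecewise-constant with possibly non-closed "steps", so some care — or an appeal to the hypothesis that later coverings come from generalised partitions of unity — is needed here, and I would flag this as the technical heart of the lemma.
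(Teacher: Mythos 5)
Your starting premise is where the argument goes wrong: you claim that the augmented columns of $\check{C}^* (\mathfrak{U}, A_c^*)$ need not be exact, and that their exactness would amount to the very quasi-isomorphism $A_c^* (\mathfrak{U};V) \hookrightarrow A^* (\mathfrak{U};V)$ being aimed at. That conflates two different complexes. The column over a (nonempty) intersection $U_{i_0 \ldots i_p}$ is the continuous standard complex $A_c^* (U_{i_0 \ldots i_p};V) = C (U_{i_0 \ldots i_p}^{*+1};V)$, i.e.\ continuous functions on the full cartesian powers --- not continuous $\mathfrak{U}$-local cochains on diagonal neighbourhoods. Its augmentation by $V$ is exact for entirely elementary reasons: fix a basepoint $u^* \in U_{i_0 \ldots i_p}$ and set $(hf)(u_0,\ldots,u_{n-1}) := f(u^*,u_0,\ldots,u_{n-1})$; then $dh + hd = \mathrm{id}$, and $h$ manifestly preserves continuity, being precomposition with the continuous insertion map. (The difficulty you have in mind only arises for $\mathfrak{U}$-local cochains, which are defined only near the diagonal, so that no global basepoint can be inserted; that is the content of the later sections, not of this lemma.) The paper's proof of Lemma \ref{jindiso2} is precisely the proof of Lemma \ref{columnsexact} with this continuity remark: column exactness plus the standard double-complex comparison shows that $j_c^*$ induces an isomorphism, with no partition of unity, no sheaf axiom and no spectral-sequence argument needed.

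Because of this misdiagnosis the rest of your plan drifts into the row-exactness question, which is what $i_c^*$ (Corollary \ref{uqnumicindiso} and, later, Section 5) is about, and it never actually closes: it ends by flagging as ``the technical heart'' the continuity of the row contraction built from the point-finite $\mathbb{Z}$-valued functions $\varphi_{q,i}$. That sub-claim is indeed false in general --- those functions are typically discontinuous, so $\varphi_{q,i} \cdot f_{i i_0 \ldots i_{p-1}}$ need not be continuous, which is exactly why the paper later assumes $R$-valued partitions of unity or loop-contractible coefficients to handle the rows. But none of this is relevant to $j_c^*$: the lemma only needs the columns, and those are unconditionally exact as explained above.
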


\begin{proof}
  The proof is analogous to that of Lemma \ref{columnsexact}.
\end{proof}

To obtain exact rows in the double complex $\check{C}^* (\mathfrak{U}, A_c^*)$ 
we impose a restriction on the coefficient group $V$. For modules $V$ over 
some unital topological ring $R$ one can replace the set 
$\{ \varphi_{q,i} \}$ of functions in Proposition \ref{propchechcontrhom} by 
generalised partitions of unity to obtain results analogous to those in 
Section \ref{seccas}, as will be shown below. In the following the
coefficients $V$ will always
be a topological module  over a unital topological ring $R$.
(Different coefficient groups are considered in the next section.)

\begin{definition}
A \emph{generalised $R$-valued partition of unity} 
$\{ \varphi_i \mid i \in I\}$ on a topological space $X$ is a set of
continuous functions 
$\varphi_i : X \rightarrow R$ which satisfies the equation 
$\sum_{i \in I} \varphi_i =1$. 
An \emph{$R$-valued partition of unity} $\{ \varphi_i \mid i \in I\}$ on a 
topological space $X$ is a generalised $R$-valued partition of unity whose 
supports $\supp \varphi_i = \overline{\varphi_i^{-1} (R \setminus \{ 0 \})}$ 
form a locally finite covering of $X$.
\end{definition}

\begin{remark}
For $R=\mathbb{R}$ one can always replace the functions 
$\varphi_i$ by positive ones with the same zero set 
(cf. Lemma \ref{sumvaprhicontthensumabsvalcont}).
\end{remark}

For any $R$-valued partition of unity $\{\varphi_{q,i} \mid i \in I \}$ 
subordinate to the open covering $\{U_i^{q+1} \mid i \in I \}$ of 
$\mathfrak{U}[q]$ and continuous cochain 
$f \in \check{C}^p (\mathfrak{U};A_c^q)$  
the products $\varphi_{q,i} f_{i i_0 \ldots i_{p-1}}$ have supports in the
open sets $U_{i i_0 \ldots i_p}^{q+1}$ respectively. Therefore they can be
continuously extended to $U_{i_0 \ldots i_{p-1} }^{q+1}$ by defining it to be 
zero outside $U_{i i_0 \ldots i_{p-1}}^{q+1}$. 
Understanding each function $\varphi_{q,i} f_{i i_0 \ldots i_{p-q } }$ to be
extended this way we define a homotopy operator for continuous cochains:

\begin{proposition} \label{propchechcontrhomcont}
For any $R$-valued partition of unity $\{ \varphi_{q,i} \mid i \in I\}$ 
subordinate to the covering $\{U_i^{q+1} \mid i \in I \}$ of $\mathfrak{U}[q]$ 
the homomorphisms 
\begin{equation} \label{eqchechcontrhomcont}
  h^{p,q} : \check{C}^p ( \mathfrak{U}, A^q) \rightarrow 
\check{C}^{p-1} ( \mathfrak{U}, A^q), \quad
h^{p,q} ( f )_{i_0 \ldots i_{p-1}} = 
\sum_{i} \varphi_{q,i} \cdot f_{i i_0 \ldots i_{p-1}}
\end{equation}
form a row contraction of the augmented row 
$A^q (\mathfrak{U};V) \hookrightarrow \check{C}^* ( \mathfrak{U}, A^q)$ which
restricts to a row contraction of the augmented sub-row 
$A_c^q (\mathfrak{U};V) \hookrightarrow \check{C}^* ( \mathfrak{U}, A_c^q)$.
\end{proposition}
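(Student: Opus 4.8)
The plan is to reuse verbatim the computation from the proof of Proposition \ref{propchechcontrhom}. The algebraic identity $\delta h^{p,q}(f) + h^{p+1,q}(\delta f) = f$ for $p \geq 1$, and the corresponding identity at the augmentation level, depends only on the pointwise relations $\sum_i \varphi_{q,i} = 1$ and $\varphi_{q,i}|_{\mathfrak{U}[q]\setminus U_i^{q+1}} = 0$; these are exactly the hypotheses satisfied by an $R$-valued partition of unity subordinate to $\{U_i^{q+1} \mid i \in I\}$, since local finiteness of the supports in particular implies the family is point finite. So the first step is to observe that $h^{p,q}$ as defined in \eqref{eqchechcontrhomcont} is a well-defined homomorphism $\check{C}^p(\mathfrak{U},A^q) \to \check{C}^{p-1}(\mathfrak{U},A^q)$ and that, by the identical five-line calculation, it is a row contraction of $A^q(\mathfrak{U};V) \hookrightarrow \check{C}^*(\mathfrak{U},A^q)$.

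The real content is that this contraction restricts to the continuous sub-row, i.e. that $h^{p,q}$ maps $\check{C}^p(\mathfrak{U},A_c^q)$ into $\check{C}^{p-1}(\mathfrak{U},A_c^q)$. For this I would argue as follows. Fix a continuous cochain $f \in \check{C}^p(\mathfrak{U},A_c^q)$ and multi-index $i_0,\ldots,i_{p-1}$. Each summand $\varphi_{q,i} \cdot f_{i i_0 \ldots i_{p-1}}$ is, a priori, a continuous function on $U_{i i_0 \ldots i_{p-1}}^{q+1}$ (product of the continuous $\varphi_{q,i}$ with the continuous $f_{i i_0 \ldots i_{p-1}}$, using that $V$ is a topological $R$-module so scalar multiplication $R \times V \to V$ is continuous). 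The key point, already prepared in the paragraph preceding the proposition, is that this summand extends by zero to a continuous function on the larger set $U_{i_0 \ldots i_{p-1}}^{q+1}$: indeed $\supp(\varphi_{q,i} \cdot f_{i i_0 \ldots i_{p-1}}) \subseteq \supp\varphi_{q,i}^{q+1} \cap U_{i i_0 \ldots i_{p-1}}^{q+1}$ is closed in $U_{i_0 \ldots i_{p-1}}^{q+1}$ (as $\supp\varphi_{q,i}^{q+1}$ is closed in $\mathfrak{U}[q]$, hence in $U_{i_0 \ldots i_{p-1}}^{q+1}$), so the extension by zero agrees with the original on the open set $U_{i i_0 \ldots i_{p-1}}^{q+1}$ and with the zero function on the open complement of that closed support, and these two opens cover $U_{i_0 \ldots i_{p-1}}^{q+1}$; continuity then follows by the gluing lemma.

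It remains to check that the (possibly infinite) sum $\sum_i \varphi_{q,i} \cdot f_{i i_0 \ldots i_{p-1}}$ is again continuous on $U_{i_0 \ldots i_{p-1}}^{q+1}$. This is where local finiteness of $\{\supp\varphi_{q,i}\}$ enters, and this is the step I expect to be the main obstacle: one must know that local finiteness of the supports in $\mathfrak{U}[q]$, restricted to the open subspace $U_{i_0 \ldots i_{p-1}}^{q+1}$, implies that around every point only finitely many summands are nonzero, so that locally the sum is a finite sum of continuous functions and hence continuous; this is again a gluing argument over the open cover of $U_{i_0 \ldots i_{p-1}}^{q+1}$ by the neighbourhoods witnessing local finiteness. (The addition $V \times V \to V$ is continuous since $V$ is a topological group.) Since these partial sums are the components of $h^{p,q}(f)$, we conclude $h^{p,q}(f) \in \check{C}^{p-1}(\mathfrak{U},A_c^q)$; the augmentation-level map $A_c^q(\mathfrak{U};V) \to \check{C}^{-1}$ is handled the same way. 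Combined with the algebraic identity from the first step, $h^{*,q}$ restricts to a row contraction of $A_c^q(\mathfrak{U};V) \hookrightarrow \check{C}^*(\mathfrak{U},A_c^q)$, which is the assertion.
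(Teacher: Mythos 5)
Your proposal is correct and follows essentially the same route as the paper: the algebraic contraction identity is inherited verbatim from Proposition \ref{propchechcontrhom} (local finiteness giving point finiteness), and the only new content is that each summand $\varphi_{q,i}\cdot f_{i i_0\ldots i_{p-1}}$ extends continuously by zero and the sum is locally finite, hence continuous. The paper compresses this continuity check into the phrase ``by construction'', referring to the extension-by-zero discussion in the paragraph preceding the proposition, which is exactly the argument you spell out.
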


\begin{proof}
  By Proposition \ref{propchechcontrhom} the maps $h^{p,q}$ form a row 
contraction of the augmented row 
$A^q (\mathfrak{U};V) \hookrightarrow \check{C}^* ( \mathfrak{U}, A^q)$. 
In addition they map continuous cochains to continuous cochains by 
construction, hence they restrict to a row contraction of the augmented sub-row 
$A_c^q (\mathfrak{U};V) \hookrightarrow \check{C}^* ( \mathfrak{U}, A_c^q)$. 
\end{proof}

\begin{definition}
  A covering $\mathfrak{U}$ of a topological space $X$ is called
  $R$-numerable, if there exists an $R$-valued partition of unity subordinate
  to $\mathfrak{U}$.
\end{definition}

\begin{corollary} \label{uqnumicindiso}
For any open covering $\mathfrak{U}=\{ U_i \mid i \in I \}$ 
of a topological space $X$ for which the coverings 
$\{ U_i^{q+1} \mid i \in I \}$ of the spaces $\mathfrak{U}[q]$ are 
$R$-numerable the homomorphism 
$i_c^*:A_c^* (\mathfrak{U};V)\rightarrow \tot\check{C}^* (\mathfrak{U},A_c^*)$ 
induces an isomorphism in cohomology.
\end{corollary}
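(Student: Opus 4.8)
The plan is to mirror the proof of Corollary \ref{rowsalwexact}, replacing the abstract row contraction of Proposition \ref{propchechcontrhom} by the continuous one of Proposition \ref{propchechcontrhomcont}. The point of the hypothesis is that ``$\{ U_i^{q+1} \mid i \in I \}$ is $R$-numerable for every $q$'' is exactly the statement that for each degree $q$ there exists an $R$-valued partition of unity $\{ \varphi_{q,i} \mid i \in I \}$ subordinate to the covering $\{ U_i^{q+1} \mid i \in I \}$ of $\mathfrak{U}[q]$, which is precisely the input demanded by Proposition \ref{propchechcontrhomcont}.

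So the first step is to fix such a partition of unity for each $q$ separately. By Proposition \ref{propchechcontrhomcont} the associated operators $h^{*,q}$ form a row contraction of the augmented row $A_c^q (\mathfrak{U};V) \hookrightarrow \check{C}^* ( \mathfrak{U}, A_c^q)$, so in particular each of these augmented rows is exact. One should note that the contractions for different $q$ need not be compatible with the vertical differential $d_v$ --- they are built from different partitions of unity $\varphi_{q,i}$ --- but this is immaterial: the comparison argument below only uses exactness of each augmented row in isolation. The second step is then the standard homological fact that if every augmented row of a double complex (here $\check{C}^* (\mathfrak{U}, A_c^*)$ augmented by $A_c^* (\mathfrak{U};V)$) is exact, then the augmentation induces a quasi-isomorphism onto the total complex. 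Concretely one compares the two filtration spectral sequences of the augmented double complex, or equivalently runs the explicit staircase diagram chase familiar from the \v{C}ech--de Rham situation; either way one obtains that $i_c^* : A_c^* (\mathfrak{U};V) \rightarrow \tot \check{C}^* (\mathfrak{U}, A_c^*)$ induces an isomorphism in cohomology.

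I do not expect a genuine obstacle here: all the real content sits in Proposition \ref{propchechcontrhomcont}, and the $R$-numerability hypothesis has been arranged so that that proposition applies in every row degree. The only subtlety worth spelling out is the one just mentioned, namely that a separate partition of unity is used in each degree $q$, so the conclusion should be phrased as ``each augmented row of $\check{C}^* (\mathfrak{U}, A_c^*)$ is exact'' rather than ``there is a single contraction of the whole double complex''; this costs one sentence and changes nothing in the spectral-sequence comparison. The argument is the exact continuous counterpart of Corollary \ref{rowsalwexact}, with $R$-numerability supplying what well-ordering of $I$ supplied there.
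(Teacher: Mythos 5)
Your proposal is correct and is exactly the argument the paper intends: the corollary is stated as an immediate consequence of Proposition \ref{propchechcontrhomcont} (one partition of unity per degree $q$ giving row contractions, hence exact augmented rows, hence the standard double-complex comparison making $i_c^*$ a quasi-isomorphism), just as Corollary \ref{rowsalwexact} follows from Proposition \ref{propchechcontrhom}. Your remark that the contractions for different $q$ need not be compatible with $d_v$ is a sensible clarification but does not change the argument.
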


\begin{theorem}\label{isocontuqnum}
For any open covering $\mathfrak{U}$ of a topological space $X$ for which 
each covering $\{ U_i^{q+1} \mid i \in I \}$ of $\mathfrak{U}[q]$ is 
$R$-numerable the inclusion 
$A_c^* (\mathfrak{U};V) \hookrightarrow A^* (\mathfrak{U};V)$ induces an
isomorphism in cohomology and the cohomologies 
$\check{H} (\mathfrak{U};V)$, $H_c (\mathfrak{U};V)$ and $H (\mathfrak{U};V)$ 
are isomorphic.
\end{theorem}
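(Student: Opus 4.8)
The plan is to run a diagram chase that plays the continuous \v{C}ech--Alexander-Spanier double complex off against the abstract one. The inclusion of the sub double complex $\check C^*(\mathfrak U, A_c^*)\hookrightarrow \check C^*(\mathfrak U, A^*)$ intertwines both the row augmentations by $A_c^*(\mathfrak U;V)$, resp. $A^*(\mathfrak U;V)$, and the column augmentations by $\check C^*(\mathfrak U;V)$, so one obtains a commutative diagram of cochain complexes
\begin{equation*}
\xymatrix{
A_c^*(\mathfrak U;V) \ar[r]^{i_c^*} \ar[d] & \tot\check C^*(\mathfrak U, A_c^*) \ar[d] & \check C^*(\mathfrak U;V) \ar[l]_{j_c^*} \ar@{=}[d] \\
A^*(\mathfrak U;V) \ar[r]^{i^*} & \tot\check C^*(\mathfrak U, A^*) & \check C^*(\mathfrak U;V) \ar[l]_{j^*}
}
\end{equation*}
whose left vertical arrow is the inclusion $A_c^*(\mathfrak U;V)\hookrightarrow A^*(\mathfrak U;V)$ and whose middle vertical arrow is induced by the inclusion of sub double complexes.

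First I would collect the facts already at hand: by Lemma~\ref{columnsexact} and Corollary~\ref{rowsalwexact} the maps $j^*$ and $i^*$ induce isomorphisms in cohomology for an arbitrary open covering; by Lemma~\ref{jindiso2} so does $j_c^*$; and here is the only place the hypothesis enters --- since each covering $\{U_i^{q+1}\mid i\in I\}$ of $\mathfrak U[q]$ is $R$-numerable, Corollary~\ref{uqnumicindiso} gives that $i_c^*$ induces an isomorphism in cohomology as well. A chase through the right-hand square (using that the rightmost vertical arrow is the identity) then shows that the middle vertical arrow induces an isomorphism in cohomology, and a chase through the left-hand square transports this to the left vertical arrow, i.e.\ the inclusion $A_c^*(\mathfrak U;V)\hookrightarrow A^*(\mathfrak U;V)$ induces an isomorphism $H_c(\mathfrak U;V)\cong H(\mathfrak U;V)$.

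For the remaining assertion it is enough to note that Corollary~\ref{cechisolocal} already identifies $\check H(\mathfrak U;V)$ with $H(\mathfrak U;V)$; combining this with the isomorphism just produced yields that $\check H(\mathfrak U;V)$, $H_c(\mathfrak U;V)$ and $H(\mathfrak U;V)$ are pairwise isomorphic. I do not expect a genuine obstacle here: the whole argument is a two-square chase, and the only points deserving a sentence are that the inclusion of the sub double complex really is compatible with both augmentations --- so that the displayed diagram actually commutes --- and that the $R$-numerability assumption is invoked exactly once, to supply the partitions of unity needed for Corollary~\ref{uqnumicindiso}.
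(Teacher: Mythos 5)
Your argument is correct and is essentially the paper's own proof: the same commutative diagram built from the two augmented double complexes, with $j^*$, $j_c^*$, $i^*$ handled by Lemmata \ref{columnsexact}, \ref{jindiso2} and Corollary \ref{rowsalwexact}, the $R$-numerability hypothesis entering only through Corollary \ref{uqnumicindiso} for $i_c^*$, and a two-square chase forcing the inclusion $A_c^*(\mathfrak U;V)\hookrightarrow A^*(\mathfrak U;V)$ to induce an isomorphism. Citing Corollary \ref{cechisolocal} for the identification with $\check H(\mathfrak U;V)$ is a harmless repackaging of what the diagram already gives.
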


\begin{proof}
The inclusions $A_c (\mathfrak{U};V) \hookrightarrow A (\mathfrak{U};V)$ and 
$\tot \check{C}^* (\mathfrak{U};A_c^*) \hookrightarrow 
\tot \check{C}^* (\mathfrak{U};A^*)$ 
intertwine the augmentations $i_c^*$ and $i^*$. Thus one obtains the
following commutative diagram
\begin{equation*}
  \xymatrix{ A^*_c (\mathfrak{U};V) \ar[r]^{i_c^*} \ar[d] & 
\tot \check{C}^* ( \mathfrak{U}, A_c^*) \ar[d] & 
\check{C} (\mathfrak{U};V) \ar[l]_{j_c^*} \ar@{=}[d] \\
A^* (\mathfrak{U};V) \ar[r]^{i^*} & \tot \check{C}^* ( \mathfrak{U}, A^*) & 
\check{C} (\mathfrak{U};V) \ar[l]_{j^*} \\
}
\end{equation*}
where the horizontal arrows are induced by the augmentations and the vertical
arrows are induced by inclusion. 
The homomorphisms $j^*_c$ and $j^*$ induces isomorphisms in cohomology 
by Lemmata \ref{columnsexact} and \ref{jindiso2}. The homomorphisms $i^*$
always induces an isomorphism as observed in Corollary \ref{rowsalwexact}.
If the open coverings  $\{ U_i^{q+1} \mid i \in I \}$ of the spaces 
$\mathfrak{U}[q]$ are $R$-numerable, then the homomorphism $i^*_c$ 
also induces an isomorphism in cohomology by Corollary \ref{uqnumicindiso}. 
All in all we obtain the following commutative diagram
\begin{equation*}
  \xymatrix{ H_c (\mathfrak{U};V) \ar[r]^\cong \ar[d] & 
H ( \tot \check{C}^* ( \mathfrak{U}, A_c^*)) \ar[d] & 
\check{H} (\mathfrak{U};V) \ar[l]^\cong \ar@{=}[d] \\
H (\mathfrak{U};V) \ar[r]^\cong & 
H ( \tot \check{C}^* ( \mathfrak{U}, A^*)) & 
\check{H} (\mathfrak{U};V) \ar[l]^\cong \\
}
\end{equation*}
in which all horizontal arrows and the right vertical arrow are isomorphisms. 
This forces the homomorphism 
$H_c (\mathfrak{U};V) \rightarrow H (\mathfrak{U};V)$ induced by inclusion to
be an isomorphism as well.
\end{proof}

In this case the \v{C}ech Cohomology $\check{H} (\mathfrak{U};V)$ for the
covering $\mathfrak{U}$ of X can be either computed from the complex 
$A_c^* (\mathfrak{U};V)$ of continuous $\mathfrak{U}$-local cochains or from 
from the complex $A^* (\mathfrak{U};V)$ of $\mathfrak{U}$-local cochains.

\begin{corollary} \label{covbycoz}
For $R=\mathbb{R}$, any generalised partition of unity 
$\{ \varphi_i \mid i \in I\}$ on $X$ and 
$\mathfrak{U}:=\{ \varphi_i^{-1} (R \setminus \{ 0\}) \mid i\in I\}$ 
the inclusion 
$A_c^* (\mathfrak{U};V) \hookrightarrow A^* (\mathfrak{U};V)$ 
induces an isomorphism in cohomology and the cohomologies 
$\check{H} (\mathfrak{U};V)$, $H_c (\mathfrak{U};V)$ and $H (\mathfrak{U};V)$ 
are isomorphic.
\end{corollary}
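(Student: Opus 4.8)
The plan is to reduce the statement to Theorem~\ref{isocontuqnum}. By that theorem it is enough to show that for every $q \in \mathbb{N}$ the open covering $\{U_i^{q+1} \mid i \in I\}$ of $\mathfrak{U}[q]$, where $U_i := \varphi_i^{-1}(\mathbb{R}\setminus\{0\})$, is $\mathbb{R}$-numerable; all the isomorphisms asserted in the corollary then follow at once.

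First I would normalise: by the Remark following the definition of a partition of unity (cf.\ Lemma~\ref{sumvaprhicontthensumabsvalcont}) we may assume $\varphi_i \geq 0$ for every $i$ without changing the sets $U_i$, hence without changing $\mathfrak{U}$. Set $\Phi_{q,i}(x_0,\ldots,x_q) := \varphi_i(x_0)\cdots\varphi_i(x_q)$ on $X^{q+1}$ and $\sigma_q := \sum_{i \in I}\Phi_{q,i}$. Each $\Phi_{q,i}$ is continuous, non-negative, with cozero set precisely $U_i^{q+1}$, so $\sigma_q$ is strictly positive on $\mathfrak{U}[q]$, and once $\sigma_q$ is known to be continuous the functions $\psi_{q,i} := \Phi_{q,i}/\sigma_q$, restricted to $\mathfrak{U}[q]$, satisfy $\sum_i \psi_{q,i} = 1$ and $\mathrm{coz}\,\psi_{q,i} = U_i^{q+1}$. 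The one analytic point is the continuity of $\sigma_q$, which follows from the estimate $0 \leq \Phi_{q,i}(\vec{x}) \leq \varphi_i(x_0)$ together with $\sum_i \varphi_i = 1$: given $\vec{a} \in X^{q+1}$ and $\varepsilon > 0$, pick a finite $F \subseteq I$ with $\sum_{i \notin F}\varphi_i(a_0) < \varepsilon$; as $\sum_{i \in F}\varphi_i$ is a continuous finite sum, on a suitable neighbourhood $W$ of $a_0$ one has $\sum_{i \notin F}\varphi_i(x_0) < 2\varepsilon$, hence $0 \leq \sum_{i \notin F}\Phi_{q,i}(\vec{x}) < 2\varepsilon$ whenever $x_0 \in W$; so near $\vec{a}$ the function $\sigma_q$ differs from the continuous finite sum $\sum_{i \in F}\Phi_{q,i}$ by less than $2\varepsilon$, and continuity at $\vec{a}$ follows. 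Thus $\{U_i^{q+1} \mid i \in I\}$ is at least the cozero covering of the generalised $\mathbb{R}$-valued partition of unity $\{\psi_{q,i}\}$ of $\mathfrak{U}[q]$.

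The remaining step, which I expect to be the real obstacle, is to improve $\{\psi_{q,i}\}$ to an honest $\mathbb{R}$-valued partition of unity subordinate to $\{U_i^{q+1}\}$: one whose supports are contained in the sets $U_i^{q+1}$ and form a locally finite family. Both requirements can fail for the choice above --- the support condition is needed so that the products $\psi_{q,i}\cdot f_{i i_0 \ldots i_{p-1}}$ extend continuously by zero (cf.\ the proof of Proposition~\ref{propchechcontrhomcont}), and local finiteness is needed so that the sum defining the row contraction is itself a continuous cochain. This is handled by the standard machinery for numerable coverings: for each $n \geq 1$ the sets $\{\psi_{q,i} > 1/n\}$ have closures contained in $U_i^{q+1}$, and fewer than $n$ of them meet any given point because $\sum_i \psi_{q,i} = 1$; from this countable sequence of point-finite families one assembles, by the usual truncation, weighting and renormalisation, a locally finite partition of unity subordinate to $\{U_i^{q+1}\}$. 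With every covering $\{U_i^{q+1} \mid i \in I\}$ thus shown to be $\mathbb{R}$-numerable, Theorem~\ref{isocontuqnum} yields all the assertions. The hard part is this last local-finiteness upgrade; the continuity of the denominator $\sigma_q$, though indispensable, is routine once the above tail estimate is in hand.
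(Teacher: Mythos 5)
Your proposal is correct and follows essentially the same route as the paper: reduce to Theorem \ref{isocontuqnum} and show that each covering $\{U_i^{q+1}\mid i\in I\}$ of $\mathfrak{U}[q]$ is numerable by forming the products $\varphi_i(x_0)\cdots\varphi_i(x_q)$ and normalising them into a generalised partition of unity whose cozero sets are exactly the $U_i^{q+1}$. The only differences are in presentation: you prove continuity of the denominator by a direct tail estimate where the paper invokes Lemmata \ref{sumcontthensubsumcont} and \ref{sumvaprhicontthensumabsvalcont}, and you explicitly flag (and correctly sketch) the standard upgrade from a generalised partition of unity to a locally finite subordinate one, a step the paper leaves implicit in its concluding ``therefore the covering is numerable''.
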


\begin{proof}
  In view of Theorem \ref{isocontuqnum} it suffices to show that the 
coverings $\{ U_i^{q+1} \mid i \in I \}$ of the spaces $\mathfrak{U}[q]$ are 
numerable. On each space $\mathfrak{U}[q]$ we can 
(by Lemmata \ref{sumcontthensubsumcont} and 
\ref{sumvaprhicontthensumabsvalcont}) define non-negative continuous 
functions $\varphi_{q,i}$ and $\varphi_q$ via 
\begin{eqnarray*}
\varphi_{q,i} : \mathfrak{U}[q] \rightarrow \mathbb{R} & & 
\varphi_{q,i} (\vec{x})= | \varphi_{q,i} (x_0) \cdots \varphi_{q,i} (x_q) | \\
  \varphi_q : \mathfrak{U}[q] \rightarrow \mathbb{R} & & 
\varphi_q (\vec{x})= \sum_i \varphi_{q,i} (\vec{x}) \, .
\end{eqnarray*}
Since the functions $\varphi_q$ are strictly non-zero on $\mathfrak{U}[q]$, 
the set $\{ \varphi_q^{-1} \varphi_{q,i} \mid i \in I\}$ is a generalised
partition of unity with cozero sets $\{ U_i^{q+1} \mid i \in I \}$. Therefore 
the open covering $\{ U_i^{q+1} \mid i \in I \}$ of $\mathfrak{U}[q]$ is 
numerable.
\end{proof}

The colimit over all $R$-numerable coverings $\mathfrak{U}$ is called the 
\emph{Alexander-Spanier cohomology w.r.t $R$-numerable coverings}. 
Passing to the colimit over all $R$-numerable covers we observe:

\begin{corollary}
  The cohomologies $\check{H} (X;V)$, $H_{AS,c} (X;V)$ and $H_{AS} (X;V)$ 
of a topological space $X$ w.r.t. $R$-numerable coverings are isomorphic.
\end{corollary}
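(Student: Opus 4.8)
The plan is to deduce the statement from Theorem~\ref{isocontuqnum} by passing to the colimit over the directed system of all $R$-numerable coverings of $X$. The first point is that this index set is genuinely directed under refinement: if $\{\varphi_i \mid i\in I\}$ and $\{\psi_j \mid j\in J\}$ are $R$-valued partitions of unity subordinate to $R$-numerable coverings $\mathfrak{U}=\{U_i\}$ and $\mathfrak{V}=\{V_j\}$, then $\{\varphi_i\psi_j \mid (i,j)\in I\times J\}$ is an $R$-valued partition of unity subordinate to the common refinement $\{U_i\cap V_j\}$; the only thing to check is that the support family $\{\supp(\varphi_i\psi_j)\}$ is locally finite, which follows from $\supp(\varphi_i\psi_j)\subseteq\supp\varphi_i\cap\supp\psi_j$ and the local finiteness of the two given support families. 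Together with the usual fact that refinement maps are, in cohomology, independent of the chosen refinement function, this makes $\mathfrak{U}\mapsto H(\mathfrak{U};V)$, $\mathfrak{U}\mapsto H_c(\mathfrak{U};V)$ and $\mathfrak{U}\mapsto\check H(\mathfrak{U};V)$ directed systems whose colimits --- since cohomology commutes with filtered colimits --- compute $H_{AS}(X;V)$, $H_{AS,c}(X;V)$ and $\check H(X;V)$ with respect to $R$-numerable coverings, respectively.

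Next I would verify that each $R$-numerable covering $\mathfrak{U}$ satisfies the hypothesis of Theorem~\ref{isocontuqnum}, namely that for every $q$ the covering $\{U_i^{q+1}\mid i\in I\}$ of $\mathfrak{U}[q]$ is $R$-numerable. This is exactly the construction in the proof of Corollary~\ref{covbycoz}: from an $R$-valued partition of unity $\{\varphi_i\}$ subordinate to $\mathfrak{U}$ one forms the continuous functions $\vec{x}\mapsto\varphi_i(x_0)\cdots\varphi_i(x_q)$ on $\mathfrak{U}[q]$, notes that their supports lie in $U_i^{q+1}$ and form a locally finite family, and that their sum $\varphi_q$ vanishes nowhere on $\mathfrak{U}[q]$; dividing by $\varphi_q$ produces the desired $R$-valued partition of unity subordinate to $\{U_i^{q+1}\}$. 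Granting this, Theorem~\ref{isocontuqnum} applies to every $R$-numerable $\mathfrak{U}$ and yields a chain of isomorphisms $\check H(\mathfrak{U};V)\cong H_c(\mathfrak{U};V)\cong H(\mathfrak{U};V)$.

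Finally, all comparison maps appearing in Theorem~\ref{isocontuqnum} --- the augmentations $i^*$, $i_c^*$, $j^*$, $j_c^*$ and the inclusion $A_c^*\hookrightarrow A^*$ --- are natural with respect to refinement, so the commutative diagram of chain maps exhibited in the proof of that theorem is a diagram of functors on the directed set of $R$-numerable coverings. Applying the colimit functor over this set turns each of the three cohomology groups of $\mathfrak{U}$ into the corresponding colimit and each isomorphism into an isomorphism of colimits, which is the assertion. The part I expect to require real care is the verification of the hypothesis of Theorem~\ref{isocontuqnum} for an arbitrary $R$-numerable covering: the normalisation step above is immediate for $R=\mathbb{R}$ (this is precisely Corollary~\ref{covbycoz}), but for more general rings one must either know that nowhere-vanishing elements of $C(\mathfrak{U}[q],R)$ are invertible or restrict the colimit to the cofinal subfamily of coverings by cozero sets of generalised partitions of unity, to which Corollary~\ref{covbycoz} already applies. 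Everything else in the argument is formal.
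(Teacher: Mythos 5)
Your proposal is correct and is essentially the paper's own (implicit) argument: the paper offers no proof of this corollary beyond the phrase ``passing to the colimit over all $R$-numerable covers'', i.e.\ it takes the covering-level isomorphisms $\check{H}(\mathfrak{U};V)\cong H_c(\mathfrak{U};V)\cong H(\mathfrak{U};V)$ of Theorem~\ref{isocontuqnum} (with the hypothesis on the coverings $\{U_i^{q+1}\}$ checked via the construction of Corollary~\ref{covbycoz}) and passes to the colimit, which is exactly what you do, with the directedness, cofinality and naturality checks made explicit. The caveat you flag for general $R$ --- invertibility of the normalising sum $\varphi_q$, or restriction to the cofinal family of cozero-set coverings --- points to an imprecision in the paper itself (Corollary~\ref{covbycoz} is only stated for $R=\mathbb{R}$) rather than to a gap in your argument.
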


Calling a topological space \emph{$R$-paracompact}, if every open covering
$\mathfrak{U}$ of $X$ admits an $R$-valued partition of unity subordinate to 
$\mathfrak{U}$ we also note:

\begin{corollary}
  The cohomologies $\check{H} (X;V)$, $H_{AS,c} (X;V)$ and $H_{AS} (X;V)$ 
of an $R$-paracompact topological space $X$ are isomorphic.
\end{corollary}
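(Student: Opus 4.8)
The plan is to reduce this corollary to the preceding one about the Alexander-Spanier cohomology with respect to $R$-numerable coverings. The key observation is that for an $R$-paracompact space, every open covering is $R$-numerable by definition, so the $R$-numerable coverings are cofinal in (in fact equal to) the system of all open coverings. Consequently the colimits defining the ordinary Alexander-Spanier cohomology $H_{AS}(X;V)$, its continuous variant $H_{AS,c}(X;V)$, and the \v{C}ech cohomology $\check{H}(X;V)$ — taken over all open coverings — coincide with the corresponding colimits taken only over $R$-numerable coverings.

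First I would make precise the cofinality claim: if $X$ is $R$-paracompact, then every open covering $\mathfrak{U}$ of $X$ already admits a subordinate $R$-valued partition of unity, hence $\mathfrak{U}$ is itself $R$-numerable. Thus the indexing category of $R$-numerable coverings (ordered by refinement) equals the indexing category of all open coverings, and no genuine passage to a cofinal subsystem is needed. Second, I would invoke the previous corollary, which asserts that $\check{H}(X;V)$, $H_{AS,c}(X;V)$ and $H_{AS}(X;V)$ computed as colimits over $R$-numerable coverings are isomorphic. Since by the first step these colimits agree with the standard definitions of $\check{H}(X;V)$, $H_{AS,c}(X;V)$ and $H_{AS}(X;V)$ for the space $X$, the three cohomologies are isomorphic as claimed.

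I do not anticipate a serious obstacle here; the content of the statement is entirely carried by Theorem~\ref{isocontuqnum} and Corollary~\ref{covbycoz}, which already establish the covering-wise isomorphisms together with compatibility under refinement, so that the isomorphisms pass to the colimit. The only point requiring a modicum of care is verifying that the isomorphisms $\check{H}(\mathfrak{U};V) \cong H_c(\mathfrak{U};V) \cong H(\mathfrak{U};V)$ are natural with respect to refinement maps of coverings — this follows because all the maps involved ($i^*$, $i_c^*$, $j^*$, $j_c^*$, and the inclusion $A_c^* \hookrightarrow A^*$) are induced by functorial constructions on the \v{C}ech-Alexander-Spanier double complex, and refinement of coverings induces compatible morphisms of these double complexes. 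Granting this naturality, taking the colimit over the (cofinal, in fact exhaustive) system of $R$-numerable coverings of an $R$-paracompact $X$ yields the desired isomorphisms.
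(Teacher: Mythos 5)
Your reduction is exactly the intended argument: the paper states this corollary without proof precisely because, for an $R$-paracompact space, every open covering admits a subordinate $R$-valued partition of unity and is therefore $R$-numerable, so the colimits over $R$-numerable coverings coincide with the usual colimits over all open coverings and the preceding corollary (resting on Theorem~\ref{isocontuqnum} and the refinement-compatibility of the double-complex maps) gives the three isomorphisms. Your proposal is correct and follows the same route, including the naturality-under-refinement point needed to pass to the colimit.
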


These observations can especially be applied to uniform spaces 
(e.g. topological groups) $X$ with open coverings of 
the form $\mathfrak{U}_U :=\{ U[x] \mid \, x \in X\}$, where $U$ is an open 
entourage of the diagonal in $X \times X$.

\begin{proposition} \label{ubypm}
  If $d: X \times X \rightarrow \mathbb{R}$ is a continuous pseudometric on 
$X$ and $V$ a real vector space, then for each $\epsilon > 0$ and covering 
$\mathfrak{U}=\{ B_d (x,\epsilon) \mid x \in X \}$ of $X$ by open 
$\epsilon$-balls the inclusion 
$A_c^* (\mathfrak{U};V) \hookrightarrow A^* (\mathfrak{U};V)$ induces an
isomorphism in cohomology and the cohomologies 
$\check{H} (\mathfrak{U};V)$, $H_c (\mathfrak{U};V)$ and $H (\mathfrak{U};V)$ 
are isomorphic.
\end{proposition}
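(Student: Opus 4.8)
The plan is to deduce this from Theorem~\ref{isocontuqnum}, applied with the unital topological ring $R=\mathbb{R}$. By that theorem it suffices to verify that for every $q\in\mathbb{N}$ the open covering $\{\,B_d(x,\epsilon)^{q+1}\mid x\in X\,\}$ of $\mathfrak{U}[q]$ is $\mathbb{R}$-numerable, i.e.\ admits a subordinate locally finite $\mathbb{R}$-valued partition of unity; once this is in hand, the isomorphism $A_c^*(\mathfrak{U};V)\hookrightarrow A^*(\mathfrak{U};V)$ on cohomology and the coincidence of $\check{H}(\mathfrak{U};V)$, $H_c(\mathfrak{U};V)$ and $H(\mathfrak{U};V)$ follow verbatim from Theorem~\ref{isocontuqnum}.

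First I would equip $X^{q+1}$ with the continuous pseudometric $d_q(\vec{x},\vec{y}):=\max_{0\le j\le q} d(x_j,y_j)$, which induces the product topology, so that its restriction makes the open subspace $\mathfrak{U}[q]\subseteq X^{q+1}$ a pseudometric space. For a diagonal point $\Delta(x):=(x,\ldots,x)$ one computes $B_{d_q}(\Delta(x),\epsilon)=B_d(x,\epsilon)^{q+1}=U_x^{q+1}$, so the covering $\{\,U_x^{q+1}\mid x\in X\,\}$ of $\mathfrak{U}[q]$ is precisely the covering of the pseudometric space $(\mathfrak{U}[q],d_q)$ by the open $\epsilon$-balls centred at diagonal points.

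The key step will be the standard fact that every open covering of a pseudometric space admits a subordinate locally finite partition of unity with values in $[0,1]\subseteq\mathbb{R}$: passing to the metric identification $Y_0:=Y/\{\rho=0\}$, whose open sets correspond bijectively to those of $Y$ and whose topology is metrizable, Stone's theorem makes $Y_0$ paracompact, hence $Y_0$ carries such a partition of unity, and one pulls it back along the continuous open quotient map $Y\to Y_0$. Applied to $(\mathfrak{U}[q],d_q)$ and the covering $\{U_x^{q+1}\}$ this shows the latter is $\mathbb{R}$-numerable, and Theorem~\ref{isocontuqnum} then closes the argument.

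The only point that will really require care is the bookkeeping in the second step: checking that the $\epsilon$-ball covering of the \emph{subspace} $\mathfrak{U}[q]\subseteq X^{q+1}$ genuinely coincides with $\{U_x^{q+1}\}$, and being comfortable invoking paracompactness (and the existence of subordinate locally finite partitions of unity) for \emph{pseudo}metric rather than merely metric spaces. Everything else is a direct appeal to the results already established above.
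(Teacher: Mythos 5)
Your argument is correct in substance, but it takes a different route from the paper. The paper does not verify numerability of the coverings $\{U_x^{q+1}\}$ of $\mathfrak{U}[q]$ by hand: it cites an external result (\cite[Proposition B2]{S70}) producing a \emph{generalised} partition of unity $\{\varphi_x \mid x\in X\}$ on $X$ itself with $\varphi_x^{-1}((0,1])=B_d(x,\epsilon)$, and then invokes Corollary \ref{covbycoz}, whose proof is where the passage from a generalised partition of unity on $X$ to numerable coverings of the spaces $\mathfrak{U}[q]$ (via products, absolute values and normalisation) is carried out. You instead go straight to Theorem \ref{isocontuqnum} and establish the required numerability of $\{B_d(x,\epsilon)^{q+1}\}$ on $\mathfrak{U}[q]$ directly, using the max-pseudometric $d_q$, the identity $B_{d_q}(\Delta(x),\epsilon)=B_d(x,\epsilon)^{q+1}$, and paracompactness of (pseudo)metric spaces via the metric quotient and Stone's theorem. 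Both routes are valid; yours trades the citation of \cite{S70} (plus the machinery inside Corollary \ref{covbycoz}) for Stone's theorem, and has the virtue of being self-contained at the level of the covering of $\mathfrak{U}[q]$, while the paper's route reuses Corollary \ref{covbycoz}, which is also needed elsewhere.

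One claim in your write-up is wrong as stated and should be repaired, though the repair is immediate: $d_q$ does \emph{not} in general induce the product topology on $X^{q+1}$, because $d$ is only assumed to be a continuous pseudometric, so the $d$-topology (hence the $d_q$-topology) may be strictly coarser than the topology of $X$ (e.g. $X$ discrete with $d$ the Euclidean metric, or $d\equiv 0$). What saves the argument is that coarser suffices: the sets $B_d(x,\epsilon)^{q+1}$ are open for $d_q$, so $\mathfrak{U}[q]$ is $d_q$-open and the covering is a $d_q$-open covering; the locally finite partition of unity obtained for the pseudometric topology consists of functions that are continuous for the genuine (finer) subspace topology of $\mathfrak{U}[q]\subseteq X^{q+1}$, and local finiteness and subordination of supports with respect to the coarser topology imply the same with respect to the finer one. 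With that remark (and the routine re-indexing of a subordinate partition of unity so that it is indexed by the covering, as required in Proposition \ref{propchechcontrhomcont}), your proof goes through.
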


\begin{proof}
Let $d: X \times X \rightarrow \mathbb{R}$ be a continuous pseudometric on 
$X$ and $V$ be a real vector space. 
By \cite[Proposition B2]{S70} there exists a generalised partition of 
unity $\{ \varphi_x \mid x \in X \}$ on $X$ satisfying 
$\varphi_x^{-1} ((0,1])=  B_d (x,\epsilon)$, so Corollary \ref{covbycoz} 
applies.
\end{proof}

\begin{example}
If $\mathfrak{U}=\{ B (x,\epsilon) \mid x \in X \}$ is an open covering of a 
finite dimensional Riemannian manifold $M$ by open $\epsilon$-balls and $V$ a 
real topological vector space, 
then the cohomology $H_c (\mathfrak{U};V)$ of the complex 
$A_c^* (\mathfrak{U};V)$ is isomorphic to cohomology $H (\mathfrak{U} ;V)$ and 
to the \v{C}ech and singular cohomologies of $M$ (cf. Ex \ref{exugeodconv}). 
If $M$ is an infinite dimensional Riemannian manifold one has to require the 
local existence of geodesics.  
\end{example}

\begin{corollary}
  For any open entourage $U$ of a uniform space $X$ and real topological 
vector space $V$ 
the inclusion $A_c^* (\mathfrak{U}_U;V)\hookrightarrow A^* (\mathfrak{U}_U;V)$ 
induces an isomorphism in cohomology and the cohomologies 
$\check{H} (\mathfrak{U}_U;V)$, $H_c (\mathfrak{U}_U;V)$ and 
$H (\mathfrak{U}_U;V)$ are isomorphic.
\end{corollary}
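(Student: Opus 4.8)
The plan is to reduce the whole statement to Corollary~\ref{covbycoz}. That corollary asserts exactly what we want -- the inclusion $A_c^*(\mathfrak{U};V)\hookrightarrow A^*(\mathfrak{U};V)$ is a quasi-isomorphism and $\check{H}(\mathfrak{U};V)$, $H_c(\mathfrak{U};V)$ and $H(\mathfrak{U};V)$ coincide -- provided the covering $\mathfrak{U}$ has the form $\{\varphi_i^{-1}(\mathbb{R}\setminus\{0\})\mid i\in I\}$ for a generalised real-valued partition of unity $\{\varphi_i\mid i\in I\}$ on $X$. So it suffices to prove the following: \emph{for every open entourage $U$ of a uniform space $X$ there is a generalised $\mathbb{R}$-valued partition of unity $\{\varphi_x\mid x\in X\}$ on $X$ with $\varphi_x^{-1}(\mathbb{R}\setminus\{0\})=U[x]$ for every $x$.} Note that one cannot sidestep this by picking a pseudometric-ball entourage $W\subseteq U$ and invoking Proposition~\ref{ubypm}, since $\mathfrak{U}_W$ is then a strictly finer covering than $\mathfrak{U}_U$, and $H(\mathfrak{U};V)$ genuinely depends on $\mathfrak{U}$; the partition of unity really must have cozero sets equal to the prescribed sets $U[x]$.

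For the construction I would start from the metrization lemma for uniformities: choose symmetric open entourages $U=U_0\supseteq U_1\supseteq U_2\supseteq\cdots$ with $U_{n+1}\circ U_{n+1}\circ U_{n+1}\subseteq U_n$ and let $d$ be the associated continuous pseudometric on $X$, so that $\{(x,y)\mid d(x,y)<2^{-n-1}\}\subseteq U_n\subseteq\{(x,y)\mid d(x,y)\le 2^{-n}\}$ for all $n$; in particular $B_d(x,\tfrac12)\subseteq U[x]$ for every $x$. The decisive structural point is that, by the defining condition on the $U_n$, the covering $\{U_1[x]\mid x\in X\}$ star-refines $\{U[x]\mid x\in X\}$, so that $\mathfrak{U}_U$ is a \emph{uniform} covering of $X$ equipped with a chosen star-refining sequence $\mathfrak{U}_{U_1},\mathfrak{U}_{U_2},\dots$. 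The construction of \cite[Proposition B2]{S70} -- which in Proposition~\ref{ubypm} was applied to the ball covering $\{B_d(x,\epsilon)\mid x\in X\}$, itself a uniform covering -- applies to any such situation and produces a generalised partition of unity $\{\varphi_x\mid x\in X\}$ whose cozero sets are precisely the members $U[x]$ of the covering. Corollary~\ref{covbycoz} then finishes the proof.

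The main obstacle is exactly this partition-of-unity construction. The ``obvious'' functions supported on the sets $U[x]$ (say, $d$-damped bumps, or Urysohn functions for the uniform space $X$) cannot simply be normalised, because around a generic point uncountably many of the $U[x]$ overlap and the naive sum diverges; one has to damp them into a genuine point-finite family summing to $1$, performing for the uniform covering $\mathfrak{U}_U$ and its star-refinement $\mathfrak{U}_{U_1}$ the same bookkeeping that \cite{S70} carries out for ball coverings -- in the spirit of the well-ordering trick in the proof of Corollary~\ref{rowsalwexact}, or of Milnor's device of passing through finite subsets of the index set -- while arranging, as there, that the cozero set of $\varphi_x$ is not shrunk below $U[x]$. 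Once this is granted, the remainder is purely formal: it is the instance $\mathfrak{U}=\mathfrak{U}_U$ of Corollary~\ref{covbycoz}, whose proof is itself the diagram chase through the double complex $\check{C}^*(\mathfrak{U},A^*)$ and its continuous subcomplex $\check{C}^*(\mathfrak{U},A_c^*)$, resting on Lemmata~\ref{columnsexact} and~\ref{jindiso2}, Corollary~\ref{rowsalwexact} and Corollary~\ref{uqnumicindiso}, all of which are already available.
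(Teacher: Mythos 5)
Your reduction to Corollary \ref{covbycoz} is the right skeleton, and your warning that one cannot simply pass to a smaller pseudometric-ball entourage $W\subseteq U$ (because $H(\mathfrak{U}_W;V)$ is the cohomology of a different covering) is correct. But the step that carries all the weight -- the existence of a generalised partition of unity $\{\varphi_x \mid x\in X\}$ with $\varphi_x^{-1}(\mathbb{R}\setminus\{0\})=U[x]$ for an \emph{arbitrary} open entourage $U$ -- is exactly what you do not prove: you assert that the construction of \cite[Proposition B2]{S70} ``applies to any such situation'' once one has a star-refining sequence and a pseudometric $d$ with $B_d(x,1/2)\subseteq U[x]$, and you yourself concede that this is the main obstacle. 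No argument along these lines can close the gap at that level of generality: such a partition of unity forces every $U[x]$ to be a cozero set, and the data you extract from the metrization lemma (balls merely \emph{contained} in $U[x]$, star-refinements) never yields this. Concretely, let $X=[0,\omega_1]$ with its unique uniformity (all neighbourhoods of the diagonal), $W=[0,\omega_1)$ and $U=(W\times W)\cup\bigl((X\setminus\{0\})\times(X\setminus\{0\})\bigr)$; this is an open symmetric entourage with $U[0]=[0,\omega_1)$, which is not an $F_\sigma$ and hence not the cozero set of any continuous real function. So the generalisation of \cite[Proposition B2]{S70} that you appeal to is false as stated, and your proposal has a genuine gap at its central point.

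The paper's own proof takes a different and much shorter route which you did not consider: it invokes the fact (attributed to \cite[Proposition B2]{S70}) that an open entourage is \emph{itself} of the form $U=d_U^{-1}([0,1))$ for a continuous pseudometric $d_U$, so that $U[x]=B_{d_U}(x,1)$ on the nose, $\mathfrak{U}_U$ is literally a covering by unit balls, and Proposition \ref{ubypm} applies verbatim (that proposition being the $\epsilon$-ball case of the partition-of-unity statement, fed into Corollary \ref{covbycoz}). In other words, the missing idea is not a refined bookkeeping of star-refinements but the identification of $U[x]$ \emph{exactly} as a pseudometric ball; the example above shows that some hypothesis of this kind is genuinely needed, since for entourages whose slices are not cozero sets the desired partition of unity cannot exist at all. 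Any repair of your argument must therefore pass through this exact-ball (or exact-cozero) description of $U$, not through the metrization estimates you propose.
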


\begin{proof}
  This follows from Corollary \ref{ubypm} and the fact that open entourages of 
uniform spaces are always of the form $U=d_U^{-1} ([0,1))$ for a continuous 
pseudometric $d_U : X \times X \rightarrow \mathbb{R}$ 
(cf.  \cite[Proposition B.2]{S70}).
\end{proof}

A particular interesting case are topological groups with open coverings of the 
form $\mathfrak{U}_U :=\{ gU \mid \, g \in G\}$, where $U$ is an open identity 
neighbourhood in $G$. Here the complexes $A_c^* (\mathfrak{U};V)$ and 
$A^* (\mathfrak{U};V)$ are sometimes called the complexes of 
\emph{continuous $U$-local cochains} and \emph{$U$-local cochains}. 
For this special case we observe:

\begin{corollary} \label{localfortopgrpsandvsv}
  For any open identity neighbourhood $U$ of a topological group $G$ and any 
real topological vector space $V$ the inclusion 
$A_c^* (\mathfrak{U}_U ;V) \hookrightarrow A^* (\mathfrak{U}_U ;V)$ induces an 
isomorphism in cohomology and the cohomologies 
$\check{H} (\mathfrak{U}_U;V)$, $H_c (\mathfrak{U}_U;V)$ and 
$H (\mathfrak{U}_U;V)$ are isomorphic.
\end{corollary}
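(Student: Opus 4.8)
The plan is to recognise this statement as the special case of the preceding corollary (the one for open entourages of a uniform space) obtained by equipping $G$ with its left uniformity. Concretely, I would first recall that a topological group $G$ becomes a uniform space when one takes as a base of entourages the sets $\tilde{N} := \{ (g,h) \in G \times G \mid g^{-1} h \in N \}$, where $N$ ranges over the open identity neighbourhoods of $G$; that this is a uniformity inducing the topology of $G$ is standard (and is already acknowledged in the remark that these observations apply to uniform spaces, e.g.\ topological groups). For an open identity neighbourhood $U$ the entourage $\tilde{U}$ is moreover \emph{open}, since it is the preimage of the open set $U$ under the continuous map $G \times G \rightarrow G$, $(g,h) \mapsto g^{-1} h$.

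Next I would identify the induced covering. For the left uniformity one has $\tilde{U}[g] = \{ h \in G \mid g^{-1} h \in U \} = gU$ for every $g \in G$, so the covering $\mathfrak{U}_{\tilde{U}} = \{ \tilde{U}[g] \mid g \in G \}$ attached to the entourage $\tilde{U}$ coincides, as a covering of $G$, with $\mathfrak{U}_U = \{ gU \mid g \in G \}$. In particular the open simplicial subspaces $\mathfrak{U}_{\tilde U}[*]$ and $\mathfrak{U}_U[*]$ of $G^{*+1}$ agree, whence the cochain complexes $A^*(\mathfrak{U}_U;V)$, $A_c^*(\mathfrak{U}_U;V)$ and the \v{C}ech complex $\check{C}^*(\mathfrak{U}_U;V)$ literally coincide with their counterparts for $\tilde{U}$.

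With these identifications in place, the assertion is immediate from the preceding corollary applied to the uniform space $G$ and the open entourage $\tilde{U}$: the inclusion $A_c^*(\mathfrak{U}_U;V) \hookrightarrow A^*(\mathfrak{U}_U;V)$ induces an isomorphism in cohomology, and $\check{H}(\mathfrak{U}_U;V)$, $H_c(\mathfrak{U}_U;V)$ and $H(\mathfrak{U}_U;V)$ are pairwise isomorphic. (Alternatively one could bypass the uniform-space vocabulary altogether and invoke Corollary~\ref{covbycoz} directly, after using Proposition~\ref{ubypm} — or a left-invariant continuous pseudometric on $G$ furnished by the Birkhoff--Kakutani construction applied to $U$ — to exhibit a generalised real partition of unity on $G$ whose cozero sets are exactly the translates $gU$.) I expect no genuine obstacle here: the only point demanding a moment's care is the bookkeeping that $\tilde{U}$ is open and that its associated covering really is $\mathfrak{U}_U$ and not the covering $\{Ug \mid g \in G\}$ by right translates — i.e.\ consistently using the left rather than the right uniformity throughout.
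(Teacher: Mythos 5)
Your proposal is correct and matches the paper's intent: the corollary is stated there (without a separate proof) precisely as the special case of the preceding uniform-space corollary, obtained by equipping $G$ with its left uniformity, for which the open entourage $\tilde{U}=\{(g,h)\mid g^{-1}h\in U\}$ satisfies $\tilde{U}[g]=gU$, so the associated covering is $\mathfrak{U}_U$. Your bookkeeping of the left uniformity and the openness of $\tilde{U}$ is exactly the verification needed, and the alternative route via a left-invariant pseudometric and Corollary \ref{covbycoz} is likewise consistent with the paper's chain of results.
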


Combining these results with those concerning singular cohomology 
(obtained in Section \ref{seccas}) we observe:

\begin{proposition} \label{propuqnumsing}
For any open covering $\mathfrak{U}$ of a topological space $X$ for which each 
set $U_{i_0 \ldots i_p}$ is $V$-acyclic and each covering 
$\{ U_i^{q+1} \mid i \in I \}$ of $\mathfrak{U}[q]$ is $R$-numerable the 
homomorphism $C (\lambda_\mathfrak{U}^* ,V) : 
A_c^* (\mathfrak{U};V) \rightarrow S^* (X,\mathfrak{U};V)$ 
induces an isomorphism $H_c (\mathfrak{U};V) \cong H_{sing} (X;V)$ in 
cohomology and the following diagram is commutative:
\begin{equation*}
  \xymatrix{
H_c (\mathfrak{U};V) \ar[r]^{H (i)}_\cong 
\ar[d]_{H (C (\lambda_\mathfrak{U}^*;V))}^\cong  &
H (\mathfrak{U};V) \ar[r]^{H (j)^{-1} H(i)}_\cong 
\ar[d]_{H (C (\lambda_\mathfrak{U}^* ;V))}^\cong & 
\check{H} (\mathfrak{U};V) \ar@{=}[d] \\
H_{sing} (\mathfrak{U};V) \ar@{=}[r] & 
H_{sing} (\mathfrak{U};V) \ar[r]^\cong_{H (j)^{-1} H(i)} &  
\check{H} (\mathfrak{U};V) 
}
\end{equation*}
In particular the \v{C}ech and the continuous $\mathfrak{U}$-local 
cohomology do not depend on the open cover $\mathfrak{U}$ subject to the 
above conditions chosen.
\end{proposition}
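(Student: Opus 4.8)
The plan is to combine Theorem~\ref{isocontuqnum} with the comparison between $\mathfrak{U}$-local and singular cochains established for $V$-acyclic covers in Section~\ref{seccas}. Under the $R$-numerability hypothesis on the coverings $\{U_i^{q+1}\mid i\in I\}$ of the spaces $\mathfrak{U}[q]$, Theorem~\ref{isocontuqnum} already yields that the inclusion $A_c^*(\mathfrak{U};V)\hookrightarrow A^*(\mathfrak{U};V)$ induces an isomorphism $H(i)\colon H_c(\mathfrak{U};V)\xrightarrow{\cong}H(\mathfrak{U};V)$ and that $\check H(\mathfrak{U};V)$, $H_c(\mathfrak{U};V)$ and $H(\mathfrak{U};V)$ are mutually isomorphic through the augmentations $i^*$, $j^*$. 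Under the $V$-acyclicity hypothesis on the sets $U_{i_0\ldots i_p}$, the Proposition of Section~\ref{seccas} on the vertex morphism gives that $C(\lambda_\mathfrak{U}^*;V)\colon A^*(\mathfrak{U};V)\to S^*(X,\mathfrak{U};V)$ induces an isomorphism $H(\mathfrak{U};V)\cong H_{sing}(X;V)$, fitting into the commutative diagram that intertwines the augmentations of the double complexes $\check C^*(\mathfrak{U},A^*)$ and $\check C^*(\mathfrak{U},S^*)$.

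First I would record that $C(\lambda_\mathfrak{U}^*;V)$ is defined on all of $A^*(\mathfrak{U};V)$ by pullback along the vertex morphism $\lambda_X$ and takes values in the complex $S^*(X,\mathfrak{U};V)$ of \emph{all} (not necessarily continuous) cochains on $\mathfrak{U}$-small singular simplices; consequently its restriction along the inclusion $A_c^*(\mathfrak{U};V)\hookrightarrow A^*(\mathfrak{U};V)$ is precisely the map $C(\lambda_\mathfrak{U}^*;V)\colon A_c^*(\mathfrak{U};V)\to S^*(X,\mathfrak{U};V)$ named in the statement. Hence the left square of the asserted diagram commutes already at the cochain level. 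The right square is the passage to cohomology of the intertwining diagram just mentioned, in which the horizontal maps $H(j)^{-1}H(i)$ are isomorphisms by Lemma~\ref{columnsexact} and Corollary~\ref{rowsalwexact}, and by Lemma~\ref{jindiso2} on the continuous side. Since $H(i)$ is an isomorphism by Theorem~\ref{isocontuqnum} and $H(C(\lambda_\mathfrak{U}^*;V))\colon H(\mathfrak{U};V)\to H_{sing}(\mathfrak{U};V)$ is an isomorphism by the cited Proposition, their composite is an isomorphism, which by commutativity of the left square equals $H(C(\lambda_\mathfrak{U}^*;V))\colon H_c(\mathfrak{U};V)\to H_{sing}(\mathfrak{U};V)$; finally $H_{sing}(\mathfrak{U};V)\cong H_{sing}(X;V)$ by the small-simplices theorem (as used in the corresponding Lemma of Section~\ref{seccas}), giving the first assertion.

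For the concluding independence statement, observe that the above identifies $\check H(\mathfrak{U};V)$, $H_c(\mathfrak{U};V)$ and $H(\mathfrak{U};V)$ with $H_{sing}(X;V)$, which contains no reference to the cover; hence none of these cohomologies depends on the particular $\mathfrak{U}$ subject to the stated acyclicity and numerability conditions. I do not anticipate a genuine difficulty here: the argument is a diagram chase splicing Theorem~\ref{isocontuqnum} onto the $V$-acyclic comparison Proposition. The only points that require care are checking that the restriction of $C(\lambda_\mathfrak{U}^*;V)$ to continuous cochains is the map named in the statement — so that the left square commutes on the nose — and making explicit that $S^*(X,\mathfrak{U};V)$ carries no continuity requirement, so that this restriction is well defined.
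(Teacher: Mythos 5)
Your proposal is correct and is essentially the argument the paper intends: the paper states this proposition as an immediate combination of Theorem~\ref{isocontuqnum} (continuous versus abstract $\mathfrak{U}$-local cochains under $R$-numerability) with the Section~\ref{seccas} comparison via the vertex morphism for $V$-acyclic covers, which is exactly the splicing you carry out. Your additional care in noting that $S^*(X,\mathfrak{U};V)$ carries no continuity requirement, so that the left square commutes on the nose at the cochain level, is a sound way of making the implicit diagram chase explicit.
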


\begin{corollary}
 For any open covering $\mathfrak{U}$ of a topological space $X$ 
for which each set $U_{i_0 \ldots i_p}$ is $V$-acyclic and each covering 
$\{ U_i^{q+1} \mid i \in I \}$ of $\mathfrak{U}[q]$ is $R$-numerable the 
singular cohomology $H_{sing} (X;V)$ and the \v{C}ech cohomology 
$\check{H} (\mathfrak{U};V)$ for the covering $\mathfrak{U}$ 
can be computed from the complex $A^* (\mathfrak{U};V)$ of 
$\mathfrak{U}$-local cochains. 
\end{corollary}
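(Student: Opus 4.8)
The plan is to deduce this statement directly from Proposition \ref{propuqnumsing}, of which it is essentially a repackaging. First I would observe that the two hypotheses imposed here are exactly the hypotheses of that proposition: every finite intersection $U_{i_0 \ldots i_p}$ is $V$-acyclic, and every covering $\{ U_i^{q+1} \mid i \in I \}$ of the diagonal neighbourhood $\mathfrak{U}[q]$ is $R$-numerable. Hence Proposition \ref{propuqnumsing} applies to $\mathfrak{U}$ without any further argument.

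Next I would read off the commutative square produced there. It supplies isomorphisms in cohomology: the inclusion $A_c^*(\mathfrak{U};V) \hookrightarrow A^*(\mathfrak{U};V)$ induces $H_c(\mathfrak{U};V) \cong H(\mathfrak{U};V)$; the composite $H(j)^{-1} H(i)$ induces $H(\mathfrak{U};V) \cong \check{H}(\mathfrak{U};V)$; and the vertex morphism $C(\lambda_\mathfrak{U}^*;V)$ induces $H_c(\mathfrak{U};V) \cong H_{sing}(X;V)$. Composing the first with the third yields an isomorphism $H(\mathfrak{U};V) \cong H_{sing}(X;V)$, while the second is already the identification with \v{C}ech cohomology. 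Since $H(\mathfrak{U};V)$ is by definition the cohomology of the complex $A^*(\mathfrak{U};V)$ of $\mathfrak{U}$-local cochains, both $H_{sing}(X;V)$ and $\check{H}(\mathfrak{U};V)$ are thereby computed from $A^*(\mathfrak{U};V)$, which is the assertion.

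I do not expect any genuine obstacle here: the entire content is already carried by Proposition \ref{propuqnumsing}, and the only thing worth making explicit in the write-up is that the displayed maps compose to natural isomorphisms — which is precisely what the commutativity asserted in that proposition guarantees. If one wished to say slightly more, one could note that these isomorphisms are induced by chain-level morphisms ($i^*$, $j^*$ and $C(\lambda_\mathfrak{U}^*;V)$) and are therefore compatible with refinement of the covering, but that refinement is not needed for the statement as phrased and I would leave it out of the proof proper.
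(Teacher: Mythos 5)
Your proposal is correct and is exactly the paper's intended route: the corollary is stated without a separate proof precisely because it is an immediate repackaging of Proposition \ref{propuqnumsing}, whose commutative diagram yields, by composition, the isomorphisms $H(\mathfrak{U};V)\cong H_{sing}(X;V)$ and $H(\mathfrak{U};V)\cong\check{H}(\mathfrak{U};V)$, so both cohomologies are computed from $A^*(\mathfrak{U};V)$.
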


\begin{example}
For any 'good' cover $\mathfrak{U}$ of a topological space $X$ for which
  each covering $\{ U_i^{q+1} \mid i \in I \}$ of $\mathfrak{U}[q]$ is 
$R$-numerable the morphism $ 
A_c^* (\mathfrak{U};V) \rightarrow S^* (X,\mathfrak{U};V)$ of cochain complexes
induces an isomorphism in cohomology and the cohomologies 
$\check{H} (\mathfrak{U};V)$, $H (\mathfrak{U};V)$, $H_c (\mathfrak{U};V)$ 
and $H_{sing} (X;V)$ are isomorphic.
\end{example}

\begin{lemma} \label{vacnumarecofin}
If the open coverings $\mathfrak{U}$ of a topological space $X$ for which 
the sets $U_{i_0 \ldots i_p}$ are $V$-acyclic and  each covering $\{ U_i^{q+1}
\mid i \in I \}$ of $\mathfrak{U}[q]$ is $R$-numerable are cofinal in all 
open coverings, then for each such covering $\mathfrak{U}$ the 
the cohomology $H_c (\mathfrak{U};V)$ of continuous $\mathfrak{U}$-local 
cochains coincides with the continuous Alexander-Spanier cohomology 
$H_{AS,c} (X;V)$ of $X$. In particular the directed system 
$H_c (\mathfrak{U};V)$ of abelian groups is co-Mittag-Leffler.
\end{lemma}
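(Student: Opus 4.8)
The plan is to mimic the argument used for the corresponding non-continuous statement (the Lemma immediately before Example \ref{exugeodconv}), simply carrying the extra $R$-numerability hypothesis along so that Theorem \ref{isocontuqnum} and Proposition \ref{propuqnumsing} become available at each covering in the cofinal system. First I would fix the cofinal system $\mathcal{C}$ of open coverings $\mathfrak{U}$ which are simultaneously $V$-acyclic (each $U_{i_0\ldots i_p}$ has trivial reduced singular cohomology) and fibrewise $R$-numerable (each $\{U_i^{q+1}\mid i\in I\}$ is $R$-numerable). Cofinality of $\mathcal{C}$ in the poset of all open coverings of $X$ is exactly the hypothesis, so the continuous Alexander-Spanier cohomology, which is by definition $\colim_{\mathfrak{U}}H_c(\mathfrak{U};V)$ over \emph{all} open coverings, may be computed as the colimit over the cofinal subsystem $\mathcal{C}$ alone.

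Next I would invoke Proposition \ref{propuqnumsing}: for every $\mathfrak{U}\in\mathcal{C}$ the morphism $C(\lambda_\mathfrak{U}^*,V)$ induces an isomorphism $H_c(\mathfrak{U};V)\cong H_{sing}(X;V)$, and by the ``in particular'' clause of that proposition this isomorphism is compatible with refinement — refining $\mathfrak{U}$ inside $\mathcal{C}$ does not change the continuous $\mathfrak{U}$-local cohomology, the identification with $H_{sing}(X;V)$ being natural. Hence the directed system $\{H_c(\mathfrak{U};V)\}_{\mathfrak{U}\in\mathcal{C}}$ consists of the fixed group $H_{sing}(X;V)$ with all transition maps isomorphisms, so its colimit is $H_{sing}(X;V)$, and therefore
\[
H_{AS,c}(X;V)=\colim_{\mathfrak{U}\in\mathcal{C}}H_c(\mathfrak{U};V)\cong H_{sing}(X;V)\cong H_c(\mathfrak{U};V)
\]
for each individual $\mathfrak{U}\in\mathcal{C}$. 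This proves the displayed coincidence $H_c(\mathfrak{U};V)=H_{AS,c}(X;V)$.

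For the final co-Mittag-Leffler assertion I would argue that, because $\mathcal{C}$ is cofinal and all its transition maps are isomorphisms, the whole directed system $\{H_c(\mathfrak{U};V)\}$ over all open coverings is \emph{essentially constant}: given any covering $\mathfrak{V}$, pick $\mathfrak{U}\in\mathcal{C}$ refining $\mathfrak{V}$; the composite $H_c(\mathfrak{V};V)\to H_c(\mathfrak{U};V)\to H_c(\mathfrak{U}';V)$ for any further refinement $\mathfrak{U}'\in\mathcal{C}$ has image equal to the (iso)image of $H_c(\mathfrak{U};V)$, which stabilises. Thus for each $\mathfrak{V}$ there is a cofinal set of refinements on which the images of the transition maps out of $H_c(\mathfrak{V};V)$ coincide, which is precisely the co-Mittag-Leffler condition. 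I would phrase this exactly as in the proof of the earlier lemma, replacing $H$ by $H_c$ throughout and citing Proposition \ref{propuqnumsing} in place of the $V$-acyclicity proposition for ordinary cochains.

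The only real subtlety — and the step I would be most careful about — is the naturality/compatibility of the isomorphisms $H_c(\mathfrak{U};V)\cong H_{sing}(X;V)$ under refinement, since the colimit argument collapses only if the transition maps are genuinely the identity of $H_{sing}(X;V)$ under these identifications. This is guaranteed by the commutative diagram in Proposition \ref{propuqnumsing} together with the fact that the vertex morphism $\lambda$ and the \v{C}ech augmentations $i,j$ are all natural in the covering; a refinement $\mathfrak{U}'\le\mathfrak{U}$ induces a map of the whole square, and the bottom edge $H_{sing}(\mathfrak{U};V)\xrightarrow{=}H_{sing}(\mathfrak{U}';V)$ is the identity on $H_{sing}(X;V)$ by the standard subdivision argument for $\mathfrak{U}$-small singular cochains. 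Everything else is a routine translation of the preceding lemma.
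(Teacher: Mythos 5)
Your proposal is correct and follows essentially the same route as the paper: compute $H_{AS,c}(X;V)$ over the cofinal system, apply Proposition \ref{propuqnumsing} to identify each $H_c(\mathfrak{U};V)$ with $H_{sing}(X;V)$, conclude the colimit coincides with each term and that the system is co-Mittag-Leffler. Your extra attention to the naturality of these identifications under refinement is a point the paper's proof leaves implicit, but it is not a divergence in method.
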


\begin{proof}
  In this case the (continuous) Alexander-Spanier cohomology can be 
computed as the colimit over this cofinal set of open coverings 
$\mathfrak{U}$. Proposition \ref{propuqnumsing} shows the isomorphisms 
$H_c (\mathfrak{U};V) \cong H (\mathfrak{U};V) \cong H_{sing} (X;V)$ 
for every covering $\mathfrak{U}$ in this cofinal set; this implies the
isomorphism $H_{AS,c} (X;V) \cong H_{AS} (X;V)$ of the colimit groups. It 
also shows that the directed systems $H_c (\mathfrak{U};V)$ and 
$H (\mathfrak{U};V)$ are co-Mittag-Leffler.
\end{proof}

\begin{example}
If $\mathfrak{U}$ is an open covering of a finite dimensional Riemannian 
manifold $M$ by geodetically convex sets, then the cohomology of the complex 
$A_c^* (\mathfrak{U};V)$ is isomorphic to the \v{C}ech cohomology $\check{H}
(\mathfrak{U};V)$ and to the singular cohomology $H_{sing} (M;V)$ of $M$. 
If $M$ is an infinite dimensional Riemannian manifold one has to require the 
local existence of geodesics for this argument to be applicable.
\end{example}

\begin{example}
  If $G$ is a Hilbert Lie group and $U$ a geodetically convex identity 
neighbourhood of $G$, then the cohomology of the complex 
$A_c^* (\mathfrak{U}_U;V)$ is isomorphic to \v{C}ech cohomology $\check{H}
(\mathfrak{U};V)$ and to the singular cohomology $H_{sing} (M;V)$ of $M$.
\end{example}

As observed before, one can obtain a similar result for locally contractible
topological groups without acyclicity condition on the open coverings:

\begin{theorem} \label{colimnbhfcont}
  For any locally contractible group $G$ with open neighbourhood 
filterbase $\mathcal{U}_1$ and real vector space $V$ the morphisms 
$A_c^* (\mathfrak{U}_U ;V) \hookrightarrow A^* (\mathfrak{U}_U ;V)$ and 
$C (\lambda_{\mathfrak{U}_U}^* ,V) :  A_c^* (\mathfrak{U}_U ; V) 
\rightarrow S^* (\mathfrak{U}_U ; V)$ for all $U \in \mathcal{U}_1$ induce 
isomorphisms  
\begin{equation*}
 \colim_{U \in \mathcal{U}_1} H_c (\mathfrak{U}_U ;V) \cong 
\colim_{U \in \mathcal{U}_1} H (\mathfrak{U}_U ;V) \cong H_{sing} (G;V) \, .
\end{equation*}
in cohomology. 
\end{theorem}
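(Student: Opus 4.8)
The plan is to assemble the statement from two facts already established: Corollary~\ref{localfortopgrpsandvsv}, which compares continuous and abstract $U$-local cochains for a single open identity neighbourhood $U$, and Theorem~\ref{colimnbhf} (van Est), which identifies $\colim_{U \in \mathcal{U}_1} H (\mathfrak{U}_U ;V)$ with $H_{sing} (G;V)$ via the vertex morphism $\lambda$. All that has to be added is that these identifications are compatible with the transition maps of the filterbase, together with the exactness of filtered colimits.

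First I would record the directed structure on $\mathcal{U}_1$, ordered by $U' \le U$ iff $U' \subseteq U$; this is directed because $\mathcal{U}_1$ is a filterbase. For $U' \subseteq U$ one has $gU' \subseteq gU$ for every $g \in G$, hence $\mathfrak{U}_{U'}$ refines $\mathfrak{U}_U$ and moreover $\mathfrak{U}_{U'}[n] \subseteq \mathfrak{U}_U[n]$ for every $n$. Restriction of functions then gives morphisms of cochain complexes $A^* (\mathfrak{U}_U;V) \to A^* (\mathfrak{U}_{U'};V)$ and $A_c^* (\mathfrak{U}_U;V) \to A_c^* (\mathfrak{U}_{U'};V)$, and these plainly commute with the inclusions $A_c^* \hookrightarrow A^*$ and with the vertex morphisms $C (\lambda_{\mathfrak{U}_U}^* ;V)$. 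Thus $U \mapsto \bigl( A_c^* (\mathfrak{U}_U;V) \hookrightarrow A^* (\mathfrak{U}_U;V)\bigr)$ and $U \mapsto C (\lambda_{\mathfrak{U}_U}^* ;V)$ are morphisms of $\mathcal{U}_1$-directed systems of cochain complexes.

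Next I would pass to the colimit. Since filtered colimits of abelian groups are exact, cohomology commutes with $\colim_{U \in \mathcal{U}_1}$, and the colimit of the inclusions induces the map $\colim_U H_c (\mathfrak{U}_U;V) \to \colim_U H (\mathfrak{U}_U;V)$ of the statement. By Corollary~\ref{localfortopgrpsandvsv} each inclusion $A_c^* (\mathfrak{U}_U;V) \hookrightarrow A^* (\mathfrak{U}_U;V)$ already induces an isomorphism in cohomology, so this colimit map is an isomorphism, being a filtered colimit of isomorphisms. Composing with the van Est isomorphism of Theorem~\ref{colimnbhf} and using that the composite $A_c^* (\mathfrak{U}_U;V) \hookrightarrow A^* (\mathfrak{U}_U;V) \to S^* (\mathfrak{U}_U;V)$, with second arrow $C (\lambda_{\mathfrak{U}_U}^* ;V)$, equals the morphism $C (\lambda_{\mathfrak{U}_U}^* ;V) : A_c^* (\mathfrak{U}_U;V) \to S^* (\mathfrak{U}_U;V)$ yields both asserted isomorphisms. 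I do not expect a real obstacle here; the only point worth stating explicitly is where local contractibility enters: it is not needed for Corollary~\ref{localfortopgrpsandvsv}, which holds for every topological group, but it is precisely the hypothesis underlying Theorem~\ref{colimnbhf}, and it is that theorem which supplies the identification with $H_{sing} (G;V)$. Everything else is the formal exactness of filtered colimits together with the commuting squares recorded above.
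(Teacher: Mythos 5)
Your proposal is correct and follows essentially the same route as the paper, which likewise deduces the first isomorphism from Corollary~\ref{localfortopgrpsandvsv} and the identification with $H_{sing}(G;V)$ from Theorem~\ref{colimnbhf}; your additional remarks on the directed structure of $\mathcal{U}_1$, compatibility of the restriction maps with the inclusions and vertex morphisms, and exactness of filtered colimits simply make explicit what the paper leaves implicit.
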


\begin{proof}
The first statement is a consequence of Corollary
\ref{localfortopgrpsandvsv}. The other isomorphisms follow from Theorem 
\ref{colimnbhf}.
\end{proof}

\begin{corollary}
  For Lie groups $G$ with open identity neighbourhood filter $\mathcal{U}_1$ 
and real vector space $V$ the cohomologies 
$\colim_{U \in \mathcal{U}_1} H_c (\mathfrak{U}_U ;V)$, 
$\colim_{U \in \mathcal{U}_1} H (\mathfrak{U}_U ;V)$, 
$\colim_{U \in \mathcal{U}_1} \check{H} (\mathfrak{U}_U;V)$ and 
$H_{sing} (G;V)$ coincide. 
\end{corollary}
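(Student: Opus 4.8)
The plan is to obtain this corollary by combining the two theorems that immediately precede it, the only new input being the observation that every Lie group is locally contractible.

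First I would verify that a Lie group $G$ satisfies the hypotheses of Theorems \ref{colimnbhf} and \ref{colimnbhfcont}. Since $G$ is a manifold modelled on a topological vector space, the identity has a neighbourhood diffeomorphic to a convex open subset of the model space; such a neighbourhood is contractible, and translating it shows that $G$ is locally contractible. Moreover the given real vector space $V$ is in particular a topological module over the unital topological ring $R = \mathbb{R}$, so the $R$-numerability arguments invoked in the proofs of those theorems apply.

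Next I would apply Theorem \ref{colimnbhfcont} to $G$ and $V$: it asserts that the inclusions $A_c^*(\mathfrak{U}_U;V) \hookrightarrow A^*(\mathfrak{U}_U;V)$ and the vertex morphisms $C(\lambda_{\mathfrak{U}_U}^*,V)$ induce isomorphisms $\colim_{U \in \mathcal{U}_1} H_c(\mathfrak{U}_U;V) \cong \colim_{U \in \mathcal{U}_1} H(\mathfrak{U}_U;V) \cong H_{sing}(G;V)$, identifying three of the four groups. To bring in the \v{C}ech colimit I would invoke Theorem \ref{colimnbhf}, which gives $\colim_{U \in \mathcal{U}_1} H(\mathfrak{U}_U;V) \cong \colim_{U \in \mathcal{U}_1} \check{H}(\mathfrak{U}_U;V) \cong H_{sing}(G;V)$; concatenating the two chains of isomorphisms identifies all four cohomologies, and by Theorem \ref{colimnbhfcont} the resulting isomorphism $\colim_{U \in \mathcal{U}_1} H_c(\mathfrak{U}_U;V) \cong H_{sing}(G;V)$ is the one induced by the vertex morphism $\lambda$.

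I do not anticipate a real obstacle; the arguments have all been assembled in the preceding sections. The only point deserving a word of care is that the isomorphism $\check{H}(\mathfrak{U}_U;V) \cong H(\mathfrak{U}_U;V)$ of Corollary \ref{cechisolocal} --- on which Theorem \ref{colimnbhf} rests --- is induced by the natural augmentation morphisms $i^*$ and $j^*$ of the \v{C}ech-Alexander-Spanier double complex, and these are functorial with respect to refinement of coverings; hence the isomorphisms are compatible with the refinement maps $\mathfrak{U}_U \to \mathfrak{U}_{U'}$ for $U \subseteq U'$ and legitimately pass to the colimit over $\mathcal{U}_1$.
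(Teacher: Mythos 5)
Your proposal is correct and takes essentially the same route the paper intends: the corollary is obtained by combining Theorem \ref{colimnbhfcont} (giving $\colim H_c \cong \colim H \cong H_{sing}$) with Theorem \ref{colimnbhf} (bringing in $\colim \check{H}$), the only extra input being that a Lie group, having chart neighbourhoods of the identity homeomorphic to convex open subsets of the model space, is locally contractible. Your added remark on compatibility of the augmentation-induced isomorphisms with refinement maps, so that they pass to the colimit over $\mathcal{U}_1$, is a legitimate point of care that the paper leaves implicit.
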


\section{Continuous Local Cochains on $k$-Spaces}
\label{secclck}

For compactly Hausdorff generated spaces $X$ one can use abelian 
$k$-groups $V$ as coefficients and work in the subcategory $\ktop$ of
compactly Hausdorff generated spaces only. The proofs presented in Section 
\ref{secclc} carry over and one obtains analogous results for $k$-spaces and
$k$-modules. In the following we work out the details. (The properties of
$k$-spaces used here can be found in the appendix.) 

Let $X$ be a $k$-space and $V$ be a $k$-group which is a  module (in $\ktop$) 
over a  $k$-ring $R$. 
For each open covering $\mathfrak{U}:=\{ U_i \mid \, i \in I \}$ of $X$ the 
spaces $\mathfrak{U}[q]$ are open subspaces of $X^{q+1}$. Because the 
coreflector $\fktop : \tops \rightarrow \tops$ preserves open embeddings
(Lemma \ref{fktoppresopemb}) the corresponding $k$-spaces 
$\fktop \mathfrak{U}[q]$ form an open simplicial subspace of the simplicial 
$k$-space $\fktop X^{q+1}$; this allows us to define a cosimplicial abelian of 
continuous $n$-cochains with values in $V$: 
\begin{equation*}
A_{kc} (\mathfrak{U};V) := C (\fktop \mathfrak{U} ,V) 
\end{equation*} 
The cohomology of the associated cochain complex $A_{kc}^* (\mathfrak{U};V)$
is denoted by $H_{kc} (\mathfrak{U};V)$. 
Since each $k$-space $\fktop U_i^{q+1}$ is an open subspace of $X^{q+1}$, 
we can consider the presheafs $A_{kc}^q (- ;V):= C ( \fktop (-^p) , V)$ and
the sub double complex 
$\check{C}^* ( \mathfrak{U}, A_{kc}^*)$ of $\check{C}^* ( \mathfrak{U}, A^*)$
of $\check{C}^* ( \mathfrak{U}, A^*)$. 
The rows of this sub double complex $\check{C}^* ( \mathfrak{U}, A_{kc}^*)$ 
can be augmented by the complex $A_{kc}^* (\mathfrak{U};V)$ of continuous 
$\mathfrak{U}$-local cochains and the columns can be augmented by the 
\v{C}ech-complex $\check{C}^* (\mathfrak{U};V)$ for the covering 
$\mathfrak{U}$.
These augmentations induce homomorphisms 
$i_{kc}^*:A_{kc}^* (\mathfrak{U};V) \rightarrow \tot 
\check{C}^* (\mathfrak{U},A_{kc}^*)$ 
and 
$j_{kc}^*:\check{C}^* (\mathfrak{U};V) \rightarrow 
\tot \check{C}^* (\mathfrak{U}, A_{kc}^*)$ 
of cochain complexes respectively.

\begin{lemma} \label{jindisok}
  The homomorphism $j_c^*:\check{C}^* (\mathfrak{U};V) \rightarrow 
\tot \check{C}^* (\mathfrak{U}, A_c^*)$ induces an isomorphism in cohomology.
\end{lemma}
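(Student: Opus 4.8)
The plan is to mirror the proof of Lemma~\ref{columnsexact} almost verbatim, replacing the presheaf $A^q(-;V)$ of all set-theoretic $q$-cochains by the presheaf $A_{kc}^q(-;V) = C(\fktop(-^{q+1}),V)$ of continuous $k$-cochains. The only thing that makes that proof work is that the augmented column
\begin{equation*}
0 \rightarrow V \rightarrow A^0(U_{i_0\ldots i_q};V) \rightarrow A^1(U_{i_0\ldots i_q};V) \rightarrow \cdots
\end{equation*}
is exact for every topological space $U_{i_0\ldots i_q}$. So the real content here is the corresponding statement for $k$-cochains: for each nonempty $k$-space $Y$ (namely $Y = \fktop U_{i_0\ldots i_q}$, equipped with the subspace-then-coreflect $k$-structure coming from $\fktop X^{q+1}$) the augmented complex
\begin{equation*}
0 \rightarrow V \xrightarrow{\ \varepsilon\ } C(\fktop Y,V) \xrightarrow{\ d\ } C(\fktop(Y^2),V) \xrightarrow{\ d\ } C(\fktop(Y^3),V) \rightarrow \cdots
\end{equation*}
is exact, where $\varepsilon$ sends $v$ to the constant function and $d$ is the alternating-sum differential of Eq.~\ref{stdiff}. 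Once this is established, the augmented columns of $\tot\check{C}^*(\mathfrak{U},A_{kc}^*)$ are exact, so the augmentation $j_{kc}^*$ is a quasi-isomorphism, exactly as in Lemma~\ref{columnsexact}. (I note in passing that the statement of the lemma as typeset names $j_c^*$ and $A_c^*$; this is a typo for $j_{kc}^*$ and $A_{kc}^*$, and should be corrected.)

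First I would fix a basepoint $y_0 \in Y$ and write down the usual contracting homotopy for the standard cochain complex: $s^n : A^n(Y;V) \rightarrow A^{n-1}(Y;V)$, $(s^n f)(x_0,\ldots,x_{n-1}) = f(y_0,x_0,\ldots,x_{n-1})$, together with the null-homotopy onto the constants at level zero. The identity $d s + s d = \id$ (away from the augmentation level, where one gets $d s + \varepsilon = \id$) is a purely formal computation that does not see the topology at all; it is recalled implicitly in the proof of Lemma~\ref{columnsexact}. The one thing to check is that $s^n$ restricts to $C(\fktop(Y^{n+1}),V) \rightarrow C(\fktop(Y^n),V)$, i.e. that $s^n f$ is continuous as a map of $k$-spaces whenever $f$ is. This is where the $k$-space machinery from the appendix enters: the map $Y^n \rightarrow Y^{n+1}$, $(x_0,\ldots,x_{n-1}) \mapsto (y_0,x_0,\ldots,x_{n-1})$ is continuous, hence $\fktop$-continuous as a map $\fktop(Y^n) \rightarrow \fktop(Y^{n+1})$ since $\fktop$ is a functor (Lemma~\ref{fktoppresopemb} and the surrounding discussion guarantee the relevant $k$-spaces are formed compatibly), and precomposition with a continuous map sends continuous $V$-valued functions to continuous $V$-valued functions. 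Therefore $s^*$ is a genuine contraction of the augmented $k$-cochain complex, which proves its exactness.

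The step I expect to be the only potential obstacle is the bookkeeping around which $k$-structures are in play: the spaces $\mathfrak{U}[q]$ and $U_{i_0\ldots i_q}^{q+1}$ are being viewed as open subspaces of $X^{q+1}$ and then coreflected, and one must be sure that the vertex-insertion maps used to build $s^*$ are still continuous after all these subspace-and-coreflection operations, and in particular that $\fktop$ commutes with the finite products and open embeddings involved in the way the appendix asserts (this is exactly the content of Lemma~\ref{fktoppresopemb} and its neighbours). Granting that, the argument is entirely parallel to Lemma~\ref{columnsexact}: exact augmented columns force $j_{kc}^*$ to be an isomorphism in cohomology, so the proof can simply read ``The proof is analogous to that of Lemma~\ref{columnsexact}, using that the standard complex of continuous $k$-cochains $A_{kc}^*(U_{i_0\ldots i_q};V)$ is exact by the vertex-insertion contraction, which is $k$-continuous because $\fktop$ is a functor preserving the relevant open embeddings.''
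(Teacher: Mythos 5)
Your proposal is correct and follows exactly the route the paper takes: the paper's proof is just the one-line remark that the argument of Lemma~\ref{columnsexact} carries over, i.e.\ the augmented columns are exact because the standard cochain complex admits the basepoint-insertion contraction, which visibly preserves (here $k$-)continuity. You merely spell out the details (and rightly flag the $j_c^*$/$A_c^*$ versus $j_{kc}^*$/$A_{kc}^*$ typo), so there is nothing to correct.
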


\begin{proof}
  The proof is analogous to that of Lemma \ref{columnsexact}.
\end{proof}

In the following the coefficients $V$ will always be a $k$-module  over a 
unital $k$-ring $R$. 
(Different coefficient groups are considered in Section \ref{seclcck}.)

\begin{proposition} \label{propchechcontrhomcontk}
For any $R$-valued partition of unity $\{ \varphi_{q,i} \mid i \in I\}$ 
subordinate to the covering $\{ \fktop U_i^{q+1} \mid i \in I \}$ of 
$\fktop \mathfrak{U}[q]$ the homomorphisms 
\begin{equation} \label{eqchechcontrhomcontk}
  h^{p,q} : \check{C}^p ( \mathfrak{U}, A^q) \rightarrow 
\check{C}^{p-1} ( \mathfrak{U}, A^q), \quad
h^{p,q} ( f )_{i_0 \ldots i_{p-1}} = 
\sum_{i} \varphi_{q,i} \cdot f_{i i_0 \ldots i_{p-1}}
\end{equation}
form a row contraction of the augmented row 
$A^q (\mathfrak{U};V) \hookrightarrow \check{C}^* ( \mathfrak{U}, A^q)$ which
restricts to a row contraction of the augmented sub-row 
$A_{kc}^q (\mathfrak{U};V)\hookrightarrow \check{C}^* ( \mathfrak{U},A_{kc}^q)$.
\end{proposition}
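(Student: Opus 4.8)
The plan is to run the argument of Proposition \ref{propchechcontrhomcont} essentially verbatim, with $\tops$ replaced by $\ktop$ and the spaces $\mathfrak{U}[q]$, $U_i^{q+1}$ replaced by their coreflections $\fktop\mathfrak{U}[q]$, $\fktop U_i^{q+1}$; the only point that needs the $k$-space machinery of the appendix is the stability of extension-by-zero inside $\ktop$. First I would check the hypotheses of Proposition \ref{propchechcontrhom} for the family $\{\varphi_{q,i}\mid i\in I\}$: regarded as (set-theoretic) functions on the underlying set $\mathfrak{U}[q]$ of $\fktop\mathfrak{U}[q]$ they satisfy $\sum_i\varphi_{q,i}=1$, and since the partition of unity is subordinate to $\{\fktop U_i^{q+1}\mid i\in I\}$ each $\varphi_{q,i}$ vanishes outside $U_i^{q+1}$; moreover the family is point-finite because its supports form a locally finite cover of $\fktop\mathfrak{U}[q]$. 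Hence Proposition \ref{propchechcontrhom} applies and the maps $h^{p,q}$ of \eqref{eqchechcontrhomcontk} already form a row contraction of the augmented row $A^q(\mathfrak{U};V)\hookrightarrow\check{C}^*(\mathfrak{U},A^q)$, so it remains only to see that they preserve $k$-continuity.

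For that, fix $f\in\check{C}^p(\mathfrak{U};A_{kc}^q)$ and indices $i_0,\dots,i_{p-1}$. By Lemma \ref{fktoppresopemb} the spaces $\fktop U_i^{q+1}$ are open $k$-subspaces of $\fktop X^{q+1}$, so $\fktop U_{ii_0\dots i_{p-1}}^{q+1}=\fktop U_i^{q+1}\cap\fktop U_{i_0\dots i_{p-1}}^{q+1}$ is an open $k$-subspace of $\fktop U_{i_0\dots i_{p-1}}^{q+1}$; restricted to it, $\varphi_{q,i}$ is $k$-continuous, and its product with $f_{ii_0\dots i_{p-1}}$ is $k$-continuous because the $R$-action on the $k$-module $V$ is a morphism in $\ktop$. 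The closure, taken inside $\fktop U_{i_0\dots i_{p-1}}^{q+1}$, of the set where this product is non-zero is contained in $\supp\varphi_{q,i}\cap\fktop U_{i_0\dots i_{p-1}}^{q+1}$, which is closed in $\fktop U_{i_0\dots i_{p-1}}^{q+1}$ and, by subordination, contained in the open subset $\fktop U_{ii_0\dots i_{p-1}}^{q+1}$. Testing $k$-continuity on the two-element open cover of $\fktop U_{i_0\dots i_{p-1}}^{q+1}$ consisting of $\fktop U_{ii_0\dots i_{p-1}}^{q+1}$ and the complement of that closure, the extension by zero of $\varphi_{q,i}f_{ii_0\dots i_{p-1}}$ to $\fktop U_{i_0\dots i_{p-1}}^{q+1}$ is therefore $k$-continuous; and $h^{p,q}(f)_{i_0\dots i_{p-1}}=\sum_i\varphi_{q,i}f_{ii_0\dots i_{p-1}}$ is a locally finite sum of such $V$-valued functions, hence $k$-continuous, since on each compact Hausdorff test space only finitely many summands survive. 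Thus $h^{p,q}$ sends $\check{C}^p(\mathfrak{U},A_{kc}^q)$ into $\check{C}^{p-1}(\mathfrak{U},A_{kc}^q)$ and $\check{C}^0(\mathfrak{U},A_{kc}^q)$ into $A_{kc}^q(\mathfrak{U};V)$.

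Combining the two observations, $h^{*,q}$ is a row contraction of $A^q(\mathfrak{U};V)\hookrightarrow\check{C}^*(\mathfrak{U},A^q)$ which carries the sub-row $A_{kc}^q(\mathfrak{U};V)\hookrightarrow\check{C}^*(\mathfrak{U},A_{kc}^q)$ to itself, so the contracting-homotopy identities restrict and $h^{*,q}$ is a row contraction of the sub-row, as claimed. The genuinely $k$-theoretic content — and the step I expect to be the main obstacle — is precisely the stability of extension-by-zero and of locally finite sums within $\ktop$, which rests on the compatibility of the coreflector $\fktop$ with open subspaces (Lemma \ref{fktoppresopemb}) and with the colimit description of $k$-continuity recorded in the appendix; granting those, the rest is the same bookkeeping as in Proposition \ref{propchechcontrhomcont}.
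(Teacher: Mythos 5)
Your proposal is correct and follows essentially the same route as the paper: the paper's proof simply declares the argument analogous to Proposition \ref{propchechcontrhomcont}, i.e.\ the algebraic row-contraction identity comes from Proposition \ref{propchechcontrhom} and the maps preserve (here $k$-)continuity by construction via extension-by-zero of the products $\varphi_{q,i}f_{ii_0\ldots i_{p-1}}$ and local finiteness of the sum. Your extra care with Lemma \ref{fktoppresopemb}, the two-set open cover for the zero-extension, and testing on compact probes just makes explicit the details the paper leaves implicit.
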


\begin{proof}
  The proof is analogous to that of Proposition \ref{propchechcontrhomcont}.
\end{proof}

\begin{corollary} \label{uqnumicindisok}
For any open covering $\mathfrak{U}=\{ U_i \mid i \in I \}$ 
of a $k$-space $X$ for which the coverings 
$\{ \fktop U_i^{q+1} \mid i \in I \}$ of the $k$-spaces $\mathfrak{U}[q]$ are 
$R$-numerable the homomorphism 
$i_{kc}^*: A_{kc}^* (\mathfrak{U};V)\rightarrow 
\tot\check{C}^* (\mathfrak{U},A_{kc}^*)$ induces an isomorphism in cohomology.
\end{corollary}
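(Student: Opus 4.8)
The plan is to follow the proof of Corollary \ref{uqnumicindiso} verbatim, substituting the $k$-space row contraction of Proposition \ref{propchechcontrhomcontk} for the contraction of Proposition \ref{propchechcontrhomcont}. The homomorphism $i_{kc}^*$ is by construction the augmentation of the first-quadrant double complex $\check{C}^* (\mathfrak{U}, A_{kc}^*)$ along its rows, so it suffices to show that each augmented row $A_{kc}^q (\mathfrak{U};V) \hookrightarrow \check{C}^* (\mathfrak{U}, A_{kc}^q)$ is exact and then appeal to a standard homological-algebra statement about augmented double complexes.

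First I would invoke the hypothesis: since each covering $\{ \fktop U_i^{q+1} \mid i \in I \}$ of $\fktop \mathfrak{U}[q]$ is $R$-numerable, there is an $R$-valued partition of unity $\{ \varphi_{q,i} \mid i \in I \}$ subordinate to it, and Proposition \ref{propchechcontrhomcontk} then produces the homomorphisms $h^{*,q}$ which form a row contraction of the augmented row $A_{kc}^q (\mathfrak{U};V) \hookrightarrow \check{C}^* (\mathfrak{U}, A_{kc}^q)$; in particular that augmented row is exact. Second, an acyclic-assembly argument (equivalently, the degeneration of the spectral sequence of the column filtration of the double complex) shows that when all augmented rows of a first-quadrant double complex are exact, the row augmentation induces an isomorphism from the cohomology of the augmenting complex onto the cohomology of the total complex. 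Applying this to $A_{kc}^* (\mathfrak{U};V) \rightarrow \tot \check{C}^* (\mathfrak{U}, A_{kc}^*)$ yields the assertion.

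The only point requiring care -- and it is precisely what Proposition \ref{propchechcontrhomcontk} records -- is that the operators $h^{p,q}$, which a priori live on $\check{C}^* (\mathfrak{U}, A^*)$, genuinely restrict to the sub double complex $\check{C}^* (\mathfrak{U}, A_{kc}^*)$; this is where the $k$-module structure of $V$ over the $k$-ring $R$ enters, guaranteeing that the relevant products $\varphi_{q,i} \cdot f_{i i_0 \ldots i_{p-1}}$, extended by zero, are again continuous on the $k$-spaces $\fktop U_{i_0 \ldots i_{p-1}}^{q+1}$. Because the double complex is concentrated in the first quadrant there are no convergence issues in the spectral-sequence argument, so no additional hypotheses are needed. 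I do not expect a genuine obstacle here: all the substantive work has already been done in Proposition \ref{propchechcontrhomcontk}, and this corollary is a formal consequence of it together with Lemma \ref{jindisok}-style exactness of the augmented columns being irrelevant for this particular statement.
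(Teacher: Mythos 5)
Your argument is exactly the one the paper intends: the corollary is an immediate consequence of Proposition \ref{propchechcontrhomcontk} (the $R$-valued partition of unity supplied by the numerability hypothesis gives a contraction of each augmented row, hence row exactness) combined with the standard first-quadrant double-complex/spectral-sequence fact that exact augmented rows make the row augmentation $i_{kc}^*$ a cohomology isomorphism. No gaps; this matches the paper's (implicit) proof.
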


\begin{theorem}\label{isocontuqnumk}
For any open covering $\mathfrak{U}$ of a $k$-space $X$ for which 
each covering $\{ \fktop U_i^{q+1} \mid i \in I \}$ of 
$\fktop \mathfrak{U}[q]$ is $R$-numerable the inclusion 
$A_{kc}^* (\mathfrak{U};V) \hookrightarrow A^* (\mathfrak{U};V)$ induces an
isomorphism in cohomology and the cohomologies 
$\check{H} (\mathfrak{U};V)$, $H_{kc} (\mathfrak{U};V)$ and 
$H (\mathfrak{U};V)$ are isomorphic.
\end{theorem}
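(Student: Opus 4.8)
The plan is to transcribe the proof of Theorem~\ref{isocontuqnum} into the $k$-space setting, replacing the subscript $c$ by $kc$ throughout. Three inputs are already in place. First, Lemma~\ref{jindisok} (proved as in Lemma~\ref{columnsexact}, since the standard ``all functions'' complex $A^*(\fktop U_{i_0\ldots i_q};V)$ is exact for any nonempty space) shows that $j_{kc}^*$ induces an isomorphism in cohomology. Second, Corollary~\ref{rowsalwexact} shows that the augmentation $i^*:A^*(\mathfrak{U};V)\to\tot\check{C}^*(\mathfrak{U},A^*)$ of the ``all functions'' double complex is always a cohomology isomorphism; this statement does not see the $k$-structure at all. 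Third, Corollary~\ref{uqnumicindisok} shows that, under the hypothesis that each covering $\{\fktop U_i^{q+1}\mid i\in I\}$ of $\fktop\mathfrak{U}[q]$ is $R$-numerable, the augmentation $i_{kc}^*:A_{kc}^*(\mathfrak{U};V)\to\tot\check{C}^*(\mathfrak{U},A_{kc}^*)$ is also a cohomology isomorphism --- and this is the only place where the $R$-numerability hypothesis is used, via the row contraction of Proposition~\ref{propchechcontrhomcontk}.

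Next I would record that $\check{C}^*(\mathfrak{U},A_{kc}^*)$ is a genuine sub double complex of $\check{C}^*(\mathfrak{U},A^*)$: a continuous map $\fktop U_{i_0\ldots i_p}^{q+1}\to V$ is in particular a set map on $U_{i_0\ldots i_p}^{q+1}$, and the \v{C}ech and standard differentials preserve continuity in $\ktop$; the presheaf restriction maps of $A_{kc}^q(-;V)=C(\fktop(-^{q+1}),V)$ make sense because $\fktop$ preserves open embeddings (Lemma~\ref{fktoppresopemb}), so that $\fktop W^{q+1}$ is an open subspace of $\fktop U^{q+1}$ for $W\subseteq U$ open. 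Hence the inclusions $A_{kc}^*(\mathfrak{U};V)\hookrightarrow A^*(\mathfrak{U};V)$ and $\tot\check{C}^*(\mathfrak{U},A_{kc}^*)\hookrightarrow\tot\check{C}^*(\mathfrak{U},A^*)$ are morphisms of cochain complexes intertwining $i_{kc}^*,j_{kc}^*$ with $i^*,j^*$, giving the commutative diagram
\[
\xymatrix{
A_{kc}^* (\mathfrak{U};V) \ar[r]^{i_{kc}^*} \ar[d] &
\tot \check{C}^* ( \mathfrak{U}, A_{kc}^*) \ar[d] &
\check{C}^* (\mathfrak{U};V) \ar[l]_{j_{kc}^*} \ar@{=}[d] \\
A^* (\mathfrak{U};V) \ar[r]^{i^*} &
\tot \check{C}^* ( \mathfrak{U}, A^*) &
\check{C}^* (\mathfrak{U};V) \ar[l]_{j^*}
}
\]
whose vertical maps are induced by inclusion.

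Then I would pass to cohomology: by Lemmata~\ref{columnsexact} and~\ref{jindisok} the maps induced by $j^*$ and $j_{kc}^*$ are isomorphisms, by Corollary~\ref{rowsalwexact} the map induced by $i^*$ is an isomorphism, and by Corollary~\ref{uqnumicindisok} the map induced by $i_{kc}^*$ is an isomorphism under the standing hypothesis. In the resulting commutative square of cohomology groups every arrow except the left vertical one $H_{kc}(\mathfrak{U};V)\to H(\mathfrak{U};V)$ is known to be an isomorphism, which forces that arrow to be an isomorphism as well; chasing the outer rectangle identifies both groups with $\check{H}(\mathfrak{U};V)$, yielding the asserted isomorphism of $\check{H}(\mathfrak{U};V)$, $H_{kc}(\mathfrak{U};V)$ and $H(\mathfrak{U};V)$. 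The only genuinely new point, and hence the main obstacle, is the bookkeeping in $\ktop$ rather than $\tops$ --- one must be certain that the products $U_{i_0\ldots i_p}^{q+1}$, their $k$-ifications, their open-subspace structure, and the presheaf restriction maps behave exactly as in Section~\ref{secclc}. These are precisely the compatibilities collected in the appendix (in particular Lemma~\ref{fktoppresopemb}), so once they are invoked the diagram chase is formally identical to that of Theorem~\ref{isocontuqnum}.
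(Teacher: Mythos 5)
Your proposal is correct and is essentially the paper's own argument: the paper proves this theorem simply by declaring it analogous to Theorem \ref{isocontuqnum}, and your write-up is exactly that transcription, with the diagram chase through $j_{kc}^*$, $j^*$, $i^*$ and $i_{kc}^*$ (Lemma \ref{jindisok}, Corollary \ref{rowsalwexact}, Corollary \ref{uqnumicindisok}) forcing the left vertical inclusion to be a cohomology isomorphism. Your explicit bookkeeping that $\fktop$ preserves open embeddings (Lemma \ref{fktoppresopemb}) so that the sub double complex and its restriction maps make sense is the same point the paper records in the setup of Section \ref{secclck}, so nothing is missing.
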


\begin{proof}
  The proof is analogous to that of Theorem \ref{isocontuqnum}.
\end{proof}

In this case the \v{C}ech Cohomology $\check{H} (\mathfrak{U};V)$ for the
covering $\mathfrak{U}$ of X can be either computed from the complex 
$A_{kc}^* (\mathfrak{U};V)$ of continuous $\mathfrak{U}$-local cochains or from 
from the complex $A^* (\mathfrak{U};V)$ of $\mathfrak{U}$-local cochains.

\begin{corollary} \label{covbycozk}
If the ring $R$ is a complete $k$-field, 
$\{ \varphi_i \mid i \in I\}$ a generalised $R$-valued partition of unity on 
$X$ and $\mathfrak{U}:=\{ \varphi_i^{-1} (R \setminus \{ 0\}) \mid i\in I\}$ 
then the inclusion 
$A_c^* (\mathfrak{U};V) \hookrightarrow A^* (\mathfrak{U};V)$ 
induces an isomorphism in cohomology and the cohomologies 
$\check{H} (\mathfrak{U};V)$, $H_{kc} (\mathfrak{U};V)$ and 
$H (\mathfrak{U};V)$ are isomorphic.
\end{corollary}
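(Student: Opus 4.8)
The plan is to follow the proof of Corollary~\ref{covbycoz}, but inside the category $\ktop$ and with $\mathbb{R}$ replaced by the complete $k$-field $R$. By Theorem~\ref{isocontuqnumk} it is enough to show that for every $q$ the open covering $\{\fktop U_i^{q+1}\mid i\in I\}$ of $\fktop\mathfrak{U}[q]$ is $R$-numerable; since the coreflector $\fktop$ preserves open embeddings (Lemma~\ref{fktoppresopemb}), these are genuine open subspaces of $\fktop X^{q+1}$, so the task is to produce a generalised $R$-valued partition of unity on $\fktop\mathfrak{U}[q]$ whose cozero sets are exactly the $\fktop U_i^{q+1}$ (which is what makes Proposition~\ref{propchechcontrhomcontk}, and hence Theorem~\ref{isocontuqnumk}, applicable).

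The candidate, copied from Corollary~\ref{covbycoz}, is built from the coordinatewise products: starting from a suitable family on $X$, one sets
\[
 \varphi_{q,i}\colon\fktop\mathfrak{U}[q]\longrightarrow R,\qquad
 \varphi_{q,i}(x_0,\ldots,x_q):=\varphi_i(x_0)\cdots\varphi_i(x_q),
\]
then $\varphi_q:=\sum_i\varphi_{q,i}$, and finally passes to $\{\varphi_q^{-1}\varphi_{q,i}\mid i\in I\}$. Continuity of $\varphi_{q,i}$ as a $\ktop$-morphism is formal: the coordinate pull-backs $\varphi_i\circ\mathrm{pr}_l$ are $\ktop$-morphisms, so by the universal property of the $k$-product they assemble to a $\ktop$-morphism into $\fktop(R^{q+1})$, which one composes with the iterated multiplication, continuous because $R$ is a $k$-ring. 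Since $R$ is a field, $\varphi_{q,i}(\vec x)\ne 0$ precisely when $x_l\in U_i$ for all $l$, i.e.\ when $\vec x\in\fktop U_i^{q+1}$, so the cozero set of $\varphi_{q,i}$ is $\fktop U_i^{q+1}$ and these cover $\fktop\mathfrak{U}[q]$. It then remains to know that the possibly infinite sum $\varphi_q$ converges to a $\ktop$-morphism — here one uses completeness of $R$ and the $k$-space analogue of Lemma~\ref{sumcontthensubsumcont} on sums of continuous $R$-valued functions — and that $\varphi_q$ is nowhere zero, so that $\varphi_q^{-1}$ is defined and continuous by continuity of inversion on $R^\times$.

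The nowhere-vanishing of $\varphi_q$ is where the argument for $\mathbb{R}$ does not carry over and is, I expect, the main obstacle. Over $\mathbb{R}$ one first replaces $\varphi_i$ by $|\varphi_i|$, renormalised to a partition of unity with unchanged cozero sets; then every summand of $\varphi_q$ is non-negative and the one indexed by a $j$ with $\vec x\in U_j^{q+1}$ is strictly positive. A general complete $k$-field carries no order, and without such a preparatory step the values $\varphi_{q,i}(\vec x)$ may cancel (already for $R=\mathbb{C}$ and $q=1$ one finds points with $\varphi_q(\vec x)=0$ but some summand non-zero). The remedy should be to construct, on $X$, a generalised $R$-valued partition of unity with the same cozero sets $\{U_i\}$ as $\{\varphi_i\}$ but whose values lie in a subset of $R$ on which convergent sums of non-zero elements stay non-zero, the role played by $\mathbb{R}_{\ge 0}$ classically; feeding such a family into the construction above makes $\varphi_q$ nowhere zero on each $\fktop\mathfrak{U}[q]$. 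Showing that a complete $k$-field admits a suitable ``modulus'' $R\to R$ — continuous, vanishing only at $0$, with image in such a cancellation-free subset — together with the invertibility of the resulting denominators, is the place where completeness of $R$ is genuinely used; once this is settled, the remainder is the routine transcription of Corollary~\ref{covbycoz} into $\ktop$, yielding $\check H(\mathfrak{U};V)\cong H_{kc}(\mathfrak{U};V)\cong H(\mathfrak{U};V)$.
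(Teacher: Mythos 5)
Your reduction is the intended one: the paper states Corollary \ref{covbycozk} without proof, and the implicit argument is the transcription of Corollary \ref{covbycoz} into $\ktop$ — reduce via Theorem \ref{isocontuqnumk} to the $R$-numerability of the coverings $\{\fktop U_i^{q+1} \mid i \in I\}$ of $\fktop\mathfrak{U}[q]$, build the coordinatewise products $\varphi_{q,i}(\vec x)=\varphi_i(x_0)\cdots\varphi_i(x_q)$ (whose cozero sets are exactly $\fktop U_i^{q+1}$ because $R$ is a field), sum, and normalise, with completeness of $R$ feeding the summability lemmas. Up to that point your proposal matches the paper's route, and your observation that continuity is formal in $\ktop$ is fine.

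However, as a proof the proposal has a genuine gap, and it sits exactly at the step you yourself flag: you never establish that $\varphi_q=\sum_i\varphi_{q,i}$ is nowhere zero, you only assert that ``the remedy should be'' a continuous modulus $R\to R$ with values in a cancellation-free subset and that completeness should supply it. That existence claim is the entire content of the corollary beyond Theorem \ref{isocontuqnumk}, and it is not proved — nor is it clearly true for an arbitrary complete $k$-field. Your $\mathbb{C}$-example of cancellation is correct (there the classical modulus $|\cdot|$ with values in $\mathbb{R}_{\ge 0}\subset\mathbb{C}$ repairs it), but for nonarchimedean complete fields the analogous cone need not exist: in $\mathbb{Q}_p$ the natural candidate $x\mapsto p^{v_p(x)}$ is continuous and vanishes only at $0$, yet convergent sums of its nonzero values can vanish, e.g. $p+(p-1)p+(p-1)p^2+\cdots=0$ since the partial sums are $p,p^2,p^3,\ldots$. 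So ``once this is settled, the remainder is routine'' conceals the one step that is neither routine nor obviously settleable in the stated generality. A second, smaller gap: even granted a nowhere-vanishing $\varphi_q$, what you obtain is a \emph{generalised} partition of unity with prescribed cozero sets, whereas Proposition \ref{propchechcontrhomcontk} and the definition of $R$-numerability require an $R$-valued partition of unity subordinate to the covering, i.e. with locally finite supports; for $R=\mathbb{R}$ this passage is the classical numerability of cozero coverings, but for a general complete $k$-field it too needs an argument. In short, your write-up correctly identifies where the real-coefficient proof breaks, but it does not close that break, so it does not yet prove the corollary.
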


If the products $X^{q+1}$ in $\tops$ are already compactly Hausdorff 
generated, then the open subspaces $\mathfrak{U}[q]$ also are $k$-spaces. 
This in particular happens if $X$ is a metric space, locally compact or a 
Hausdorff $k_\omega$-space (cf. Lemma \ref{appfinprodofko}). 
In this case the diagonal neighbourhoods of the form $\mathfrak{U}[q]$ are 
cofinal in all diagonal neighbourhoods and the colimit 
$\colim H_{kc} (\mathfrak{U};V)$ over all $R$-numerable coverings 
$\mathfrak{U}$ is called the \emph{Alexander-Spanier cohomology w.r.t 
$R$-numerable coverings}. 

\begin{corollary}
For metric, locally compact or Hausdorff $k_\omega$-spaces $X$ the
cohomologies $\check{H} (X;V)$, $H_{AS,kc} (X;V)$ and $H_{AS} (X;V)$ 
w.r.t. $R$-numerable coverings are isomorphic.
\end{corollary}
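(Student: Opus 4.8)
The plan is to obtain the corollary from Theorem~\ref{isocontuqnumk} by passing to the colimit over all $R$-numerable coverings, in exact parallel with the way the non-$k$ statement followed from Theorem~\ref{isocontuqnum}. The new ingredient is the hypothesis on $X$: by Lemma~\ref{appfinprodofko} every finite power $X^{q+1}$ is compactly Hausdorff generated when $X$ is metric, locally compact or a Hausdorff $k_\omega$-space, and since the coreflector $\fktop$ preserves open embeddings (Lemma~\ref{fktoppresopemb}) each diagonal neighbourhood $\mathfrak{U}[q]$ is then already a $k$-space, so that $\fktop\mathfrak{U}[q]=\mathfrak{U}[q]$, $A_{kc}^*(\mathfrak{U};V)=A_c^*(\mathfrak{U};V)$, and the neighbourhoods of the form $\mathfrak{U}[q]$ are cofinal among all diagonal neighbourhoods; this is what guarantees that $\colim_{\mathfrak{U}}H_{kc}(\mathfrak{U};V)$ (the colimit over $R$-numerable coverings) really computes $H_{AS,kc}(X;V)$.

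First I would verify that the $R$-numerable coverings $\mathfrak{U}$ of $X$ for which every covering $\{U_i^{q+1}\mid i\in I\}$ of $\mathfrak{U}[q]$ is again $R$-numerable are cofinal among all $R$-numerable coverings. Starting from an $R$-valued partition of unity $\{\varphi_i\mid i\in I\}$ subordinate to $\mathfrak{U}$ one passes to the cozero refinement $\{\varphi_i^{-1}(R\setminus\{0\})\mid i\in I\}$ and, as in the proof of Corollary~\ref{covbycozk}, forms on each $k$-space $\mathfrak{U}[q]$ the continuous functions $\vec{x}\mapsto\varphi_i(x_0)\cdots\varphi_i(x_q)$; these have supports contained in the $U_i^{q+1}$ and locally finite, and their sum $\varphi_q$ is nowhere zero on $\mathfrak{U}[q]$ for the cozero covering, so that $\{\varphi_q^{-1}\varphi_{q,i}\mid i\in I\}$ is an $R$-valued partition of unity subordinate to $\{U_i^{q+1}\mid i\in I\}$. (When $X$ is metric this step is immediate, since $\mathfrak{U}[q]$ is then metrizable, hence paracompact, and every open covering of it is numerable.) This is the step at which the standing hypothesis on the coefficient ring — a complete $k$-field, cf. Corollary~\ref{covbycozk} — is used: one needs $\varphi_q$ to be invertible in $R$ with continuous inverse, and the usual $k$-space bookkeeping (continuity of sums, products and inverses may be tested on compact Hausdorff subspaces) to know the resulting functions are continuous out of the $k$-space $\mathfrak{U}[q]$.

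For each covering $\mathfrak{U}$ in this cofinal family Theorem~\ref{isocontuqnumk} then supplies the isomorphisms $H_{kc}(\mathfrak{U};V)\cong\check{H}(\mathfrak{U};V)\cong H(\mathfrak{U};V)$, induced respectively by the inclusion $A_{kc}^*(\mathfrak{U};V)\hookrightarrow A^*(\mathfrak{U};V)$ and by the augmentations $i^*,\,j^*,\,i_{kc}^*,\,j_{kc}^*$ of the two \v{C}ech--Alexander--Spanier double complexes. All of these morphisms of cochain complexes are natural with respect to refinement of coverings, so they induce morphisms of the associated directed systems; passing to the colimit over the cofinal family, hence over all $R$-numerable coverings, yields the asserted isomorphisms $\check{H}(X;V)\cong H_{AS,kc}(X;V)\cong H_{AS}(X;V)$.

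The main obstacle is the cofinality argument of the second paragraph: checking that the product-and-normalise construction genuinely delivers an $R$-valued partition of unity on the $k$-space $\mathfrak{U}[q]$ — local finiteness of the supports of the functions $\varphi_i(x_0)\cdots\varphi_i(x_q)$, non-vanishing of $\varphi_q$, and continuity of $\varphi_q^{-1}\varphi_{q,i}$ as a map out of $\fktop\mathfrak{U}[q]$ — together with the bookkeeping needed to see that the colimit over $R$-numerable coverings agrees with the one over the cofinal subfamily of cozero coverings, which rests on the cofinality of the $\mathfrak{U}[q]$ among diagonal neighbourhoods provided by Lemma~\ref{appfinprodofko}.
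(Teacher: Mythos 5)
Your argument is correct and follows the same route the paper intends for this corollary (which it states without an explicit proof): Lemma \ref{appfinprodofko} and the metric/locally compact cases make the powers $X^{q+1}$, hence the diagonal neighbourhoods $\mathfrak{U}[q]$, compactly Hausdorff generated and cofinal, the product-and-normalise construction of Corollary \ref{covbycozk} supplies the $R$-numerability of the coverings $\{U_i^{q+1}\}$ needed for Theorem \ref{isocontuqnumk}, and the isomorphisms are natural under refinement, so they pass to the colimit over $R$-numerable coverings. Your remark that the normalisation step uses the hypotheses of Corollary \ref{covbycozk} on the ring $R$ (invertibility of the sum $\varphi_q$) is an accurate reading of what the paper's own bookkeeping implicitly requires.
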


\begin{example}
Real and complex Kac-Moody groups are Hausdorff $k_\omega$-spaces 
(cf.  \cite{GGH06}). Thus for real or complex Kac-Moody groups $G$ the 
cohomologies $\check{H} (G;V)$, $H_{AS,kc} (G;V)$ and $H_{AS} (G;V)$ w.r.t. 
$R$-numerable coverings are isomorphic.
\end{example}

\begin{corollary}
  The cohomologies $\check{H} (X;V)$, $H_{AS,kc} (X;V)$ and $H_{AS} (X;V)$ 
of a metric, paracompact and locally compact or paracompact Hausdorff 
$k_\omega $-space $X$ are isomorphic.
\end{corollary}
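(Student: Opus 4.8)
The plan is to deduce this from the preceding corollary -- which already identifies $\check H(X;V)$, $H_{AS,kc}(X;V)$ and $H_{AS}(X;V)$ once the defining colimits are restricted to $R$-numerable coverings -- by showing that under the present hypotheses this restriction is cofinal and hence harmless. Two things must be arranged: that the continuous $k$-cochains $A_{kc}^*(\mathfrak U;V)$ are available on \emph{every} open covering of $X$ and compute $H_{AS,kc}(X;V)$ in the colimit, and that the coverings to which Theorem \ref{isocontuqnumk} applies are cofinal among all open coverings of $X$.

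For the first point I would note that each of the three classes in the hypothesis -- metric spaces, paracompact locally compact Hausdorff spaces, and paracompact Hausdorff $k_\omega$-spaces -- has all finite powers $X^{q+1}$ again compactly Hausdorff generated: a finite power of a metric space is metric, a finite power of a locally compact Hausdorff space is locally compact Hausdorff, and a finite power of a Hausdorff $k_\omega$-space is a Hausdorff $k_\omega$-space by Lemma \ref{appfinprodofko}. Hence $X^{q+1}$ is a $k$-space for all $q$, so by Lemma \ref{fktoppresopemb} each diagonal neighbourhood $\mathfrak U[q]\subseteq X^{q+1}$ is a $k$-space and the neighbourhoods of the form $\mathfrak U[q]$ are cofinal in all neighbourhoods of the diagonals in $X^{*+1}$; as recorded just before the previous corollary this yields $\colim_{\mathfrak U}H_{kc}(\mathfrak U;V)=H_{AS,kc}(X;V)$, $\colim_{\mathfrak U}H(\mathfrak U;V)=H_{AS}(X;V)$ and $\colim_{\mathfrak U}\check H(\mathfrak U;V)=\check H(X;V)$, the colimits running over all open coverings.

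For the second point I would use paracompactness of $X$: every open covering of $X$ is refined by the cozero covering $\mathfrak U'=\{\varphi_i^{-1}(R\setminus\{0\})\mid i\in I\}$ of a generalised partition of unity $\{\varphi_i\}$ subordinate to it, and then -- exactly by the construction carried out in the proof of Corollary \ref{covbycozk} -- each induced covering $\{\fktop U^{q+1}\mid U\in\mathfrak U'\}$ of $\fktop\mathfrak U'[q]$ is $R$-numerable, so that $\mathfrak U'$ meets the hypotheses of Theorem \ref{isocontuqnumk}. These coverings being cofinal, passing to the colimit over them in the isomorphisms $\check H(\mathfrak U';V)\cong H_{kc}(\mathfrak U';V)\cong H(\mathfrak U';V)$ supplied by Theorem \ref{isocontuqnumk} gives the asserted isomorphisms $\check H(X;V)\cong H_{AS,kc}(X;V)\cong H_{AS}(X;V)$.

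The one step that needs genuine care is this cofinality argument -- transporting a partition of unity on $X$ to the $R$-numerability of the coverings $\{\fktop U^{q+1}\mid U\in\mathfrak U\}$ of the $k$-spaces $\fktop\mathfrak U[q]$ that Theorem \ref{isocontuqnumk} actually demands; since this is exactly what is done in the proof of Corollary \ref{covbycozk}, what remains is a purely formal colimit argument over the resulting cofinal family of coverings.
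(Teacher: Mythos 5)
Your proposal is correct and follows exactly the route the paper intends (the corollary is stated there without an explicit proof as an immediate consequence of the preceding material): finite powers of metric, locally compact Hausdorff, resp.\ Hausdorff $k_\omega$-spaces are again of that kind (Lemma \ref{appfinprodofko}), so the diagonal neighbourhoods $\mathfrak{U}[q]$ are $k$-spaces and cofinal, while paracompactness makes the cozero coverings of subordinate partitions of unity cofinal in all open coverings, so that Corollary \ref{covbycozk} (via Theorem \ref{isocontuqnumk}) gives the isomorphisms covering-wise and the claim follows by passing to the colimit. Nothing essential is missing.
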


\begin{example}
For metrisable manifolds $M$ and real $k$-modules $V$ the cohomologies 
$\check{H} (M;V)$, $H_{AS,kc} (M;V)$ and $H_{AS} (M;V)$ are isomorphic. 
\end{example}

\begin{proposition} \label{ubypmk}
  If $d: X \times X \rightarrow \mathbb{R}$ is a continuous pseudometric on 
$X$ and $V$ a real $k$-module then for each $\epsilon > 0$ and covering 
$\mathfrak{U}=\{ B_d (x,\epsilon) \mid x \in X \}$ of $X$ by open 
$\epsilon$-balls the inclusion 
$A_{kc}^* (\mathfrak{U};V) \hookrightarrow A^* (\mathfrak{U};V)$ induces an
isomorphism in cohomology and the cohomologies 
$\check{H} (\mathfrak{U};V)$, $H_{kc} (\mathfrak{U};V)$ and 
$H (\mathfrak{U};V)$ are isomorphic.
\end{proposition}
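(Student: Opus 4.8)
The plan is to obtain this as the $k$-space counterpart of Proposition~\ref{ubypm}, deducing it from Corollary~\ref{covbycozk} in exactly the way Proposition~\ref{ubypm} was deduced from Corollary~\ref{covbycoz}. The first step is to record that $R=\mathbb{R}$ is admissible as coefficient ring: the real line is locally compact Hausdorff, hence a $k$-space, and it is a complete metric field, so it is a complete $k$-field in the sense required by Corollary~\ref{covbycozk}. Consequently a real $k$-module $V$ is a $k$-module over a complete $k$-field, which is precisely the hypothesis on $V$ in that corollary.

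Next I would produce the partition of unity. Given the continuous pseudometric $d\colon X\times X\to\mathbb{R}$ and $\epsilon>0$, \cite[Proposition~B2]{S70} supplies a generalised partition of unity $\{\varphi_x\mid x\in X\}$ on $X$ with $\varphi_x^{-1}((0,1])=B_d(x,\epsilon)$ for every $x$. Since each $\varphi_x$ takes values in $[0,1]$, its cozero set $\varphi_x^{-1}(\mathbb{R}\setminus\{0\})$ equals $\varphi_x^{-1}((0,1])=B_d(x,\epsilon)$, so $\{\varphi_x\mid x\in X\}$ is a generalised $\mathbb{R}$-valued partition of unity whose associated cozero-set covering is precisely $\mathfrak{U}=\{B_d(x,\epsilon)\mid x\in X\}$. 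Applying Corollary~\ref{covbycozk} with $R=\mathbb{R}$ and this partition of unity then yields that the inclusion $A_{kc}^*(\mathfrak{U};V)\hookrightarrow A^*(\mathfrak{U};V)$ induces an isomorphism in cohomology and that $\check{H}(\mathfrak{U};V)$, $H_{kc}(\mathfrak{U};V)$ and $H(\mathfrak{U};V)$ agree.

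I do not expect a genuine obstacle here; the only point to watch is that the $\varphi_x$, which are continuous into $\mathbb{R}$ in $\tops$, are also admissible on the $k$-spaces $\fktop\mathfrak{U}[q]$ that enter Corollary~\ref{covbycozk}. This is automatic: $\mathbb{R}$ is locally compact, so continuity and $k$-continuity of maps into $\mathbb{R}$ coincide, and the passage from the $\varphi_x$ to the auxiliary non-negative functions $\varphi_{q,i}$, $\varphi_q$ on $\fktop\mathfrak{U}[q]$ (together with the bookkeeping involving the coreflector $\fktop$ and the fact that the $\varphi_q$ are strictly non-zero) is already carried out inside the proof of Corollary~\ref{covbycozk}. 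Thus the proposition is an immediate specialisation of Corollary~\ref{covbycozk} to $R=\mathbb{R}$, using the existence result \cite[Proposition~B2]{S70}.
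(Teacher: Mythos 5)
Your proposal is correct and follows essentially the same route as the paper: the paper proves Proposition~\ref{ubypmk} by declaring it analogous to Proposition~\ref{ubypm}, i.e.\ invoking \cite[Proposition B2]{S70} to obtain the generalised partition of unity whose cozero sets are the $\epsilon$-balls and then applying the cozero-set corollary (here Corollary~\ref{covbycozk} with $R=\mathbb{R}$, which is indeed a complete $k$-field). Your additional remarks on the admissibility of $\mathbb{R}$ and on $k$-continuity into $\mathbb{R}$ are exactly the routine checks the paper leaves implicit.
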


\begin{proof}
  The proof is analogous to that of Proposition \ref{ubypm}.
\end{proof}

\begin{example}
If $\mathfrak{U}=\{ B (x,\epsilon) \mid x \in X \}$ is an open covering of a 
finite dimensional Riemannian manifold $M$ by open $\epsilon$-balls and $V$ a 
real $k$-module, 
then the cohomology $H_{kc} (\mathfrak{U};V)$ of the complex 
$A_{kc}^* (\mathfrak{U};V)$ is isomorphic to cohomology 
$H (\mathfrak{U} ;V)$ and to the \v{C}ech and singular cohomologies of $M$ 
(cf. Ex. \ref{exugeodconv}). If $M$ is an infinite dimensional Riemannian 
manifold one has to require the local existence of geodesics.  
\end{example}

\begin{corollary}
For any open entourage $U$ of a uniform $k$-space $X$ and real $k$-module $V$ 
the inclusion 
$A_{kc}^* (\mathfrak{U}_U;V)\hookrightarrow A^* (\mathfrak{U}_U;V)$ 
induces an isomorphism in cohomology and the cohomologies 
$\check{H} (\mathfrak{U}_U;V)$, $H_{kc} (\mathfrak{U}_U;V)$ and 
$H (\mathfrak{U}_U;V)$ are isomorphic.
\end{corollary}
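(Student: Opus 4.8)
The plan is to deduce the statement from Proposition~\ref{ubypmk}, by recognising the covering $\mathfrak{U}_U$ as a covering of $X$ by open balls of a continuous pseudometric --- exactly as the analogous corollary for uniform topological spaces was obtained from Proposition~\ref{ubypm}.

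First I would invoke the pseudometrisation of entourages: every open entourage $U$ of a uniform space $X$ has the form $U = d_U^{-1}([0,1))$ for some continuous pseudometric $d_U : X \times X \rightarrow \mathbb{R}$ (\cite[Proposition B.2]{S70}). Since $X$ is here assumed to be a uniform $k$-space this representation is still available, and $d_U$ is continuous on the topological product $X \times X$. Then, for each $x \in X$, the section $U[x] = \{\, y \in X \mid (x,y) \in U \,\}$ equals $\{\, y \in X \mid d_U(x,y) < 1 \,\} = B_{d_U}(x,1)$, so $\mathfrak{U}_U = \{\, U[x] \mid x \in X \,\}$ is precisely the open covering of $X$ by the unit $d_U$-balls.

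With this identification in hand I would simply apply Proposition~\ref{ubypmk} to the pseudometric $d = d_U$, the real $k$-module $V$, and $\epsilon = 1$. That proposition immediately yields that the inclusion $A_{kc}^*(\mathfrak{U}_U;V) \hookrightarrow A^*(\mathfrak{U}_U;V)$ induces an isomorphism in cohomology and that $\check{H}(\mathfrak{U}_U;V)$, $H_{kc}(\mathfrak{U}_U;V)$ and $H(\mathfrak{U}_U;V)$ are mutually isomorphic, which is the assertion.

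I do not expect a genuine obstacle: the corollary is the uniform-space specialisation of Proposition~\ref{ubypmk}. The only point that warrants a moment's attention is the hypothesis of that proposition that the pseudometric be genuinely continuous on $X \times X$ (and not merely on $\fktop(X \times X)$) and that the balls $B_{d_U}(x,1)$ really do form an \emph{open} cover of $X$; both are exactly what the cited pseudometrisation result provides.
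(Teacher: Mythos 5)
Your proposal is correct and matches the paper's own route: the paper proves the analogous corollary for uniform spaces by combining the pseudometrisation of open entourages ($U=d_U^{-1}([0,1))$, \cite[Proposition B.2]{S70}) with the $\epsilon$-ball proposition, and the $k$-space version in question is obtained the same way from Proposition \ref{ubypmk}. Your identification $U[x]=B_{d_U}(x,1)$ and the application with $\epsilon=1$ are exactly the intended argument.
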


A particular interesting case are compactly Hausdorff generated topological 
groups $G$. 
For such groups and $k$-modules $V$ over the $k$-ring $\mathbb{R}$ we observe:

\begin{corollary} \label{localfortopgrpsandvsvk}
For any open $1$-neighbourhood $U$ of a compactly Hausdorff generated 
topological group $G$ and real $k$-module $V$ the inclusion 
$A_{kc}^* (\mathfrak{U}_U ;V) \hookrightarrow A^* (\mathfrak{U}_U ;V)$
induces an isomorphism in cohomology and the cohomologies 
$\check{H} (\mathfrak{U}_U;V)$, $H_c (\mathfrak{U}_U;V)$ and 
$H (\mathfrak{U}_U;V)$ are isomorphic.
\end{corollary}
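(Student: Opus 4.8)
The plan is to obtain this as a special case of the corollary above on open entourages of uniform $k$-spaces, just as Corollary~\ref{localfortopgrpsandvsv} was obtained in the purely topological setting. The first step is to observe that a compactly Hausdorff generated topological group $G$ is simultaneously a $k$-space and a uniform space: its (left) group uniformity is generated by the entourages $\tilde{U}:=\{(g,h)\in G\times G\mid g^{-1}h\in U\}$ indexed by the open identity neighbourhoods $U$, and each $\tilde{U}$ is an open entourage of the diagonal, being the preimage of the open set $U$ under the continuous map $G\times G\to G$, $(g,h)\mapsto g^{-1}h$, and containing the diagonal because $e\in U$.

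Next I would identify the covering attached to $\tilde{U}$ with the left translation covering: since $\tilde{U}[g]=\{h\in G\mid g^{-1}h\in U\}=gU$ for every $g\in G$, we have $\mathfrak{U}_{\tilde{U}}=\mathfrak{U}_U$ as open coverings of $G$. Applying the uniform $k$-space corollary to $X=G$ and the entourage $\tilde{U}$ then gives immediately that $A_{kc}^*(\mathfrak{U}_U;V)\hookrightarrow A^*(\mathfrak{U}_U;V)$ induces an isomorphism in cohomology and that $\check{H}(\mathfrak{U}_U;V)$, $H_{kc}(\mathfrak{U}_U;V)$ and $H(\mathfrak{U}_U;V)$ are isomorphic.

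For the present statement there is thus no genuine obstacle beyond unwinding the cited corollary; the substance lies upstream, in Proposition~\ref{ubypmk} and Theorem~\ref{isocontuqnumk}. Concretely, one uses \cite[Proposition~B.2]{S70} to write $\tilde{U}=d^{-1}([0,1))$ for a continuous pseudometric $d$ on $G$, so that $\mathfrak{U}_U=\{B_d(g,1)\mid g\in G\}$ is a covering by open unit balls, and feeds this into Theorem~\ref{isocontuqnumk} via the generalised $\mathbb{R}$-valued partitions of unity built on each $\fktop\mathfrak{U}[q]$ as in Corollary~\ref{covbycozk}. The only point that is not a purely formal transcription of Section~\ref{secclc} is verifying that the coverings $\{\fktop U_i^{q+1}\mid i\in I\}$ of the $k$-ified diagonal neighbourhoods $\fktop\mathfrak{U}[q]$ really are $R$-numerable in $\ktop$ — that the assembled sums $\varphi_q=\sum_i\varphi_{q,i}$ are continuous on $\fktop\mathfrak{U}[q]$ and that renormalisation yields a partition of unity subordinate to $\{\fktop U_i^{q+1}\}$ — which rests on $\mathbb{R}$ being a complete $k$-field together with the $k$-space forms of Lemmata~\ref{sumcontthensubsumcont} and~\ref{sumvaprhicontthensumabsvalcont}. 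I expect that $R$-numerability verification to be the only place where anything beyond a routine translation is required.
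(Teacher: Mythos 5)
Your proposal is correct and follows exactly the route the paper intends: the corollary is stated as the special case of the preceding corollary on open entourages of uniform $k$-spaces, applied to the left-uniformity entourage $\tilde{U}=\{(g,h)\mid g^{-1}h\in U\}$, whose associated covering $\{\tilde{U}[g]\mid g\in G\}=\{gU\mid g\in G\}$ is $\mathfrak{U}_U$, with the real substance residing upstream in Proposition~\ref{ubypmk}, Corollary~\ref{covbycozk} and Theorem~\ref{isocontuqnumk} exactly as you indicate. Your explicit check of the entourage/covering identification and your flagging of the $R$-numerability of $\{\fktop U_i^{q+1}\}$ on $\fktop\mathfrak{U}[q]$ (where $\mathbb{R}$ being a complete $k$-field is used) only make explicit what the paper leaves tacit.
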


This especially applies to all metric groups, all locally compact groups and
all topological groups which are Hausdorff $k_\omega$-spaces:

\begin{example}
  For any Hilbert Lie group, real or complex Kac-Moody group $G$ and real
  $k$-module $V$ the inclusion 
$A_{kc}^* (\mathfrak{U}_U ;V) \hookrightarrow A^* (\mathfrak{U}_U ;V)$ 
induces an isomorphism in cohomology and the cohomologies 
$\check{H} (\mathfrak{U}_U;V)$, $H_c (\mathfrak{U}_U;V)$ and 
$H (\mathfrak{U}_U;V)$ are isomorphic.
\end{example}

\begin{proposition} \label{propuqnumsingk}
For any open covering $\mathfrak{U}$ of a $k$-space $X$ for which each 
set $U_{i_0 \ldots i_p}$ is $V$-acyclic and each covering 
$\{ \fktop U_i^{q+1} \mid i \in I \}$ of $\fktop \mathfrak{U}[q]$ is 
$R$-numerable the homomorphism $C (\lambda_\mathfrak{U}^* ,V) : 
A_{kc}^* (\mathfrak{U};V) \rightarrow S^* (X,\mathfrak{U};V)$ 
induces an isomorphism $H_{kc} (\mathfrak{U};V) \cong H_{sing} (X;V)$ in 
cohomology and the following diagram is commutative:
\begin{equation*}
  \xymatrix{
H_{kc} (\mathfrak{U};V) \ar[r]^{H (i)}_\cong 
\ar[d]_{H (C (\lambda_\mathfrak{U}^*;V))}^\cong  &
H (\mathfrak{U};V) \ar[r]^{H (j)^{-1} H(i)}_\cong 
\ar[d]_{H (C (\lambda_\mathfrak{U}^* ;V))}^\cong & 
\check{H} (\mathfrak{U};V) \ar@{=}[d] \\
H_{sing} (\mathfrak{U};V) \ar@{=}[r] & 
H_{sing} (\mathfrak{U};V) \ar[r]^\cong_{H (j)^{-1} H(i)} & 
\check{H} (\mathfrak{U};V) 
}
\end{equation*}
In particular the \v{C}ech and the continuous $\mathfrak{U}$-local 
cohomology do not depend on the open cover $\mathfrak{U}$ subject to the 
above conditions chosen.
\end{proposition}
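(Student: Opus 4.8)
The plan is to imitate the proof of Proposition~\ref{propuqnumsing} almost verbatim, replacing the presheaf of continuous cochains by the presheaf $A_{kc}^q(-;V)$ of $k$-continuous cochains and invoking the $k$-versions of the auxiliary results of this section. First I would record the two inputs. Under the $R$-numerability hypothesis on the coverings $\{\fktop U_i^{q+1}\mid i\in I\}$ of $\fktop\mathfrak{U}[q]$, Theorem~\ref{isocontuqnumk} — through Corollary~\ref{uqnumicindisok} together with Lemma~\ref{jindisok} — gives that the augmentations $i_{kc}^*$ and $j_{kc}^*$ induce isomorphisms, so that $H_{kc}(\mathfrak{U};V)\cong H(\mathfrak{U};V)\cong\check{H}(\mathfrak{U};V)$. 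Under the $V$-acyclicity hypothesis, the augmented columns of the double complex $\check{C}^*(\mathfrak{U},S^*)$ are exact, exactly as in Section~\ref{seccas}; note that this point is insensitive to $k$-ification, since singular cochains are arbitrary $V$-valued functions on singular simplices, with no topology on the target side.

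Next I would assemble the comparison diagram. The vertex morphism $\lambda_X\colon C(\varDelta,X)\to\fktop X^{*+1}$ of semi-simplicial spaces induces a morphism $C(\lambda_\mathfrak{U}^*;V)\colon A_{kc}^*(\mathfrak{U};V)\to S^*(X,\mathfrak{U};V)$ of cochain complexes as well as a morphism $\check{C}^*(\mathfrak{U};C(\lambda^*;V))$ of the associated $\check{C}$-double complexes, and these intertwine $i_{kc}^*,j_{kc}^*$ with the augmentations $i^*,j^*$ of $\check{C}^*(\mathfrak{U},S^*)$; moreover $C(\lambda_\mathfrak{U}^*;V)$ is compatible with the inclusion $A_{kc}^*(\mathfrak{U};V)\hookrightarrow A^*(\mathfrak{U};V)$, because precomposing a $k$-continuous function on $\fktop X^{q+1}$ with $\lambda$ is just the restriction of precomposing its underlying element of $A^q(\mathfrak{U};V)$ with $\lambda$. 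Stacking these morphisms over the two augmented double complexes (for $A_{kc}^*$ and for $S^*$) yields a commutative square of the same shape as the one in the proof of Proposition~\ref{propuqnumsing}, in which the horizontal augmentations $i_{kc}^*$, $j_{kc}^*$ are isomorphisms by the previous paragraph, the horizontal augmentations $i^*$, $j^*$ for $S^*$ are isomorphisms by the always-valid row contraction and the $V$-acyclicity, and the rightmost vertical arrow is the identity. A diagram chase then forces $H(\check{C}^*(\mathfrak{U};C(\lambda^*;V)))$ to be an isomorphism, and with it $H(C(\lambda_\mathfrak{U}^*;V))\colon H_{kc}(\mathfrak{U};V)\to H_{sing}(X;V)$; reading off the outer rectangle of the same chase produces the asserted commutative diagram, each of whose faces is induced by an honest morphism of (double) complexes.

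I expect the only genuine point to check — as opposed to pure bookkeeping — is that $\lambda$ restricts to a morphism out of the $k$-continuous cochain double complex, i.e. that precomposition with $\lambda_X$ carries $C(\fktop\mathfrak{U}[q];V)$ into $S^*(X,\mathfrak{U};V)$; this is immediate because the standard simplices are $k$-spaces, so $\lambda_X$ factors continuously through $\fktop X^{*+1}$, and $C(-,V)$ is contravariant, while there is no continuity constraint to verify on the target. Everything else repeats the arguments of Section~\ref{seccas} and of Theorem~\ref{isocontuqnumk} word for word.
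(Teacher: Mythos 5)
Your proposal is correct and is essentially the argument the paper intends: Proposition \ref{propuqnumsingk} is stated without a written proof precisely because it is the $k$-space analogue of Proposition \ref{propuqnumsing}, obtained by comparing the augmented double complexes $\check{C}^*(\mathfrak{U},A_{kc}^*)$ and $\check{C}^*(\mathfrak{U},S^*)$ via the vertex morphism and invoking Lemma \ref{jindisok}, Corollary \ref{uqnumicindisok} (equivalently Theorem \ref{isocontuqnumk}) on one side and column exactness from $V$-acyclicity plus the always-exact rows on the singular side. Your observation that no continuity needs to be checked on the singular-cochain target, so that precomposition with $\lambda$ is unproblematic for $k$-continuous cochains, is exactly the only point where the $k$-setting could have interfered, and you handle it correctly.
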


\begin{corollary}
 For any open covering $\mathfrak{U}$ of a $k$-space $X$ 
for which each set $U_{i_0 \ldots i_p}$ is $V$-acyclic and each covering 
$\{ \fktop U_i^{q+1} \mid i \in I \}$ of $\fktop \mathfrak{U}[q]$ is 
$R$-numerable the singular cohomology $H_{sing} (X;V)$ and the \v{C}ech 
cohomology $\check{H} (\mathfrak{U};V)$ for the covering $\mathfrak{U}$ 
can be computed from the complex $A_{kc}^* (\mathfrak{U};V)$ of 
continuous $\mathfrak{U}$-local cochains. 
\end{corollary}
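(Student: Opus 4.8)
The plan is to deduce the corollary in one step from Proposition~\ref{propuqnumsingk}. That proposition was proved under precisely the two hypotheses imposed here --- $V$-acyclicity of every $U_{i_0\ldots i_p}$ and $R$-numerability of every covering $\{\fktop U_i^{q+1}\mid i\in I\}$ of $\fktop\mathfrak{U}[q]$ --- and it already establishes that the vertex morphism induces an isomorphism $H_{kc}(\mathfrak{U};V)\cong H_{sing}(X;V)$, together with the commutative square in which the composite $H(j)^{-1}H(i)\circ H(i)$ realises an isomorphism $H_{kc}(\mathfrak{U};V)\cong\check H(\mathfrak{U};V)$. Combining these, the cohomology $H_{kc}(\mathfrak{U};V)$ of the complex $A_{kc}^*(\mathfrak{U};V)$ of continuous $\mathfrak{U}$-local cochains is simultaneously isomorphic to $H_{sing}(X;V)$ and to $\check H(\mathfrak{U};V)$, which is exactly the assertion.

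For the reader's convenience I would also recall where each half of Proposition~\ref{propuqnumsingk} comes from, so that the corollary is self-contained: the $R$-numerability hypothesis yields, via the row contraction of Proposition~\ref{propchechcontrhomcontk} and Corollary~\ref{uqnumicindisok}, that $i_{kc}^*$ induces an isomorphism in cohomology, which together with Lemma~\ref{jindisok} identifies $H_{kc}(\mathfrak{U};V)$ with $H(\tot\check C^*(\mathfrak{U},A_{kc}^*))$ and hence with $\check H(\mathfrak{U};V)$ (this is Theorem~\ref{isocontuqnumk}); the $V$-acyclicity hypothesis makes the augmented columns of $\check C^*(\mathfrak{U},S^*)$ exact, so that the same bookkeeping on the singular side, intertwined through the morphism $C(\lambda^*;V)$ of double complexes, transports the isomorphism to $H_{sing}(X;V)$.

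I do not expect a genuine obstacle here; the work was already carried out in Section~\ref{secclck} and in the singular part of Section~\ref{seccas}. The one bookkeeping point worth keeping in mind --- and it is already built into the statements I am citing --- is that, since $X$ need not be a $k$-space fixed under products, all continuous cochains and all partitions of unity are taken on the coreflected $k$-spaces $\fktop\mathfrak{U}[q]$ and $\fktop U_i^{q+1}$, using that $\fktop$ preserves open embeddings (Lemma~\ref{fktoppresopemb}) so that these fit together into an open simplicial $k$-subspace; with that understood, the proof of the corollary is the short deduction from Proposition~\ref{propuqnumsingk} described above.
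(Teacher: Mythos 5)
Your deduction is correct and matches the paper's intent: the corollary is stated immediately after Proposition \ref{propuqnumsingk} with no separate proof, precisely because it is the direct consequence you describe, reading off the isomorphisms $H_{kc}(\mathfrak{U};V)\cong H_{sing}(X;V)$ and $H_{kc}(\mathfrak{U};V)\cong\check H(\mathfrak{U};V)$ from that proposition's diagram. Your supplementary recollection of how the proposition is assembled (Theorem \ref{isocontuqnumk} on the continuous side, column exactness from $V$-acyclicity on the singular side) is accurate and consistent with the paper.
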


\begin{example}
For any 'good' cover $\mathfrak{U}$ of a $k$-space $X$ for which each covering 
$\{ \fktop U_i^{q+1} \mid i \in I \}$ of 
$\fktop \mathfrak{U}[q]$ is $R$-numerable the morphism 
$ 
A_{kc}^* (\mathfrak{U};V) \rightarrow S^* (X,\mathfrak{U};V)$ 
of cochain complexes induces an isomorphism in cohomology and the cohomologies 
$\check{H} (\mathfrak{U};V)$, $H (\mathfrak{U};V)$, $H_{kc} (\mathfrak{U};V)$ 
and $H_{sing} (X;V)$ are isomorphic.
\end{example}

\begin{lemma} \label{vacnumarecofink}
If $X$ is a $k$-space and the diagonal neighbourhoods $\mathfrak{U}[q]$ 
for open coverings $\mathfrak{U}$ for which the sets 
$U_{i_0 \ldots i_p}$ are $V$-acyclic and each covering 
$\{ \fktop U_i^{q+1} \mid i\in I\}$ of $\fktop \mathfrak{U}[q]$ 
is $R$-numerable are cofinal in all diagonal neighbourhoods, 
then for each such covering $\mathfrak{U}$ the cohomology 
$H_{kc} (\mathfrak{U};V)$ of continuous $\mathfrak{U}$-local 
cochains coincides with the continuous Alexander-Spanier cohomology 
$H_{AS,kc} (X;V)$ of $X$. In particular the directed system 
$H_{kc} (\mathfrak{U};V)$ of abelian groups is co-Mittag-Leffler.
\end{lemma}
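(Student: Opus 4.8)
The plan is to run the same argument as in the proof of Lemma~\ref{vacnumarecofin}, now in the $k$-space setting, using Proposition~\ref{propuqnumsingk} in place of Proposition~\ref{propuqnumsing}. Write $\mathcal{C}$ for the family of open coverings $\mathfrak{U}=\{U_i\mid i\in I\}$ of $X$ such that every $U_{i_0\ldots i_p}$ is $V$-acyclic and every covering $\{\fktop U_i^{q+1}\mid i\in I\}$ of $\fktop\mathfrak{U}[q]$ is $R$-numerable. By hypothesis the diagonal neighbourhoods $\mathfrak{U}[q]$ arising from coverings $\mathfrak{U}\in\mathcal{C}$ are cofinal in the directed poset of all diagonal neighbourhoods; hence $\mathcal{C}$ is itself directed (a cofinal subset of a directed poset is directed), and the continuous Alexander-Spanier cohomology $H_{AS,kc}(X;V)=\colim_\mathfrak{U} H_{kc}(\mathfrak{U};V)$ may already be computed as the colimit $\colim_{\mathfrak{U}\in\mathcal{C}} H_{kc}(\mathfrak{U};V)$ over this cofinal subfamily. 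The same remark applies to $H_{AS}(X;V)$ and $\colim_{\mathfrak{U}\in\mathcal{C}} H(\mathfrak{U};V)$.

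First I would apply Proposition~\ref{propuqnumsingk} to each $\mathfrak{U}\in\mathcal{C}$: the inclusion $A_{kc}^*(\mathfrak{U};V)\hookrightarrow A^*(\mathfrak{U};V)$ and the vertex morphism $C(\lambda_\mathfrak{U}^*,V)$ induce isomorphisms $H_{kc}(\mathfrak{U};V)\cong H(\mathfrak{U};V)\cong H_{sing}(X;V)$ fitting into the commutative diagram of that proposition, and these identifications are natural with respect to refinement of coverings. Passing to the colimit over the directed family $\mathcal{C}$ therefore gives $H_{AS,kc}(X;V)\cong H_{AS}(X;V)\cong H_{sing}(X;V)$, the first isomorphism being induced by the inclusion of complexes. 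Moreover, since every transition map in the directed system $\{H_{kc}(\mathfrak{U};V)\}_{\mathfrak{U}\in\mathcal{C}}$ is an isomorphism — both sides being identified compatibly with the covering-independent group $H_{sing}(X;V)$ — each structure map $H_{kc}(\mathfrak{U};V)\to\colim_{\mathfrak{U}\in\mathcal{C}}H_{kc}(\mathfrak{U};V)=H_{AS,kc}(X;V)$ is an isomorphism; this is exactly the assertion that $H_{kc}(\mathfrak{U};V)$ coincides with $H_{AS,kc}(X;V)$ for each such $\mathfrak{U}$.

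The co-Mittag-Leffler statement is then immediate: the directed system $\{H_{kc}(\mathfrak{U};V)\}_{\mathfrak{U}\in\mathcal{C}}$ has all transition maps isomorphisms, hence is essentially constant and in particular co-Mittag-Leffler. I expect no serious obstacle; the only point requiring a line of care is the verification that the cofinality hypothesis genuinely makes $\mathcal{C}$ directed and lets one replace the colimit over all $R$-numerable coverings by the colimit over $\mathcal{C}$ — but this is precisely the bookkeeping already carried out in the topological case of Lemma~\ref{vacnumarecofin}, and it transfers verbatim once one works with $\fktop\mathfrak{U}[q]$ and $A_{kc}^*$ throughout.
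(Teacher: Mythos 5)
Your argument is correct and is essentially the paper's own: the paper omits an explicit proof here, the intended argument being the one of Lemma \ref{vacnumarecofin} transported to the $k$-setting, i.e.\ use Proposition \ref{propuqnumsingk} to get $H_{kc}(\mathfrak{U};V)\cong H(\mathfrak{U};V)\cong H_{sing}(X;V)$ compatibly for each covering in the cofinal family and then pass to the colimit, which is exactly what you do. Your added remark that naturality under refinement makes all transition maps isomorphisms (hence the structure maps to the colimit are isomorphisms and the system is co-Mittag-Leffler) is the same bookkeeping the paper relies on implicitly.
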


\begin{example}
If $\mathfrak{U}$ is an open covering of a finite dimensional Riemannian 
manifold $M$ by geodetically convex sets, then the cohomology of the complex 
$A_{kc}^* (\mathfrak{U};V)$ is isomorphic to the \v{C}ech cohomology $\check{H}
(\mathfrak{U};V)$ and to the singular cohomology $H_{sing} (M;V)$ of $M$. 
If $M$ is an infinite dimensional Riemannian manifold one has to require the 
local existence of geodesics for this argument to be applicable.
\end{example}

\begin{example}
  If $G$ is a Hilbert Lie group and $U$ a geodetically convex identity 
neighbourhood of $G$, then the cohomology of the complex 
$A_{kc}^* (\mathfrak{U}_U;V)$ is isomorphic to \v{C}ech cohomology $\check{H}
(\mathfrak{U};V)$ and to the singular cohomology $H_{sing} (M;V)$ of $M$.
\end{example}

One obtains a similar result for locally contractible compactly Hausdorff
generated topological groups without acyclicity condition on the open 
coverings:

\begin{theorem} \label{colimnbhfcontk}
For any locally contractible compactly Hausdorff generated group $G$ with 
open neighbourhood filterbase $\mathcal{U}_1$ for which all finite products 
$G^{p+1}$ are $k$-spaces and any real $k$-module $V$ the morphisms 
$A_{kc}^* (\mathfrak{U}_U ;V) \hookrightarrow A^* (\mathfrak{U}_U ;V)$ and 
$C (\lambda_{\mathfrak{U}_U}^* ,V) :  A_{kc}^* (\mathfrak{U}_U ; V) 
\rightarrow S^* (\mathfrak{U}_U ; V)$ for all $U \in \mathcal{U}_1$ induce 
isomorphisms  
\begin{equation*}
 \colim_{U \in \mathcal{U}_1} H_{kc} (\mathfrak{U}_U ;V) \cong 
\colim_{U \in \mathcal{U}_1} H (\mathfrak{U}_U ;V) \cong H_{sing} (G;V) \, .
\end{equation*}
in cohomology. 
\end{theorem}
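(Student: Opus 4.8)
The plan is to obtain this theorem as a formal consequence of Corollary~\ref{localfortopgrpsandvsvk} and Theorem~\ref{colimnbhf}, exactly in the way Theorem~\ref{colimnbhfcont} was deduced from Corollary~\ref{localfortopgrpsandvsv} and Theorem~\ref{colimnbhf} in the previous section. First I would fix $U \in \mathcal{U}_1$. Since $G$ is a compactly Hausdorff generated topological group and all finite products $G^{p+1}$ are $k$-spaces, the diagonal neighbourhoods $\mathfrak{U}_U[q]$ are open $k$-subspaces of $G^{q+1}$, so that $\fktop \mathfrak{U}_U[q] = \mathfrak{U}_U[q]$ by Lemma~\ref{fktoppresopemb} and the complex $A_{kc}^*(\mathfrak{U}_U;V)$ is genuinely the complex of continuous cochains on these diagonal neighbourhoods. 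Corollary~\ref{localfortopgrpsandvsvk} therefore applies and the inclusion $A_{kc}^*(\mathfrak{U}_U;V) \hookrightarrow A^*(\mathfrak{U}_U;V)$ induces an isomorphism $H_{kc}(\mathfrak{U}_U;V) \cong H(\mathfrak{U}_U;V)$.

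These isomorphisms are natural with respect to the restriction homomorphisms induced by an inclusion $V \subseteq U$ of identity neighbourhoods, hence they assemble into an isomorphism of the two directed systems indexed by $\mathcal{U}_1$. Since filtered colimits of cochain complexes are exact and thus commute with passage to cohomology, passing to the colimit over $\mathcal{U}_1$ yields the first isomorphism $\colim_{U \in \mathcal{U}_1} H_{kc}(\mathfrak{U}_U;V) \cong \colim_{U \in \mathcal{U}_1} H(\mathfrak{U}_U;V)$. For the second isomorphism I would invoke Theorem~\ref{colimnbhf}: as $G$ is locally contractible, the vertex morphisms $C(\lambda_{\mathfrak{U}_U}^*,V) : A^*(\mathfrak{U}_U;V) \rightarrow S^*(\mathfrak{U}_U;V)$ induce an isomorphism $\colim_{U \in \mathcal{U}_1} H(\mathfrak{U}_U;V) \cong H_{sing}(G;V)$. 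Because $C(\lambda_{\mathfrak{U}_U}^*,V) : A_{kc}^*(\mathfrak{U}_U;V) \rightarrow S^*(\mathfrak{U}_U;V)$ is by construction the composite of the inclusion $A_{kc}^* \hookrightarrow A^*$ with the vertex morphism on all cochains, composing the two isomorphisms just described shows that $C(\lambda_{\mathfrak{U}_U}^*,V)$ induces the asserted isomorphism $\colim_{U \in \mathcal{U}_1} H_{kc}(\mathfrak{U}_U;V) \cong H_{sing}(G;V)$.

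There is no genuine technical obstacle here; the argument is purely a matter of assembling results already established. The only points needing attention are bookkeeping: checking that the hypotheses of Corollary~\ref{localfortopgrpsandvsvk} (compact Hausdorff generation, together with the $k$-space condition on the $G^{p+1}$ used to identify $A_{kc}^*$ with the complex of continuous cochains on the $\mathfrak{U}_U[q]$) and of Theorem~\ref{colimnbhf} (local contractibility) are both satisfied by the same group $G$, and verifying the factorisation of the vertex morphism through $A_{kc}^* \hookrightarrow A^*$ so that the composite isomorphism is indeed the one induced by $\lambda$. One should also note that, in contrast to the statements about a single covering $\mathfrak{U}$, no Mittag-Leffler argument is needed here because filtered colimits are exact.
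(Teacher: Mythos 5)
Your proposal is correct and follows essentially the same route as the paper: the first isomorphism is obtained from Corollary \ref{localfortopgrpsandvsvk} (applied for each $U \in \mathcal{U}_1$ and then passing to the filtered colimit), and the second from van Est's result as recorded in Theorem \ref{colimnbhf}, exactly mirroring how Theorem \ref{colimnbhfcont} was deduced from Corollary \ref{localfortopgrpsandvsv} and Theorem \ref{colimnbhf}. Your additional bookkeeping (the identification $\fktop\,\mathfrak{U}_U[q]=\mathfrak{U}_U[q]$ under the hypothesis that the $G^{p+1}$ are $k$-spaces, and exactness of filtered colimits) is consistent with the paper's intent and fills in details the paper leaves implicit.
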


\begin{corollary}
For metrisable Lie groups $G$ with open identity neighbourhood 
filter $\mathcal{U}_1$ and real $k$-modules $V$ the cohomologies 
$\colim_{U \in \mathcal{U}_1} H_{kc} (\mathfrak{U}_U ;V)$, 
$\colim_{U \in \mathcal{U}_1} H (\mathfrak{U}_U ;V)$, 
$\colim_{U \in \mathcal{U}_1} \check{H} (\mathfrak{U}_U;V)$ and 
$H_{sing} (G;V)$ coincide. 
\end{corollary}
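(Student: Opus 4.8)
The plan is to read the corollary off from Theorem~\ref{colimnbhfcontk} together with Corollary~\ref{cechisolocal}, the only real task being to check that a metrisable Lie group $G$ satisfies the hypotheses of Theorem~\ref{colimnbhfcontk}.

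First I would verify those hypotheses. A metrisable Lie group $G$ is in particular a metric space, and then every finite power $G^{p+1}$ is again metrisable; since a metric space is a Hausdorff $k$-space, all the products $G^{p+1}$ are compactly Hausdorff generated (this is the point at which metrisability is used; cf.\ Lemma~\ref{appfinprodofko}). Moreover $G$ is locally contractible: any chart is a homeomorphism from an identity neighbourhood onto an open $0$-neighbourhood of the (locally convex) model space, and such open sets have a neighbourhood basis consisting of convex, hence contractible, subsets. Thus $G$ is a locally contractible compactly Hausdorff generated group all of whose finite products are $k$-spaces, so Theorem~\ref{colimnbhfcontk} applies and gives
\[
 \colim_{U \in \mathcal{U}_1} H_{kc} (\mathfrak{U}_U ;V) \cong
\colim_{U \in \mathcal{U}_1} H (\mathfrak{U}_U ;V) \cong H_{sing} (G;V) .
\]

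It remains to incorporate the \v{C}ech cohomology. By Corollary~\ref{cechisolocal}, for each $U \in \mathcal{U}_1$ the cohomologies $\check{H} (\mathfrak{U}_U ;V)$ and $H (\mathfrak{U}_U ;V)$ are isomorphic; this isomorphism is induced by the augmentations $j^*$ and $i^*$ of the \v{C}ech--Alexander-Spanier double complex of $\mathfrak{U}_U$ and is hence compatible with the maps induced by refinement, in particular with those attached to the inclusions $U' \subseteq U$ of identity neighbourhoods. Passing to the colimit over the filterbase $\mathcal{U}_1$ therefore yields $\colim_{U} \check{H} (\mathfrak{U}_U ;V) \cong \colim_{U} H (\mathfrak{U}_U ;V)$, and combined with the displayed isomorphisms this identifies all four cohomology groups. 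I do not foresee a genuine obstacle; the only points deserving a word of care are that ``metrisable'' is precisely what secures the $k$-space condition on every finite product $G^{p+1}$ needed to invoke Theorem~\ref{colimnbhfcontk}, and that the comparison isomorphisms of Corollary~\ref{cechisolocal} are natural enough in the covering to survive the passage to the colimit.
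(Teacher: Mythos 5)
Your proposal is correct and follows essentially the route the paper intends (the corollary is stated without proof as an immediate consequence): metrisability makes every finite power $G^{p+1}$ a metric, hence compactly Hausdorff generated, space, Lie groups are locally contractible, so Theorem~\ref{colimnbhfcontk} applies, and the \v{C}ech groups are identified via Corollary~\ref{cechisolocal} (as in Theorem~\ref{colimnbhf}) and passage to the colimit over $\mathcal{U}_1$. The only cosmetic remark is that local convexity of the model space is not needed for local contractibility -- balanced, hence star-shaped, zero-neighbourhoods suffice -- and the $k$-space property of the powers rests on their metrisability rather than on Lemma~\ref{appfinprodofko}, which concerns $k_\omega$-spaces.
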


\section{Smooth Local Cochains}
\label{secslc}

We use the differential calculus over general base fields and rings 
presented in \cite{BGN04} and assume that the ring $R$ to be a smooth manifold 
with smooth addition and multiplication.
For an open covering $\mathfrak{U}$ of a (possibly infinite dimensional) 
differential manifold $M$ and abelian Lie groups $V$ which are smooth
$R$-modules one can consider the complex 
$A_s^* (\mathfrak{U};V) = C^\infty (\mathfrak{U} [*] ,V)$ of smooth 
$\mathfrak{U}$-local cochains. 
Replacing the pre-sheaf $A^q (-;V)$ of $q$-cochains 
by the pre-sheaf $A_s^q (-;V)=C^\infty ( -^q ;V)$ of smooth $q$-cochains we 
obtain a sub double complex 
$\check{C}^* ( \mathfrak{U}, A_s^*)$ of $\check{C}^* ( \mathfrak{U}, A_c^*)$ 
whose groups are given by
\begin{equation*}
\check{C}^p ( \mathfrak{U}, A_s^q) :=
\left\{ f \in \check{C}^p ( \mathfrak{U}, A^q) \mid \forall i_0 , \ldots i_p
  \in I: f_{i_0 \ldots i_p} \in C^\infty ( U_{i_0 \ldots i_p}^{q+1} ;V ) \right\}
\, .
\end{equation*}
The rows of this sub double complex $\check{C}^* ( \mathfrak{U}, A_s^*)$ 
can be augmented by the complex $A_s^* (\mathfrak{U};V)$ of smooth  
$\mathfrak{U}$-local cochains and the columns can be augmented by the 
\v{C}ech-complex $\check{C}^* (\mathfrak{U};V)$ for the covering 
$\mathfrak{U}$.
These augmentations induce homomorphisms 
$i_s^*:A_s^* (\mathfrak{U};V) \rightarrow 
\tot \check{C}^* (\mathfrak{U},A_c^*)$ 
and 
$j_s^*:\check{C}^* (\mathfrak{U};V) \rightarrow 
\tot \check{C}^* (\mathfrak{U}, A_s^*)$ 
of cochain complexes respectively.

\begin{lemma} \label{jindiso3}
  The homomorphism $j_s^*:\check{C}^* (\mathfrak{U};V) \rightarrow 
\tot \check{C}^* (\mathfrak{U}, A_s^*)$ induces an isomorphism in cohomology.
\end{lemma}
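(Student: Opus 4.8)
The statement is the smooth analogue of Lemmata \ref{columnsexact} and \ref{jindiso2}: the augmentation $j_s^*$ of the columns of the double complex $\check{C}^* (\mathfrak{U}, A_s^*)$ by the \v{C}ech complex $\check{C}^* (\mathfrak{U};V)$ is a quasi-isomorphism. As in those earlier lemmata, the plan is to reduce this to the exactness of the augmented columns of the double complex; a standard spectral-sequence (or iterated cone) argument then upgrades column-wise exactness to the statement that the augmentation $j_s^* : \check{C}^* (\mathfrak{U};V) \rightarrow \tot \check{C}^* (\mathfrak{U}, A_s^*)$ induces an isomorphism in cohomology. So the real content to check is: for each fixed \v{C}ech degree $p$, the augmented column
\begin{equation*}
0 \rightarrow \check{C}^p (\mathfrak{U};V) \rightarrow \check{C}^p (\mathfrak{U}, A_s^0) \xrightarrow{\ d_v\ } \check{C}^p (\mathfrak{U}, A_s^1) \xrightarrow{\ d_v\ } \cdots
\end{equation*}
is exact.

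Since $\check{C}^p (\mathfrak{U}, A_s^q)$ is (an alternating-sign subgroup of) a product over multi-indices $i_0,\ldots,i_p$ of the groups $A_s^q (U_{i_0\ldots i_p};V) = C^\infty (U_{i_0\ldots i_p}^{q+1};V)$, and both $d_v$ and the \v{C}ech augmentation act factor-wise on these products, exactness of the column reduces to exactness, for each fixed open set $W := U_{i_0\ldots i_p}$, of the augmented standard smooth complex
\begin{equation*}
0 \rightarrow V \rightarrow C^\infty (W;V) \xrightarrow{\ d\ } C^\infty (W^2;V) \xrightarrow{\ d\ } C^\infty (W^3;V) \xrightarrow{\ d\ } \cdots
\end{equation*}
with the differential $d$ from Eq.~\ref{stdiff}. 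Thus the only thing that differs from the proof of Lemma \ref{columnsexact} is that one must know the \emph{smooth} standard complex $A_s^* (W;V) = C^\infty (W^{*+1};V)$ of a manifold $W$ is exact (with its constant-function augmentation), rather than the abstract one $A^* (W;V)$, which was used there.

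The key step is therefore to exhibit an explicit contracting homotopy for $C^\infty (W^{*+1};V)$ which preserves smoothness. The standard one does: fixing any point $w_0 \in W$ (on each connected component, or working component-wise), define $s : C^\infty (W^{q+1};V) \rightarrow C^\infty (W^{q};V)$ by $s(f)(x_1,\ldots,x_q) := f(w_0,x_1,\ldots,x_q)$, i.e. inserting the basepoint in the first slot; one checks $ds + sd = \id$ in positive degrees and $sd = \id - (\text{augmentation})$ in degree $0$, exactly as for the abstract complex. The point is that $s(f)$ is smooth whenever $f$ is, because it is the composite of $f$ with the smooth map $W^q \rightarrow W^{q+1}$, $(x_1,\ldots,x_q) \mapsto (w_0,x_1,\ldots,x_q)$; smoothness of this insertion map in the calculus of \cite{BGN04} is immediate. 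Hence the augmented smooth standard complex of any manifold is exact, the augmented columns of $\check{C}^* (\mathfrak{U}, A_s^*)$ are exact, and $j_s^*$ is a quasi-isomorphism.

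I do not expect a genuine obstacle here; the only mild subtlety is bookkeeping — making sure the insertion homotopy is compatible with the alternating-sign condition defining $\check{C}^p$ and with the sign convention $(-1)^p d$ for the vertical differential, but these signs are inherited verbatim from the proof of Lemma \ref{columnsexact}, so it suffices to say the proof is analogous once one has recorded the exactness of the smooth standard complex. Accordingly the write-up can be as terse as ``The proof is analogous to that of Lemma \ref{columnsexact}, using that the smooth standard complex $C^\infty (W^{*+1};V)$ of a manifold $W$ is exact, as witnessed by the basepoint-insertion homotopy, which preserves smoothness.''
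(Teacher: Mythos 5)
Your proposal is correct and follows exactly the paper's route: the paper disposes of this lemma by declaring the proof analogous to that of Lemma \ref{columnsexact}, i.e.\ the augmented columns are exact because the (smooth) standard complex of each $U_{i_0\ldots i_p}$ is exact, which is precisely your basepoint-insertion homotopy observation, the only added content being your explicit remark that insertion of a fixed point preserves smoothness. (Just drop the parenthetical about choosing basepoints component-wise: a single basepoint works even for disconnected $U_{i_0\ldots i_p}$, whereas genuinely different basepoints on different components would spoil the homotopy identity.)
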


\begin{proof}
  The proof is analogous to that of Lemma \ref{columnsexact}.
\end{proof}

Replacing continuity by smoothness, $R$-paracompactness by smooth 
$R$-para\-com\-pactness in the discussion in Section \ref{secclc} yields:

\begin{proposition} \label{propchechcontrhomsmooth}
For any smooth $R$-valued partition of unity $\{ \varphi_{q,i} \mid i \in I\}$ 
subordinate to the covering $\{U_i^{q+1} \mid i \in I \}$ of $\mathfrak{U}[q]$ 
the homomorphisms 
\begin{equation} \label{eqchechcontrhomsmooth}
  h^{p,q} : \check{C}^p ( \mathfrak{U}, A^q) \rightarrow 
\check{C}^{p-1} ( \mathfrak{U}, A^q), \quad
h^{p,q} ( f )_{i_0 \ldots i_{p-1}} = 
\sum_{i} \varphi_{q,i} \cdot f_{i i_0 \ldots i_{p-1}}
\end{equation}
form a row contraction of the augmented row 
$A^q (\mathfrak{U};V) \hookrightarrow \check{C}^* ( \mathfrak{U}, A^q)$ which
restricts to a row contraction of the augmented sub-row 
$A_s^q (\mathfrak{U};V) \hookrightarrow \check{C}^* ( \mathfrak{U}, A_s^q)$.
\end{proposition}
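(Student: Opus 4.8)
The plan is to follow the proof of Proposition~\ref{propchechcontrhomcont} line by line, isolating the two places where ``continuous'' must be upgraded to ``smooth''. First I would note that the row‑contraction identity is a purely pointwise algebraic fact: the computation in the proof of Proposition~\ref{propchechcontrhom} uses only $\sum_i\varphi_{q,i}=1$, the support condition ${\varphi_{q,i}}_{\mid\mathfrak{U}[q]\setminus U_i^{q+1}}=0$, and the $R$-module structure on $V$ that makes the products $\varphi_{q,i}\cdot f_{ii_0\ldots i_{p-1}}$ meaningful; nowhere is continuity or smoothness invoked. Hence the maps $h^{p,q}$ of~\eqref{eqchechcontrhomsmooth} automatically form a row contraction of the augmented row $A^q(\mathfrak{U};V)\hookrightarrow\check{C}^*(\mathfrak{U},A^q)$, and the entire task reduces to showing that $h^{p,q}$ carries smooth cochains to smooth cochains.

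For that I would argue in two steps, exactly as in the continuous case. Given $f\in\check{C}^p(\mathfrak{U};A_s^q)$ and indices $i,i_0,\ldots,i_{p-1}$, the product $\varphi_{q,i}\cdot f_{ii_0\ldots i_{p-1}}$ is smooth on $U_{ii_0\ldots i_{p-1}}^{q+1}$, and since $\supp\varphi_{q,i}\subseteq U_i^{q+1}$ it vanishes on a neighbourhood of every point of $U_{i_0\ldots i_{p-1}}^{q+1}$ that does not lie in $U_{ii_0\ldots i_{p-1}}^{q+1}$; because smoothness in the calculus of~\cite{BGN04} is a local property, its extension by zero to $U_{i_0\ldots i_{p-1}}^{q+1}$ is again smooth. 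Then, as $\{\varphi_{q,i}\mid i\in I\}$ is an honest $R$-valued partition of unity, the supports $\supp\varphi_{q,i}$ form a locally finite family, so $h^{p,q}(f)_{i_0\ldots i_{p-1}}=\sum_i\varphi_{q,i}\cdot f_{ii_0\ldots i_{p-1}}$ is locally a finite sum of smooth $V$-valued functions and hence smooth; the same observation applied to the augmentation $A_s^q(\mathfrak{U};V)\hookrightarrow\check{C}^0(\mathfrak{U},A_s^q)$ finishes the argument.

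The only genuinely delicate points are foundational rather than computational: one must know that in the setting of~\cite{BGN04} finite products of (possibly infinite‑dimensional) manifolds are again manifolds — so that $\mathfrak{U}[q]\subseteq M^{q+1}$ and each $U_{i_0\ldots i_p}^{q+1}$ carry a manifold structure and the phrase ``smooth $R$-valued partition of unity'' makes sense at all — and that smoothness there is local and stable under products and under locally finite sums. Granting these standard facts, the proof is obtained from that of Proposition~\ref{propchechcontrhomcont} by replacing ``continuous'' with ``smooth'' throughout, and I do not expect any real obstacle.
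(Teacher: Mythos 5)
Your proposal is correct and follows essentially the same route as the paper: the paper reduces the statement to Proposition \ref{propchechcontrhom} for the algebraic row-contraction identity and then notes that $h^{p,q}$ preserves smooth cochains by construction (extension by zero off $U_{i i_0 \ldots i_{p-1}}^{q+1}$ plus local finiteness of the supports), which is exactly your two-step argument with the BGN-calculus locality points made explicit.
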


\begin{proof}
  The proof is analogous to that of Proposition \ref{propchechcontrhomcont}.
\end{proof}

\begin{corollary} \label{uqnumicindisosmooth}
For any open covering $\mathfrak{U}=\{ U_i \mid i \in I \}$ 
of a manifold $M$ for which the coverings $\{ U_i^{q+1} \mid i \in I \}$ of 
the manifolds $\mathfrak{U}[q]$ are smoothly $R$-numerable the homomorphism 
$i_s^*:A_s^* (\mathfrak{U};V)\rightarrow \tot\check{C}^* (\mathfrak{U},A_s^*)$ 
induces an isomorphism in cohomology.
\end{corollary}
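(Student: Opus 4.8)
The plan is to reduce the corollary, just as in the continuous case (Corollary~\ref{uqnumicindiso}), to Proposition~\ref{propchechcontrhomsmooth} together with the standard fact that augmenting the rows of a first‑quadrant double complex by exact augmented rows produces a quasi‑isomorphism onto the total complex.

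First I would note that each $\mathfrak{U}[q]=\bigcup_{U\in\mathfrak{U}}U^{q+1}$ is an open subset of the product manifold $M^{q+1}$, hence itself a manifold, and that $\{U_i^{q+1}\mid i\in I\}$ is an open covering of it. By hypothesis this covering is smoothly $R$‑numerable, so for each $q$ there is a smooth $R$‑valued partition of unity $\{\varphi_{q,i}\mid i\in I\}$ subordinate to it. Feeding this partition into Proposition~\ref{propchechcontrhomsmooth} produces, for each fixed $q$, homomorphisms $h^{p,q}\colon\check{C}^p(\mathfrak{U},A_s^q)\to\check{C}^{p-1}(\mathfrak{U},A_s^q)$ forming a row contraction of the augmented sub‑row $A_s^q(\mathfrak{U};V)\hookrightarrow\check{C}^*(\mathfrak{U},A_s^q)$; in particular each such augmented row is exact.

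Next I would run the usual total‑complex argument. The double complex $\check{C}^*(\mathfrak{U},A_s^*)$ is concentrated in the first quadrant, and its rows become exact after augmentation by $A_s^*(\mathfrak{U};V)$; therefore the induced map $i_s^*\colon A_s^*(\mathfrak{U};V)\to\tot\check{C}^*(\mathfrak{U},A_s^*)$ is a quasi‑isomorphism. Concretely one assembles, out of the row contractions $h^{*,q}$ and the vertical differential, a contracting homotopy of the mapping cone of $i_s^*$ by the familiar staircase induction on total degree (or, equivalently, one compares the two spectral sequences of the augmented double complex). This is formally identical to the reasoning already used for Lemma~\ref{columnsexact} and Corollary~\ref{rowsalwexact}, with the roles of rows and columns interchanged, so the corollary follows.

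Since Proposition~\ref{propchechcontrhomsmooth} is assumed, there is essentially no obstacle left; the one point that does require care is entirely internal to that proposition, namely that the $h^{p,q}$ preserve smoothness of cochains. The product $\varphi_{q,i}\cdot f_{ii_0\ldots i_{p-1}}$, a priori defined only on $U_{ii_0\ldots i_p}^{q+1}$, extends by zero to a smooth function on $U_{i_0\ldots i_{p-1}}^{q+1}$ precisely because $\varphi_{q,i}$ is subordinate to $\{U_i^{q+1}\mid i\in I\}$ and hence vanishes, together with all of its derivatives, outside $U_i^{q+1}$. Granting this, one could simply write that the proof is analogous to that of Corollary~\ref{uqnumicindiso}.
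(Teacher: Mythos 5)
Your proof is correct and follows exactly the route the paper intends: the smooth $R$-numerability hypothesis supplies the partitions of unity needed to invoke Proposition~\ref{propchechcontrhomsmooth}, whose row contractions make the augmented rows exact, and the standard first-quadrant double-complex argument then shows $i_s^*$ is a quasi-isomorphism, just as in the continuous case of Corollary~\ref{uqnumicindiso}. Your remark on the smooth extension by zero of $\varphi_{q,i}\cdot f_{ii_0\ldots i_{p-1}}$ is a point internal to the proposition and is handled there in the same way.
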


\begin{theorem} \label{isosmoothuqnum}
For any open covering $\mathfrak{U}$ of a manifold $M$ for which each covering 
$\{ U_i^{q+1} \mid i \in I \}$ of $\mathfrak{U}[q]$ is smoothly $R$-numerable 
the inclusion $A_s^* (\mathfrak{U};V) \hookrightarrow A^* (\mathfrak{U};V)$ 
induces an isomorphism in cohomology and the cohomologies 
$\check{H} (\mathfrak{U};V)$, $H_s (\mathfrak{U};V)$, $H_c (\mathfrak{U};V)$ 
and $H (\mathfrak{U};V)$ are isomorphic.
\end{theorem}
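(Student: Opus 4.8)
The plan is to imitate the proof of Theorem~\ref{isocontuqnum} verbatim, replacing continuity by smoothness throughout. First I would note that the inclusions $A_s^*(\mathfrak{U};V) \hookrightarrow A^*(\mathfrak{U};V)$ and $\tot\check{C}^*(\mathfrak{U};A_s^*) \hookrightarrow \tot\check{C}^*(\mathfrak{U};A^*)$ intertwine the augmentations $i_s^*$ and $i^*$, yielding a commutative diagram
\begin{equation*}
  \xymatrix{ A^*_s (\mathfrak{U};V) \ar[r]^{i_s^*} \ar[d] &
\tot \check{C}^* ( \mathfrak{U}, A_s^*) \ar[d] &
\check{C} (\mathfrak{U};V) \ar[l]_{j_s^*} \ar@{=}[d] \\
A^* (\mathfrak{U};V) \ar[r]^{i^*} & \tot \check{C}^* ( \mathfrak{U}, A^*) &
\check{C} (\mathfrak{U};V) \ar[l]_{j^*} \\
}
\end{equation*}
in which the vertical arrows are induced by inclusion.

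Next I would invoke the building blocks already established. Lemma~\ref{columnsexact} gives that $j^*$ induces an isomorphism in cohomology, and Lemma~\ref{jindiso3} gives the same for $j_s^*$. Corollary~\ref{rowsalwexact} shows $i^*$ always induces an isomorphism. Finally, the hypothesis that each covering $\{ U_i^{q+1} \mid i \in I \}$ of $\mathfrak{U}[q]$ is smoothly $R$-numerable is exactly what Corollary~\ref{uqnumicindisosmooth} needs in order to conclude that $i_s^*$ induces an isomorphism in cohomology as well. Passing to cohomology, all four maps $H(i_s^*)$, $H(j_s^*)$, $H(i^*)$, $H(j^*)$ become isomorphisms, so the diagram forces the remaining vertical map $H_s(\mathfrak{U};V) \to H(\mathfrak{U};V)$ induced by inclusion to be an isomorphism. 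The identifications $H_s(\mathfrak{U};V) \cong H(\tot\check{C}^*(\mathfrak{U},A_s^*))$, $H(\mathfrak{U};V) \cong H(\tot\check{C}^*(\mathfrak{U},A^*))$ and $\check{H}(\mathfrak{U};V) \cong H(\tot\check{C}^*(\mathfrak{U},A_s^*))$ then give the asserted chain of isomorphisms among $\check{H}(\mathfrak{U};V)$, $H_s(\mathfrak{U};V)$ and $H(\mathfrak{U};V)$; the further identification with $H_c(\mathfrak{U};V)$ comes by composing with the isomorphism of Theorem~\ref{isocontuqnum} (whose $R$-numerability hypothesis is implied by smooth $R$-numerability, since a smooth $R$-valued partition of unity is in particular a continuous one).

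Since every step is a reference to an already-proved statement assembled into a commuting square, there is no real obstacle here; the proof is genuinely just \emph{``analogous to that of Theorem~\ref{isocontuqnum}''}. The only point that deserves a word of care is the compatibility of the hypotheses: one must observe that smooth $R$-numerability of $\{ U_i^{q+1} \mid i \in I\}$ implies its $R$-numerability, so that Theorem~\ref{isocontuqnum} applies and the smooth, continuous, local and \v{C}ech cohomologies can all be linked. I would state the proof in one or two sentences pointing to Theorem~\ref{isocontuqnum} and Corollary~\ref{uqnumicindisosmooth}, exactly as the earlier ``analogous'' proofs in Section~\ref{secclck} are handled.
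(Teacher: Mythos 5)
Your proposal is correct and matches the paper's intent exactly: the paper's own proof is just the statement that the argument is analogous to Theorem~\ref{isocontuqnum}, and you carry out precisely that analogy, assembling Lemmata~\ref{columnsexact} and \ref{jindiso3}, Corollary~\ref{rowsalwexact} and Corollary~\ref{uqnumicindisosmooth} into the same commutative diagram. Your added remark that smooth $R$-numerability implies $R$-numerability, so that Theorem~\ref{isocontuqnum} supplies the identification with $H_c (\mathfrak{U};V)$, is a correct and welcome clarification of a point the paper leaves implicit.
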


\begin{proof}
  The proof is analogous to that of Theorem \ref{isocontuqnum}.
\end{proof}

In this case the \v{C}ech Cohomology $\check{H} (\mathfrak{U};V)$ for the
covering $\mathfrak{U}$ of X can be either computed from the complex 
$A_s^* (\mathfrak{U};V)$ of smooth $\mathfrak{U}$-local cochains, the complex 
$A_c^* (\mathfrak{U};V)$ of continuous $\mathfrak{U}$-local cochains or from 
from the complex $A^* (\mathfrak{U};V)$ of $\mathfrak{U}$-local cochains.

\begin{corollary} \label{covbycozsmooth}
For any smooth $R$-valued partition of unity $\{ \varphi_i \mid i \in I\}$ on 
$M$ and $\mathfrak{U}:=\{ \varphi_i^{-1} (R \setminus \{ 0 \}) \mid i \in I \}$ 
the inclusion 
$A_s^* (\mathfrak{U};V) \hookrightarrow A^* (\mathfrak{U};V)$ 
induces an isomorphism in cohomology and the cohomologies 
$\check{H} (\mathfrak{U};V)$, $H_s (\mathfrak{U};V)$, $H_c (\mathfrak{U};V)$ 
and $H (\mathfrak{U};V)$ are isomorphic.
\end{corollary}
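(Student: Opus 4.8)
The plan is to imitate the proof of Corollary~\ref{covbycoz} line for line, replacing its continuous ingredients by their smooth counterparts. Concretely, by Theorem~\ref{isosmoothuqnum} it suffices to check that, for every $q$, the open covering $\{U_i^{q+1}\mid i\in I\}$ of the manifold $\mathfrak{U}[q]=\bigcup_i U_i^{q+1}$ is smoothly $R$-numerable. Once this is established, Theorem~\ref{isosmoothuqnum} yields at one stroke that $A_s^*(\mathfrak{U};V)\hookrightarrow A^*(\mathfrak{U};V)$ induces an isomorphism in cohomology and that $\check{H}(\mathfrak{U};V)$, $H_s(\mathfrak{U};V)$, $H_c(\mathfrak{U};V)$ and $H(\mathfrak{U};V)$ all agree, so no further bookkeeping is needed.

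To produce the required partition of unity on $\mathfrak{U}[q]$ I would pull the given functions $\varphi_i\colon M\to R$ back along the $q+1$ coordinate projections $\mathrm{pr}_j\colon\mathfrak{U}[q]\to M$ and form the smooth functions $\psi_{q,i}(\vec x):=\varphi_i(x_0)^2\cdots\varphi_i(x_q)^2$ on $\mathfrak{U}[q]$. These are smooth as finite products of pullbacks of smooth functions; their cozero sets are exactly $\psi_{q,i}^{-1}(R\setminus\{0\})=U_i^{q+1}$ (squaring creates no new zeros), and their supports lie in $(\supp\varphi_i)^{q+1}$, so $\{\psi_{q,i}\}_i$ inherits local finiteness from $\{\supp\varphi_i\}_i$: a product neighbourhood of $\vec x$ each of whose factors meets only finitely many $\supp\varphi_i$ meets $\supp\psi_{q,i}$ for only finitely many $i$. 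Then $\psi_q:=\sum_i\psi_{q,i}$ is a locally finite sum of smooth functions, hence smooth, and it is strictly positive on all of $\mathfrak{U}[q]$, since every $\vec x\in\mathfrak{U}[q]$ lies in some $U_i^{q+1}$, where $\psi_{q,i}(\vec x)>0$. As inversion is smooth on $R^\times$ (in particular on $\mathbb{R}\setminus\{0\}$), $\psi_q^{-1}$ is smooth, so $\{\psi_q^{-1}\psi_{q,i}\mid i\in I\}$ is a smooth $R$-valued partition of unity on $\mathfrak{U}[q]$ with cozero sets $\{U_i^{q+1}\mid i\in I\}$; thus $\{U_i^{q+1}\mid i\in I\}$ is smoothly $R$-numerable and Theorem~\ref{isosmoothuqnum} applies.

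The step I expect to be the real obstacle is precisely the smoothness and invertibility of the normalising denominator $\psi_q$: Corollary~\ref{covbycoz} could afford the merely continuous absolute value $|\varphi_i(x_0)\cdots\varphi_i(x_q)|$, whereas here one is forced to square, and one must then know that the resulting (a priori infinite) pointwise sum $\psi_q$ is still smooth. If $\{\varphi_i\}$ is genuinely locally finite this is immediate as above; in the merely generalised case it requires the smooth analogue of Lemma~\ref{sumvaprhicontthensumabsvalcont} (the statement invoked by Corollary~\ref{covbycoz} in the continuous setting), after which invertibility is free because $\psi_q>0$. One also needs, as in the continuous case, that possessing a smooth partition of unity with cozero sets $\{U_i^{q+1}\}$ already forces smooth $R$-numerability (a smooth shrinking argument), but apart from this the proof is routine: the row contractions, the exactness of the augmented columns and rows, and the diagram chase comparing $A_s^*$, $A_c^*$, $A^*$ and the \v{C}ech complex are all supplied by Proposition~\ref{propchechcontrhomsmooth}, Lemma~\ref{jindiso3}, Corollary~\ref{uqnumicindisosmooth} and Theorem~\ref{isosmoothuqnum}.
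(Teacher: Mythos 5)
This is essentially the paper's own argument: the paper likewise reduces via Theorem~\ref{isosmoothuqnum} to smooth $R$-numerability of the coverings $\{U_i^{q+1}\mid i\in I\}$ of $\mathfrak{U}[q]$ and, as in Corollary~\ref{covbycoz}, normalises coordinatewise products of the $\varphi_i$, the key point being exactly the one you isolate --- local finiteness of the supports makes the normalising sum smooth --- with your squares being the right smooth substitute for the absolute values of the continuous case, and the shrinking step you flag being supplied by Lemma~\ref{genpartthenvarphiin} and Proposition~\ref{uisnumiffvarphiin} of the appendix. The only caveat is that your positivity/invertibility argument for $\psi_q$ tacitly assumes $R=\mathbb{R}$ (or an ordered field), but the paper's one-line proof, which simply refers back to the $R=\mathbb{R}$ Corollary~\ref{covbycoz}, rests on the same tacit assumption.
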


\begin{proof}
  The proof is analogous to the proof of Corollary \ref{covbycoz} where the 
requirement of local finiteness guarantees the smoothness of the functions 
$\varphi_q$.
\end{proof}

For the ring $R=\mathbb{R}$ of reals we obtain a more general version:

\begin{corollary} \label{covbycozsmoothg}
For $R=\mathbb{R}$, any smooth generalised partition of unity 
$\{ \varphi_i \mid i \in I\}$ on $M$ and 
$\mathfrak{U}:=\{ \varphi_i^{-1} (R \setminus \{ 0 \}) \mid i \in I \}$ 
the inclusion 
$A_s^* (\mathfrak{U};V) \hookrightarrow A^* (\mathfrak{U};V)$ 
induces an isomorphism in cohomology and the cohomologies 
$\check{H} (\mathfrak{U};V)$, $H_s (\mathfrak{U};V)$, $H_c (\mathfrak{U};V)$ 
and $H (\mathfrak{U};V)$ are isomorphic.
\end{corollary}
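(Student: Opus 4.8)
The plan is to imitate the proof of Corollary~\ref{covbycoz}, replacing \emph{continuous} by \emph{smooth} throughout and exploiting that the coefficient ring is $\mathbb{R}$. Following that proof, it suffices to show that for every $q$ the open covering $\{U_i^{q+1}\mid i\in I\}$ of $\mathfrak{U}[q]$ admits a smooth generalised $\mathbb{R}$-valued partition of unity whose cozero sets are exactly the sets $U_i^{q+1}$. Granting this, Proposition~\ref{propchechcontrhomsmooth} supplies a row contraction of the augmented sub-row $A_s^q(\mathfrak{U};V)\hookrightarrow\check{C}^*(\mathfrak{U},A_s^q)$, so the augmentation $i_s^*$ induces an isomorphism in cohomology as in Corollary~\ref{uqnumicindisosmooth}; together with the isomorphisms induced by $j^*$ and $j_s^*$ (Lemmata~\ref{columnsexact} and~\ref{jindiso3}) and by $i^*$ (Corollary~\ref{rowsalwexact}), the commutative-diagram argument of Theorem~\ref{isosmoothuqnum} then forces the inclusion $A_s^*(\mathfrak{U};V)\hookrightarrow A^*(\mathfrak{U};V)$ to induce an isomorphism as well. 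Since a smooth generalised partition of unity is in particular a continuous one, Corollary~\ref{covbycoz} applies to the same $\mathfrak{U}$ and gives $\check{H}(\mathfrak{U};V)\cong H_c(\mathfrak{U};V)\cong H(\mathfrak{U};V)$; applying two-out-of-three to the inclusions $A_s^*(\mathfrak{U};V)\hookrightarrow A_c^*(\mathfrak{U};V)\hookrightarrow A^*(\mathfrak{U};V)$ then shows that all four cohomologies $\check{H}$, $H_s$, $H_c$, $H$ of $\mathfrak{U}$ coincide.

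To construct the required partitions of unity I would first invoke the remark following the definition of a generalised partition of unity, in its smooth form and legitimate because $R=\mathbb{R}$, to replace the $\varphi_i$ by non-negative smooth functions with the same zero sets, so that we may assume $0\le\varphi_i\le 1$ and $\sum_{i\in I}\varphi_i=1$ without changing $\mathfrak{U}$. On each $\mathfrak{U}[q]$ put
\begin{equation*}
\varphi_{q,i}(\vec{x}):=\varphi_i(x_0)\cdots\varphi_i(x_q)
\qquad\text{and}\qquad
\varphi_q(\vec{x}):=\sum_{i\in I}\varphi_{q,i}(\vec{x}) \, .
\end{equation*}
Each $\varphi_{q,i}$ is smooth, non-negative, has cozero set exactly $U_i^{q+1}$, and satisfies $0\le\varphi_{q,i}(\vec{x})\le\varphi_i(x_0)$. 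The function $\varphi_q$ is strictly positive on $\mathfrak{U}[q]$, since for $\vec{x}\in\mathfrak{U}[q]$ one can choose $i$ with $\vec{x}\in U_i^{q+1}$, whence $\varphi_{q,i}(\vec{x})>0$. Once $\varphi_q$ is known to be smooth, $\varphi_q^{-1}$ is smooth as well and $\{\varphi_q^{-1}\varphi_{q,i}\mid i\in I\}$ is a smooth generalised $\mathbb{R}$-valued partition of unity on $\mathfrak{U}[q]$ with cozero sets $\{U_i^{q+1}\mid i\in I\}$, as needed.

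The only step that is not a routine transcription of the continuous case is the smoothness of $\varphi_q=\sum_i\varphi_{q,i}$: in Corollary~\ref{covbycozsmooth} this was delivered by the local finiteness of the supports, which is precisely the hypothesis that has been dropped here. I expect this to be the main obstacle, and I would overcome it using the smooth analogues of the summation Lemmata~\ref{sumcontthensubsumcont} and~\ref{sumvaprhicontthensumabsvalcont} from the appendix, feeding in the domination $0\le\varphi_{q,i}(\vec{x})\le\varphi_i(x_0)$ and the trivial smoothness of the dominating sum $\vec{x}\mapsto\sum_i\varphi_i(x_0)\equiv 1$ in order to push the convergence of $\sum_i\varphi_{q,i}$ through to all of its partial derivatives; the same lemmas also underwrite the initial reduction to non-negative $\varphi_i$. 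With the smoothness of $\varphi_q$ established, the rest of the argument is exactly as in Corollary~\ref{covbycoz}.
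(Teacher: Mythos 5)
Your overall architecture (reduce via Theorem \ref{isosmoothuqnum} to smooth numerability of the coverings $\{U_i^{q+1}\mid i\in I\}$ of $\mathfrak{U}[q]$) matches the paper, but the step you yourself flag as the main obstacle is a genuine gap, and the fix you propose does not work. Smoothness of $\varphi_q=\sum_i\varphi_{q,i}$ cannot be obtained from ``smooth analogues'' of Lemmata \ref{sumcontthensubsumcont} and \ref{sumvaprhicontthensumabsvalcont}: no such lemmas exist in the paper, and the statement you would need is false in general --- dominated (even uniform) convergence of a series of smooth functions controls only the values, not the derivatives, so a sum of smooth functions whose supports are not locally finite need not be smooth. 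The domination $0\le\varphi_{q,i}(\vec{x})\le\varphi_i(x_0)$ says nothing about the series of differentials, and local finiteness of supports is precisely the hypothesis dropped in passing from Corollary \ref{covbycozsmooth} to the present statement (the paper's proof of \ref{covbycozsmooth} explicitly notes that local finiteness is what guarantees smoothness of $\varphi_q$). Your preliminary reduction to non-negative $\varphi_i$ has the same defect: the remark you invoke rests on $|\varphi_i|$, which is not smooth, and replacing it by $\varphi_i^2/\sum_j\varphi_j^2$ reintroduces the same smoothness-of-an-infinite-sum problem. A smaller inaccuracy: a smooth \emph{generalised} partition of unity with cozero sets $U_i^{q+1}$ does not directly feed Proposition \ref{propchechcontrhomsmooth}, which requires a smooth partition of unity subordinate to the covering (locally finite supports contained in the $U_i^{q+1}$, needed both for the extension by zero and for the smoothness of $h^{p,q}(f)$); one must first upgrade to genuine smooth numerability.

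The paper's proof avoids normalising an infinite sum altogether. It forms the smooth generalised partition of unity $\varphi_{i_0\ldots i_q}(\vec{m})=\varphi_{i_0}(m_0)\cdots\varphi_{i_q}(m_q)$ on $M^{q+1}$ and then invokes the appendix machinery: Lemma \ref{genpartthenvarphiin} (composition with the bump function $f(x)=e^{-1/x}$ for $x>0$) produces, for each $n$, families $\varphi_{\vec{i},n}$ with locally finite supports refining $\{U_{i_0}\times\cdots\times U_{i_q}\}$ and exhausting the cozero sets, and Proposition \ref{uisnumiffvarphiin}, applied to the diagonal family $\{\varphi_{(i,\ldots,i),n}\}$, converts this into a genuine smooth partition of unity subordinate to $\{U_i^{q+1}\}$, i.e. smooth numerability; Theorem \ref{isosmoothuqnum} then yields all the stated isomorphisms (your two-out-of-three step via Corollary \ref{covbycoz} is harmless but not needed). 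To salvage your plan, replace the dominated-convergence step by exactly this appeal to Lemma \ref{genpartthenvarphiin} and Proposition \ref{uisnumiffvarphiin}.
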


\begin{proof}
 In view of Theorem \ref{isosmoothuqnum} it suffices to show that the 
coverings $\{ U_i^{q+1} \mid i \in I \}$ of the manifolds $\mathfrak{U}[q]$ are 
smoothly numerable. The smooth functions $\varphi_{i_1,\ldots,i_q}$ given by 
$\varphi_{i_0,\ldots,i_q} (\vec{m}):=\varphi_{i_1} (m_0) \cdots \varphi_{i_q}
(m_q)$, $i_0,\ldots,i_q \in I$ form a generalised partition of unity of
$M^{q+1}$. By Lemma \ref{genpartthenvarphiin} there exist there exist 
non-negative smooth real functions 
$\{ \varphi_{i_0,\ldots , i_q ,n} \mid i_0,\ldots i_q \in I,\, n \in \mathbb{N}\}$ 
such that for all $n \in \mathbb{N}$  
\begin{enumerate}
\item the collection 
$\{ \supp \varphi_{\vec{i} ,n} \mid \vec{i} \in I^{q+1} \}$ refines 
$\{ U_{i_0} \times \cdots \times U_{i_q} \mid i_0, \ldots , i_q \in I \}$,
\item the collection 
$\{ \supp \varphi_{\vec{i} , n}  \mid \vec{i} \in I^{q+1} \}$ of
supports is locally finite, 
\end{enumerate}
and such that that for for fixed $\vec{i} \in I^{q+1}$ the supports of 
$\varphi_{\vec{i},n}$, $n \in \mathbb{N}$ exhaust the open set $U_{i_0} \times
\cdots \times U_{i_q}$. An application of Proposition 
\ref{uisnumiffvarphiin} to the set $\{ \varphi_{i,\ldots,i} \mid i \in I\}$ of
smooth functions shows that the covering $\{ U_i^{q+1} \mid i \in I \}$ of
$\mathfrak{U}[q]$ is numerable.
\end{proof}

The colimit $H_{AS,s} (M;V): = \colim H (\mathfrak{U};V)$ over all smoothly 
$R$-numerable coverings $\mathfrak{U}$ is called 
the \emph{Alexander-Spanier cohomology w.r.t smoothly $R$-numerable coverings}. 
Passing to the colimit over all smoothly $R$-numerable covers we observe:

\begin{corollary}
  The cohomologies $\check{H} (M;V)$, $H_{AS,s} (M;V)$ and 
$H_{AS} (M;V)$ of a manifold $M$ w.r.t. smoothly $R$-numerable coverings 
are isomorphic.
\end{corollary}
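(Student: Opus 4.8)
The plan is to deduce the statement by passing to the colimit in Corollary~\ref{covbycozsmooth} (equivalently, in Theorem~\ref{isosmoothuqnum}), in exact analogy with the passage from Theorem~\ref{isocontuqnum} to the corresponding statement for $R$-numerable coverings in Section~\ref{secclc}. First I would pin down a cofinal subsystem of the directed set of smoothly $R$-numerable open coverings of $M$ on which Corollary~\ref{covbycozsmooth} applies on the nose. If $\mathfrak{U}$ is smoothly $R$-numerable, pick a subordinate smooth $R$-valued partition of unity $\{\varphi_i \mid i\in I\}$; its cozero covering $\mathfrak{U}_\varphi := \{\varphi_i^{-1}(R\setminus\{0\}) \mid i\in I\}$ is then an open covering which refines $\mathfrak{U}$, is again smoothly $R$-numerable, and is precisely of the form treated in Corollary~\ref{covbycozsmooth}. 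Hence the coverings $\mathfrak{U}_\varphi$ are cofinal among all smoothly $R$-numerable coverings; being a cofinal subset of a directed set they form a directed system which computes the same colimits $\check{H}(M;V)$, $H_{AS,s}(M;V)$ and $H_{AS}(M;V)$.

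Second, I would record that the isomorphisms supplied by Corollary~\ref{covbycozsmooth} are natural with respect to refinement. For a fixed $\mathfrak{U}_\varphi$ that corollary provides isomorphisms
\[
\check{H}(\mathfrak{U}_\varphi;V)\;\cong\;H_s(\mathfrak{U}_\varphi;V)\;\cong\;H_c(\mathfrak{U}_\varphi;V)\;\cong\;H(\mathfrak{U}_\varphi;V),
\]
all of them induced by the canonical inclusions of cochain complexes $A_s^* \hookrightarrow A_c^* \hookrightarrow A^*$ and by the row- and column-augmentations $i_s^*$, $i_c^*$, $i^*$, $j^*$ into the total complexes $\tot\check{C}^*(\mathfrak{U}_\varphi,A_s^*)$, $\tot\check{C}^*(\mathfrak{U}_\varphi,A_c^*)$, $\tot\check{C}^*(\mathfrak{U}_\varphi,A^*)$. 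Each of these maps commutes with the restriction homomorphisms attached to a refinement of coverings, because those restriction maps act purely by reindexing and restricting functions and thereby respect the subcomplex inclusions as well as the \v{C}ech and Alexander--Spanier differentials. Consequently the four assignments $\mathfrak{U}_\varphi \mapsto \check{H},\,H_s,\,H_c,\,H$ are functors on the cofinal subsystem and the displayed isomorphisms are natural transformations among them.

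The third step is then purely formal: applying $\colim$ over the cofinal subsystem $\{\mathfrak{U}_\varphi\}$ to these natural isomorphisms yields
\[
\check{H}(M;V)\;\cong\;H_{AS,s}(M;V)\;\cong\;H_{AS}(M;V)
\]
(together with $\cong\colim_\mathfrak{U} H_c(\mathfrak{U};V)$), the colimits being taken with respect to smoothly $R$-numerable coverings as asserted. The only genuinely non-bookkeeping point I expect is the naturality assertion of the second step --- i.e.\ that the refinement restriction maps truly intertwine the inclusions $A_s^* \hookrightarrow A^*$ and the augmentations $i^*$, $j^*$; cofinality, directedness of a cofinal subset of a directed set, and exactness of filtered colimits of abelian groups (so that $\colim$ commutes with taking cohomology) are all routine. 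A marginally shorter alternative bypasses Corollary~\ref{covbycozsmooth} and invokes Theorem~\ref{isosmoothuqnum} directly: the computation in the proof of Corollary~\ref{covbycozsmooth} shows that for every $\mathfrak{U}_\varphi$ each covering $\{U_i^{q+1} \mid i\in I\}$ of $\mathfrak{U}_\varphi[q]$ is smoothly numerable, so the theorem applies verbatim to each member of the cofinal subsystem, after which one again passes to the colimit.
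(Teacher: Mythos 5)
Your argument is correct and is essentially the paper's own (implicit) proof: the paper simply passes to the colimit over all smoothly $R$-numerable coverings using Corollary \ref{covbycozsmooth} and Theorem \ref{isosmoothuqnum}, exactly as you do. The only difference is that you make explicit the cofinality of the cozero coverings $\mathfrak{U}_\varphi$ and the naturality of the isomorphisms under refinement, details the paper leaves unstated (and for which, if one wants the membership of $\mathfrak{U}_\varphi$ among smoothly $R$-numerable coverings fully justified, the appendix results on cozero coverings of smooth partitions of unity can be cited).
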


Calling a manifold smoothly \emph{$R$-paracompact}, if every open 
covering $\mathfrak{U}$ of $X$ admits a smooth $R$-valued partition of unity 
subordinate to $\mathfrak{U}$ we also note:

\begin{corollary}
  The cohomologies $\check{H} (M;V)$, $H_{AS,s} (M;V)$ and 
$H_{AS} (M;V)$ of a smoothly $R$-paracompact manifold $M$ are isomorphic.
\end{corollary}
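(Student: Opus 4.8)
The plan is to obtain this corollary as a formal consequence of the preceding corollary, which already records the isomorphisms $\check{H}(M;V)\cong H_{AS,s}(M;V)\cong H_{AS}(M;V)$ once the colimits defining $\check{H}(M;V)$ and $H_{AS}(M;V)$ are restricted to smoothly $R$-numerable coverings. The only additional observation needed is that, for a smoothly $R$-paracompact manifold, this restriction is vacuous.

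First I would unwind the definitions: by definition $M$ is smoothly $R$-paracompact precisely when every open covering $\mathfrak{U}$ of $M$ admits a subordinate smooth $R$-valued partition of unity, i.e., precisely when every open covering of $M$ is smoothly $R$-numerable. Hence the partially ordered set of smoothly $R$-numerable coverings of $M$, ordered by refinement, coincides with the partially ordered set of all open coverings of $M$; a fortiori the smoothly $R$-numerable coverings are cofinal among all open coverings. Passing to colimits, $H_{AS,s}(M;V)=\colim_{\mathfrak{U}}H(\mathfrak{U};V)$ over smoothly $R$-numerable coverings is then the colimit over all open coverings, hence equals $H_{AS}(M;V)$ by the very definition of Alexander--Spanier cohomology; and $\check{H}(M;V)=\colim_{\mathfrak{U}}\check{H}(\mathfrak{U};V)$ over all open coverings is isomorphic, via the coverwise isomorphisms of Corollary~\ref{cechisolocal}, to $\colim_{\mathfrak{U}}H(\mathfrak{U};V)=H_{AS}(M;V)$. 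Together with the preceding corollary this yields the asserted chain of isomorphisms; one could equally route through $H_s(\mathfrak{U};V)$ or $H_c(\mathfrak{U};V)$ by Theorem~\ref{isosmoothuqnum}, since on smoothly $R$-numerable coverings all of these agree coverwise.

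I do not expect a genuine obstacle here: the statement is bookkeeping built on the preceding corollary and the definition of smooth $R$-paracompactness. The only mild point is to check that the coverwise isomorphisms are natural in $\mathfrak{U}$, so that they descend to isomorphisms of the colimit groups rather than being present only coverwise -- but this naturality is exactly what was exploited in the comparison diagrams of Theorem~\ref{isosmoothuqnum} and Corollary~\ref{cechisolocal}, so it is already available.
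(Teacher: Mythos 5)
Your proposal is correct and is essentially the paper's own (implicit) argument: the corollary is stated as an immediate consequence of the preceding one, since smooth $R$-paracompactness means every open covering admits a subordinate smooth $R$-valued partition of unity, so the smoothly $R$-numerable coverings are all open coverings and the restricted colimits coincide with the full ones. Your additional remarks on cofinality and naturality of the coverwise isomorphisms are exactly the bookkeeping the paper leaves tacit.
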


\begin{example}
If $\mathfrak{U}$ is an open covering of a Riemannian manifold $M$ and $V$ a 
real topological vector space, then the cohomology $H_s (\mathfrak{U};V)$ of 
the complex $A_s^* (\mathfrak{U};V)$ is isomorphic to cohomology 
$H (\mathfrak{U} ;V)$. If the open sets $U \in \mathfrak{U}$ are geodsically
convex then these cohomologies are also isomorphic to the \v{C}ech and 
singular cohomologies of $M$ (cf. Ex \ref{exugeodconv}). 
(If $M$ is an infinite dimensional Riemannian manifold one has to require the 
local existence of geodesics.)
\end{example}

A particular interesting case are Lie groups with open coverings of the 
form $\mathfrak{U}_U :=\{ gU \mid \, g \in G\}$, where $U$ is an open identity 
neighbourhood in $G$. Here the complex $A_s^* (\mathfrak{U};V)$ is sometimes 
called the complex of \emph{smooth $U$-local cochains}. For this special case 
we observe:

\begin{corollary} \label{localfortopgrpsandvsvsmooth}
  For any open identity neighbourhood $U$ of a smoothly paracompact Lie group 
$G$ and any real TVS $V$ the inclusion 
$A_s^* (\mathfrak{U}_U ;V) \hookrightarrow A^* (\mathfrak{U}_U ;V)$ induces an 
isomorphism in cohomology and the cohomologies 
$\check{H} (\mathfrak{U}_U;V)$, $H_s (\mathfrak{U}_U;V)$, 
$H_c (\mathfrak{U}_U;V)$ and $H (\mathfrak{U}_U;V)$ are isomorphic.
\end{corollary}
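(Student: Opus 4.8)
The plan is to derive this directly from Theorem~\ref{isosmoothuqnum}, applied to the manifold $M=G$, the open covering $\mathfrak{U}=\mathfrak{U}_U=\{\,gU\mid g\in G\,\}$, and the ring $R=\mathbb{R}$ (a real topological vector space being in particular an abelian Lie group which is a smooth $\mathbb{R}$-module). Everything then reduces to one assertion: for every $q\in\mathbb{N}$ the open covering $\{\,(gU)^{q+1}\mid g\in G\,\}$ of the diagonal neighbourhood $\mathfrak{U}_U[q]=\bigcup_{g\in G}(gU)^{q+1}\subseteq G^{q+1}$ is smoothly $\mathbb{R}$-numerable, i.e.\ admits a subordinate smooth $\mathbb{R}$-valued partition of unity.

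To establish this I would first note that each $\mathfrak{U}_U[q]$ is again a smoothly paracompact manifold: it is an open submanifold of the finite product $G^{q+1}$, and smooth paracompactness is preserved under finite products (one multiplies smooth bump functions and checks that local finiteness of supports survives) and under passage to open submanifolds. Hence the open covering $\{\,(gU)^{q+1}\mid g\in G\,\}$ of $\mathfrak{U}_U[q]$ carries a subordinate smooth partition of unity $\{\psi_j\mid j\in J\}$. Choosing for each $j$ an element $g(j)\in G$ with $\supp\psi_j\subseteq (g(j)U)^{q+1}$ and setting $\varphi_g:=\sum_{g(j)=g}\psi_j$, one obtains --- using that a locally finite union of the closed supports is closed --- smooth functions $\varphi_g$ with $\supp\varphi_g\subseteq (gU)^{q+1}$, with locally finite supports, and with $\sum_{g\in G}\varphi_g=\sum_{j\in J}\psi_j=1$. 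Thus $\{\varphi_g\mid g\in G\}$ is a smooth $\mathbb{R}$-valued partition of unity subordinate to $\{\,(gU)^{q+1}\mid g\in G\,\}$, so the latter covering is smoothly $\mathbb{R}$-numerable.

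Granting the smooth $\mathbb{R}$-numerability of all the coverings $\{\,(gU)^{q+1}\mid g\in G\,\}$, Theorem~\ref{isosmoothuqnum} applies verbatim and yields that the inclusion $A_s^*(\mathfrak{U}_U;V)\hookrightarrow A^*(\mathfrak{U}_U;V)$ induces an isomorphism in cohomology and that $\check H(\mathfrak{U}_U;V)$, $H_s(\mathfrak{U}_U;V)$, $H_c(\mathfrak{U}_U;V)$ and $H(\mathfrak{U}_U;V)$ are all isomorphic, which is the claim. Alternatively, one could exhibit $\mathfrak{U}_U$ as the family of cozero sets of a smooth generalised partition of unity on $G$ and invoke Corollary~\ref{covbycozsmoothg}; the only additional work there is arranging the cozero sets to be \emph{exactly} the $gU$, which again rests on smooth paracompactness of $G$.

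The one genuinely delicate step is the heredity of smooth paracompactness to the products $G^{q+1}$ and their open subsets $\mathfrak{U}_U[q]$. For finite-dimensional $G$ this is immediate, since a paracompact finite-dimensional manifold is metrizable and metrizability is inherited by finite products and by open subsets; in the infinite-dimensional (BGN) setting one must instead invoke the corresponding permanence properties of the modelling space, after which the remainder of the argument is purely formal bookkeeping.
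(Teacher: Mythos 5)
Your reduction to Theorem \ref{isosmoothuqnum} is the right framework, but the step that carries all the weight --- that each covering $\{(gU)^{q+1}\mid g\in G\}$ of $\mathfrak{U}_U[q]$ is smoothly $\mathbb{R}$-numerable --- is not established. You derive it from the claim that smooth paracompactness passes to finite products and to open submanifolds, but neither permanence property holds in general: even ordinary topological paracompactness is not preserved by finite products (the Sorgenfrey plane) nor by open subspaces, and ``multiplying bump functions'' only yields partitions of unity subordinate to covers of product form $\{U_i\times V_j\}$, not to an arbitrary open cover of $G^{q+1}$ or of the open subset $\mathfrak{U}_U[q]$. Your metrizability remark does repair this in the finite-dimensional case, but the corollary (and the paper, which explicitly does not restrict to finite-dimensional Lie groups) is aimed at infinite-dimensional groups, where ``permanence properties of the modelling space'' of the kind you invoke are not available. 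The paper itself signals that this is not formal bookkeeping: whenever it needs partitions of unity on the powers $G^{q+1}$ (Lemma \ref{foropenuexistsvandphi}, Proposition \ref{lemappcechcontrhomres}, Theorem \ref{colimspclg}) it adds the hypothesis that all finite products of $G$ be smoothly paracompact, and even then it only obtains a row contraction after shrinking $U$ to a smaller identity neighbourhood $V$ with $V^{-1}V\subseteq\overline{W}\subseteq U$.

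Your fallback via Corollary \ref{covbycozsmoothg} has the same unfilled hole, and you flag it yourself: that route needs the translates $gU$ to be \emph{exactly} the cozero sets of a smooth generalised partition of unity, whereas smooth paracompactness of $G$ only provides a partition of unity subordinate to $\mathfrak{U}_U$, whose cozero sets may be strictly smaller than the $gU$. With such a subordinate family $\{\varphi_g\}$, the diagonal products $\varphi_g(x_0)\cdots\varphi_g(x_q)$ underlying the proof of Corollary \ref{covbycozsmoothg} and Proposition \ref{uisnumiffvarphiin} can vanish at points of $\mathfrak{U}_U[q]$ whose coordinates are not all contained in one and the same cozero set, so the normalisation and hence the numerability argument breaks down; producing a smooth function with cozero set exactly a prescribed open set requires more than smooth paracompactness. (In the continuous setting the paper sidesteps this via pseudometrics, cf.\ Proposition \ref{ubypm}, where the cited result yields a partition of unity whose cozero sets are precisely the balls; a smooth substitute for that step is exactly what is missing here.) So, as written, the proposal has a genuine gap at the numerability claim: to close it one must either assume smooth paracompactness of the finite products $G^{q+1}$ (and of the open subsets $\mathfrak{U}_U[q]$), or construct a smooth generalised partition of unity on $G$ whose cozero sets are precisely the translates $gU$.
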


Combining these results with those concerning singular cohomology we observe:

\begin{proposition}  \label{propuqsmoothnumsing}
For any open covering $\mathfrak{U}$ of a smooth manifold $M$  for which each 
set $U_{i_0 \ldots i_p}$ is $V$-acyclic and each covering 
$\{ U_i^{q+1} \mid i \in I \}$ of $\mathfrak{U}[q]$ is smoothly $R$-numerable 
the homomorphism $ 
A_s^* (\mathfrak{U};V) \rightarrow S^* (\mathfrak{U};V)$ 
induces an isomorphism $H_s (\mathfrak{U};V) \cong H_{sing} (X;V)$ in 
cohomology and the following diagram is commutative:
\begin{equation*}
  \xymatrix{
H_s (\mathfrak{U};V) \ar[r]_\cong 
\ar[d]_{H (C (\lambda_\mathfrak{U}^*;V))}^\cong  &
H_c (\mathfrak{U};V) \ar[r]_\cong 
\ar[d]_{H (C (\lambda_\mathfrak{U}^*;V))}^\cong  &
H (\mathfrak{U};V) \ar[r]^{H (j)^{-1} H(i)}_\cong 
\ar[d]_{H (C (\lambda_\mathfrak{U}^* ;V))}^\cong & 
\check{H} (\mathfrak{U};V) \ar@{=}[d] \\
H_{sing} (\mathfrak{U};V) \ar@{=}[r] &
H_{sing} (\mathfrak{U};V) \ar@{=}[r] & 
H_{sing} (\mathfrak{U};V) \ar[r]^\cong_{H (j)^{-1} H(i)} & 
\check{H} (\mathfrak{U};V) 
}
\end{equation*}
In particular the \v{C}ech and the smooth $\mathfrak{U}$-local 
cohomology do not depend on the open cover $\mathfrak{U}$ subject to the 
acyclicity condition chosen.
\end{proposition}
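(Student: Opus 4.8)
The plan is to follow verbatim the template of the proof of Proposition~\ref{propuqnumsing}, only replacing the presheaf $A_c^q(-;V)$ of continuous cochains by the presheaf $A_s^q(-;V)=C^\infty(-^{q};V)$ of smooth cochains throughout. First I would note that the vertex morphism $\lambda_X\colon C(\varDelta,X)\to X^{*+1}$ induces, by restriction of the morphisms already built in Section~\ref{seccas}, a morphism $C(\lambda_\mathfrak{U}^*;V)\colon A_s^*(\mathfrak{U};V)\to S^*(X,\mathfrak{U};V)$ of cochain complexes together with a morphism $\check{C}^*(\mathfrak{U};C(\lambda^*;V))\colon \check{C}^*(\mathfrak{U};A_s^*)\to\check{C}^*(\mathfrak{U};S^*)$ of double complexes; these intertwine the row augmentations (by $A_s^*(\mathfrak{U};V)$, resp.\ $S^*(X,\mathfrak{U};V)$) and the column augmentations (by the \v{C}ech complex $\check{C}^*(\mathfrak{U};V)$ in both cases). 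This yields a commutative ladder whose upper zig-zag is $A_s^*(\mathfrak{U};V)\xrightarrow{i_s^*}\tot\check{C}^*(\mathfrak{U},A_s^*)\xleftarrow{j_s^*}\check{C}^*(\mathfrak{U};V)$, whose lower zig-zag is $S^*(X,\mathfrak{U};V)\xrightarrow{i^*}\tot\check{C}^*(\mathfrak{U},S^*)\xleftarrow{j^*}\check{C}^*(\mathfrak{U};V)$, and whose rightmost vertical arrow is the identity on $\check{C}^*(\mathfrak{U};V)$.

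Next I would identify which of these arrows are quasi-isomorphisms. The arrow $j_s^*$ is always a quasi-isomorphism by Lemma~\ref{jindiso3}; the arrow $i_s^*$ is a quasi-isomorphism because, by hypothesis, each covering $\{U_i^{q+1}\mid i\in I\}$ of $\mathfrak{U}[q]$ is smoothly $R$-numerable, so Corollary~\ref{uqnumicindisosmooth} applies. In the lower zig-zag, $i^*$ is always a quasi-isomorphism and $j^*$ is one because the sets $U_{i_0\ldots i_p}$ are $V$-acyclic, exactly as in the proof of Proposition~\ref{propuqnumsing}, and moreover $H\bigl(S^*(X,\mathfrak{U};V)\bigr)=H_{sing}(X;V)$. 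Chasing the right-hand square (identity on the \v{C}ech column) shows that $\check{C}^*(\mathfrak{U};C(\lambda^*;V))$ is a quasi-isomorphism between the two total complexes; chasing the left-hand square then forces $C(\lambda_\mathfrak{U}^*;V)\colon A_s^*(\mathfrak{U};V)\to S^*(X,\mathfrak{U};V)$ to be a quasi-isomorphism, i.e.\ $H_s(\mathfrak{U};V)\cong H_{sing}(X;V)$.

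Finally I would assemble the displayed commutative diagram by stacking this ladder on top of the corresponding ladders for $A_c^*$ and $A^*$: the inclusions $A_s^*(\mathfrak{U};V)\hookrightarrow A_c^*(\mathfrak{U};V)\hookrightarrow A^*(\mathfrak{U};V)$ induce isomorphisms in cohomology by Theorem~\ref{isosmoothuqnum}, Proposition~\ref{propuqnumsing} identifies the $A_c^*$- and $A^*$-columns with $H_{sing}$ and $\check{H}(\mathfrak{U};V)$, and all the horizontal maps in the diagram are induced either by the single semi-simplicial morphism $\lambda$ or by inclusions of presheaves, so every square commutes on the nose. Independence of the open cover subject to the stated conditions is then immediate, since $\check{H}(\mathfrak{U};V)\cong H_{sing}(X;V)$ with $H_{sing}(X;V)$ manifestly independent of $\mathfrak{U}$. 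There is no deep obstacle here—the argument is a diagram chase—but the one point deserving care is checking that the squares involving $\lambda$ restrict correctly to the smooth sub-double-complex $\check{C}^*(\mathfrak{U},A_s^*)$ and, above all, that $i_s^*$ really is a quasi-isomorphism, which is precisely where the smooth $R$-numerability hypothesis enters through Corollary~\ref{uqnumicindisosmooth} (ultimately Corollary~\ref{covbycozsmooth}).
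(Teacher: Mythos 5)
Your proposal is correct and follows essentially the same route as the paper, which simply declares the proof analogous to that of Proposition~\ref{propuqnumsing}: the same two zig-zag comparisons via the double complexes $\check{C}^*(\mathfrak{U},A_s^*)$ and $\check{C}^*(\mathfrak{U},S^*)$, with Lemma~\ref{jindiso3} and Corollary~\ref{uqnumicindisosmooth} handling the smooth side, $V$-acyclicity handling the singular columns, and Theorem~\ref{isosmoothuqnum} supplying the comparison with $H_c$ and $H$ in the displayed diagram. The only point you leave implicit -- that smooth $R$-numerability implies the plain $R$-numerability needed to invoke Proposition~\ref{propuqnumsing} for the continuous column -- is immediate, since smooth partitions of unity are in particular continuous.
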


\begin{proof}
  The proof is analogous to the proof of Proposition \ref{propuqnumsing}.
\end{proof}

\begin{corollary}
 For any open covering $\mathfrak{U}$ of a smooth manifold $M$ for which each
 set $U_{i_0 \ldots i_p}$ is $V$-acyclic and each covering 
$\{ U_i^{q+1} \mid i \in I \}$ of $\mathfrak{U}[q]$ is smoothly $R$-numerable 
the singular cohomology $H_{sing} (M;V)$ and the \v{C}ech cohomology
$\check{H} (\mathfrak{U};V)$ 
can both be computed from the complex $A_s^* (\mathfrak{U};V)$ of 
smooth $\mathfrak{U}$-local cochains. 
\end{corollary}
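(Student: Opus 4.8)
The plan is to read the statement off directly from Proposition \ref{propuqsmoothnumsing}. By definition the complex $A_s^* (\mathfrak{U};V)$ of smooth $\mathfrak{U}$-local cochains has cohomology $H_s (\mathfrak{U};V)$, so it suffices to exhibit, under the two stated hypotheses, natural isomorphisms $H_s (\mathfrak{U};V) \cong H_{sing} (M;V)$ and $H_s (\mathfrak{U};V) \cong \check{H} (\mathfrak{U};V)$, and to observe that the first is induced by the vertex morphism $C (\lambda_\mathfrak{U}^* ;V)$.

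First I would note that the hypotheses here -- each set $U_{i_0 \ldots i_p}$ is $V$-acyclic and each covering $\{ U_i^{q+1} \mid i \in I \}$ of $\mathfrak{U}[q]$ is smoothly $R$-numerable -- are precisely those of Proposition \ref{propuqsmoothnumsing}. That proposition supplies a commutative diagram in which the leftmost vertical map $H (C (\lambda_\mathfrak{U}^* ;V))$ is an isomorphism $H_s (\mathfrak{U};V) \xrightarrow{\ \cong\ } H_{sing} (\mathfrak{U};V) = H_{sing} (M;V)$, and in which $H_s (\mathfrak{U};V)$ is carried isomorphically onto $\check{H} (\mathfrak{U};V)$ by the composite of the horizontal isomorphisms $H(i) : H_s (\mathfrak{U};V) \to H_c (\mathfrak{U};V) \to H (\mathfrak{U};V)$ followed by $H(j)^{-1} H(i) : H (\mathfrak{U};V) \to \check{H} (\mathfrak{U};V)$. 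Composing these identifications gives both desired isomorphisms with source $H_s (\mathfrak{U};V)$, i.e. with the cohomology of $A_s^* (\mathfrak{U};V)$, and the first of them is the one induced by $C (\lambda_\mathfrak{U}^* ;V)$ as asserted.

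There is essentially no obstacle at this stage: all the real work has been done in Proposition \ref{propuqsmoothnumsing}, which in turn rests on Theorem \ref{isosmoothuqnum} for the smooth-local versus local comparison, on Corollary \ref{cechisolocal} together with Corollary \ref{rowsalwexact} and Lemma \ref{jindiso3} for the \v{C}ech identification, and on the vertex-morphism argument of Proposition \ref{propuqnumsing} for the singular identification. The only thing left to check is the bookkeeping that the isomorphisms in the conclusion are exactly the ones obtained from the natural morphisms out of $A_s^* (\mathfrak{U};V)$, and this is immediate from the commutativity of the diagram in Proposition \ref{propuqsmoothnumsing}. I would write the proof in one or two sentences, noting the complete analogy with the corollary that follows Proposition \ref{propuqnumsing} for the (non-smooth) local complex $A^* (\mathfrak{U};V)$.
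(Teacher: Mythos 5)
Your proposal is correct and matches the paper's intent exactly: the corollary is stated as an immediate consequence of Proposition \ref{propuqsmoothnumsing}, and reading the two isomorphisms $H_s (\mathfrak{U};V) \cong H_{sing} (M;V)$ and $H_s (\mathfrak{U};V) \cong \check{H} (\mathfrak{U};V)$ off the commutative diagram there is precisely the intended argument. Nothing is missing.
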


\begin{example}
  For any 'good' covering $\mathfrak{U}=\{ U_i \mid i \in I \}$ of a manifold 
$M$ for which each covering $\{ U_i^{q+1} \mid i \in I \}$ of 
$\mathfrak{U}[q]$ is smoothly $R$-numerable the morphism 
$ C (\lambda_\mathfrak{U}^* ,V) : 
A_s^* (\mathfrak{U};V) \rightarrow S^* (\mathfrak{U};V)$ of cochain complexes
induces an isomorphism in cohomology and the cohomologies 
$\check{H} (\mathfrak{U};V)$, $H_s (\mathfrak{U};V)$, $H_c (\mathfrak{U};V)$, 
$H (\mathfrak{U};V)$ and $H_{sing} (M;V)$ are isomorphic.
\end{example}

\begin{lemma} \label{vacnumarecofinsmooth}
If the open coverings $\mathfrak{U}$ of a manifold $M$ for which the sets 
$U_{i_0 \ldots i_p}$ are $V$-acyclic and  each covering 
$\{ U_i^{q+1} \mid i \in I \}$ of $\mathfrak{U}[q]$ is smoothly $R$-numerable 
are cofinal in all open coverings, then for each such covering $\mathfrak{U}$
the the cohomology $H_s (\mathfrak{U};V)$ of smooth $\mathfrak{U}$-local 
cochains coincides with the smooth Alexander-Spanier cohomology 
$H_{AS,s} (M;V)$ of $M$. In particular the directed system 
$H_s (\mathfrak{U};V)$ of abelian groups is co-Mittag-Leffler.
\end{lemma}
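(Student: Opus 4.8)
The plan is to follow the pattern of the proof of Lemma \ref{vacnumarecofin} verbatim, simply replacing the continuous cochain complex $A_c^*(\mathfrak{U};V)$ by the smooth one $A_s^*(\mathfrak{U};V)$ and invoking the smooth analogue Proposition \ref{propuqsmoothnumsing} in place of Proposition \ref{propuqnumsing}. First I would use the cofinality hypothesis: by assumption the open coverings $\mathfrak{U}$ of $M$ which simultaneously satisfy the $V$-acyclicity condition $\tilde{H}_{sing}(U_{i_0\ldots i_p};V)=0$ for all $i_0,\ldots,i_p\in I$ and the condition that each $\{U_i^{q+1}\mid i\in I\}$ be smoothly $R$-numerable form a cofinal subsystem of the directed system of all open coverings of $M$. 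Hence the smooth Alexander-Spanier cohomology $H_{AS,s}(M;V)=\colim_{\mathfrak{U}}H_s(\mathfrak{U};V)$ can already be computed as the colimit of $H_s(\mathfrak{U};V)$ over this cofinal subsystem alone.

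Next, for each covering $\mathfrak{U}$ in this cofinal subsystem Proposition \ref{propuqsmoothnumsing} supplies a chain of isomorphisms $H_s(\mathfrak{U};V)\cong H_c(\mathfrak{U};V)\cong H(\mathfrak{U};V)\cong H_{sing}(M;V)$, induced by the inclusions $A_s^*\hookrightarrow A_c^*\hookrightarrow A^*$ and by the vertex morphism $\lambda$. Because the inclusions of cochain complexes are the obvious ones and $\lambda$ is a morphism of semi-simplicial spaces not depending on $\mathfrak{U}$, these isomorphisms commute with the transition maps of the directed systems coming from refinement of coverings. Passing to the colimit over the cofinal subsystem therefore yields $H_{AS,s}(M;V)\cong H_{sing}(M;V)$, and in particular identifies $H_{AS,s}(M;V)$ with $H_{AS}(M;V)$.

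Finally, since for every $\mathfrak{U}$ in the cofinal subsystem the group $H_s(\mathfrak{U};V)$ is isomorphic to the fixed group $H_{sing}(M;V)$ by isomorphisms compatible with the transition maps, every transition map within the subsystem must itself be an isomorphism; consequently the directed system $H_s(\mathfrak{U};V)$ is co-Mittag-Leffler. The only step that genuinely needs to be checked — and the point I would expect to be the main (albeit minor) obstacle — is the compatibility of the isomorphisms of Proposition \ref{propuqsmoothnumsing} with the refinement maps, so that they descend to the colimit; but this naturality is immediate from the construction, exactly as in the continuous case treated in Lemma \ref{vacnumarecofin}, since both the inclusions of the various cochain presheaves and the vertex morphism $\lambda$ are natural in $X$ and independent of the chosen cover.
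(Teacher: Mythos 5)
Your proposal follows the same route as the paper: restrict the colimit defining $H_{AS,s}(M;V)$ to the cofinal subsystem of coverings satisfying the acyclicity and smooth numerability hypotheses, invoke Proposition \ref{propuqsmoothnumsing} to identify each $H_s(\mathfrak{U};V)$ with $H_{sing}(M;V)$, and conclude both the identification of the colimit and the co-Mittag-Leffler property. Your explicit remark on the compatibility of these isomorphisms with the refinement maps (via naturality of the inclusions and of the vertex morphism $\lambda$) is a point the paper leaves implicit, but it is the same argument.
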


\begin{proof}
  In this case the (smooth) Alexander-Spanier cohomology can be 
computed as the colimit over this cofinal set of open coverings 
$\mathfrak{U}$. Proposition \ref{propuqsmoothnumsing} shows the isomorphisms 
$H_c (\mathfrak{U};V) \cong H (\mathfrak{U};V) \cong H_{sing} (M;V)$ 
for every covering $\mathfrak{U}$ in this cofinal set; this implies the
isomorphism $H_{AS,c} (M;V) \cong H_{AS} (M;V)$ of the colimit groups. It 
also shows that the directed systems $H_c (\mathfrak{U};V)$ and 
$H (\mathfrak{U};V)$ are co-Mittag-Leffler.
\end{proof}

\begin{example}
If $\mathfrak{U}$ is an open covering of a finite dimensional Riemannian 
manifold $M$ by geodetically convex sets, then the cohomology of the complex 
$A_s^* (\mathfrak{U};V)$ is isomorphic to the \v{C}ech cohomology $\check{H}
(\mathfrak{U};V)$ and to the singular cohomology $H_{sing} (M;V)$ of $M$. 
If $M$ is an infinite dimensional Riemannian manifold one has to require the 
local existence of geodesics and smooth partitions of unity for this argument 
to be applicable.
\end{example}

\begin{example}
  If $G$ is a Hilbert Lie group and $U$ a geodetically convex identity 
neighbourhood of $G$, then the cohomology of the complex 
$A_s^* (\mathfrak{U}_U;V)$ is isomorphic to \v{C}ech cohomology $\check{H}
(\mathfrak{U};V)$ and to the singular cohomology $H_{sing} (M;V)$ of $M$.
\end{example}

For Lie groups one can also consider the complex 
$\colim_{U \in \mathcal{U}_1} A_s^* (\mathfrak{U}_U ;V)$, where $U$ ranges
over all open identity neighbourhoods of $G$. In order to obtain a result
similar to that for locally contractible groups (Theorem \ref{colimnbhfcont}) we
require the Lie groups and their finite products to be smoothly 
$R$-paracompact. Although this does not guarantee the row-exactness of the 
double complex $\check{C}^* (\mathfrak{U}_U, A_c^*)$, it allows us to construct 
approximative row contractions 
$h^{*,q} : \check{C}^* (\mathfrak{U}_U, A_s^q) \rightarrow  
\check{C}^{*-1} (\mathfrak{U}_U, A_s^q)$. These row contractions will serve the 
same purpose after shrinking the open covering $\mathfrak{U}_U$ to an open 
covering $\mathfrak{U}_V$.

\begin{lemma} \label{foropenuexistsvandphi}
For any open identity neighbourhood $U$ of a Lie group $G$ for which all
finite products are smoothly $R$-paracompact, there exists an open identity 
neighbourhood $V \subseteq U$ and smooth $R$-valued functions 
$\{\varphi_{q,g} \mid G \in G\}$ with locally finite supports in 
$(gU) \times \cdots \times (gU)$ respectively such that the restriction of 
each function $\varphi_q = \sum_{g \in G} \varphi_{q,g}$ to $\mathfrak{V}[q]$
is the constant function $1$.
\end{lemma}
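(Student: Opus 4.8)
The plan is to reduce the statement to the hypothesis that the finite products $G^{p+1}$ are smoothly $R$-paracompact. The difficulty is that the open set $\mathfrak{U}_U[q] = \bigcup_{g\in G}(gU)^{q+1} \subseteq G^{q+1}$ need not itself be smoothly $R$-paracompact, so one cannot simply subordinate a smooth $R$-valued partition of unity to the covering $\{(gU)^{q+1}\mid g\in G\}$ of $\mathfrak{U}_U[q]$. Instead I would complete this covering to an open covering of the whole product $G^{q+1}$, at the price of passing from $U$ to a strictly smaller identity neighbourhood $V$.

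First I would fix, once and for all (independently of $q$), a symmetric open identity neighbourhood $V$ with $V^4\subseteq U$, which exists by continuity of the multiplication; since $e\in V$ this forces $V\subseteq U$. Writing $\mathfrak{V}[q]:=\bigcup_{g\in G}(gV)^{q+1}$, the key elementary observation is that $\overline{\mathfrak{V}[q]}\subseteq\mathfrak{U}_U[q]$ for every $q$. Indeed, the set $Z_q:=\{(x_0,\ldots,x_q)\in G^{q+1}\mid x_0^{-1}x_i\in\overline{V^2}\ \text{for all}\ i\}$ is closed (a finite intersection of preimages of $\overline{V^2}$ under continuous maps) and contains $\mathfrak{V}[q]$, since $x_i=gv_i$ gives $x_0^{-1}x_i=v_0^{-1}v_i\in V^{-1}V=V^2$; and by the standard estimate $\overline{W}\subseteq W^2$ for a symmetric open $W\ni e$ applied to $W=V^2$ one has $\overline{V^2}\subseteq V^4\subseteq U$, so that $Z_q\subseteq\{(x_0,\ldots,x_q)\mid x_i\in x_0U\}\subseteq\mathfrak{U}_U[q]$. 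Consequently $\mathcal{W}_q:=\{(gU)^{q+1}\mid g\in G\}\cup\{G^{q+1}\setminus\overline{\mathfrak{V}[q]}\}$ is an open covering of $G^{q+1}$.

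Next, for each $q$ I would use the smooth $R$-paracompactness of $G^{q+1}$ to pick a smooth $R$-valued partition of unity $\{\psi_{q,\beta}\}_\beta$ whose supports form a locally finite family, with $\sum_\beta\psi_{q,\beta}=1$ and each $\supp\psi_{q,\beta}$ contained in a member of $\mathcal{W}_q$. Call $\beta$ \emph{good} if $\supp\psi_{q,\beta}\subseteq(g(\beta)U)^{q+1}$ for a chosen $g(\beta)\in G$, and \emph{bad} otherwise, so that a bad $\psi_{q,\beta}$ has support in $G^{q+1}\setminus\overline{\mathfrak{V}[q]}$ and hence vanishes on $\mathfrak{V}[q]$. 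Put $\varphi_{q,g}:=\sum_{\beta\ \text{good},\,g(\beta)=g}\psi_{q,\beta}$ for $g\in G$ (the empty sum, hence $\equiv 0$, unless $g$ is some $g(\beta)$). As a locally finite partial sum of the $\psi_{q,\beta}$ this is smooth and $R$-valued; its support lies in the locally finite, hence closed, union of those $\supp\psi_{q,\beta}$ with $g(\beta)=g$, so $\supp\varphi_{q,g}\subseteq(gU)^{q+1}$, and $\{\supp\varphi_{q,g}\mid g\in G\}$ is locally finite because any point has a neighbourhood meeting only finitely many $\supp\psi_{q,\beta}$. Finally $\varphi_q=\sum_{g\in G}\varphi_{q,g}=\sum_{\beta\ \text{good}}\psi_{q,\beta}=1-\sum_{\beta\ \text{bad}}\psi_{q,\beta}$, and since every bad summand vanishes on $\mathfrak{V}[q]$ the restriction of $\varphi_q$ to $\mathfrak{V}[q]$ is the constant function $1$; thus this $V$ and these $\varphi_{q,g}$ satisfy the assertion.

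I expect the only genuinely non-formal step to be the first one: recognising that the covering of $\mathfrak{U}_U[q]$ must be completed to a covering of the ambient product $G^{q+1}$, and that the containment $\overline{\mathfrak{V}[q]}\subseteq\mathfrak{U}_U[q]$ — which uses nothing beyond the regularity of the topological group — renders the discarded \emph{bad} part harmless on the shrunken neighbourhood $\mathfrak{V}[q]$. Everything else (the regrouping of the $\psi_{q,\beta}$, the bookkeeping for local finiteness, and the smoothness of the locally finite sums in the calculus of \cite{BGN04}) is routine.
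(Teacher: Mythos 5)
Your proof is correct and follows essentially the same route as the paper: shrink $U$ to a smaller identity neighbourhood, show $\overline{\mathfrak{V}[q]}\subseteq\mathfrak{U}_U[q]$, adjoin the complement $G^{q+1}\setminus\overline{\mathfrak{V}[q]}$ to get an open covering of the whole product, apply smooth $R$-paracompactness of $G^{q+1}$, and observe that the piece of the partition supported off $\overline{\mathfrak{V}[q]}$ is harmless on $\mathfrak{V}[q]$. The only deviations are cosmetic: you prove the closure containment via the closed set $Z_q$ and $\overline{V^2}\subseteq V^4\subseteq U$ where the paper uses a net argument with $V^{-1}V\subseteq\overline{W}\subseteq U$, and you regroup an arbitrarily indexed subordinate partition of unity, a point the paper glosses over by indexing it directly by the covering sets.
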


\begin{proof}
Let $V$ and $W$ be open identity neighbourhoods satisfying 
$V^{-1} V \subseteq \overline{W} \subseteq U$. 
We assert that the closure of $\mathfrak{V}[q]$ is contained in 
$\mathfrak{U}[q]$. If $\vec{x}_j$ is a net in $\mathfrak{V}[q]$ converging to 
a point $\vec{x}$ in $G^{q+1}$, then the net $x_{j,0}^{-1} \vec{x}_j$
converges to $x_0^{-1} \vec{x}$. Every point $\vec{x}_j$ is contained in some 
open set $(gV)\times \cdots \times (gV)$, as a consequence the point 
$x_{j,0}^{-1} x_{j,k}$ is contained in 
$(gV)^{-1} gV = V^{-1} g^{-1} g V = V^{-1} V \subset \overline{W}$ and 
$\vec{x}$ is contained in 
$(x_0 \overline{W} ) \times \cdots \times (x_0\overline{W})$; the latter set
is contained in $\mathfrak{U}[q]$. Thus the complement 
$A=G^{q+1} \setminus \overline{\mathfrak{V}[q]}$ and the open sets 
$(gU) \times \cdots \times (gU)$ form an open covering of $G^{q+1}$. If the 
Lie group $G$ and its finite products are smoothly $R$-paracompact, then there
exists a smooth $R$-valued partition of unity 
$\{ \psi\} \cup \{\varphi_{q,g} \mid g \in G\}$ subordinate to this open 
covering. Since the function $\psi$ has support in the complement of
$\mathfrak{V}[q]$ the sum $\varphi_q = \sum_{g \in G} \varphi_{q,g}$ is $1$ on 
$\mathfrak{V}[q]$.
\end{proof}

For any open covering $\mathfrak{U}:=\{ U_i \mid i \in I\}$ of a smooth
manifold $M$, set of smooth $R$-valued functions 
$\{\varphi_{q,i} \mid i\in I\}$ with locally finite supports contained in 
$U_i^{q+1}$ respectively and smooth cochain 
$f \in \check{C}^p (\mathfrak{U};A_s^q)$ the products 
$\varphi_{q,i} f_{i  i_0 \ldots i_{p-1}}$ have supports in the open sets 
$U_{i i_0 \ldots i_p}^{q+1}$ respectively. Therefore they can be smoothly 
extended to $U_{i_0 \ldots i_{p-1} }^{q+1}$ by defining it to be zero outside 
$U_{i i_0 \ldots i_{p-1}}^{q+1}$. Understanding each function 
$\varphi_{q,i} f_{i g_0 \ldots i_{p-1} }$ to be extended this way we define an 
approximation to the homotopy operator in Eq. \ref{eqchechcontrhom}:

\begin{equation*}
  h_\varphi^{p,q} : \check{C}^p ( \mathfrak{U}, A^q) \rightarrow  
\check{C}^{p-1} ( \mathfrak{U}, A^q), \quad 
h_\varphi^{p,q} ( f )_{i_0 \ldots i_{p-1}} = 
\sum_{i \in I} \varphi_{q,g} f_{i i_0 \ldots i_{p-1}} 
\nonumber
\end{equation*} 
The homomorphisms map continuous cochains to continuous cochains and smooth
cochains to smooth cochains by construction. In addition we observe:

\begin{lemma} \label{lemappcechcontrhom}
For any open covering $\mathfrak{U}:=\{ U_i \mid i \in I\}$ of a smooth
manifold $M$ and set of smooth $R$-valued functions 
$\{\varphi_{q,i} \mid i \in I \}$ with locally finite supports contained in 
$U_i^{q+1}$ respectively the homomorphisms $h_\varphi^{p,q}$ satisfy the equation
\begin{equation} \label{eqappcechcontrhom}
  \delta h_\varphi^{p,q} (f) + h_\varphi^{p+1,q} (\delta f) = 
\sum_i \varphi_{q,i} f 
\end{equation}  
for all cochains $f \in \check{C}^p (\mathfrak{U} , A^q )$.
\end{lemma}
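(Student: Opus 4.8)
The plan is to run, almost verbatim, the computation from the proof of Proposition~\ref{propchechcontrhom}, simply dropping the normalisation $\sum_i\varphi_i=1$ and letting it reappear on the right-hand side. First I would check that the left-hand side of Eq.~\ref{eqappcechcontrhom} makes sense: since the supports of the $\varphi_{q,i}$ are locally finite and contained in the $U_i^{q+1}$, around every point of $U_{i_0\ldots i_{p-1}}^{q+1}$ all but finitely many of the extended products $\varphi_{q,i}\,f_{i i_0\ldots i_{p-1}}$ vanish; hence $h_\varphi^{p,q}(f)_{i_0\ldots i_{p-1}}$ is a well-defined element of $A^q(U_{i_0\ldots i_{p-1}};V)$, smooth when $f$ is smooth and continuous when $f$ is continuous, and the same local finiteness legitimises interchanging the sum over $i\in I$ with the finite alternating sum coming from the \v{C}ech differential. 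All products are understood to be extended by zero exactly as described just before the statement of the lemma.

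Next, for $f\in\check{C}^p(\mathfrak{U},A^q)$, I would expand $\delta h_\varphi^{p,q}(f)_{i_0\ldots i_p}$ via the definition of the \v{C}ech coboundary, pull the summation over $i$ to the front of the resulting alternating sum, and rewrite the inner sum $\sum_{k=0}^p(-1)^k f_{i i_0\ldots\hat{i}_k\ldots i_p}$ as $f_{i_0\ldots i_p}-(\delta f)_{i i_0\ldots i_p}$ — this is just the defining relation of $\delta$ read backwards, exactly as in Proposition~\ref{propchechcontrhom}. Collecting the two resulting contributions gives
\[
\delta h_\varphi^{p,q}(f)_{i_0\ldots i_p}
= \Bigl(\sum_i\varphi_{q,i}\Bigr)f_{i_0\ldots i_p}
- h_\varphi^{p+1,q}(\delta f)_{i_0\ldots i_p},
\]
which, after moving the last term to the left, is precisely Eq.~\ref{eqappcechcontrhom} on $U_{i_0\ldots i_p}^{q+1}$; the case $p=0$ is the same computation with $\delta$ interpreted as the augmentation. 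Since this holds on every $U_{i_0\ldots i_p}^{q+1}$, it is an identity of cochains.

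There is no real obstacle here beyond bookkeeping: one only has to make sure the extensions by zero are internally consistent — i.e.\ that the support of each $\varphi_{q,i}\,f_{i i_0\ldots i_{p-1}}$ is closed in $U_{i_0\ldots i_{p-1}}^{q+1}$, which is what allows extension by zero to land in $A^q$ and, when $f$ is smooth resp.\ continuous, in $A_s^q$ resp.\ $A_c^q$ — and that the rearrangement of the locally finite sums over the possibly infinite index set $I$ is term-by-term valid. Once that is in place the argument is formally identical to the one for Proposition~\ref{propchechcontrhom}; the only conceptual change is that $\sum_i\varphi_{q,i}$ need not equal $1$, which is exactly why Eq.~\ref{eqappcechcontrhom} has $\sum_i\varphi_{q,i}f$ on the right rather than $f$ — and this is unavoidable, since Lemma~\ref{foropenuexistsvandphi} only produces functions whose sum is $1$ on the smaller diagonal neighbourhood $\mathfrak{V}[q]$, not on all of $\mathfrak{U}[q]$.
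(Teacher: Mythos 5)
Your proposal is correct and follows essentially the same route as the paper: expand $\delta h_\varphi^{p,q}(f)$, interchange the (locally finite) sum over $i$ with the alternating sum, rewrite the inner sum as $f_{i_0\ldots i_p}-(\delta f)_{i i_0\ldots i_p}$, and collect terms to obtain Eq.~\ref{eqappcechcontrhom}. Your extra remarks on the extensions by zero and on why the sum $\sum_i\varphi_{q,i}$ replaces $1$ are consistent with the discussion preceding the lemma and add nothing contradictory.
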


\begin{proof}
For any cochain $f \in \check{C}^p (\mathfrak{U};A^q)$ of bidegree $(p,q)$ 
the horizontal coboundary of $h_\varphi^{p,q} (f)$ computes to 
\begin{eqnarray*}
  (\delta h_\varphi^{p,q} (f))_{i_0 \ldots i_p} (\vec{x}) & = & 
\sum_{k=0}^p (-1)^k h_\varphi^{p,q} (f)_{i_0 \ldots \hat{i}_k \ldots i_p} (\vec{x})\\
& = & \sum_{k=0}^p (-1)^k \sum_{i} 
\varphi_{q,i} (\vec{x}) f_{i i_0 \ldots \hat{i}_k \ldots i_{p-1}} (\vec{x})\\
& = &  \sum_{i} \varphi_{q,i} (\vec{x}) \sum_{k=0}^p (-1)^k 
f_{i i_0 \ldots \hat{i}_k \ldots i_{p-1}} (\vec{x})\\
& = & \sum_i \varphi_{q,i} (\vec{x} ) \left[ f_{i_0 \ldots i_p} (\vec{x}) - 
(\delta f)_{i i_0 \ldots i_p} (\vec{x}) \right] \\
& = & \sum_i \varphi_{q,i} ( \vec{x}) f_{i_0 \ldots i_p} (\vec{x}) - 
h_\varphi^{p+1,q} (\delta f)_{i i_0 \ldots i_p} (\vec{x}) \, ,
\end{eqnarray*}
which is the stated equality.
\end{proof}

\begin{proposition} \label{lemappcechcontrhomres}
For any open identity neighbourhood $U$ of a Lie group $G$ for which all
finite products are smoothly $R$-paracompact, there exists an open identity 
neighbourhood $V \subseteq U$ and homomorphisms 
$h^{p,q} : \check{C}^p ( \mathfrak{U}_U, A^q) \rightarrow  
\check{C}^{p-1} ( \mathfrak{U}_U , A^q)$ satisfying the equation
\begin{equation*} \label{eqappcechcontrhomres}
\res_{\mathfrak{U}_V ,\mathfrak{U}_U}^{p,q} 
\left[ \delta h^{p,q} + h^{p+1,q} \delta \right]
= \res_{\mathfrak{U}_V ,\mathfrak{U}_U}^{p,q} \,
\end{equation*}  
and which leave the sub-rows $\check{C}^* ( \mathfrak{U}_U , A_s^q)$ and 
$\check{C}^* ( \mathfrak{U}_U , A_c^q)$ invariant. In particular the colimit
double complex 
$\colim_{U \in \mathcal{U}_1} \check{C}^* ( \mathfrak{U}_U ;A_s^*)$ is row-exact.
\end{proposition}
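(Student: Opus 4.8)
The plan is to feed the data produced by Lemma~\ref{foropenuexistsvandphi} into Lemma~\ref{lemappcechcontrhom}, working with the open covering $\mathfrak{U}_U = \{ gU \mid g \in G\}$ (so the index set is $I = G$). First I would apply Lemma~\ref{foropenuexistsvandphi} to the given identity neighbourhood $U$ to obtain an identity neighbourhood $V \subseteq U$ together with, for each $q$, smooth $R$-valued functions $\{ \varphi_{q,g} \mid g \in G\}$ whose supports are locally finite and contained in $(gU)^{q+1} = U_g^{q+1}$ respectively, and such that $\varphi_q := \sum_{g \in G} \varphi_{q,g}$ restricts to the constant function $1$ on $\mathfrak{V}[q]$. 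I then set $h^{p,q} := h_\varphi^{p,q}$, the operator defined by the formula preceding Lemma~\ref{lemappcechcontrhom} for these particular functions. As noted there, multiplication by the smooth functions $\varphi_{q,g}$ followed by extension by zero sends continuous cochains to continuous cochains and smooth cochains to smooth cochains, so each $h^{p,q}$ leaves the sub-rows $\check{C}^* ( \mathfrak{U}_U , A_c^q)$ and $\check{C}^* ( \mathfrak{U}_U , A_s^q)$ invariant.

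Next I would invoke Lemma~\ref{lemappcechcontrhom}, which gives $\delta h^{p,q}(f) + h^{p+1,q}(\delta f) = \sum_{g \in G} \varphi_{q,g} f = \varphi_q \cdot f$ for every $f \in \check{C}^p (\mathfrak{U}_U, A^q)$, where $\varphi_q \cdot f$ denotes pointwise scalar multiplication. The essential observation is that the refinement map induced by the identity on $G$ (legitimate since $gV \subseteq gU$) restricts the component $f_{g_0 \ldots g_p}$ of a cochain to $(g_0 V \cap \cdots \cap g_p V)^{q+1}$, which is contained in $(g_0 V)^{q+1} \subseteq \mathfrak{V}[q]$, where $\varphi_q$ equals $1$. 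Hence $\res_{\mathfrak{U}_V , \mathfrak{U}_U}^{p,q} (\varphi_q \cdot f) = \res_{\mathfrak{U}_V , \mathfrak{U}_U}^{p,q}(f)$, and since this holds for every $f$, applying $\res_{\mathfrak{U}_V , \mathfrak{U}_U}^{p,q}$ to the preceding identity yields exactly the asserted equation $\res_{\mathfrak{U}_V , \mathfrak{U}_U}^{p,q} [ \delta h^{p,q} + h^{p+1,q} \delta ] = \res_{\mathfrak{U}_V , \mathfrak{U}_U}^{p,q}$. The invariance of the sub-rows was established in the previous paragraph.

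For the row-exactness of the colimit double complex I would work in the filtered colimit over $\mathcal{U}_1$, which is filtered because $\mathcal{U}_1$ is a filterbase, so any two of its members contain a common refining member. Fix $q$ and take a class in the kernel of the differential of $\colim_{U} \check{C}^* ( \mathfrak{U}_U , A_s^q)$; after passing to a refinement we may represent it by a cochain $f \in \check{C}^p (\mathfrak{U}_U , A_s^q)$ with $\delta f = 0$. Choosing $V \subseteq U$ and $h^{*,q}$ as above and using that restriction commutes with the \v{C}ech differential, the displayed equation collapses to $\res_{\mathfrak{U}_V , \mathfrak{U}_U}^{p,q}(f) = \delta \big( h^{p,q}(f)|_{\mathfrak{U}_V} \big)$. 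Since $h^{p,q}$ preserves smooth cochains, in the colimit the class of $f$ is therefore a coboundary when $p \geq 1$, while for $p = 0$ it lies in the image of the augmentation, because $h^{0,q}(f) = \sum_{g} \varphi_{q,g} f_g$ is a global element of $A_s^q (\mathfrak{U}_U; V)$; injectivity of the augmentation holds already before passing to the colimit since the sets $U_g^{q+1}$ cover $\mathfrak{U}[q]$. Hence every row of $\colim_{U} \check{C}^* ( \mathfrak{U}_U ; A_s^*)$ is exact.

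I expect the only genuinely delicate point to be the domain bookkeeping in the second paragraph: pinning down that the sets $(g_0 V \cap \cdots \cap g_p V)^{q+1}$ onto which the restriction projects really sit inside $\mathfrak{V}[q]$, so that the factor $\varphi_q$ becomes $1$ and drops out. Everything else (smoothness and continuity of the extended products $\varphi_{q,g} f_{g g_0 \ldots g_{p-1}}$, local finiteness of the sums, and the identification of $h^{0,q}$ with the augmentation) is routine and was already dealt with in the discussion preceding Lemma~\ref{lemappcechcontrhom}.
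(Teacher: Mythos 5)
Your proposal is correct and follows essentially the same route as the paper: apply Lemma~\ref{foropenuexistsvandphi} to produce $V$ and the functions $\varphi_{q,g}$, set $h^{p,q}=h_\varphi^{p,q}$, and conclude via Lemma~\ref{lemappcechcontrhom}, using that $\varphi_q\equiv 1$ on $\mathfrak{V}[q]$ so the factor disappears after applying $\res_{\mathfrak{U}_V,\mathfrak{U}_U}^{p,q}$. Your additional verifications (that $(g_0V\cap\cdots\cap g_pV)^{q+1}\subseteq\mathfrak{V}[q]$, and the filtered-colimit argument for row-exactness including the $p=0$ case) are details the paper leaves implicit, and they are carried out correctly.
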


\begin{proof}
For any open identity neighbourhood $U$ of a Lie group $G$ for which all
finite products are smoothly $R$-paracompact Lemma \ref{foropenuexistsvandphi} 
shows the existence of an open identity neighbourhood $V \subseteq U$ and 
smooth $R$-valued functions $\{\varphi_{q,g} \mid G \in G\}$ with locally 
finite supports in $(gU) \times \cdots \times (gU)$ respectively such that 
the restriction of each function $\varphi_q = \sum_{g \in G} \varphi_{q,g}$ to 
$\mathfrak{V}[q]$ is the constant function $1$. For $h^{p,q} =
h_\varphi^{p,q}$ the stated equality now follows from Lemma 
\ref{lemappcechcontrhom}.
\end{proof}

\begin{corollary} \label{corfinprodpcthenjsindiso}
For any open neighbourhood filterbase $\mathcal{U}_1$ of a Lie group $G$ whose 
finite products are smoothly $R$-paracompact the morphisms 
$i_s^*$ induce an isomorphism 
$\colim_{U \in \mathcal{U}_1} H_s (\mathfrak{U}_U ;V) \cong 
\colim_{U \in \mathcal{U}_1} H (\tot\check{C}^* (\mathfrak{U}_U ;A_s^*))$.  
\end{corollary}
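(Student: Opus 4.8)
The plan is to mimic the proof of Corollary \ref{rowsalwexact}, but work in the colimit over $\mathcal{U}_1$ so that the approximative row contractions of Proposition \ref{lemappcechcontrhomres} become genuine row contractions in the limit. Concretely, for each fixed $q$ I would show that the augmented row $\colim_{U} A_s^q(\mathfrak{U}_U;V) \hookrightarrow \colim_{U} \check{C}^*(\mathfrak{U}_U,A_s^q)$ is exact. The colimit of a directed system of complexes has cohomology the colimit of the cohomologies (colimits are exact), so it suffices to show that every cocycle in $\check{C}^p(\mathfrak{U}_U,A_s^q)$ becomes a coboundary after passing far enough along the filterbase. This is exactly what Proposition \ref{lemappcechcontrhomres} provides: given $U$, there is $V \subseteq U$ and maps $h^{p,q}$ with $\res_{\mathfrak{U}_V,\mathfrak{U}_U}^{p,q}[\delta h^{p,q} + h^{p+1,q}\delta] = \res_{\mathfrak{U}_V,\mathfrak{U}_U}^{p,q}$, and these $h^{p,q}$ preserve the smooth sub-rows. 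Hence if $f \in \check{C}^p(\mathfrak{U}_U,A_s^q)$ is a $\delta$-cocycle with $p \geq 1$, then its restriction to $\mathfrak{U}_V$ equals $\delta(h^{p,q}f)$, so it vanishes in the colimit; in degree $p=0$ the same identity shows the image of the augmentation is hit, giving exactness of the augmented colimit row.

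Having established that the colimit double complex $\colim_{U} \check{C}^*(\mathfrak{U}_U,A_s^*)$ has exact (augmented) rows, I would run the standard double-complex comparison argument, exactly as in the proof of the isomorphism part of Corollary \ref{rowsalwexact} (or Theorem \ref{isocontuqnum}). The augmentation $i_s^*$ maps the complex $\colim_U A_s^*(\mathfrak{U}_U;V)$ into the total complex $\colim_U \tot\check{C}^*(\mathfrak{U}_U,A_s^*)$, and because each augmented row is exact the spectral sequence of the double complex (filtered by columns) collapses onto the augmentation column; equivalently, a direct zig-zag diagram chase using the row contractions shows $i_s^*$ is a quasi-isomorphism. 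Since the formation of the total complex and the computation of cohomology both commute with the filtered colimit over $\mathcal{U}_1$, we obtain $\colim_U H_s(\mathfrak{U}_U;V) \cong \colim_U H(\tot\check{C}^*(\mathfrak{U}_U;A_s^*))$, which is the assertion.

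The main technical point to be careful about is the interaction of the colimit with the approximative nature of $h^{p,q}$: for a single $U$ the maps $h^{p,q}$ are \emph{not} a row contraction, only a row contraction up to restriction to a smaller $\mathfrak{U}_V$. One must therefore phrase the exactness argument entirely at the level of the directed colimit, where "restrict to a cofinal piece of the index set" is harmless, rather than trying to produce a contraction on any fixed stage. It is also worth checking that the restriction maps $\res_{\mathfrak{U}_V,\mathfrak{U}_U}$ are the structure maps of the directed system defining the colimit complexes and that they are compatible with $i_s^*$ and with the differentials $d_h$, $d_v$ — this is routine but needs a sentence. With that in place the argument is a formal consequence of Proposition \ref{lemappcechcontrhomres} together with exactness of filtered colimits.
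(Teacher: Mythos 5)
Your argument is correct and is essentially the paper's intended one: the paper derives this corollary directly from the ``in particular'' clause of Proposition \ref{lemappcechcontrhomres} (row-exactness of the colimit double complex, obtained exactly as you do by letting the approximative contractions become genuine contractions after restriction along the filterbase) followed by the standard augmented double-complex comparison, using that filtered colimits commute with totalisation and cohomology. Your added remarks on the compatibility of the restriction maps with $i_s^*$ and the differentials fill in precisely the routine checks the paper leaves implicit.
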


Summarising the preceding observations for Lie groups we have shown:

\begin{theorem} \label{colimspclg}
 For any open neighbourhood filterbase $\mathcal{U}_1$ of a Lie group $G$ whose 
finite products are smoothly $R$-paracompact the morphisms 
$A_s^* (\mathfrak{U}_U ;V) \hookrightarrow A^* (\mathfrak{U}_U ;V)$ and 
$C (\lambda_{\mathfrak{U}_U}^* ,V) :  A_s^* (\mathfrak{U}_U ; V) 
\rightarrow S^* (\mathfrak{U}_U ;V)$ for all $U \in \mathcal{U}_1$ induce 
isomorphisms  
\begin{equation*}
 \colim_{U \in \mathcal{U}_1} H_s (\mathfrak{U}_U ;V) \cong 
\colim_{U \in \mathcal{U}_1} H (\mathfrak{U}_U ;V) \cong H_{sing} (G;V)
\end{equation*}
in cohomology.
\end{theorem}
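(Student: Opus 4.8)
The plan is to run the argument of Theorem \ref{isocontuqnum} again, but only after passing to the colimit over the filterbase $\mathcal{U}_1$. This detour is forced on us: for a \emph{fixed} identity neighbourhood $U$ the rows of $\check{C}^*(\mathfrak{U}_U, A_s^*)$ need not be exact, since a smooth $R$-valued partition of unity subordinate to $\{(gU)^{q+1}\mid g\in G\}$ may fail to exist. Proposition \ref{lemappcechcontrhomres} and Corollary \ref{corfinprodpcthenjsindiso} were set up precisely to repair this: only the colimit double complex $\colim_{U}\check{C}^*(\mathfrak{U}_U, A_s^*)$ is row-exact, hence only the colimit augmentation $i_s^*$ becomes a quasi-isomorphism. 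Everything below therefore takes place after applying $\colim_{U\in\mathcal{U}_1}$, which is harmless because filtered colimits of abelian groups (or $R$-modules) are exact and so commute with passage to cohomology.

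First I would assemble the four colimit-level quasi-isomorphisms. The inclusions $A_s^*(\mathfrak{U}_U;V)\hookrightarrow A^*(\mathfrak{U}_U;V)$ and $\tot\check{C}^*(\mathfrak{U}_U, A_s^*)\hookrightarrow\tot\check{C}^*(\mathfrak{U}_U, A^*)$ are compatible with the restriction maps of the filterbase and intertwine $i_s^*,j_s^*$ with $i^*,j^*$; taking colimits yields the commutative diagram
\begin{equation*}
\xymatrix{
\colim_U H_s(\mathfrak{U}_U;V) \ar[r]^-{i_s^*} \ar[d] & \colim_U H(\tot\check{C}^*(\mathfrak{U}_U, A_s^*)) \ar[d] & \colim_U \check{H}(\mathfrak{U}_U;V) \ar[l]_-{j_s^*} \ar@{=}[d] \\
\colim_U H(\mathfrak{U}_U;V) \ar[r]^-{i^*} & \colim_U H(\tot\check{C}^*(\mathfrak{U}_U, A^*)) & \colim_U \check{H}(\mathfrak{U}_U;V) \ar[l]_-{j^*}
}
\end{equation*}
in which $i_s^*$ is an isomorphism by Corollary \ref{corfinprodpcthenjsindiso}, $i^*$ is an isomorphism by Corollary \ref{rowsalwexact} (colimit of isomorphisms), and $j_s^*$, $j^*$ are isomorphisms by Lemmata \ref{jindiso3} and \ref{columnsexact}. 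A diagram chase then forces the left-hand vertical arrow --- which is induced by the inclusion of smooth into arbitrary $\mathfrak{U}_U$-local cochains --- to be an isomorphism, so $\colim_U H_s(\mathfrak{U}_U;V)\cong\colim_U H(\mathfrak{U}_U;V)$.

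It remains to identify $\colim_U H(\mathfrak{U}_U;V)$ with $H_{sing}(G;V)$ compatibly with the vertex morphism. Every Lie group in the sense of \cite{BGN04} that is modelled so as to admit the charts used here is locally contractible (a chart domain may be taken convex, hence contractible), so Theorem \ref{colimnbhf} applies and tells us that the morphisms $C(\lambda_{\mathfrak{U}_U}^*,V):A^*(\mathfrak{U}_U;V)\to S^*(\mathfrak{U}_U;V)$ induce an isomorphism $\colim_U H(\mathfrak{U}_U;V)\cong H_{sing}(G;V)$. Composing with the inclusion isomorphism of the previous paragraph, and observing that the restriction of $C(\lambda_{\mathfrak{U}_U}^*,V)$ to $A_s^*(\mathfrak{U}_U;V)$ is exactly the morphism named in the statement, yields the asserted chain of isomorphisms $\colim_U H_s(\mathfrak{U}_U;V)\cong\colim_U H(\mathfrak{U}_U;V)\cong H_{sing}(G;V)$.

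The main obstacle is not conceptual but bookkeeping: one must check that all the vertical inclusions and all the morphisms $C(\lambda^*;V)$ are genuine morphisms of the relevant (double) complexes, that they commute with the augmentations and with the filterbase transition maps, and hence that passing to the colimit and chasing the resulting square is legitimate. This is entirely parallel to the proofs of Theorem \ref{isocontuqnum} and Proposition \ref{propuqnumsing}; the only genuinely new input is that the fixed-covering row contraction is replaced by the colimit row-exactness of Corollary \ref{corfinprodpcthenjsindiso}. Note also that, just as in Theorem \ref{colimnbhfcont}, the acyclicity hypothesis on the intersections $U_{i_0\ldots i_p}$ that was needed in Proposition \ref{propuqnumsing} is here circumvented, since the comparison with $H_{sing}(G;V)$ is routed through van Est's result as packaged in Theorem \ref{colimnbhf}.
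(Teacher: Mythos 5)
Your argument is correct and follows essentially the same route as the paper's own proof: the colimit-level commutative diagram relating $i_s^*$, $i^*$, $j_s^*$, $j^*$, with row-exactness of the colimit double complex supplied by Corollary \ref{corfinprodpcthenjsindiso}, column-exactness by Lemmata \ref{columnsexact} and \ref{jindiso3}, and the identification with $H_{sing}(G;V)$ via Theorem \ref{colimnbhf}. The only differences are presentational (you chase the diagram in cohomology and cite Corollary \ref{rowsalwexact} explicitly for $i^*$, where the paper folds this into its citation of Theorem \ref{colimnbhf}), so nothing further is needed.
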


\begin{proof}
For every Lie group $G$ with open identity neighbourhood filterbase
$\mathcal{U}_1$ the inclusions 
$A_s^* (\mathfrak{U}_U ;V) 
\hookrightarrow A^* (\mathfrak{U}_U ;V)$ and 
$\tot \check{C}^* (\mathfrak{U}_U  ; A_s^*) \hookrightarrow 
\tot \check{C}^* (\mathfrak{U}_U ; A^*)$ of cochain complexes lead to the 
commutative diagram
\begin{equation*}
  \xymatrix{
\colim_{U \in \mathcal{U}_1} A_s^* (\mathfrak{U}_U ;V) \ar[r]^{i_s^*} \ar[d] & 
\colim_{U \in \mathcal{U}_1} \tot \check{C}^* ( \mathfrak{U}_U , A_s^*) \ar[d] & 
\colim_{U \in \mathcal{U}_1} \check{C} (\mathfrak{U}_U ;V) \ar[l]_{j_s^*} \ar@{=}[d] \\
\colim_{U \in \mathcal{U}_1} A^* (\mathfrak{U}_U ;V) \ar[r]^{i^*} 
& 
\colim_{U \in \mathcal{U}_1} \tot \check{C}^* ( \mathfrak{U}_U , A^*) 
& 
\colim_{U \in \mathcal{U}_1} \check{C} (\mathfrak{U}_U ;V) \ar[l]_{j^*} 
}
\end{equation*}
of cochain complexes. The morphisms $j_s^*$ and $j^*$ on the right
hand side induce isomorphisms in cohomology, hence the inclusions of total
complexes also induce isomorphisms in cohomology. The inclusions $i_s^*$ and 
$i^*$ also induce isomorphisms in cohomology 
(by Corollary \ref{corfinprodpcthenjsindiso} and Theorem \ref{colimnbhf}),
which proves the first isomorphism. The second isomorphism also follows from 
Theorem \ref{colimnbhf}).
\end{proof}

\section{Loop Contractible Coefficients}
\label{seclcc}

In this section we consider a different class of coefficient groups $V$ and 
derive results analogous to previously obtained ones. 
To obtain exact rows in the 
double complex $\check{C}^* (\mathfrak{U}, A_c^*)$ we again impose a 
restriction on the coefficient group $V$; however this time it is an algebraic
topological one:

\begin{definition}
  A (semi-)topological group $G$ is called \emph{loop contractible}, 
if there exists a contraction $\Phi : G \times I \rightarrow G$ to the 
identity such that 
$\Phi_t : G \rightarrow G, g \mapsto \Phi (g,t)$ is a homomorphism of
(semi-)topological groups for all $t \in I$.
\end{definition}

\begin{example}
  Any topological vector space $V$ is loop contractible via 
$\Phi (v,t)=t \cdot v$.
\end{example}

\begin{example}
  The path group $PG=C ( (I,\{ 0 \}),(G, \{e \}))$ of based paths of a 
topological group $G$ is loop contractible via 
$\Phi_{PG} (\gamma , s) (t):= \gamma (st)$.  
\end{example}

\begin{remark}
  A topological group $G$ is loop contractible if and only if the extension 
$\Omega G \hookrightarrow PG \twoheadrightarrow G$ is a semi-direct product: If
$\Phi_G$ is a loop contraction of $G$, then the group homomorphism 
$s:G \rightarrow PG, s (g)(t)=\Phi_G (g,t)$ is a right inverse to the 
evaluation $\mathrm{ev}_1 :PG \rightarrow G$ at $1$; conversely, if such a right
inverse $s$ exists, then the homotopy given by 
$\Phi_G (g,t) := \mathrm{ev}_1 \Phi_{PG} (s(g))(t)$ 
is a loop contraction of $G$.
\end{remark}

\begin{example}
For a topological group $G$ the geometric realisation $EG:=| G^{*+1} |$ of 
the simplicial space $G^{*+1}$ is a semi-topological group. As observed in
\cite{MB78} the elements in $EG$ can be identified with the step functions 
$f:[0,1) \rightarrow G$ which are continuous from the right and the
multiplication in $EG$ is given by the pointwise multiplication of these step
functions.  
The natural contraction of $EG$ is explicitely given by  
\begin{equation*}
  \Phi (f,s) (t):=
  \begin{cases}
    e & \text{if} \, t < s \\
f (t) & \text{if} \, s \leq t
  \end{cases}
\end{equation*}
(cf. the contraction in \cite[Section 11.2]{F10} in \cite[p. 214]
{MB78}). This is a loop contraction of the semi-topological group $EG$. 
\end{example}

If the abelian topological coefficient group $V$ is loop contractible then 
one can generalise the classical construction of the row contractions in 
Proposition \ref{propchechcontrhomcont}. 
For this purpose we 
consider the singular semi-simplicial space $C (\varDelta , V)$ of $V$ and the 
vertex morphism $\lambda_V : C (\varDelta , V) \rightarrow V^{*+1}$ of 
semi-simplicial spaces, which assigns to each singular $n$-simplex 
$\tau : \varDelta^n \rightarrow V$ its ordered set of vertices 
$(\tau ( \vec{e}_0) ,\ldots, \tau (\vec{e}_n))$. 
It has been shown in \cite[Chapter 3]{F10}, that there exists a right inverse 
$\widehat{\sigma}$ 
to the vertex morphism $\lambda_V$. 
The  construction is as follows: Let $\Phi : V\times I \rightarrow V$ be a
loop contraction and consider the continuous map
\begin{equation*}
F: V \times V \times I \rightarrow V,  
\quad (v_0,v_1,t) \mapsto v_0 + \Phi (v_1 -v_0 ,t) \, ,
\end{equation*}
then start by setting $\hat{\sigma}_0 (v)(t_0) =v$ and by inductively defining
the functions $\widehat{\sigma}_n$ via  
\begin{equation} \label{defsigman}
  \widehat{\sigma}_{n+1} (\vec{v} )(\vec{t}):=
  \begin{cases}
    v_0 & \text{if } \,  t_0 =1 \\
F \left( v_0 , \widehat{\sigma}_n ( v_1 ,\ldots,v_{n+1} ) 
\left( \frac{t_1}{ 1 -t_0}, \ldots, \frac{t_{n+1}}{ 1 -t_0}  \right) , 
t_0 \right) & \text{if } \, t_0 \neq 0
  \end{cases}
\end{equation}
We equip the spaces $C(\varDelta^n ,V)$ with the compact-open topology. 
The crucial properties of the maps 
$\widehat{\sigma}_n : V^{n+1} \rightarrow (\varDelta^n, V)$ we rely on are:

\begin{proposition} \label{sigmaismorph}
  The map $\widehat{\sigma} : V^{*+1} \rightarrow C (\varDelta^* , V)$ is a
  morphism of semi-simplicial topological spaces which is a right inverse to the
  vertex morphism $\lambda_V$ and all the adjoint functions 
$\sigma_n : V^{n+1} \times \varDelta^n \rightarrow V$ are continuous. In
addition for all $v \in V$ the singular $n$-simplices 
$\widehat{\sigma}_n (v,\ldots,v)$ are the constant maps 
$\varDelta^n \rightarrow \{ v \}$.
\end{proposition}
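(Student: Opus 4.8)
The plan is to prove all four assertions by induction on $n$, reading everything off the recursion~\eqref{defsigman}. Since each $\varDelta^n$ is compact Hausdorff, the exponential law for the compact-open topology identifies continuous maps $V^{n+1}\to C(\varDelta^n,V)$ with continuous maps $V^{n+1}\times\varDelta^n\to V$; hence the assertion that $\widehat\sigma$ is a morphism of \emph{topological} semi-simplicial spaces reduces to continuity of each adjoint $\sigma_n$ together with compatibility of the $\widehat\sigma_n$ with the coface maps $\delta^i\colon\varDelta^n\hookrightarrow\varDelta^{n+1}$, the latter being checkable pointwise.

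I would dispatch the algebraic statements first. The identity $\lambda_V\circ\widehat\sigma_n=\id_{V^{n+1}}$ amounts to $\widehat\sigma_n(\vec v)(\vec e_j)=v_j$ for every vertex $\vec e_j$ of $\varDelta^n$: for $j=0$ this is the first clause of~\eqref{defsigman} (there $t_0=1$), while for $j\geq1$ one has $t_0=0$, the second clause applies, $F(v_0,-,0)$ is the projection onto its middle argument, and the claim follows inductively from $\widehat\sigma_{n-1}(v_1,\dots,v_{n+1})(\vec e_{j-1})=v_j$. Compatibility with $\delta^i$ is again an induction on $n$: for $i=0$ the image lies in $\{t_0=0\}$ and~\eqref{defsigman} yields $\widehat\sigma_{n+1}(\vec v)\circ\delta^0=\widehat\sigma_n(v_1,\dots,v_{n+1})$ at once; for $i\geq1$ the inserted zero sits among the last $n+1$ barycentric coordinates, so after the rescaling $t_k\mapsto t_k/(1-t_0)$ one sees $\delta^{i-1}$ applied inside $\widehat\sigma_n$ and the inductive hypothesis closes the case. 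Finally $\widehat\sigma_n(v,\dots,v)$ is constantly $v$: since $\Phi_t$ is a homomorphism, $F(v,v,t)=v+\Phi_t(0)=v$, and a trivial induction propagates this through~\eqref{defsigman}.

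The substance is the continuity of $\sigma_n$, proved by induction on $n$, the base case $\sigma_0=\mathrm{pr}_V$ being immediate. On the open set $\{t_0<1\}\subseteq\varDelta^{n+1}$ the rescaling $\vec t\mapsto\bigl(\tfrac{t_1}{1-t_0},\dots,\tfrac{t_{n+1}}{1-t_0}\bigr)\in\varDelta^n$ is continuous, so there $\sigma_{n+1}$ is a composite of continuous maps (the rescaling, $\sigma_n$, and $F$). What remains — and this is the step I expect to be the main obstacle — is continuity at the points $(\vec v,\vec e_0)$ of $\{t_0=1\}$, where~\eqref{defsigman} assigns the value $v_0$; concretely one must show $F\bigl(v_0',\widehat\sigma_n(v_1',\dots,v_{n+1}')(\vec s'),t_0'\bigr)\to v_0$ as $(\vec v',\vec t')\to(\vec v,\vec e_0)$, with $\vec s'$ the rescaled last coordinates of $\vec t'$. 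This is a compactness argument: the tuple $(v_1',\dots,v_{n+1}')$ eventually lies in a compact $K\subseteq V^{n+1}$, the $\vec s'$ lie in the compact $\varDelta^n$, and $\sigma_n$ is continuous by the inductive hypothesis, so $\widehat\sigma_n(v_1',\dots,v_{n+1}')(\vec s')=\sigma_n(v_1',\dots,v_{n+1}',\vec s')$ remains in the compact set $\sigma_n(K\times\varDelta^n)$, whence $w':=\widehat\sigma_n(v_1',\dots,v_{n+1}')(\vec s')-v_0'$ ranges over a compact $K'\subseteq V$. As $\Phi(-,1)$ is the constant homomorphism at $e$, for every identity neighbourhood $N$ we have $K'\times\{1\}\subseteq\Phi^{-1}(N)$, so the tube lemma furnishes $\delta>0$ with $\Phi\bigl(K'\times(1-\delta,1]\bigr)\subseteq N$; hence $\Phi(w',t_0')\to e$ and $F(v_0',-,t_0')=v_0'+\Phi(w',t_0')\to v_0$. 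This establishes continuity of $\sigma_{n+1}$ everywhere, and with it the proposition. (If $V$ is not first countable the same reasoning runs with nets, or directly with neighbourhood bases; the compactness step is unaffected.)
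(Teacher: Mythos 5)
Most of your argument is sound, and it is genuinely more self-contained than the paper's proof, which simply cites \cite[Lemmata 3.0.69, 3.0.70]{F10}: the vertex identity $\lambda_V\circ\widehat\sigma=\id$, the compatibility with the cofaces, the constancy of $\widehat\sigma_n(v,\dots,v)$, and the continuity of $\sigma_{n+1}$ on the open set $\{t_0<1\}$ are all handled correctly (granting the normalisation $\Phi(\cdot,0)=\id_V$, $\Phi(\cdot,1)=e$, which you adopt implicitly and which is indeed the only orientation of the contraction under which the recursion \eqref{defsigman} has the stated properties, even though the paper's examples are written with the opposite orientation).

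The gap is in the one step you yourself identify as the crux. Your continuity argument at the points $(\vec v,\vec e_0)$ rests on the claim that the approaching tuples $(v_1',\dots,v_{n+1}')$ eventually lie in a compact $K\subseteq V^{n+1}$, and your closing parenthesis asserts that this ``compactness step is unaffected'' when one passes to nets. It is affected: a convergent net in a topological group need not have any tail contained in a compact set. For instance, in an infinite-dimensional Hilbert space index a net by $\mathcal{N}(0)\times\mathbb{N}$ (neighbourhoods of $0$ by reverse inclusion, product order) and put $x_{(U,n)}=r_U e_n\in U$ with $(e_n)$ orthonormal; the net converges to $0$, yet every tail contains a closed, discrete, infinite set, so no tail lies in a compact set. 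Since the proposition is asserted for arbitrary loop contractible abelian topological groups $V$ (not assumed metrisable or first countable), sequences do not suffice, and the compact set $K'$ to which you apply the tube lemma need not exist. The repair is to run the compactness argument over the simplex rather than over the values in $V$: the map $G(\vec v,\vec s,t):=v_0+\Phi\bigl(\sigma_n(v_1,\dots,v_{n+1},\vec s)-v_0,t\bigr)$ is continuous on $V^{n+2}\times\varDelta^n\times I$ by the inductive hypothesis, and $G(\vec v,\vec s,1)=v_0$ for \emph{all} $\vec s$. Given a neighbourhood $N$ of $v_0$, compactness of $\varDelta^n$ yields a neighbourhood $A$ of $\vec v$ and $c<1$ with $G(A\times\varDelta^n\times(c,1])\subseteq N$; since $\sigma_{n+1}(\vec v',\vec t')=G(\vec v',\rho(\vec t'),t_0')$ for $t_0'<1$ (with $\rho$ the rescaling into $\varDelta^n$) and $\sigma_{n+1}(\vec v',\vec t')=v_0'=G(\vec v',\vec s,1)$ for $t_0'=1$, this gives continuity at $(\vec v,\vec e_0)$ for arbitrary $V$. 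With that substitution your induction closes and the remainder of your argument stands.
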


\begin{proof}
The continuity of the maps 
$\sigma_n : V^{n+1} \times \varDelta^n \rightarrow V$ is shown in 
\cite[Lemma 3.0.69]{F10}, the fact that $\widehat{\sigma}$ is a morphism of
semi-simplicial spaces is the content of \cite[Lemma 3.0.70]{F10}; 
the proofs presented there carry over in verbatim. 
The last statement is a consequence of the inductive definition 
\ref{defsigman} of the functions $\widehat{\sigma}_n$.
\end{proof}

\begin{lemma} \label{loopthenhom}
  If $\Phi :V \times I \rightarrow V$ is a loop contraction, then 
$\widehat{\sigma} : V^{*+1} \rightarrow C (\varDelta^* , V)$ is a morphism of
semi-simplicial abelian topological groups.
\end{lemma}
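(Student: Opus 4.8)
The plan is to verify that $\widehat{\sigma}$ respects the group operations degree by degree, reducing everything to the already-established continuity and semi-simplicial properties from Proposition \ref{sigmaismorph}. Concretely, I would show that for each $n$ the map $\widehat{\sigma}_n : V^{n+1} \to C(\varDelta^n, V)$ is a group homomorphism, where the target carries the pointwise abelian group structure induced from $V$; since we already know each $\widehat{\sigma}_n$ is continuous (via continuity of the adjoint $\sigma_n$) and that the collection is compatible with faces and degeneracies, homomorphy in each degree is exactly what is needed to conclude that $\widehat{\sigma}$ is a morphism of semi-simplicial abelian topological groups.

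First I would handle the base case: $\widehat{\sigma}_0(v)(t_0) = v$ is visibly additive in $v$. Then I would proceed by induction on $n$, the inductive step resting on the structure of the map $F(v_0, v_1, t) = v_0 + \Phi(v_1 - v_0, t)$. The key point is that for each fixed $t$ the map $\Phi_t$ is a group homomorphism (that is the definition of a loop contraction), so $F(\cdot, \cdot, t)$ is additive in $(v_0, v_1)$ jointly: $F(v_0 + w_0, v_1 + w_1, t) = (v_0 + w_0) + \Phi_t((v_1 + w_1) - (v_0 + w_0)) = v_0 + \Phi_t(v_1 - v_0) + w_0 + \Phi_t(w_1 - w_0) = F(v_0,v_1,t) + F(w_0,w_1,t)$. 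Feeding this into the recursion \ref{defsigman}, together with the inductive hypothesis that $\widehat{\sigma}_n$ is additive, gives additivity of $\widehat{\sigma}_{n+1}$ at every barycentric coordinate $\vec{t}$ with $t_0 \neq 0$; the boundary case $t_0 = 1$ where $\widehat{\sigma}_{n+1}(\vec v)(\vec t) = v_0$ is again trivially additive, and the two formulas agree on the overlap by construction, so additivity holds on all of $\varDelta^n$.

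The only subtlety — and the main thing to be careful about rather than a genuine obstacle — is the case distinction in \ref{defsigman} and the rescaling of the simplex coordinates by $1/(1-t_0)$: one must check that additivity passes through this reparametrisation, which it does since the rescaling acts only on the domain $\varDelta^n$ and not on the values in $V$, and that the inductive hypothesis is applied at the correct rescaled point $\left(\frac{t_1}{1-t_0}, \ldots, \frac{t_{n+1}}{1-t_0}\right)$, which is independent of the $v_i$. Once additivity in each degree is established, compatibility with the simplicial structure is already known from Proposition \ref{sigmaismorph}, and continuity of each $\widehat{\sigma}_n$ into the compact-open topology is likewise already known; hence $\widehat{\sigma}$ is a morphism of semi-simplicial abelian topological groups. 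I would remark in passing that the same argument shows $\widehat{\sigma}_n$ commutes with the inversion and the neutral element, but abelianness makes the additive statement alone sufficient.
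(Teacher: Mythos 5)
Your proposal is correct and follows essentially the same route as the paper: an induction on the degree, with the base case $\widehat{\sigma}_0$ trivially additive and the inductive step driven by the additivity of $F(v_0,v_1,t)=v_0+\Phi(v_1-v_0,t)$ in $(v_0,v_1)$, which is exactly where the hypothesis that each $\Phi_t$ is a homomorphism enters. The additional remarks (that the rescaling of the barycentric coordinates is independent of the group variables, and that continuity and simplicial compatibility are already supplied by Proposition \ref{sigmaismorph}) match the paper's treatment, so no further comparison is needed.
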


\begin{proof}
It is to show that each map $\widehat{\sigma}_n : V^{n+1} \rightarrow C
(\varDelta^n , V)$ is a group homomorphism. 
  This is proved by induction. The functions $\widehat{\sigma}_0$ are group
  homomorphisms by definition. 
Moreover, since $\Phi$ is a loop contraction, the function $F : V \times V
\times I \rightarrow V$ is additive in $V \times V$. Now assume that the 
function $\widehat{\sigma}_n$ is a group homomorphism and let 
$\vec{v} , \vec{w} \in V^{n+2}$ be given. The singular $(n+1)$-simplex 
$\widehat{\sigma}_{n+1} (\vec{v} + \vec{w})$ takes the value $v_0 + w_0$ at 
$t_0=1$. For $t_0 \neq 0$ its value is given by 
\begin{eqnarray*}
\lefteqn{\widehat{\sigma}_{n+1} (\vec{v} + \vec{w}) (\vec{t})= } 
\hspace{30pt}\\
& = & F \left( v_0 +w_0 , \widehat{\sigma}_n ( v_1+w_1 ,\ldots,v_{n+1}+w_{n+1}) 
\left( \frac{t_1}{ 1 -t_0}, \ldots, \frac{t_{n+1}}{ 1 -t_0}  \right) , 
t_0 \right) \\
& = & F \left( v_0 , \widehat{\sigma}_n ( v_1 , \ldots,v_{n+1} ) 
\left( \frac{t_1}{ 1 -t_0}, \ldots, \frac{t_{n+1}}{ 1 -t_0}  \right) , 
t_0 \right) \\
& & + F \left( w_0 , \widehat{\sigma}_n ( w_1 ,\ldots,w_{n+1}) 
\left( \frac{t_1}{ 1 -t_0}, \ldots, \frac{t_{n+1}}{ 1 -t_0}  \right) , 
t_0 \right) \\ 
& = & \widehat{\sigma}_{n+1} (\vec{v}) (\vec{t}) + 
\widehat{\sigma}_{n+1} (\vec{w}) (\vec{t})
\end{eqnarray*}
which completes the inductive step. 
\end{proof}

From now on we assume the coefficient group $V$ to be loop contractible with
loop contraction $\Phi : V \times I \rightarrow V$, which gives rise to a
morphism $\widehat{\sigma} : V^{*+1} \rightarrow C (\varDelta^* , V)$ of 
semi-simplicial abelian topological groups that is a right inverse
to $\lambda_V$. The above observations enable us to replace 
the linear combination $\sum_i \varphi_{q,i} f_{i i_0 \ldots i_p}$ of functions 
in Proposition \ref{propchechcontrhom} by the values of the singular 
$n$-simplices $\widehat{\sigma}_n (f_{\alpha_0 i_0 \ldots i_p},\ldots
,f_{\alpha_n i_0 \ldots i_p})$ at 
$( \varphi_{q, \alpha_0} , \ldots \varphi_{q, \alpha_n})$ for certain indices 
$\alpha_0,\ldots,\alpha_n \in I$. For this purpose we first observe:

\begin{lemma} \label{hpfcont}
For any partition of unity $\{ \varphi_{q,i} \mid i \in I\}$ 
subordinate to the open cover $\{U_i^{q+1} \mid i \in I \}$ of 
$\mathfrak{U}[q]$ and $p$-cochain $f \in \check{C}^p (\mathfrak{U} , A_c^q)$ 
the maps 
  \begin{equation} \label{predefhp}
  U_{i_0 \ldots i_{p-1}}^{q+1} \rightarrow V , \quad 
x \mapsto 
\sigma_n \circ ( f_{\alpha_0 i_0 \ldots i_{p-1}} , \ldots , f_{\alpha_n i_0
  \ldots i_{p-1}} , \varphi_{q, \alpha_0} , \ldots , \varphi_{q, \alpha_n} ) 
(\vec{x})
\end{equation}
-- where for each $x \in X$ the indices $\alpha_0 < \cdots < \alpha_n$ are
those for which $\varphi_{q, \alpha_i}^{-1} (\mathbb{R} \setminus \{ 0 \})$ 
contains $x$ -- are continuous.
\end{lemma}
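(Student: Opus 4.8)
The claim is that the maps in \eqref{predefhp} are continuous, even though the choice of indices $\alpha_0 < \cdots < \alpha_n$ varies with the point $x$. The key observation is that although the index set varies locally, the value of the map is *independent* of which indices we throw in, as long as we include all the ones on whose cozero set $x$ lies. This is the analogue of the classical fact that in the \v Cech--de Rham row contraction the term $\varphi_{q,\alpha} f_{\alpha i_0 \ldots i_{p-1}}$ vanishes off $\supp \varphi_{q,\alpha}$, so enlarging the index set does not change anything; here the role of ``linear combination'' is played by the right inverse $\widehat\sigma$.

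First I would record the stability property: for fixed $x$, if $\alpha_0 < \cdots < \alpha_n$ is the list of indices with $\varphi_{q,\alpha_k}(x) \neq 0$, and $\beta$ is any further index (so $\varphi_{q,\beta}(x) = 0$), then inserting $\beta$ at the appropriate place in the list does not change the value of \eqref{predefhp} at $x$. This follows from two facts about $\widehat\sigma_n$ established earlier: (i) $\widehat\sigma$ is a morphism of semi-simplicial spaces (Proposition \ref{sigmaismorph}), so face maps intertwine $\widehat\sigma_n$ and $\widehat\sigma_{n-1}$; and (ii) since $\sum_i \varphi_{q,i} = 1$, the point $(\varphi_{q,\alpha_0}(x), \ldots, \varphi_{q,\alpha_n}(x))$ lies in the interior of the affine simplex with those barycentric coordinates, and if one coordinate is zero the corresponding vertex may be deleted without changing which simplex of $\varDelta^{n}$ (viewed as a face of $\varDelta^{n+1}$ after inserting $\beta$) we are evaluating at. Concretely, if $\varphi_{q,\beta}(x)=0$ then the barycentric point $(\ldots,\varphi_{q,\beta}(x),\ldots)$ lies on the face of $\varDelta^{n+1}$ opposite the $\beta$-vertex, so by the semi-simplicial identity $\partial_\beta \widehat\sigma_{n+1} = \widehat\sigma_n \partial_\beta$ the value equals $\widehat\sigma_n$ evaluated on the list with $\beta$ removed, at the point with the $\beta$-coordinate dropped — which is exactly \eqref{predefhp} again.

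With stability in hand the continuity argument is local. Fix a point $x_0 \in U_{i_0 \ldots i_{p-1}}^{q+1}$ and let $\alpha_0 < \cdots < \alpha_n$ be its associated index list. Because the partition of unity $\{\varphi_{q,i}\}$ is subordinate to a locally finite cover, there is an open neighbourhood $N$ of $x_0$ meeting only finitely many cozero sets, say those with indices in a finite set $J \supseteq \{\alpha_0,\ldots,\alpha_n\}$; shrinking $N$ we may also assume $\varphi_{q,\alpha_k}$ is nonzero throughout $N$ for each $k$ (using continuity of $\varphi_{q,\alpha_k}$ and $\varphi_{q,\alpha_k}(x_0)\neq 0$). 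Now for every $x \in N$ the full index list at $x$ is a subset of $J$ containing $\{\alpha_0,\ldots,\alpha_n\}$, so by the stability property the value of \eqref{predefhp} at $x$ equals
\begin{equation*}
\sigma_n \circ (f_{\alpha_0 i_0 \ldots i_{p-1}}, \ldots, f_{\alpha_n i_0 \ldots i_{p-1}}, \varphi_{q,\alpha_0}, \ldots, \varphi_{q,\alpha_n})(\vec x)
\end{equation*}
with the \emph{same} fixed indices $\alpha_0,\ldots,\alpha_n$. On $N$ this is a composite of the continuous functions $f_{\alpha_k i_0 \ldots i_{p-1}}$ (these lie in $C(U_{i_0\ldots i_{p-1}}^{q+1};V)$ because $f \in \check C^p(\mathfrak U;A_c^q)$) and $\varphi_{q,\alpha_k}$ with the map $\sigma_n : V^{n+1}\times\varDelta^n \to V$, which is continuous by Proposition \ref{sigmaismorph}; here we must also check that $x \mapsto (\varphi_{q,\alpha_0}(x),\ldots,\varphi_{q,\alpha_n}(x))$ lands in $\varDelta^n$ on $N$, i.e. that the coordinates are non-negative and sum to $1$ — the latter because all other $\varphi_{q,i}$ vanish on $N$. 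Hence \eqref{predefhp} is continuous on $N$, and since $x_0$ was arbitrary, it is continuous.

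The main obstacle is the bookkeeping in the stability step: one has to match the combinatorics of inserting an index $\beta$ with a zero coefficient against the semi-simplicial face identities for $\widehat\sigma$, and to be careful that the indices are kept in increasing order (the alternating-sign/antisymmetry convention of $\check C^p$ plays no role here since we are only evaluating, not differentiating). Once the invariance under enlarging the (ordered) index set is cleanly stated, the passage to a fixed finite index set on a neighbourhood and the appeal to continuity of $\sigma_n$ are routine.
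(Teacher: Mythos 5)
Your overall strategy is the paper's: reduce locally to a fixed finite index set using local finiteness, identify the variable-index expression with a fixed-index one via the semi-simplicial (face-map) property of $\widehat{\sigma}$, and then invoke continuity of $\sigma_n$ on $V^{n+1}\times\varDelta^n$. The stability observation itself (inserting an index $\beta$ with $\varphi_{q,\beta}(x)=0$ does not change the value, because $\widehat{\sigma}$ commutes with face maps) is exactly the right key fact.

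However, the localisation step as you state it would fail. You compare, for all $x$ in the neighbourhood $N$, against the expression built from the index list of the \emph{centre} $x_0$, i.e. from $\{\alpha_0,\ldots,\alpha_n\}=\{\alpha : \varphi_{q,\alpha}(x_0)\neq 0\}$. But for $x\in N$ the active list at $x$ may strictly contain this set: an index $\beta$ with $\varphi_{q,\beta}(x_0)=0$ but $x_0\in\supp\varphi_{q,\beta}$ becomes active arbitrarily close to $x_0$, and no shrinking of $N$ removes it. At such $x$ the stability property only lets you \emph{add} zero-coefficient indices; it does not let you drop $\beta$, whose coefficient at $x$ is nonzero, so the claimed identity with the fixed $(\alpha_0,\ldots,\alpha_n)$-expression is false. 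The same problem invalidates your check that the coordinates land in $\varDelta^n$: on $N$ the remaining $\varphi_{q,i}$ need not vanish, so $\sum_k\varphi_{q,\alpha_k}$ need not be $1$ there, and the fixed expression is not even an evaluation of $\sigma_n$ on its domain. The repair is to enlarge the comparison set: take $A:=\{\beta : x_0\in\supp\varphi_{q,\beta}\}$ (finite, by local finiteness) and shrink $N$ to miss the supports of all other members of your finite set $J$; then every index active at any $x\in N$ lies in $A$, each $f_{\beta i_0\ldots i_{p-1}}$ with $\beta\in A$ is defined and continuous near $x_0$ because the cozero set of $\varphi_{q,\beta}$ lies in $\supp\varphi_{q,\beta}\subseteq U_\beta^{q+1}$, the coefficients indexed by $A$ sum to $1$ on $N$, and your stability argument now applies in the legitimate direction (passing from the list at $x$ up to $A$). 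With that change the proof coincides with the paper's argument, which compares against the expression indexed by all locally relevant indices rather than by the active list at the centre point.
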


\begin{proof}
It suffices to show that each point $\vec{x} \in \mathfrak{U}[q]$ has a 
neighbourhood on which 
the functions defined in \ref{predefhp} are continuous. This is a consequence 
of the fact that $\hat{\sigma} : V^{*+1} \rightarrow C (\varDelta , V)$ is a 
morphism of semi-simplicial spaces: Since the supports of the functions 
$\varphi_{q,i}$ are locally finite, each point $\vec{x} \in \mathfrak{U}[q]$
has a neighbourhood $W$ such that the set 
$I_W :=\{ \alpha' \in I \mid \, 
\varphi_{q,\alpha' } (\vec{x}) \neq 0 \}$ 
is finite. Let $\alpha_0' < \cdots < \alpha_{k}'$ be the ordered set of indices
in $I_W$. The function defined by 
\begin{equation*}
  x \mapsto \sigma_k  ( f_{\alpha_0' i_0 \ldots i_{p-1}} (x) , \ldots , f_{\alpha_k' i_0 \ldots
  i_{p-1}} (x) , \varphi_{q, \alpha_0'} (x) , \ldots , \varphi_{q, \alpha_k'} (x) )
\end{equation*}
is continuous on the open set 
$U_{\alpha_0 ' \ldots \alpha_n ' i_0 \ldots i_{p-1}}^{q+1}$, 
which is a neighbourhood of $\vec{x}$. Let $\alpha_0 < \cdots < \alpha_n$ be 
the ordered set of indices for which 
$\varphi_{q, \alpha_i}^{-1} (\mathbb{R} \setminus \{ 0 \})$ contains
$\vec{x}$; it is a subset of $I_W$. The fact that $\sigma$ is a morphism of 
semi-simplicial spaces implies the equality 
\begin{multline*}
 \sigma_k  ( f_{\alpha_0' i_0 \ldots i_{p-1}} (x) , \ldots , f_{\alpha_k' i_0
   \ldots i_{p-1}} (x) , \varphi_{q, \alpha_1'} (x) , \ldots ,
 \varphi_{q, \alpha_k'} (x) ) = \\ 
\sigma_n  ( f_{\alpha_0 i_0 \ldots i_{p-1}} (x) , \ldots , f_{\alpha_n i_0 \ldots
  i_{p-1}} (x) , \varphi_{q, \alpha_0} (x) , \ldots , \varphi_{q, \alpha_n}
(x) ) \,
\end{multline*}
which shows that the function defined in \ref{predefhp} is continuous on the 
open neighbourhood $U_{\alpha_0 ' \ldots \alpha_n ' i_0 \ldots i_{p-1}}^{q+1}$
of $\vec{x}$. 
\end{proof}

\begin{lemma} \label{hpfcoch}
For any partition of unity $\{ \varphi_{q,i} \mid i \in I\}$ 
subordinate to the open cover $\{U_i^{q+1} \mid i \in I \}$ of 
$\mathfrak{U}[q]$ and $p$-cochain $f \in \check{C}^p (\mathfrak{U} , A_c^q)$ 
the maps defined in \ref{predefhp} form a cochain in 
$\check{C}^{p-1} (\mathfrak{U};A_c^q)$.   
\end{lemma}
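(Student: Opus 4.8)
The plan is to check the two conditions that make the family of maps in \ref{predefhp} -- which I will denote $h^{p,q}(f)_{i_0\ldots i_{p-1}}$, mirroring Eq.\ \ref{eqchechcontrhom} -- a cochain in $\check{C}^{p-1}(\mathfrak{U};A_c^q)$: first, that each component is a well-defined continuous $V$-valued function on $U_{i_0\ldots i_{p-1}}^{q+1}$; second, that the resulting family is alternating in the indices $i_0,\ldots,i_{p-1}$. The first condition is essentially already available, so the genuine content is the second.

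For the well-definedness and continuity I would fix a point $\vec x\in U_{i_0\ldots i_{p-1}}^{q+1}$ and observe that, because the supports of the partition of unity $\{\varphi_{q,i}\mid i\in I\}$ are locally finite, only finitely many $\varphi_{q,\alpha}(\vec x)$ are nonzero, and at least one is, since $\sum_i\varphi_{q,i}=1$; enumerating these as $\alpha_0<\cdots<\alpha_n$, the support condition forces $\vec x\in U_{\alpha_k}^{q+1}$, hence $\vec x\in U_{\alpha_k i_0\ldots i_{p-1}}^{q+1}$, so each $f_{\alpha_k i_0\ldots i_{p-1}}(\vec x)$ is defined, and the nonzero values $(\varphi_{q,\alpha_0}(\vec x),\ldots,\varphi_{q,\alpha_n}(\vec x))$ form a point of $\varDelta^n$. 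Thus the expression \ref{predefhp} makes sense at every point of $U_{i_0\ldots i_{p-1}}^{q+1}$, and its continuity there is precisely Lemma \ref{hpfcont}; hence each component lies in $A_c^q(U_{i_0\ldots i_{p-1}};V)=C(U_{i_0\ldots i_{p-1}}^{q+1};V)$.

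For the alternating property I would argue pointwise. Fix $\pi\in S_{p-1}$ and a point $\vec x\in U_{i_0\ldots i_{p-1}}^{q+1}=U_{i_{\pi(0)}\ldots i_{\pi(p-1)}}^{q+1}$. The key observation is that the ordered index set $\alpha_0<\cdots<\alpha_n$ selected in \ref{predefhp} depends only on $\vec x$ and the functions $\varphi_{q,i}$, not on the lower indices, so the same $\alpha_k$ enter the evaluations of $h^{p,q}(f)_{i_0\ldots i_{p-1}}$ and of $h^{p,q}(f)_{i_{\pi(0)}\ldots i_{\pi(p-1)}}$ at $\vec x$. Permuting the last $p$ entries of the string $\alpha_k i_0\ldots i_{p-1}$ by $\pi$ is a permutation in $S_p$ fixing the initial ($\alpha_k$) slot and of the same sign as $\pi$, so the alternating property of $f\in\check{C}^p(\mathfrak{U};A_c^q)$ gives $f_{\alpha_k i_{\pi(0)}\ldots i_{\pi(p-1)}}=\mathrm{sign}(\pi)\,f_{\alpha_k i_0\ldots i_{p-1}}$ on the common domain $U_{\alpha_k i_0\ldots i_{p-1}}^{q+1}$, for every $k$. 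Substituting into \ref{predefhp} and using that $\widehat{\sigma}_n$, hence its adjoint $\sigma_n$, is additive in the $V^{n+1}$ variable by Lemma \ref{loopthenhom} -- so in particular commutes with multiplication by $\mathrm{sign}(\pi)\in\{+1,-1\}$ -- I obtain $h^{p,q}(f)_{i_{\pi(0)}\ldots i_{\pi(p-1)}}(\vec x)=\mathrm{sign}(\pi)\,h^{p,q}(f)_{i_0\ldots i_{p-1}}(\vec x)$, and since $\vec x$ was arbitrary this is the required identity of functions.

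The step needing the most care is this last extraction of the sign through $\sigma_n$: it is exactly where loop contractibility of $V$ is used, since only the homomorphism property of $\widehat{\sigma}$ from Lemma \ref{loopthenhom} permits pulling $\mathrm{sign}(\pi)$ out of $\sigma_n$; for a merely contractible coefficient group no such manipulation is available. Everything else is bookkeeping -- the finiteness and non-emptiness of the relevant index set, the fact that permuting the lower indices neither touches the $\alpha_k$ slot nor changes the sign, and the independence of the selected index set from the lower indices, which is what lets the pointwise identity be assembled into an equality of functions.
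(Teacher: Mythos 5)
Your proposal is correct and follows essentially the same route as the paper: continuity of the components is delegated to Lemma \ref{hpfcont}, and the alternating property is obtained by combining the sign-equivariance of $f$ under permutations of the lower indices with the homomorphism property of $\widehat{\sigma}_n$ from Lemma \ref{loopthenhom}, which lets the factor $\mathrm{sign}(\pi)$ be pulled through $\sigma_n$. Your write-up merely makes explicit the bookkeeping (independence of the selected indices $\alpha_0<\cdots<\alpha_n$ from the lower indices, and the extension of the permutation to the full index string fixing the first slot) that the paper's proof leaves implicit.
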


\begin{proof}
It is to show that for each $p$-cochain 
$f \in \check{C}^p (\mathfrak{U} , A_c^q)$ and permutation $s$ of 
$\{ i_0 \ldots i_p \}$ the maps defined in \ref{predefhp} satisfy the 
equalities 
\begin{multline*}
\sigma_n  ( f_{\alpha_0 i_0 \ldots i_{p-1}} (x) , \ldots , f_{\alpha_n i_0
 \ldots i_{p-1}} (x) , \varphi_{q, \alpha_0} (x) , \ldots , \varphi_{q, \alpha_n}
(x) ) = \\
\mathrm{sign} (s) 
  \sigma_n ( f_{\alpha_0 i_{s (0)} \ldots i_{s (p-1)} } (x), 
\ldots , f_{\alpha_n i_{s (0)} \ldots i_{s (p-1)} } (x), 
\varphi_{q, \alpha_0} (x) , \ldots , \varphi_{q, \alpha_n} (x) ) \, .
\end{multline*}
This is a consequence of Lemma \ref{loopthenhom}. Thus the assignment in 
\ref{predefhp} defines a cochain in $\check{C}^{p-1} (\mathfrak{U};A_c^q)$.
\end{proof}

\begin{proposition} \label{propchechcontrhomloop}
For any partition of unity $\{ \varphi_{q,i} \mid i \in I\}$ subordinate to 
the open cover $\{U_i^{q+1} \mid i \in I \}$ of $\mathfrak{U}[q]$ the 
homomorphisms 
\begin{eqnarray}\label{eqchechcontrhomloop}
  h^{p,q} : \check{C}^p ( \mathfrak{U}, A^q) & \rightarrow &  
\check{C}^{p-1} ( \mathfrak{U}, A^q) \\
(h^{p,q} f)_{i_0 \ldots i_{p-1}} & = &  
\sigma_n \circ ( f_{\alpha_0 i_0 \ldots i_{p-1}} , \ldots , f_{\alpha_n i_0 \ldots i_{p-1}} , 
\varphi_{q, \alpha_0} , \ldots , \varphi_{q, \alpha_n} ) \, , \nonumber
\end{eqnarray}
-- where for each $x \in X$ the indices $\alpha_0 < \cdots < \alpha_n$ are
those satisfying $\varphi_{q, \alpha_i} (\vec{x}) \neq 0$ --
form a contraction of the augmented row 
$A^q (\mathfrak{U};V) \hookrightarrow \check{C}^* ( \mathfrak{U}, A^q)$ which
restricts to a row contraction of the augmented sub-complex 
$A_c^q (\mathfrak{U};V) \hookrightarrow \check{C}^* ( \mathfrak{U}, A_c^q)$.
\end{proposition}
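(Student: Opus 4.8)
The plan is to generalise the computation in the proof of Proposition~\ref{propchechcontrhom}, replacing the linear combination $\sum_i \varphi_{q,i}f_{i i_0\ldots i_{p-1}}$ by the ``twisted combination'' produced by $\widehat{\sigma}$, and to deduce the contraction identity $\delta h^{p,q}+h^{p+1,q}\delta=\id$ from only three ingredients: the combinatorial identity defining the \v{C}ech coboundary, the additivity of $\widehat{\sigma}_n$ (Lemma~\ref{loopthenhom}), and the fact that $\widehat{\sigma}_n$ sends a constant tuple $(v,\ldots,v)$ to the constant singular simplex at $v$ (Proposition~\ref{sigmaismorph}).

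First I would record that the $h^{p,q}$ are well-defined group homomorphisms of the required type. By Lemmata~\ref{hpfcont} and~\ref{hpfcoch} the map $h^{p,q}$ carries continuous $p$-cochains to continuous $(p-1)$-cochains, and the same argument---which uses only that $\widehat{\sigma}$ is a morphism of semi-simplicial abelian groups---shows that $h^{p,q}$ maps $\check{C}^p(\mathfrak{U},A^q)$ into $\check{C}^{p-1}(\mathfrak{U},A^q)$, with $h^{0,q}$ landing in the global cochains $A^q(\mathfrak{U};V)$, respectively $A_c^q(\mathfrak{U};V)$; additivity of $\sigma_n(-;\vec{\varphi})$ makes each $h^{p,q}$ a homomorphism. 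Since for a partition of unity every $\vec{x}\in\mathfrak{U}[q]$ lies in only finitely many of the supports $\supp\varphi_{q,i}$, the ordered set $\alpha_0<\cdots<\alpha_n$ of those $i$ with $\varphi_{q,i}(\vec{x})\neq 0$ is finite, $(\varphi_{q,\alpha_0}(\vec{x}),\ldots,\varphi_{q,\alpha_n}(\vec{x}))$ is a point of $\varDelta^n$, and every value $f_{\alpha_j i_0\ldots i_{p-1}}(\vec{x})$ occurring is defined whenever $\vec{x}\in U_{i_0\ldots i_{p-1}}^{q+1}$, because $\varphi_{q,\alpha_j}(\vec{x})\neq 0$ forces $\vec{x}\in U_{\alpha_j}^{q+1}$.

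The heart of the proof is a pointwise computation. Fix $f\in\check{C}^p(\mathfrak{U},A^q)$ with $p\geq 1$, a point $\vec{x}$, and write $\vec{\varphi}:=(\varphi_{q,\alpha_0}(\vec{x}),\ldots,\varphi_{q,\alpha_n}(\vec{x}))$ for the $\alpha_j$ as above. Unravelling the \v{C}ech coboundary gives, for every $j$,
\begin{equation*}
(\delta f)_{\alpha_j i_0\ldots i_p}=f_{i_0\ldots i_p}-\sum_{k=0}^p(-1)^k f_{\alpha_j i_0\ldots\hat{i}_k\ldots i_p},
\end{equation*}
whose first term is independent of $j$. Evaluating at $\vec{x}$, writing the $(n+1)$-tuple indexed by $j$ as the constant tuple with entries $f_{i_0\ldots i_p}(\vec{x})$ minus $\sum_k(-1)^k$ times the tuple with entries $f_{\alpha_j i_0\ldots\hat{i}_k\ldots i_p}(\vec{x})$, and using additivity of $\sigma_n(-;\vec{\varphi})$ together with $\sigma_n\bigl((a,\ldots,a);\vec{\varphi}\bigr)=a$ from Proposition~\ref{sigmaismorph}, one obtains
\begin{equation*}
(h^{p+1,q}\delta f)_{i_0\ldots i_p}(\vec{x})=f_{i_0\ldots i_p}(\vec{x})-\sum_{k=0}^p(-1)^k\sigma_n\bigl(f_{\alpha_0 i_0\ldots\hat{i}_k\ldots i_p}(\vec{x}),\ldots,f_{\alpha_n i_0\ldots\hat{i}_k\ldots i_p}(\vec{x});\vec{\varphi}\bigr),
\end{equation*}
and the sum on the right is exactly $(\delta h^{p,q}f)_{i_0\ldots i_p}(\vec{x})$. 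Hence $\delta h^{p,q}+h^{p+1,q}\delta=\id$ on $\check{C}^p(\mathfrak{U},A^q)$ for all $p\geq 1$.

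It remains to treat the augmented end and to transfer everything to the continuous subcomplex. For $p=0$ the same manipulation, now with $(\delta f)_{\alpha_j i}=f_i-f_{\alpha_j}$ and with the augmentation $\epsilon\colon A^q(\mathfrak{U};V)\to\check{C}^0(\mathfrak{U},A^q)$ in place of $\delta$, yields $\epsilon h^{0,q}+h^{1,q}\delta=\id$; and $h^{0,q}\epsilon=\id$ is immediate since $\sigma_n\bigl(g(\vec{x}),\ldots,g(\vec{x});\vec{\varphi}\bigr)=g(\vec{x})$. The restriction to the sub-row $A_c^q(\mathfrak{U};V)\hookrightarrow\check{C}^*(\mathfrak{U},A_c^q)$ is then automatic, the augmentation being a restriction of $\epsilon$ and each $h^{p,q}$ preserving continuity by Lemma~\ref{hpfcont}. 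I expect the point requiring care to be not any of these identities---each reduces to the three ingredients above applied at a single point---but the bookkeeping forced by the index set $\alpha_0<\cdots<\alpha_n$ varying with $\vec{x}$; this, however, is precisely what Lemmata~\ref{hpfcont} and~\ref{hpfcoch} have already settled, so the fixed-point computation above is all that is needed.
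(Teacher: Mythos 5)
Your proposal is correct and follows essentially the same route as the paper: well-definedness and preservation of continuity are delegated to Lemmata \ref{loopthenhom}, \ref{hpfcont} and \ref{hpfcoch}, and the contraction identity is obtained by the same pointwise computation, expanding $(\delta f)_{\alpha_j i_0\ldots i_p}$ and using additivity of $\sigma_n$ together with the constant-simplex property of $\widehat{\sigma}_n$. Your explicit treatment of the augmented end ($p=0$) merely spells out what the paper leaves implicit.
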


\begin{proof}
The maps $h^{p,q}$ are homomorphisms of abelian groups by Lemma
\ref{loopthenhom} and map the subgroups $\check{C}^* ( \mathfrak{U}, A_c^q)$
of continuous cochains into each other by Lemma \ref{hpfcont}. 
Consider a point $\vec{x} \in \mathfrak{U}[q]$ and let 
$\alpha_0,\ldots, \alpha_n$ be the ordered set of indices in $I$ for which 
$\varphi_{q,i}^{-1} ( (0,1])$ contains the point $\vec{x}$. The evaluation of 
$(h^{p+1,q} \delta f)_{i_0 \ldots i_p}$ at $\vec{x}$ computes to
\begin{eqnarray*}
\lefteqn{(h^{p+1} \delta f)_{i_0 \ldots i_p} (\vec{x}) =  
\sigma_n  ( (\delta f)_{\alpha_0 i_0 \ldots i_p} (x) , \ldots , (\delta
f)_{\alpha_n  i_0 \ldots i_p} (x) , \varphi_{q, \alpha_0} (x) , \ldots,
\varphi_{q, \alpha_n} (\vec{x}) ) } \hspace{46pt} \\
& = & \sigma_n  ( f_{i_0 \ldots i_p} (x) , \ldots , f_{i_0 \ldots i_p} (\vec{x}) , 
\varphi_{q, \alpha_0} (x) , \ldots, \varphi_{q, \alpha_n} (\vec{x}) ) \\ 
& & - 
\sum_k (-1)^k \sigma_n  ( f_{\alpha_0 i_0 \ldots \hat{i}_k \ldots i_p} , 
\ldots  f_{\alpha_n i_0 \ldots \hat{i}_k \ldots i_p} , 
\varphi_{q, \alpha_0} (x) , \ldots, \varphi_{q, \alpha_n} (\vec{x}) ) \\
& =& 
f_{i_0 \ldots i_p} ( x )  - (\delta h^p f)_{i_0 \ldots i_p} (\vec{x}) \, .
\end{eqnarray*}
Thus the homomorphisms $h^{p,q}$ form a row contraction of 
$A^q (\mathfrak{U};A) \hookrightarrow \check{C}^* ( \mathfrak{U}, A^q)$, which 
restricts to a row contraction of the sub-complex 
$A_c^q (\mathfrak{U};A_c) \hookrightarrow \check{C}^* ( \mathfrak{U}, A_c^q)$.
\end{proof}

\begin{corollary} \label{uqnumicindisoloop}
For any open covering $\mathfrak{U}=\{ U_i \mid i \in I \}$ of a topological 
space $X$ for which the coverings $\{ U_i^{q+1} \mid i \in I \}$ of the 
spaces $\mathfrak{U}[q]$ are numerable the homomorphism 
$i_c^*:A_c^* (\mathfrak{U};V)\rightarrow \tot\check{C}^* (\mathfrak{U},A_c^*)$ 
induces an isomorphism in cohomology.
\end{corollary}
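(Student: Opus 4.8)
The plan is to deduce the statement from Proposition~\ref{propchechcontrhomloop} in exactly the way Corollary~\ref{uqnumicindiso} follows from Proposition~\ref{propchechcontrhomcont}, the loop-contraction row contraction now playing the role formerly played by an $R$-valued partition of unity.

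First I would unwind the hypothesis. To say that each covering $\{U_i^{q+1} \mid i \in I\}$ of $\mathfrak{U}[q]$ is numerable is to say that for every $q \in \mathbb{N}$ there is a partition of unity $\{\varphi_{q,i} \mid i \in I\}$ subordinate to $\{U_i^{q+1} \mid i \in I\}$. With the loop contraction $\Phi : V \times I \rightarrow V$ fixed at the start of the section, and the resulting morphism $\widehat{\sigma} : V^{*+1} \rightarrow C(\varDelta^*, V)$ of semi-simplicial abelian topological groups right inverse to $\lambda_V$, Proposition~\ref{propchechcontrhomloop} then supplies, for each such $q$, homomorphisms $h^{*,q}$ forming a row contraction of the augmented row $A^q(\mathfrak{U};V) \hookrightarrow \check{C}^*(\mathfrak{U}, A^q)$ which restricts to a row contraction of the augmented sub-row $A_c^q(\mathfrak{U};V) \hookrightarrow \check{C}^*(\mathfrak{U}, A_c^q)$. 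In particular every augmented row of the sub double complex $\check{C}^*(\mathfrak{U}, A_c^*)$ is exact.

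Next I would invoke the standard homological fact --- the one already underlying Corollary~\ref{rowsalwexact} --- that if all augmented rows of a first-quadrant double complex are exact, then the row augmentation induces an isomorphism from the cohomology of the augmenting complex onto the cohomology of the total complex (there are no convergence issues, since in each total degree only finitely many summands occur). Applied to $\check{C}^*(\mathfrak{U}, A_c^*)$ augmented by $A_c^*(\mathfrak{U};V)$, this yields that $i_c^*$ is a quasi-isomorphism, which is the assertion; alternatively one may assemble the $h^{*,q}$ into an explicit contracting homotopy for the cone of $i_c^*$ by the usual zig-zag.

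I do not expect a genuine obstacle: the substantive content lies entirely in Proposition~\ref{propchechcontrhomloop} (and, behind it, Lemmata~\ref{hpfcont} and~\ref{hpfcoch} together with the fact that $\widehat{\sigma}$ is a morphism of semi-simplicial abelian topological groups). The only points meriting a moment's care are that the partitions of unity furnished by numerability are the ordinary real-valued ones --- which is precisely the hypothesis of Proposition~\ref{propchechcontrhomloop}, since here $V$ is only required to be loop contractible rather than an $R$-module --- and that the row contractions for the various $q$ may be chosen independently, which is harmless because the double-complex argument needs only each augmented row to be exact, not a single coherent contraction across all $q$.
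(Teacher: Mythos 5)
Your proposal is correct and follows exactly the route the paper intends: numerability supplies, for each $q$, a partition of unity subordinate to $\{U_i^{q+1} \mid i \in I\}$, Proposition~\ref{propchechcontrhomloop} then gives row contractions restricting to the continuous sub-rows, and the standard first-quadrant double complex argument (as in Corollaries~\ref{rowsalwexact} and~\ref{uqnumicindiso}) yields that $i_c^*$ is a quasi-isomorphism. Your remarks that only ordinary real-valued partitions of unity are needed and that the contractions for different $q$ need not be coherent are both accurate and match the paper's setup.
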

Recalling the contractibility condition imposed on the coefficient group $V$ 
we proceed to show:

\begin{theorem}\label{isocontuqnumloop}
For any loop contractible abelian topological group $V$ and
open covering $\mathfrak{U}$ of a topological space $X$ for which 
each covering $\{ U_i^{q+1} \mid i \in I \}$ of $\mathfrak{U}[q]$ is numerable 
the inclusion $A_c^* (\mathfrak{U};V) \hookrightarrow A^* (\mathfrak{U};V)$ 
induces an isomorphism in cohomology and the cohomologies 
$\check{H} (\mathfrak{U};V)$, $H_c (\mathfrak{U};V)$ and $H (\mathfrak{U};V)$ 
are isomorphic.
\end{theorem}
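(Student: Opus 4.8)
The plan is to carry out the same diagram chase as in the proof of Theorem~\ref{isocontuqnum}, with the $R$-numerable row contraction of Proposition~\ref{propchechcontrhomcont} replaced throughout by the loop-contractible one of Proposition~\ref{propchechcontrhomloop}. First I would observe that the inclusions $A_c^*(\mathfrak{U};V)\hookrightarrow A^*(\mathfrak{U};V)$ and $\tot\check{C}^*(\mathfrak{U},A_c^*)\hookrightarrow\tot\check{C}^*(\mathfrak{U},A^*)$ intertwine the augmentations $i_c^*$ and $i^*$, and that the identity on $\check{C}^*(\mathfrak{U};V)$ intertwines $j_c^*$ and $j^*$; this produces precisely the commutative diagram of cochain complexes displayed in the proof of Theorem~\ref{isocontuqnum}, whose horizontal arrows are the augmentations and whose vertical arrows are induced by inclusion.

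Next I would assemble the four inputs. The augmentations $j^*$ and $j_c^*$ induce isomorphisms in cohomology by Lemmata~\ref{columnsexact} and~\ref{jindiso2}: exactness of the augmented columns only uses that the standard complex $A^*(U_{i_0\ldots i_q};V)$ of a topological space is exact, which holds for every abelian topological group $V$ and is entirely insensitive to the loop contraction. The augmentation $i^*$ induces an isomorphism unconditionally, by Corollary~\ref{rowsalwexact}. The only place where the hypotheses of the theorem enter is the claim that $i_c^*$ induces an isomorphism in cohomology: here I would invoke Corollary~\ref{uqnumicindisoloop}, whose hypothesis is exactly that each covering $\{U_i^{q+1}\mid i\in I\}$ of $\mathfrak{U}[q]$ be numerable, and which rests on the fact --- established in Proposition~\ref{propchechcontrhomloop} via the semi-simplicial section $\widehat{\sigma}$ of $\lambda_V$ --- that the row contraction built from a numerable partition of unity together with $\widehat{\sigma}$ preserves continuous cochains precisely when $V$ is loop contractible.

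Finally, passing to cohomology turns the diagram into
\begin{equation*}
\xymatrix{ H_c(\mathfrak{U};V) \ar[r]^\cong \ar[d] & H(\tot\check{C}^*(\mathfrak{U},A_c^*)) \ar[d] & \check{H}(\mathfrak{U};V) \ar[l]^\cong \ar@{=}[d] \\ H(\mathfrak{U};V) \ar[r]^\cong & H(\tot\check{C}^*(\mathfrak{U},A^*)) & \check{H}(\mathfrak{U};V) \ar[l]^\cong \\ }
\end{equation*}
in which all horizontal arrows and the right vertical arrow are isomorphisms; commutativity then forces the left vertical arrow $H_c(\mathfrak{U};V)\to H(\mathfrak{U};V)$, induced by the inclusion $A_c^*(\mathfrak{U};V)\hookrightarrow A^*(\mathfrak{U};V)$, to be an isomorphism as well. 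Combining this with the identification $H(\mathfrak{U};V)\cong\check{H}(\mathfrak{U};V)$ of Corollary~\ref{cechisolocal} yields the asserted common isomorphism of $\check{H}(\mathfrak{U};V)$, $H_c(\mathfrak{U};V)$ and $H(\mathfrak{U};V)$. I do not anticipate a genuine obstacle: the substantive work --- producing a continuous-cochain-preserving row contraction under the loop-contractibility hypothesis --- has already been done in Proposition~\ref{propchechcontrhomloop}, and the chase above is formally identical to that of Theorem~\ref{isocontuqnum}; the only thing to keep an eye on is that, unlike in Corollary~\ref{covbycoz}, here numerability of the coverings $\{U_i^{q+1}\mid i\in I\}$ is hypothesised outright, so no auxiliary construction of partitions of unity on the spaces $\mathfrak{U}[q]$ is required.
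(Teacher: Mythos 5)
Your proposal is correct and follows exactly the route the paper intends: the paper's own proof of Theorem~\ref{isocontuqnumloop} is simply "analogous to Theorem~\ref{isocontuqnum}", i.e.\ the same diagram chase with Corollary~\ref{uqnumicindisoloop} (resting on Proposition~\ref{propchechcontrhomloop}) replacing Corollary~\ref{uqnumicindiso}. The only small overstatement is the phrase that the row contraction preserves continuous cochains \emph{precisely when} $V$ is loop contractible --- loop contractibility is only used as a sufficient condition --- but this does not affect the argument.
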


\begin{proof}
  The proof is analogous to that of Theorem \ref{isocontuqnum}.
\end{proof}

In this case the \v{C}ech Cohomology $\check{H} (\mathfrak{U};V)$ for the
covering $\mathfrak{U}$ of X can be either computed from the complex 
$A_c^* (\mathfrak{U};V)$ of continuous $\mathfrak{U}$-local cochains or from 
from the complex $A^* (\mathfrak{U};V)$ of $\mathfrak{U}$-local cochains.

\begin{corollary} \label{covbycozloop}
For any loop contractible abelian topological group $V$, generalised 
partition of unity $\{ \varphi_i \mid i \in I\}$ on 
$X$ and $\mathfrak{U}:=\{ \varphi_i^{-1} (R \setminus \{ 0\}) \mid i\in I\}$ 
the inclusion $A_c^* (\mathfrak{U};V) \hookrightarrow A^* (\mathfrak{U};V)$ 
induces an isomorphism in cohomology and the cohomologies 
$\check{H} (\mathfrak{U};V)$, $H_c (\mathfrak{U};V)$ and $H (\mathfrak{U};V)$ 
are isomorphic.
\end{corollary}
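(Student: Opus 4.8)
The plan is to reduce the statement to Theorem~\ref{isocontuqnumloop}, whose only hypothesis besides loop contractibility of $V$ (which is assumed here) is that for every $q$ the open covering $\{ U_i^{q+1} \mid i \in I \}$ of $\mathfrak{U}[q]$ be numerable. Since $\mathfrak{U}$ is by assumption the family of cozero sets of the generalised partition of unity $\{ \varphi_i \mid i \in I\}$, this is exactly the situation of Corollary~\ref{covbycoz}, and I would reuse that argument verbatim: the particular nature of the coefficient group plays no role in it.

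Concretely, for each $q$ I would put $\varphi_{q,i} : \mathfrak{U}[q] \to \mathbb{R}$, $\varphi_{q,i}(\vec{x}) = | \varphi_i(x_0) \cdots \varphi_i(x_q) |$, and $\varphi_q(\vec{x}) = \sum_i \varphi_{q,i}(\vec{x})$. By Lemmata~\ref{sumcontthensubsumcont} and \ref{sumvaprhicontthensumabsvalcont} the functions $\varphi_{q,i}$ and $\varphi_q$ are continuous and non-negative. If $\vec{x} \in \mathfrak{U}[q]$ then $\vec{x} \in U_i^{q+1}$ for some $i$, so $\varphi_i(x_k) \neq 0$ for $k = 0, \ldots, q$ and hence $\varphi_{q,i}(\vec{x}) > 0$; thus $\varphi_q$ is strictly positive on $\mathfrak{U}[q]$. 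Consequently $\{ \varphi_q^{-1} \varphi_{q,i} \mid i \in I\}$ is a generalised $\mathbb{R}$-valued partition of unity on $\mathfrak{U}[q]$ whose cozero sets are precisely the sets $\varphi_{q,i}^{-1}(\mathbb{R} \setminus \{0\}) = U_i^{q+1}$, so the covering $\{ U_i^{q+1} \mid i \in I\}$ of $\mathfrak{U}[q]$ is numerable.

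The only point requiring care is the continuity and non-negativity of the (in general not locally finite) sum $\varphi_q$ and of its building blocks $\varphi_{q,i}$; this is exactly what the cited Lemmata provide, the first passing from a convergent sum of continuous functions to the continuity of arbitrary subsums and the second upgrading this to sums of absolute values, after which strict positivity on $\mathfrak{U}[q]$ follows from the defining property of the cover $\mathfrak{U}$. Once numerability of all the covers $\{ U_i^{q+1} \mid i \in I\}$ is established, Theorem~\ref{isocontuqnumloop} applies directly and yields both the claimed isomorphism induced by $A_c^*(\mathfrak{U};V) \hookrightarrow A^*(\mathfrak{U};V)$ in cohomology and the identifications of $\check{H}(\mathfrak{U};V)$, $H_c(\mathfrak{U};V)$ and $H(\mathfrak{U};V)$. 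Note that no topological ring or module structure on $V$ enters, only loop contractibility, so this is genuinely the analogue of Corollary~\ref{covbycoz} for loop contractible coefficients.
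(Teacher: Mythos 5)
Your argument is exactly the paper's: the paper proves Corollary~\ref{covbycozloop} by declaring it analogous to Corollary~\ref{covbycoz}, i.e.\ by forming $\varphi_{q,i}(\vec{x})=|\varphi_i(x_0)\cdots\varphi_i(x_q)|$, invoking Lemmata~\ref{sumcontthensubsumcont} and \ref{sumvaprhicontthensumabsvalcont} for continuity, normalising to get numerability of $\{U_i^{q+1}\mid i\in I\}$, and then applying Theorem~\ref{isocontuqnumloop} in place of Theorem~\ref{isocontuqnum}. So your proposal is correct and takes essentially the same route.
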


\begin{proof}
  The proof is analogous to that of Corollary \ref{covbycoz}.
\end{proof}

Passing to the colimit over all numerable coverings yields the classical 
results: 

\begin{corollary}
  For any topological space $X$ and loop contractible abelian topological 
group $V$ the \v{C}ech cohomology 
$\check{H} (X;V)$ w.r.t. numerable coverings and the continuous 
Alexander-Spanier cohomology $H_{AS,c} (X;V)$ w.r.t. numerable coverings are 
isomorphic. 
\end{corollary}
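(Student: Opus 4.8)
The plan is to deduce the statement from Corollary \ref{covbycozloop} by a cofinality argument on the directed set of numerable coverings of $X$, ordered by refinement. Both sides are colimits of cohomology groups over this directed set: since filtered colimits of abelian groups are exact and the refinement maps $A_c^*(\mathfrak{U};V)\to A_c^*(\mathfrak{V};V)$ of continuous local cochains are canonical (restriction of functions, using $\mathfrak{V}[n]\subseteq\mathfrak{U}[n]$), one has $H_{AS,c}(X;V)=\colim_{\mathfrak{U}}H_c(\mathfrak{U};V)$; and $\check{H}(X;V)=\colim_{\mathfrak{U}}\check{H}(\mathfrak{U};V)$ by the definition of \v{C}ech cohomology, both colimits taken over the numerable coverings $\mathfrak{U}$ of $X$. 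It therefore suffices to exhibit a cofinal subsystem of numerable coverings on which $H_c(\mathfrak{U};V)$ and $\check{H}(\mathfrak{U};V)$ are identified by isomorphisms compatible with the refinement maps.

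First I would check that the coverings of the shape occurring in Corollary \ref{covbycozloop} form such a cofinal subsystem. Given a numerable covering $\mathfrak{V}$ of $X$, pick a locally finite partition of unity $\{\psi_j\mid j\in J\}$ subordinate to $\mathfrak{V}$ and set $\mathfrak{V}':=\{\psi_j^{-1}(\mathbb{R}\setminus\{0\})\mid j\in J\}$. Since $\supp\psi_j$ lies in some member of $\mathfrak{V}$, the covering $\mathfrak{V}'$ refines $\mathfrak{V}$; and $\{\psi_j\}$ is itself a locally finite partition of unity subordinate to $\mathfrak{V}'$, so $\mathfrak{V}'$ is again numerable. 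Thus every numerable covering is refined by one of the form $\mathfrak{U}=\{\varphi_i^{-1}(\mathbb{R}\setminus\{0\})\mid i\in I\}$ with $\{\varphi_i\}$ a (generalised, in fact locally finite) partition of unity; for coverings of this shape the proof of Corollary \ref{covbycoz} moreover shows that each covering $\{U_i^{q+1}\mid i\in I\}$ of $\mathfrak{U}[q]$ is again numerable, which is exactly the hypothesis under which Theorem \ref{isocontuqnumloop} and Corollary \ref{covbycozloop} apply. Since the directed set of numerable coverings is filtered (a common refinement of $\mathfrak{U}$ and $\mathfrak{V}$ is provided by the products $\varphi_i\psi_j$), the two colimits above may be computed over this cofinal subsystem of cozero coverings.

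On this subsystem Corollary \ref{covbycozloop} supplies, for each $\mathfrak{U}$, the isomorphism $H_c(\mathfrak{U};V)\cong\check{H}(\mathfrak{U};V)$; concretely it is the zig-zag $H(j_c^*)^{-1}\circ H(i_c^*)$ read off from the double complex $\check{C}^*(\mathfrak{U},A_c^*)$, using Lemma \ref{jindiso2} (the $j_c^*$ are always cohomology isomorphisms) and Corollary \ref{uqnumicindisoloop} (the $i_c^*$ are isomorphisms under the numerability hypothesis just verified). A refinement $\mathfrak{U}'$ of $\mathfrak{U}$ induces a morphism of the associated double complexes that intertwines the augmentations $i_c^*$ and $j_c^*$ and restricts on the augmenting \v{C}ech columns $\check{C}^*(\mathfrak{U};V)$ to the usual \v{C}ech refinement homomorphism, so after passing to cohomology the zig-zags $H(j_c^*)^{-1}\circ H(i_c^*)$ commute with the structure maps of the two directed systems. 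Taking the colimit over the cofinal subsystem of cozero coverings then yields the asserted isomorphism $H_{AS,c}(X;V)\cong\check{H}(X;V)$ with respect to numerable coverings.

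The main obstacle is precisely this last naturality check: the \v{C}ech refinement homomorphisms $\check{C}^*(\mathfrak{U};V)\to\check{C}^*(\mathfrak{U}';V)$ depend on a choice of refinement function and are canonical only up to chain homotopy, and likewise for the induced maps on the total complexes $\tot\check{C}^*(\mathfrak{U},A_c^*)$. One must therefore carry out the comparison one level down, after taking cohomology, where this ambiguity disappears; only then is the diagram of colimit groups legitimately commutative and the colimit of the per-covering isomorphisms again an isomorphism. Everything else — exactness of filtered colimits, the cofinality computation, and the isomorphism for a single cozero covering — is either routine or already available from the earlier results.
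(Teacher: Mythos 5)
Your argument is correct and is essentially the paper's (unwritten) proof: the corollary is obtained from Corollary \ref{covbycozloop} by passing to the colimit over numerable coverings, using that cozero coverings of subordinate partitions of unity are cofinal among numerable coverings. You merely spell out the cofinality and the naturality of the zig-zag isomorphisms under refinement, which the paper leaves implicit in the phrase ``passing to the colimit over all numerable coverings.''
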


\begin{corollary}
  For any paracompact topological space $X$ and loop contractible coefficient 
group $V$ the \v{C}ech cohomology 
$\check{H} (X;V)$ and the continuous Alexander-Spanier 
cohomology $H_{AS,c} (X;V)$ are isomorphic. 
\end{corollary}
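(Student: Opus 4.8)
The plan is to obtain this from the immediately preceding corollary, which already yields the isomorphism $\check{H}(X;V) \cong H_{AS,c}(X;V)$ once both sides are formed as colimits over numerable coverings only. It therefore suffices to verify that on a paracompact space the numerable open coverings are cofinal in the directed set of all open coverings, so that the two ``w.r.t.\ numerable coverings'' qualifiers may simply be dropped.

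First I would invoke the standard fact that a paracompact (Hausdorff) space $X$ admits, for every open covering $\mathfrak{U} = \{U_i \mid i \in I\}$, a subordinate partition of unity $\{\varphi_i \mid i \in I\}$, i.e.\ with $\sum_{i} \varphi_i = 1$ and $\supp \varphi_i \subseteq U_i$. In particular every open covering of $X$ is numerable, so the numerable coverings of $X$ are precisely all of its open coverings; consequently $\check{H}(X;V)$ and $H_{AS,c}(X;V)$ coincide with their ``w.r.t.\ numerable coverings'' counterparts, and the preceding corollary gives the claim at once. If one prefers not to cite that corollary, one can argue directly: the cozero coverings $\mathfrak{V} := \{\varphi_i^{-1}(\mathbb{R}\setminus\{0\}) \mid i \in I\}$ so obtained refine $\mathfrak{U}$ and are of the form treated in Corollary~\ref{covbycozloop}, hence constitute a cofinal subsystem of all open coverings of $X$; since cohomology commutes with filtered colimits and the inclusions $A_c^*(\mathfrak{V};V) \hookrightarrow A^*(\mathfrak{V};V)$ are natural in $\mathfrak{V}$ and induce isomorphisms in cohomology for each such $\mathfrak{V}$ by Corollary~\ref{covbycozloop}, passing to the colimit over this cofinal family produces an isomorphism $H_{AS,c}(X;V) \cong H_{AS}(X;V)$, which combined with the classical isomorphism $H_{AS}(X;V) \cong \check{H}(X;V)$ gives the result.

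The only point that requires a little attention is the cofinality step in this second route: a numerable covering $\mathfrak{U}$ need not by itself satisfy the hypothesis of Theorem~\ref{isocontuqnumloop} --- that each covering $\{U_i^{q+1} \mid i \in I\}$ of $\mathfrak{U}[q]$ be numerable --- because the $\varphi_i$ may vanish inside $U_i$; this is exactly why one replaces $\mathfrak{U}$ by its cozero refinement $\mathfrak{V}$, for which Corollary~\ref{covbycozloop} (resting ultimately on the loop-contraction row contraction of Proposition~\ref{propchechcontrhomloop}) supplies that hypothesis automatically. Beyond this bookkeeping there is no genuine obstacle: the substantive work has already been carried out, and the present corollary is simply the passage of Corollary~\ref{covbycozloop} to the colimit over a cofinal family of coverings that exists precisely because $X$ is paracompact.
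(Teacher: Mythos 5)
Your proposal is correct and follows the paper's (implicit) argument: the corollary is stated without separate proof precisely because, on a paracompact space, every open covering admits a subordinate partition of unity and is therefore numerable, so the preceding corollary's ``w.r.t.\ numerable coverings'' qualifiers can be dropped. Your additional remark --- that a numerable $\mathfrak{U}$ need not itself satisfy the hypothesis of Theorem~\ref{isocontuqnumloop} on the coverings of $\mathfrak{U}[q]$, and that one passes to the cofinal family of cozero refinements handled by Corollary~\ref{covbycozloop} --- is a correct and worthwhile clarification of the bookkeeping the paper leaves tacit.
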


\begin{example}
  If a paracompact space $X$ has trivial \v{C}ech cohomology $\check{H} (X;V)$ 
(e.g. if $X$ is contractible) and $V$ is loop contractible, then the
  continuous Alexander-Spanier cohomology $H_{AS} (X;V)$ is trivial as well.
\end{example}

As we did before, we apply these observations to uniform spaces $X$ with open 
coverings of the form $\mathfrak{U}_U :=\{ U[x] \mid \, x \in X\}$, where 
$U$ is an open entourage of the diagonal in $X \times X$.

\begin{proposition} \label{ubypmloop}
  If $d: X \times X \rightarrow \mathbb{R}$ is a continuous pseudometric on 
$X$ then for each $\epsilon > 0$, covering 
$\mathfrak{U}=\{ B_d (x,\epsilon) \mid x \in X \}$ of $X$ by open 
$\epsilon$-balls and loop contractible abelian topological group $V$ 
the inclusion 
$A_c^* (\mathfrak{U};V) \hookrightarrow A^* (\mathfrak{U};V)$ induces an
isomorphism in cohomology and the cohomologies 
$\check{H} (\mathfrak{U};V)$, $H_c (\mathfrak{U};V)$ and $H (\mathfrak{U};V)$ 
are isomorphic.
\end{proposition}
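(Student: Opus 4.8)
The plan is to run the proof of Proposition \ref{ubypm} essentially verbatim, replacing the appeal to Corollary \ref{covbycoz} by one to its loop-contractible analogue, Corollary \ref{covbycozloop}. First I would invoke \cite[Proposition B2]{S70} to obtain a generalised partition of unity $\{ \varphi_x \mid x \in X \}$ on $X$ with $\varphi_x^{-1} ((0,1]) = B_d (x,\epsilon)$ for every $x \in X$. Since these functions are nonnegative, their cozero sets $\varphi_x^{-1} (\mathbb{R} \setminus \{ 0 \})$ are exactly the open balls $B_d (x,\epsilon)$, so the covering $\mathfrak{U} = \{ B_d (x,\epsilon) \mid x \in X \}$ is precisely the cozero family of this generalised partition of unity. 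Corollary \ref{covbycozloop}, applied to the loop contractible abelian topological group $V$ and this generalised partition of unity, then yields at once that $A_c^* (\mathfrak{U};V) \hookrightarrow A^* (\mathfrak{U};V)$ induces an isomorphism in cohomology and that $\check{H} (\mathfrak{U};V)$, $H_c (\mathfrak{U};V)$ and $H (\mathfrak{U};V)$ are isomorphic.

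The only step requiring a word of justification is the identification of the cozero set of $\varphi_x$ with its positivity set $\varphi_x^{-1}((0,1])$; for the nonnegative functions produced in \cite{S70} this is automatic. If one preferred to allow functions of indeterminate sign, one could instead replace each $\varphi_x$ by $|\varphi_x| / \sum_{y} |\varphi_y|$, which is again a generalised partition of unity with the same cozero sets, the denominator being continuous by Lemma \ref{sumvaprhicontthensumabsvalcont} and strictly positive because the balls $B_d(x,\epsilon)$ already cover $X$.

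I do not anticipate a genuine obstacle here. The substantive work -- building a row contraction of the double complex $\check{C}^* (\mathfrak{U}, A_c^*)$ from a loop contraction of $V$ together with a partition of unity on each diagonal neighbourhood $\mathfrak{U}[q]$ -- has already been carried out in Proposition \ref{propchechcontrhomloop} and packaged, via Theorem \ref{isocontuqnumloop}, into Corollary \ref{covbycozloop}. The content of the present proposition is simply the observation that the open $\epsilon$-balls of a continuous pseudometric constitute the cozero family of a generalised partition of unity, which is exactly the hypothesis that Corollary \ref{covbycozloop} requires.
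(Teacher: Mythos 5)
Your proposal matches the paper's argument: the paper proves this proposition by noting it is analogous to Proposition \ref{ubypm}, i.e.\ by invoking \cite[Proposition B2]{S70} to exhibit the $\epsilon$-balls as the cozero sets of a generalised partition of unity and then applying the loop-contractible analogue (Corollary \ref{covbycozloop}) in place of Corollary \ref{covbycoz}. Your extra remark about normalising via $|\varphi_x|/\sum_y|\varphi_y|$ is a harmless addition; the argument is correct.
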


\begin{proof}
  The proof is analogous to that of Proposition \ref{ubypm}.
\end{proof}

\begin{example}
If $\mathfrak{U}=\{ B (x,\epsilon) \mid x \in X \}$ is an open covering of a 
finite dimensional Riemannian manifold $M$ by open $\epsilon$-balls and 
the coefficient group $V$ is loop contractible, 
then the cohomology $H_c (\mathfrak{U};V)$ of the complex 
$A_c^* (\mathfrak{U};V)$ is isomorphic to cohomology $H (\mathfrak{U} ;V)$ and 
to the \v{C}ech and singular cohomologies of $M$ (cf. Ex \ref{exugeodconv}). 
If $M$ is an infinite dimensional Riemannian manifold one has to require the 
local existence of geodesics.  
\end{example}

\begin{corollary}
  For any open entourage $U$ of a uniform space $X$ 
and loop contractible abelian topological group $V$ 
the inclusion 
$A_c^* (\mathfrak{U}_U;V)\hookrightarrow A^* (\mathfrak{U}_U;V)$ induces an 
isomorphism in cohomology and the cohomologies 
$\check{H} (\mathfrak{U}_U;V)$, $H_c (\mathfrak{U}_U;V)$ and 
$H (\mathfrak{U}_U;V)$ are isomorphic.
\end{corollary}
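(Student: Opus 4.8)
The plan is to deduce this directly from Proposition~\ref{ubypmloop}, which already treats coverings of a space by the open balls of a continuous pseudometric for loop contractible coefficients, by invoking the standard structural fact about uniform spaces that every entourage is a sublevel set of a continuous pseudometric. In other words, the corollary is the uniform-space packaging of Proposition~\ref{ubypmloop}, exactly as the corresponding statement for topological vector spaces in Section~\ref{secclc} is the uniform-space packaging of Proposition~\ref{ubypm}.

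Concretely, first I would recall that since $U$ is an open entourage of the uniform space $X$ there is, by \cite[Proposition B.2]{S70}, a continuous pseudometric $d_U : X \times X \rightarrow \mathbb{R}$ with $U = d_U^{-1}([0,1))$. Unwinding the definition of the induced open covering one then has, for each $x \in X$, that $U[x] = \{\, y \in X \mid (x,y) \in U \,\} = \{\, y \in X \mid d_U(x,y) < 1 \,\} = B_{d_U}(x,1)$, so that $\mathfrak{U}_U = \{\, U[x] \mid x \in X \,\}$ is precisely the covering $\{\, B_{d_U}(x,1) \mid x \in X \,\}$ of $X$ by open $d_U$-balls of radius $1$. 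Applying Proposition~\ref{ubypmloop} with pseudometric $d = d_U$, radius $\epsilon = 1$, and the given loop contractible abelian topological group $V$ then yields at once that $A_c^*(\mathfrak{U}_U;V) \hookrightarrow A^*(\mathfrak{U}_U;V)$ induces an isomorphism in cohomology and that $\check{H}(\mathfrak{U}_U;V)$, $H_c(\mathfrak{U}_U;V)$ and $H(\mathfrak{U}_U;V)$ are isomorphic, which is the assertion.

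All of the genuine work has already been carried out upstream: Proposition~\ref{ubypmloop} rests on Corollary~\ref{covbycozloop}, which in turn relies on the row contraction of Proposition~\ref{propchechcontrhomloop} assembled from the semi-simplicial section $\widehat{\sigma} : V^{*+1} \rightarrow C(\varDelta^*,V)$ of the vertex morphism $\lambda_V$, whose existence and group-homomorphism property depend on the loop contractibility of $V$ (Proposition~\ref{sigmaismorph}, Lemma~\ref{loopthenhom}). Hence the only thing left to verify here is the purely point-set identification $U[x] = B_{d_U}(x,1)$, i.e.\ that the covering $\mathfrak{U}_U$ appearing in the corollary coincides with the ball covering $\mathfrak{U}$ of Proposition~\ref{ubypmloop}; this is immediate from $U = d_U^{-1}([0,1))$, so I do not expect any obstacle — the corollary is a one-line consequence of Proposition~\ref{ubypmloop} and the metrisation of entourages.
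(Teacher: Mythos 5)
Your proposal is correct and follows exactly the route the paper intends: the corollary is deduced from Proposition~\ref{ubypmloop} together with the fact (cf.\ \cite[Proposition B.2]{S70}) that an open entourage is of the form $U=d_U^{-1}([0,1))$ for a continuous pseudometric $d_U$, so that $\mathfrak{U}_U$ is the covering by open $d_U$-balls of radius $1$ -- the same argument the paper gives for the analogous corollary in Section~\ref{secclc}.
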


For topological groups with open coverings of the 
form $\mathfrak{U}_U :=\{ gU \mid \, g \in G\}$, where $U$ is an open identity 
neighbourhood in $G$ we observe:

\begin{corollary} \label{localfortopgrpsandvsvloop}
For any open identity neighbourhood $U$ of a topological group $G$ and 
loop contractible coefficient group $V$ the 
inclusion $A_c^* (\mathfrak{U}_U ;V) \hookrightarrow A^* (\mathfrak{U}_U ;V)$
induces an isomorphism in cohomology and the cohomologies 
$\check{H} (\mathfrak{U}_U;V)$, $H_c (\mathfrak{U}_U;V)$ and 
$H (\mathfrak{U}_U;V)$ are isomorphic.
\end{corollary}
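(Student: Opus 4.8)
The plan is to recognise that the left-invariant covering $\mathfrak{U}_U = \{ gU \mid g \in G \}$ of $G$ is exactly a covering of a uniform space by translates of an open entourage, and hence also a covering by $\epsilon$-balls of a continuous pseudometric, so that the assertion becomes a direct instance of Proposition \ref{ubypmloop} (equivalently, of the corollary for open entourages of uniform spaces proved just above it). This is the same reduction that yields the topological vector space version, Corollary \ref{localfortopgrpsandvsv}; the only new ingredient is that here the coefficient group $V$ is merely loop contractible, which has already been absorbed into Proposition \ref{ubypmloop}.

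Concretely, I would equip $G$ with its left uniformity and set $\tilde U := \{ (g,h) \in G \times G \mid g^{-1} h \in U \}$. Since the map $m : G \times G \to G$, $m(g,h) = g^{-1}h$, is continuous and $U$ is open, $\tilde U = m^{-1}(U)$ is an open entourage of the diagonal, and $\tilde U[g] = \{ h \mid g^{-1}h \in U \} = gU$ for every $g \in G$; hence $\mathfrak{U}_{\tilde U} = \{ \tilde U[g] \mid g \in G \} = \mathfrak{U}_U$. By \cite[Proposition B.2]{S70} there is a continuous pseudometric $d : G \times G \to \mathbb{R}$ with $\tilde U = d^{-1}([0,1))$, and then $B_d(g,1) = \{ h \mid d(g,h) < 1 \} = \tilde U[g] = gU$, so $\mathfrak{U}_U = \{ B_d(g,1) \mid g \in G \}$ is a covering of $G$ by open $1$-balls of $d$. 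Applying Proposition \ref{ubypmloop} with $\epsilon = 1$ to the loop contractible coefficient group $V$ then gives that the inclusion $A_c^*(\mathfrak{U}_U;V) \hookrightarrow A^*(\mathfrak{U}_U;V)$ induces an isomorphism in cohomology and that $\check{H}(\mathfrak{U}_U;V)$, $H_c(\mathfrak{U}_U;V)$ and $H(\mathfrak{U}_U;V)$ are isomorphic.

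I do not anticipate a real obstacle: all the substantive work — building the simplicial section $\widehat{\sigma}$ from the loop contraction of $V$, turning it into a row contraction (Proposition \ref{propchechcontrhomloop}), and deducing Theorem \ref{isocontuqnumloop} and Proposition \ref{ubypmloop} — is already done, and what remains is only the routine identification of $\mathfrak{U}_U$ with an entourage/pseudometric covering, verbatim as in the topological vector space case. The one point I would state with care is that it is the \emph{left} uniformity that produces the left translates $gU$ (rather than $Ug$), which is why that uniformity, and not the right one, is the correct choice.
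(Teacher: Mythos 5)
Your proposal is correct and follows exactly the route the paper intends: the paper states this corollary as the specialisation of the preceding uniform-space corollary (itself resting on Proposition \ref{ubypmloop} and the fact, via \cite[Proposition B.2]{S70}, that open entourages are of the form $d^{-1}([0,1))$), with $\mathfrak{U}_U$ arising from the entourage $\{(g,h)\mid g^{-1}h\in U\}$ of the left uniformity. Your explicit identification $B_d(g,1)=gU$ and the remark about choosing the left rather than the right uniformity simply spell out what the paper leaves implicit.
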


Combining these results with those concerning singular cohomology 
(obtained in Section \ref{seccas}) we observe:

\begin{proposition} \label{propuqnumsingloop}
For any loop contractible abelian group $V$ and open covering 
$\mathfrak{U}$ of a space $X$ for which each set $U_{i_0 \ldots i_p}$ is 
$V$-acyclic and each covering 
$\{ U_i^{q+1} \mid i \in I \}$ of $\mathfrak{U}[q]$ is numerable the 
homomorphism $C (\lambda_\mathfrak{U}^* ,V) : 
A_c^* (\mathfrak{U};V) \rightarrow S^* (X,\mathfrak{U};V)$ 
induces an isomorphism $H_c (\mathfrak{U};V) \cong H_{sing} (X;V)$ in 
cohomology and the following diagram is commutative:
\begin{equation*}
  \xymatrix{
H_c (\mathfrak{U};V) \ar[r]^{H (i)}_\cong 
\ar[d]_{H (C (\lambda_\mathfrak{U}^*;V))}^\cong  &
H (\mathfrak{U};V) \ar[r]^{H (j)^{-1} H(i)}_\cong 
\ar[d]_{H (C (\lambda_\mathfrak{U}^* ;V))}^\cong & 
\check{H} (\mathfrak{U};V) \ar@{=}[d] \\
H_{sing} (\mathfrak{U};V) \ar@{=}[r] & 
H_{sing} (\mathfrak{U};V) \ar[r]^\cong_{H (j)^{-1} H(i)} & 
\check{H} (\mathfrak{U};V) 
}
\end{equation*}
In particular the \v{C}ech and the continuous $\mathfrak{U}$-local 
cohomology do not depend on the open cover $\mathfrak{U}$ subject to the 
above conditions chosen.
\end{proposition}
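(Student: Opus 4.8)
The plan is to argue exactly as in the proof of Proposition~\ref{propuqnumsing}, replacing the appeal to Theorem~\ref{isocontuqnum} by an appeal to Theorem~\ref{isocontuqnumloop}. Indeed, the hypothesis that each covering $\{U_i^{q+1}\mid i\in I\}$ of $\mathfrak{U}[q]$ be numerable, together with the loop contractibility of $V$, is precisely what makes both the inclusion $A_c^*(\mathfrak{U};V)\hookrightarrow A^*(\mathfrak{U};V)$ and the inclusion $\tot\check{C}^*(\mathfrak{U};A_c^*)\hookrightarrow\tot\check{C}^*(\mathfrak{U};A^*)$ induce isomorphisms in cohomology, through the row contractions of Proposition~\ref{propchechcontrhomloop} built from the morphism $\widehat{\sigma}$; the only remaining ingredient is the comparison with singular cochains via the vertex morphism.

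First I would recall that the vertex morphism $\lambda_X : C(\varDelta,X)\rightarrow X^{*+1}$ of semi-simplicial spaces induces not only the morphism $C(\lambda_\mathfrak{U}^*,V) : A_c^*(\mathfrak{U};V)\rightarrow S^*(X,\mathfrak{U};V)$ of cochain complexes but also a morphism $\check{C}^*(\mathfrak{U};C(\lambda^*;V)) : \check{C}^*(\mathfrak{U};A_c^*)\rightarrow\check{C}^*(\mathfrak{U};S^*)$ of double complexes, and that these are compatible with the augmentations of both the rows and the columns. Then I would note that, since each $U_{i_0\ldots i_p}$ is $V$-acyclic, the augmented columns of $\check{C}^*(\mathfrak{U};S^*)$ are exact, so the augmentation $\check{C}^*(\mathfrak{U};V)\rightarrow\tot\check{C}^*(\mathfrak{U};S^*)$ induces an isomorphism in cohomology, exactly as in the proof (given in Section~\ref{seccas}) that $A^*(\mathfrak{U};V)\rightarrow S^*(X,\mathfrak{U};V)$ is a cohomology isomorphism under the $V$-acyclicity assumption. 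Stacking the inclusion square relating the $A_c$- and $A$-double complexes on top of the $\lambda$-comparison square relating the $A$- and $S$-double complexes and augmenting on the right by $\check{C}^*(\mathfrak{U};V)$ produces a commutative diagram of cochain complexes in which, by Lemmata~\ref{columnsexact} and~\ref{jindiso2}, Corollaries~\ref{rowsalwexact} and~\ref{uqnumicindisoloop} and Theorem~\ref{isocontuqnumloop}, every morphism except possibly the two $\lambda$-induced vertical ones emanating from $A_c^*(\mathfrak{U};V)$ and $A^*(\mathfrak{U};V)$ induces an isomorphism in cohomology. A two-out-of-three chase then forces $C(\lambda_\mathfrak{U}^*,V)$ to induce the asserted isomorphism $H_c(\mathfrak{U};V)\cong H_{sing}(X;V)$, and the displayed diagram in the statement is then just a record of these identifications: $H(i)$ is the map induced by the row augmentations, $H(j)^{-1}H(i)$ is the \v{C}ech-to-$\mathfrak{U}$-local comparison isomorphism of Corollary~\ref{cechisolocal}, and the right-hand column is the identity on $\check{H}(\mathfrak{U};V)$. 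Independence of the open cover follows because the source and target of the resulting composite isomorphism, $H_{sing}(X;V)$ and $\check{H}(\mathfrak{U};V)$, are unaffected by the choice of $\mathfrak{U}$ subject to the stated conditions.

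The hard part is purely organisational rather than mathematical: one must check that the $\lambda$-induced morphism of double complexes respects \emph{simultaneously} the horizontal augmentation by the relevant complexes of $\mathfrak{U}$-local cochains and the vertical augmentation by $\check{C}^*(\mathfrak{U};V)$, so that the assembled three-column diagram commutes on the nose before passing to cohomology. There is no topological obstruction to landing continuous cochains in $S^*(X,\mathfrak{U};V)$, since singular cochains carry no topology, and the loop contraction $\widehat{\sigma}$ enters only indirectly, through the already-established Theorem~\ref{isocontuqnumloop}, so that no fresh manipulation of $\widehat{\sigma}$ is needed here.
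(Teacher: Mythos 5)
Your proposal is correct and follows essentially the route the paper intends: it combines the $V$-acyclicity column argument and vertex-morphism comparison from Section \ref{seccas} with the loop-contractible row contractions (Proposition \ref{propchechcontrhomloop}, Corollary \ref{uqnumicindisoloop}, Theorem \ref{isocontuqnumloop}), exactly as Proposition \ref{propuqnumsing} does with Theorem \ref{isocontuqnum} in place of Theorem \ref{isocontuqnumloop}. The stacked double-complex diagram and two-out-of-three chase you describe is precisely the "combination of these results" the paper appeals to.
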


\begin{corollary}
 For loop contractible coefficient groups $V$ and any open covering 
$\mathfrak{U}$ of a topological space $X$ 
for which each set $U_{i_0 \ldots i_p}$ is $V$-acyclic and each covering 
$\{ U_i^{q+1} \mid i \in I \}$ of $\mathfrak{U}[q]$ is numerable the 
singular cohomology $H_{sing} (X;V)$ and the \v{C}ech cohomology 
$\check{H} (\mathfrak{U};V)$ for the covering $\mathfrak{U}$ 
can be computed from the complex $A_c^* (\mathfrak{U};V)$ of 
continuous $\mathfrak{U}$-local cochains. 
\end{corollary}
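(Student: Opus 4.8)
The plan is to read this off directly from Proposition \ref{propuqnumsingloop}, of which it is a purely bookkeeping consequence. First I would note that the hypotheses here are exactly those of that proposition: the coefficient group $V$ is loop contractible, each finite intersection $U_{i_0 \ldots i_p}$ is $V$-acyclic, and each covering $\{ U_i^{q+1} \mid i \in I\}$ of $\mathfrak{U}[q]$ is numerable. The first two feed the singular side of the argument (acyclicity makes the augmented columns of $\check{C}^*(\mathfrak{U},S^*)$ exact, as in Section \ref{seccas}), while loop contractibility together with numerability feeds the continuous-local side via Corollary \ref{uqnumicindisoloop} and Theorem \ref{isocontuqnumloop}.

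Next I would invoke the commutative diagram displayed in Proposition \ref{propuqnumsingloop}. Its top row exhibits isomorphisms $H_c(\mathfrak{U};V) \xrightarrow{H(i)} H(\mathfrak{U};V) \xrightarrow{H(j)^{-1}H(i)} \check{H}(\mathfrak{U};V)$, so the cohomology of the complex $A_c^*(\mathfrak{U};V)$ is canonically isomorphic to the \v{C}ech cohomology $\check{H}(\mathfrak{U};V)$ for the covering $\mathfrak{U}$. The left vertical arrow $H(C(\lambda_\mathfrak{U}^*;V))$ is likewise an isomorphism onto $H_{sing}(\mathfrak{U};V) = H_{sing}(X;V)$, so $H_c(\mathfrak{U};V) \cong H_{sing}(X;V)$ as well, the isomorphism being induced by the vertex morphism $\lambda$. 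Chaining these two isomorphisms gives $H_{sing}(X;V) \cong H_c(\mathfrak{U};V) \cong \check{H}(\mathfrak{U};V)$, which is precisely the assertion that both the singular cohomology of $X$ and the \v{C}ech cohomology for $\mathfrak{U}$ are computed by the complex $A_c^*(\mathfrak{U};V)$ of continuous $\mathfrak{U}$-local cochains.

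There is essentially no obstacle here beyond checking that the hypotheses of Proposition \ref{propuqnumsingloop} are met verbatim; the only point requiring a moment's attention is confirming that the numerability condition imposed on each $\{U_i^{q+1}\}$ is exactly what Corollary \ref{uqnumicindisoloop} needs for the row contractions built from $\widehat{\sigma}$ to force $i_c^*$ to be a quasi-isomorphism, so that the horizontal isomorphisms in the diagram are genuinely available. Given that, the corollary is immediate. In LaTeX the proof would simply read: by Proposition \ref{propuqnumsingloop} the morphism $C(\lambda_\mathfrak{U}^*;V)$ induces an isomorphism $H_c(\mathfrak{U};V) \cong H_{sing}(X;V)$ and, via the commutative diagram there, an isomorphism $H_c(\mathfrak{U};V) \cong \check{H}(\mathfrak{U};V)$; hence both cohomologies are computed from $A_c^*(\mathfrak{U};V)$.
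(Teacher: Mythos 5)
Your proposal is correct and matches the paper's intent exactly: the corollary is stated in the paper as an immediate consequence of Proposition \ref{propuqnumsingloop}, and reading the isomorphisms $H_{sing}(X;V) \cong H_c(\mathfrak{U};V) \cong \check{H}(\mathfrak{U};V)$ off the commutative diagram there, after verifying the hypotheses verbatim, is precisely the intended argument.
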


\begin{example}
For any loop contractible coefficient group $V$ and 'good' cover 
$\mathfrak{U}$ of a topological space $X$ for which 
each covering $\{ U_i^{q+1} \mid i \in I \}$ of $\mathfrak{U}[q]$ is numerable
the morphism $ 
A_c^* (\mathfrak{U};V) \rightarrow S^* (X,\mathfrak{U};V)$ of cochain complexes
induces an isomorphism in cohomology and the cohomologies 
$\check{H} (\mathfrak{U};V)$, $H (\mathfrak{U};V)$, $H_c (\mathfrak{U};V)$ 
and $H_{sing} (X;V)$ are isomorphic.
\end{example}

\begin{lemma}
If $V$ is loop contractible and the open coverings $\mathfrak{U}$ of a 
topological space $X$ for which the sets $U_{i_0 \ldots i_p}$ are $V$-acyclic 
and  each covering $\{ U_i^{q+1}
\mid i \in I \}$ of $\mathfrak{U}[q]$ is numerable are cofinal in all 
open coverings, then for each such covering $\mathfrak{U}$ the 
the cohomology $H_c (\mathfrak{U};V)$ of continuous $\mathfrak{U}$-local 
cochains coincides with the continuous Alexander-Spanier cohomology 
$H_{AS,c} (X;V)$ of $X$. In particular the directed system 
$H_c (\mathfrak{U};V)$ of abelian groups is co-Mittag-Leffler.
\end{lemma}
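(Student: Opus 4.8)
The plan is to run the argument of Lemma~\ref{vacnumarecofin} with Proposition~\ref{propuqnumsingloop} in place of Proposition~\ref{propuqnumsing}. Let $\mathcal{C}$ denote the family of those open coverings $\mathfrak{U}=\{U_i \mid i \in I\}$ of $X$ for which $\tilde{H}_{sing}(U_{i_0 \ldots i_p};V)=0$ for all $i_0,\ldots,i_p\in I$ and for which each covering $\{U_i^{q+1}\mid i\in I\}$ of $\mathfrak{U}[q]$ is numerable. By hypothesis $\mathcal{C}$ is cofinal in the directed set of all open coverings of $X$; hence the continuous Alexander-Spanier cohomology is the colimit $H_{AS,c}(X;V)=\colim_{\mathfrak{U}\in\mathcal{C}} H_c(\mathfrak{U};V)$, with the transition maps being the restrictions along the inclusions $\mathfrak{V}[n]\subseteq\mathfrak{U}[n]$ whenever $\mathfrak{V}$ refines $\mathfrak{U}$.

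For each $\mathfrak{U}\in\mathcal{C}$, Proposition~\ref{propuqnumsingloop} yields an isomorphism $H_c(\mathfrak{U};V)\cong H_{sing}(X;V)$ induced by the vertex morphism $C(\lambda_\mathfrak{U}^*,V)$, and these isomorphisms are compatible with refinement. Indeed, for $\mathfrak{V}\in\mathcal{C}$ refining $\mathfrak{U}$ the vertex morphisms intertwine the restriction map $H_c(\mathfrak{U};V)\to H_c(\mathfrak{V};V)$ with the map $H(S^*(X,\mathfrak{U};V))\to H(S^*(X,\mathfrak{V};V))$ induced by the inclusion of $\mathfrak{V}$-small singular simplices into $\mathfrak{U}$-small ones; and the latter map is the identity of $H_{sing}(X;V)$, by the classical theorem that small simplices compute singular cohomology. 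Thus the directed subsystem $\bigl(H_c(\mathfrak{U};V)\bigr)_{\mathfrak{U}\in\mathcal{C}}$ is isomorphic to the constant directed system with value $H_{sing}(X;V)$, every transition map of which is an isomorphism.

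Consequently $H_{AS,c}(X;V)=\colim_{\mathfrak{U}\in\mathcal{C}}H_c(\mathfrak{U};V)\cong H_{sing}(X;V)$, and each structure map $H_c(\mathfrak{U};V)\to H_{AS,c}(X;V)$ with $\mathfrak{U}\in\mathcal{C}$ is an isomorphism, which is the claimed coincidence. For the last assertion, note that given any open covering $\mathfrak{U}_0$ of $X$ one may choose $\mathfrak{U}\in\mathcal{C}$ refining it; every refinement of $\mathfrak{U}$ inside $\mathcal{C}$ then induces an isomorphism on $H_c(-;V)$, so the directed system $H_c(\mathfrak{U};V)$ is essentially constant beyond $\mathfrak{U}$, i.e. co-Mittag-Leffler.

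I expect the only non-formal point to be the refinement-compatibility of the vertex-morphism isomorphisms, and this rests entirely on the classical barycentric-subdivision fact that the inclusion of $\mathfrak{V}$-small into $\mathfrak{U}$-small singular chains is a chain homotopy equivalence, hence induces the identity on $H_{sing}(X;V)$; the remainder of the argument is formal manipulation of colimits over a cofinal family.
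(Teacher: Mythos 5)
Your proposal is correct and follows essentially the same route as the paper: the paper proves this lemma by the argument of Lemma \ref{vacnumarecofin} with Proposition \ref{propuqnumsingloop} supplying the isomorphisms $H_c(\mathfrak{U};V)\cong H(\mathfrak{U};V)\cong H_{sing}(X;V)$ on the cofinal family, then passes to the colimit. The only difference is that you make explicit the refinement-compatibility of the vertex-morphism isomorphisms (via the small-simplices theorem), a point the paper leaves implicit.
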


\begin{proof}
  The proof is analogous to that of Lemma \ref{vacnumarecofin}.
\end{proof}

\begin{example}
If $\mathfrak{U}$ is an open covering of a finite dimensional Riemannian 
manifold $M$ by geodetically convex sets and $V$ is loop contractible, 
then the cohomology of the complex 
$A_c^* (\mathfrak{U};V)$ is isomorphic to the \v{C}ech cohomology $\check{H}
(\mathfrak{U};V)$ and to the singular cohomology $H_{sing} (M;V)$ of $M$. 
If $M$ is an infinite dimensional Riemannian manifold one has to require the 
local existence of geodesics for this argument to be applicable.
\end{example}

\begin{example}
  If $G$ is a Hilbert Lie group, $U$ a geodetically convex identity 
neighbourhood of $G$ and $V$ is loop contractible, 
then the cohomology of the complex 
$A_c^* (\mathfrak{U}_U;V)$ is isomorphic to \v{C}ech cohomology $\check{H}
(\mathfrak{U};V)$ and to the singular cohomology $H_{sing} (G;V)$ of $G$.
\end{example}

As observed before, one can obtain a similar result for locally contractible
topological groups without acyclicity condition on the open coverings:

\begin{theorem} \label{colimnbhfcontloop}
  For any locally contractible group $G$ with open identity neighbourhood 
filterbase $\mathcal{U}_1$ and loop contractible $V$ the morphisms 
$A_c^* (\mathfrak{U}_U ;V) \hookrightarrow A^* (\mathfrak{U}_U ;V)$ and 
$C (\lambda_{\mathfrak{U}_U}^* ,V) :  A_c^* (\mathfrak{U}_U ; V) 
\rightarrow S^* (\mathfrak{U}_U ; V)$ for all $U \in \mathcal{U}_1$ induce 
isomorphisms  
\begin{equation*}
 \colim_{U \in \mathcal{U}_1} H_c (\mathfrak{U}_U ;V) \cong 
\colim_{U \in \mathcal{U}_1} H (\mathfrak{U}_U ;V) \cong H_{sing} (G;V) \, .
\end{equation*}
in cohomology. 
\end{theorem}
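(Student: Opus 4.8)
The plan is to obtain this statement in exactly the same way as Theorems~\ref{colimnbhfcont} and~\ref{colimnbhfcontk} were obtained, namely by combining the single-neighbourhood comparison result for loop contractible coefficients with van Est's identification of the colimit. The two inputs are Corollary~\ref{localfortopgrpsandvsvloop}, which for a fixed open identity neighbourhood $U$ gives an isomorphism $H_c(\mathfrak{U}_U;V)\cong H(\mathfrak{U}_U;V)$ induced by the inclusion of cochain complexes, and Theorem~\ref{colimnbhf}, which identifies $\colim_{U\in\mathcal{U}_1}H(\mathfrak{U}_U;V)$ with $H_{sing}(G;V)$ via the vertex morphism $\lambda$.

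First I would pass to the colimit in Corollary~\ref{localfortopgrpsandvsvloop}. For $U'\subseteq U$ in $\mathcal{U}_1$ one has $\mathfrak{U}_{U'}[q]\subseteq\mathfrak{U}_U[q]$, and the resulting restriction homomorphisms commute with the inclusions $A_c^*(\mathfrak{U}_U;V)\hookrightarrow A^*(\mathfrak{U}_U;V)$; hence these inclusions constitute a morphism of directed systems of cochain complexes, and the isomorphisms they induce in cohomology assemble into a morphism of directed systems of abelian groups. Since filtered colimits of abelian groups are exact, they commute with the formation of cohomology, so the induced map $\colim_{U}H_c(\mathfrak{U}_U;V)\to\colim_{U}H(\mathfrak{U}_U;V)$ is again an isomorphism; this is the first isomorphism claimed. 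Composing it with the isomorphism $\colim_{U}H(\mathfrak{U}_U;V)\cong H_{sing}(G;V)$ of Theorem~\ref{colimnbhf} gives the second. Finally one observes that this composite is induced by $C(\lambda_{\mathfrak{U}_U}^*;V):A_c^*(\mathfrak{U}_U;V)\to S^*(\mathfrak{U}_U;V)$, because precomposing a continuous $\mathfrak{U}_U$-local cochain with the vertex map $\lambda_X$ yields a continuous, and in particular an ordinary, singular cochain, so $C(\lambda_{\mathfrak{U}_U}^*;V)$ on $A_c^*$ factors as the inclusion $A_c^*\hookrightarrow A^*$ followed by $C(\lambda_{\mathfrak{U}_U}^*;V)$ on $A^*$.

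I do not expect a real obstacle: the two substantive ingredients have already been established, the row contractions built in Section~\ref{seclcc} from the right inverse $\widehat{\sigma}$ to the vertex morphism (which exists precisely because $V$ is loop contractible) on the one hand, and van Est's theorem underlying Theorem~\ref{colimnbhf} on the other. The only things that need a word of care are the naturality in $U$ of the comparison isomorphisms, so that the passage to the colimit is legitimate, and the exactness of filtered colimits, which lets one interchange $\colim$ with cohomology; both are routine.
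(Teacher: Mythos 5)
Your proposal is correct and follows essentially the same route as the paper, which (modulo a self-referential typo in its citation) deduces the theorem exactly as Theorem \ref{colimnbhfcont} was deduced: the first isomorphism from Corollary \ref{localfortopgrpsandvsvloop} applied neighbourhood-wise and passed to the colimit, the second from Theorem \ref{colimnbhf}. Your added remarks on naturality in $U$, exactness of filtered colimits, and the factorisation of $C(\lambda_{\mathfrak{U}_U}^*;V)$ through the inclusion $A_c^*\hookrightarrow A^*$ merely make explicit what the paper leaves implicit.
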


\begin{proof}
  The proof is analogous to that of Theorem \ref{colimnbhfcontloop}.
\end{proof}

\begin{corollary}
For Lie groups $G$ with open identity neighbourhood filter base 
$\mathcal{U}_1$ and loop contractible coefficient groups $V$ 
the cohomologies $\colim_{U \in \mathcal{U}_1} H_c (\mathfrak{U}_U ;V)$, 
$\colim_{U \in \mathcal{U}_1} H (\mathfrak{U}_U ;V)$, 
$\colim_{U \in \mathcal{U}_1} \check{H} (\mathfrak{U}_U;V)$ and 
$H_{sing} (G;V)$ coincide. 
\end{corollary}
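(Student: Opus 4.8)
The plan is to reduce the statement entirely to results already available for locally contractible topological groups, exploiting the fact that every Lie group is such a group. First I would record that a Lie group $G$, being modelled on a (locally convex) topological vector space, admits a chart at the identity whose domain may be chosen to be a convex --- and hence contractible --- open identity neighbourhood; thus $G$ is a locally contractible topological group, while $V$ is loop contractible by hypothesis. Consequently Theorem \ref{colimnbhfcontloop} applies and yields at once the isomorphisms $\colim_{U \in \mathcal{U}_1} H_c (\mathfrak{U}_U ;V) \cong \colim_{U \in \mathcal{U}_1} H (\mathfrak{U}_U ;V) \cong H_{sing} (G;V)$.

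It then remains to fit the \v{C}ech cohomology into this chain. For each fixed open identity neighbourhood $U \in \mathcal{U}_1$ Corollary \ref{cechisolocal} gives an isomorphism $\check{H} (\mathfrak{U}_U ;V) \cong H (\mathfrak{U}_U ;V)$, induced by the two augmentations $i^*$ and $j^*$ of the \v{C}ech--Alexander--Spanier double complex $\check{C}^* (\mathfrak{U}_U , A^*)$; since shrinking $U$ to some $U' \subseteq U$ refines $\mathfrak{U}_U$ to $\mathfrak{U}_{U'}$ and the restriction maps on local cochains are compatible with the refinement maps on \v{C}ech cochains and with both augmentations, this isomorphism is natural with respect to the filterbase $\mathcal{U}_1$. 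Passing to the colimit therefore gives $\colim_{U \in \mathcal{U}_1} \check{H} (\mathfrak{U}_U ;V) \cong \colim_{U \in \mathcal{U}_1} H (\mathfrak{U}_U ;V)$, which together with the previous display shows that all four groups coincide. (Alternatively one may simply cite the \v{C}ech part of Theorem \ref{colimnbhf}, which holds for every locally contractible topological group and makes no use of loop contractibility.)

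I do not expect any genuine obstacle: all the substance has been absorbed into Theorem \ref{colimnbhfcontloop} and, through it, into van Est's theorem via Theorem \ref{colimnbhf}. The only points deserving a word of justification are the local contractibility of Lie groups --- immediate from the existence of a convex chart at the identity --- and the compatibility of the isomorphism of Corollary \ref{cechisolocal} with refinements of coverings, which is what permits the passage to the colimit over $\mathcal{U}_1$.
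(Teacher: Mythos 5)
Your proposal is correct and follows exactly the route the paper intends (the corollary is stated without explicit proof): Lie groups are locally contractible, so Theorem \ref{colimnbhfcontloop} gives the isomorphisms with $H_{sing}(G;V)$, and the \v{C}ech part comes from Corollary \ref{cechisolocal} (equivalently Theorem \ref{colimnbhf}) passed to the colimit over $\mathcal{U}_1$. Your added remarks on naturality of the augmentation isomorphism under refinement are a welcome, if routine, justification of that colimit passage.
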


\section{$k$-Spaces and Loop Contractible Coefficients}
\label{seclcck}

In this section we work in the category of $k$-spaces and derive results 
analogous to previously obtained ones.  
To obtain exact rows in the double complex 
$\check{C}^* (\mathfrak{U}, A_{kc}^*)$ we again impose the restriction of loop
contractibility.  

\begin{example}
  Any $k$-vector space $V$ is loop contractible via $\Phi (v,t)=t \cdot v$.
\end{example}

\begin{example}
The path $k$-group 
$\fktop PG= \fktop C ( (I,\{ 0 \}),(G, \{e \}))$ of based paths of a 
$k$-group $G$ is loop contractible via 
$\Phi_{PG} (\gamma , s) (t):= \gamma (st)$.  
\end{example}

\begin{remark}
  A $k$-group $G$ is loop contractible if and only if the extension 
$\fktop \Omega G \hookrightarrow \fktop PG \twoheadrightarrow G$ is a 
semi-direct product: If $\Phi_G$ is a loop contraction of $G$, then the group 
homomorphism $s:G \rightarrow \fktop PG, s (g)(t)=\Phi_G (g,t)$ is a right 
inverse to the evaluation $\mathrm{ev}_1 :PG \rightarrow G$ at $1$; 
conversely, if such a right inverse $s$ exists, then the homotopy given by 
$\Phi_G (g,t) := \mathrm{ev}_1 \Phi_{PG} (s(g))(t)$ 
is a loop contraction of $G$.
\end{remark}

\begin{example}
For a $k$-group $G$ the geometric realisation 
$\fktop EG=| \fktop G^{*+1} |$ of the simplicial
$k$-group $G^{*+1}$ is a $k$-group. As observed in \cite{MB78} the elements 
in $\fktop EG$ can be identified with the step functions 
$f:[0,1) \rightarrow G$ which are continuous from the right and the 
multiplication in $EG$ is given by the pointwise multiplication of these step 
functions.  
The natural contraction of $EG$ is explicitely given by  
\begin{equation*}
  \Phi (f,s) (t):=
  \begin{cases}
    e & \text{if} \, t < s \\
f (t) & \text{if} \, s \leq t
  \end{cases}
\end{equation*}
(cf. the contraction in \cite[Section 11.2]{F10} in \cite[p. 214]
{MB78}). This is a loop contraction of the $k$-group $EG$. 
\end{example}

If the coefficient $k$-group $V$ is loop contractible then one can transfer
the construction of the semi-simplicial morphism 
$\widehat{\sigma} : V^{*+1} \rightarrow C (\varDelta^* , V)$ constructed in
Section \ref{seclcc} to the category of $k$-spaces. This is done by replacing
products in $\tops$ by products in $\ktop$ in the inductive definition of 
$\widehat{\sigma}$. We denote the so obtained map 
$\fktop V^{*+1} \rightarrow \fktop C (\varDelta,X)$ by 
$\fktop \widehat{\sigma}$.

\begin{proposition}
The map 
$\fktop \widehat{\sigma} :\fktop V^{*+1} \rightarrow \fktop C(\varDelta^*,V)$ 
is a morphism of semi-simplicial $k$-spaces which is a right inverse to the
vertex morphism $\fktop \lambda_V$ and all the adjoint functions 
$\fktop \sigma_n : \fktop V^{n+1} \times \varDelta^n \rightarrow V$ are 
continuous. In addition for all $v \in V$ the singular $n$-simplices 
$\widehat{\sigma}_n (v,\ldots,v)$ are the constant maps 
$\varDelta^n \rightarrow \{ v \}$.
\end{proposition}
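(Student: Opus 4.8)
The plan is to run the proof of Proposition~\ref{sigmaismorph} essentially verbatim, the only genuinely new point being to keep track of where products in $\tops$ must be read as products in $\ktop$. First I would fix the categorical framework: since $\varDelta^n$ is compact, hence a locally compact Hausdorff $k_\omega$-space, the $k$-product $\fktop V^{n+1}\times\varDelta^n$ agrees with $\fktop(V^{n+1}\times\varDelta^n)$ (cf.\ Lemma~\ref{appfinprodofko}) and $\fktop C(\varDelta^n,V)$ is the exponential object in the cartesian closed category $\ktop$. Consequently, specifying the maps $\fktop\widehat{\sigma}_n\colon\fktop V^{n+1}\to\fktop C(\varDelta^n,V)$ is the same as specifying their adjoints $\fktop\sigma_n\colon\fktop V^{n+1}\times\varDelta^n\to V$, and the inductive formula~\ref{defsigman}, together with the auxiliary map $F\colon\fktop(V\times V)\times I\to V$, $(v_0,v_1,t)\mapsto v_0+\Phi(v_1-v_0,t)$ (continuous because $\Phi$ is), makes sense in $\ktop$.

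With this in place, I would prove continuity of the $\fktop\sigma_n$ by induction on $n$, copying \cite[Lemma 3.0.69]{F10}: $\fktop\sigma_0$ is a projection, and in the inductive step continuity away from the vertex $\vec e_0$ is immediate from the $F$-formula of~\ref{defsigman}, while continuity at $\vec e_0$ follows from $\Phi_1=0$ and the compactness of $\varDelta^n$ (with $\Phi_0=\id$ used at $t_0=0$); since these are honest point-set arguments and the formation of $k$-products with the locally compact space $\varDelta^n$ behaves in $\ktop$ exactly as in $\tops$, the argument transfers without change. The remaining assertions are insensitive to the topology and transfer verbatim as well: that $\fktop\widehat{\sigma}$ commutes with the face and degeneracy operators is the computation of \cite[Lemma 3.0.70]{F10} performed on underlying functions; that $\fktop\lambda_V\circ\fktop\widehat{\sigma}=\id$ follows by induction, evaluating~\ref{defsigman} at the vertices of $\varDelta^{n+1}$ (at $\vec e_0$ one obtains $v_0$, and at $\vec e_k$ with $k\ge 1$ one has $t_0=0$, so the value is $F(v_0,\widehat{\sigma}_n(v_1,\dots,v_{n+1})(\vec e_{k-1}),0)=v_k$ by the induction hypothesis and $\Phi_0=\id$); and $\widehat{\sigma}_n(v,\dots,v)$ is constant by induction, since $F(v,v,t)=v+\Phi(0,t)=v$ as each $\Phi_t$ is a homomorphism.

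The step I expect to require the most care is the first one: making certain that the exponential adjunction used even to define $\fktop\widehat{\sigma}$, and each of the products implicit in~\ref{defsigman}, really are the $\ktop$-versions, so that the phrase ``carries over verbatim'' is actually justified. Once this categorical bookkeeping is in order, the continuity induction and the algebraic identities are routine, and no analytic difficulty beyond those already handled in Section~\ref{seclcc} arises.
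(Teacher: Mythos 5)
Your proposal is correct and follows essentially the same route as the paper, whose proof consists of the single remark that the argument of Proposition~\ref{sigmaismorph} (i.e.\ the transfer of \cite[Lemmata 3.0.69, 3.0.70]{F10} and the inductive definition~\ref{defsigman}) carries over verbatim once products are read in $\ktop$. Your additional bookkeeping — that $\varDelta^n$ is compact so the $k$-product agrees with the ordinary one, and that the exponential adjunction is taken in $\ktop$ — is exactly the implicit content of the paper's ``analogous'' claim, spelled out.
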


\begin{proof}
The proof is analogous to that of Proposition \ref{sigmaismorph}.
\end{proof}

\begin{lemma} \label{loopthenhomk}
  If $\Phi :V \times I \rightarrow V$ is a loop contraction, then 
$\widehat{\sigma} : \fktop V^{*+1} \rightarrow \fktop C (\varDelta^* , V)$ 
is a morphism of semi-simplicial abelian topological groups.
\end{lemma}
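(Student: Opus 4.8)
The plan is to follow the proof of Lemma~\ref{loopthenhom} line for line, the only thing to watch being that every construction is now performed in $\ktop$. Since the coreflector $\fktop$ does not alter underlying sets and the product $\fktop V^{n+1}$ in $\ktop$ has the same underlying set as $V^{n+1}$ (likewise $\fktop C(\varDelta^n,V)$ has underlying set $C(\varDelta^n,V)$), the map $\fktop\widehat{\sigma}_n$ coincides, as a map of underlying sets, with the map $\widehat{\sigma}_n$ of Section~\ref{seclcc}, because the defining recursion \eqref{defsigman} is the same pointwise prescription in both settings. Continuity of $\fktop\widehat\sigma$ as a morphism of semi-simplicial $k$-spaces, together with the face and degeneracy compatibilities, is already recorded in the preceding Proposition, so the present lemma reduces to the purely algebraic claim that each $\fktop\widehat{\sigma}_n:\fktop V^{n+1}\to\fktop C(\varDelta^n,V)$ is a group homomorphism — and as a statement about underlying sets this is exactly Lemma~\ref{loopthenhom}.

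For a self-contained argument I would simply re-run the induction. The maps $\widehat{\sigma}_0(v)(t_0)=v$ are homomorphisms by definition. Because $\Phi$ is a loop contraction, each $\Phi_t$ is a homomorphism, hence the auxiliary map $F(v_0,v_1,t)=v_0+\Phi(v_1-v_0,t)$ is additive in the pair $(v_0,v_1)$. Assuming $\widehat{\sigma}_n$ additive and feeding $F$ and the inductive hypothesis into \eqref{defsigman}, the three-line computation from the proof of Lemma~\ref{loopthenhom} yields $\fktop\widehat{\sigma}_{n+1}(\vec v+\vec w)=\fktop\widehat{\sigma}_{n+1}(\vec v)+\fktop\widehat{\sigma}_{n+1}(\vec w)$, where addition in $\fktop C(\varDelta^{n+1},V)$ is taken pointwise. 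This completes the induction, and combined with the preceding Proposition it shows that $\fktop\widehat{\sigma}$ is a morphism of semi-simplicial abelian $k$-groups.

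The one point I would be careful about — and the main, if modest, obstacle — is to confirm that $\fktop C(\varDelta^n,V)$ is genuinely an abelian group object in $\ktop$ under pointwise operations: one needs the addition map $\fktop C(\varDelta^n,V)\times_{\ktop}\fktop C(\varDelta^n,V)\to\fktop C(\varDelta^n,V)$ to be continuous in the $k$-ified topologies. This follows from the fact that $\fktop$ preserves the relevant finite products (the products in play being $k$-spaces, as in the appendix) together with the continuity of addition on the $k$-group $V$; with this in hand, the pointwise additivity established above is also a morphism in $\ktop$, and the proof goes through exactly as in the topological case.
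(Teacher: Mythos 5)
Your proposal is correct and takes essentially the same route as the paper, whose proof of this lemma is just the remark that the argument of Lemma \ref{loopthenhom} carries over verbatim: the same induction on $n$, with $\widehat{\sigma}_0$ a homomorphism by definition and the inductive step using additivity of $F$ coming from the loop contraction. Your extra care about underlying sets being unchanged by $\fktop$ and about pointwise addition remaining continuous on $\fktop C(\varDelta^n,V)$ merely fleshes out that one-line reference and introduces no divergence.
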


\begin{proof}
The proof is analogous to that of Lemma \ref{loopthenhomk}.
\end{proof}

From now on we assume the coefficient $k$-group $V$ to be loop contractible 
with loop contraction $\Phi : V \times I \rightarrow V$, which gives rise to a
morphism 
$\fktop \widehat{\sigma} :\fktop V^{*+1} \rightarrow \fktop C(\varDelta^*,V)$ 
of semi-simplicial abelian $k$-groups that is a right inverse
to $\fktop \lambda_V$. The above observations enable us to replace 
the linear combination $\sum_i \varphi_{q,i} f_{i i_0 \ldots i_p}$ of functions 
in Proposition \ref{propchechcontrhomcontk} by the values of the singular 
$n$-simplices $\widehat{\sigma}_n (f_{\alpha_0 i_0 \ldots i_p},\ldots
,f_{\alpha_n i_0 \ldots i_p})$ at 
$( \varphi_{q, \alpha_0} , \ldots \varphi_{q, \alpha_n})$ for certain indices 
$\alpha_0,\ldots,\alpha_n \in I$. For this purpose we first observe:

\begin{lemma} \label{hpfcontk}
For any partition of unity $\{ \varphi_{q,i} \mid i \in I\}$ 
subordinate to the open cover $\{ \fktop U_i^{q+1} \mid i \in I \}$ of 
$\fktop \mathfrak{U}[q]$ and $p$-cochain 
$f \in \check{C}^p (\mathfrak{U} , A_{kc}^q)$ 
the maps 
  \begin{equation} \label{predefhpk}
  \fktop U_{i_0 \ldots i_{p-1}}^{q+1} \rightarrow V , \quad 
x \mapsto 
\sigma_n \circ ( f_{\alpha_0 i_0 \ldots i_{p-1}} , \ldots , f_{\alpha_n i_0
  \ldots i_{p-1}} , \varphi_{q, \alpha_0} , \ldots , \varphi_{q, \alpha_n} ) 
(\vec{x})
\end{equation}
-- where for each $x \in X$ the indices $\alpha_0 < \cdots < \alpha_n$ are
those for which $\varphi_{q, \alpha_i}^{-1} (\mathbb{R} \setminus \{ 0 \})$ 
contains $x$ -- are continuous.
\end{lemma}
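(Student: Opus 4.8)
The plan is to follow the proof of Lemma~\ref{hpfcont} line by line, replacing products in $\tops$ by products in $\ktop$ and invoking the $k$-space counterparts of the ingredients used there. Since continuity of a map into a space is a local property, it suffices to show that every point $\vec{x}$ of the domain $\fktop U_{i_0 \ldots i_{p-1}}^{q+1}$ has an open neighbourhood (which we may take inside $\fktop\mathfrak{U}[q]$) on which the map defined in~\ref{predefhpk} is continuous.

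First I would invoke local finiteness: since $\{\varphi_{q,i} \mid i \in I\}$ is a partition of unity subordinate to $\{\fktop U_i^{q+1} \mid i \in I\}$, the point $\vec{x}$ has an open neighbourhood $W$ in $\fktop\mathfrak{U}[q]$ meeting only finitely many supports $\supp\varphi_{q,i}$; let $\alpha_0' < \cdots < \alpha_k'$ enumerate the corresponding indices. Because the coreflector $\fktop$ preserves open embeddings (Lemma~\ref{fktoppresopemb}), the set $\fktop U_{\alpha_0' \ldots \alpha_k' i_0 \ldots i_{p-1}}^{q+1}$ is an open subspace of $\fktop\mathfrak{U}[q]$; on its intersection with $W$ the tuple $x \mapsto ( f_{\alpha_0' i_0 \ldots i_{p-1}}(x), \ldots, f_{\alpha_k' i_0 \ldots i_{p-1}}(x) )$ is a continuous map into $\fktop V^{k+1}$ (a finite product of continuous maps in $\ktop$), while $x \mapsto ( \varphi_{q,\alpha_0'}(x), \ldots, \varphi_{q,\alpha_k'}(x) )$ is continuous into $\varDelta^k$. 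Post-composing with the map $\fktop\sigma_k : \fktop V^{k+1} \times \varDelta^k \rightarrow V$, which is continuous by the preceding proposition ($\varDelta^k$ being locally compact Hausdorff, so that the $k$-space product on the source agrees with the ordinary one), shows that $x \mapsto \sigma_k( f_{\alpha_0' i_0 \ldots i_{p-1}}(x), \ldots, f_{\alpha_k' i_0 \ldots i_{p-1}}(x), \varphi_{q,\alpha_0'}(x), \ldots, \varphi_{q,\alpha_k'}(x) )$ is continuous on this neighbourhood of $\vec{x}$.

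It remains to identify this continuous map with the map~\ref{predefhpk}, which is where the semi-simplicial identities enter. For a fixed $x$, the indices $\alpha_0 < \cdots < \alpha_n$ for which $\varphi_{q,\alpha_j}^{-1}(\mathbb{R}\setminus\{0\})$ contains $x$ form a sub-list of $\alpha_0' < \cdots < \alpha_k'$, and the barycentric coordinate of every omitted index vanishes at $x$, so $( \varphi_{q,\alpha_0'}(x), \ldots, \varphi_{q,\alpha_k'}(x) )$ lies in an iterated face of $\varDelta^k$. Since $\fktop\widehat{\sigma}$ is a morphism of semi-simplicial $k$-spaces (Lemma~\ref{loopthenhomk}), its adjoint $\fktop\sigma$ commutes with the face maps, yielding exactly the equality $\sigma_k( \ldots, \varphi_{q,\alpha_0'}(x), \ldots, \varphi_{q,\alpha_k'}(x) ) = \sigma_n( \ldots, \varphi_{q,\alpha_0}(x), \ldots, \varphi_{q,\alpha_n}(x) )$ used in the topological case. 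Hence~\ref{predefhpk} coincides near $\vec{x}$ with the continuous map of the previous paragraph, so it is continuous there, and since $\vec{x}$ was arbitrary, continuous everywhere. The only genuine point to verify beyond what was already done for Lemma~\ref{hpfcont} is the $k$-space bookkeeping (openness preserved by $\fktop$, continuity of finite products of maps in $\ktop$, and continuity of $\fktop\sigma_k$ on the ordinary product $\fktop V^{k+1}\times\varDelta^k$); all of these are furnished by the appendix lemmas and the preceding proposition, so no new obstacle arises.
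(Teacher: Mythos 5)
Your proof is correct and takes exactly the paper's approach: the paper's own proof of this lemma is literally ``analogous to that of Lemma \ref{hpfcont}'', and you carry out that analogy, supplying the $k$-space bookkeeping (openness preserved by $\fktop$ via Lemma \ref{fktoppresopemb}, continuity into the $\ktop$-product, continuity of $\fktop\sigma_k$, and the face-map identification coming from $\fktop\widehat{\sigma}$ being a semi-simplicial morphism). If anything you spell out the pointwise comparison with the fixed finite index set more carefully than the model proof does; the one residual imprecision (arranging the finite index set so that the comparison domain is simultaneously a neighbourhood of $\vec{x}$ and dominates all indices whose $\varphi_{q,i}$ are non-vanishing near $\vec{x}$) is inherited verbatim from the paper's own argument for Lemma \ref{hpfcont} and is easily repaired by shrinking $W$ off the finitely many supports not containing $\vec{x}$.
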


\begin{proof}
The proof is analogous to that of Lemma \ref{hpfcont}.
\end{proof}

\begin{lemma} \label{hpfcochk}
For any partition of unity $\{ \varphi_{q,i} \mid i \in I\}$ 
subordinate to the open cover $\{ \fktop U_i^{q+1} \mid i \in I \}$ of 
$\fktop \mathfrak{U}[q]$ and $p$-cochain 
$f \in \check{C}^p (\mathfrak{U} , A_{kc}^q)$ the maps defined in 
\ref{predefhp} form a cochain in $\check{C}^{p-1} (\mathfrak{U};A_{kc}^q)$.   
\end{lemma}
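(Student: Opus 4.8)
The plan is to follow the proof of Lemma \ref{hpfcoch} line by line, substituting the $k$-space constructions for their topological counterparts. Two conditions must be checked in order to conclude that the assignment in \ref{predefhpk} lands in $\check{C}^{p-1}(\mathfrak{U};A_{kc}^q)$: that each component is continuous on the relevant $k$-space, and that the resulting family is alternating in the indices $i_0,\ldots,i_{p-1}$. The first is exactly the content of Lemma \ref{hpfcontk}, which gives that each of the maps in \ref{predefhpk} is a continuous map $\fktop U_{i_0 \ldots i_{p-1}}^{q+1} \rightarrow V$, hence an element of $A_{kc}^q(U_{i_0\ldots i_{p-1}};V)$; so only the alternating property remains to be treated.

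For that, I would fix a permutation $s$ of $\{i_0,\ldots,i_{p-1}\}$ and a point $\vec{x} \in \fktop\mathfrak{U}[q]$, let $\alpha_0 < \cdots < \alpha_n$ be the indices with $\varphi_{q,\alpha_j}(\vec{x}) \neq 0$, and compare $\sigma_n \circ (f_{\alpha_0 i_0 \ldots i_{p-1}},\ldots,f_{\alpha_n i_0 \ldots i_{p-1}}, \varphi_{q,\alpha_0},\ldots,\varphi_{q,\alpha_n})(\vec{x})$ with the corresponding expression for the permuted indices. Since $f \in \check{C}^p(\mathfrak{U};A_{kc}^q) \subseteq \check{C}^p(\mathfrak{U};A^q)$, permuting the subscripts $i_0,\ldots,i_{p-1}$ replaces each $V$-argument $f_{\alpha_j i_0 \ldots i_{p-1}}$ by $\mathrm{sign}(s)\, f_{\alpha_j i_{s(0)}\ldots i_{s(p-1)}}$; and because the adjoint maps $\fktop\sigma_n$ are additive in every $V$-slot --- this is precisely the $k$-group homomorphism property of $\fktop\widehat\sigma$ established in Lemma \ref{loopthenhomk} --- the common factor $\mathrm{sign}(s) \in \{\pm 1\}$ pulls out of all $n+1$ arguments simultaneously. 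This yields the required antisymmetry relation, so the maps in \ref{predefhpk} define a cochain in $\check{C}^{p-1}(\mathfrak{U};A_{kc}^q)$.

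The one place where the $k$-space setting genuinely intervenes is that the additivity invoked in the previous paragraph must be the one coming from the $k$-group structure on $V$, i.e. from Lemma \ref{loopthenhomk} rather than Lemma \ref{loopthenhom}; since $\fktop\widehat\sigma$ is obtained from $\widehat\sigma$ by replacing products in $\tops$ by products in $\ktop$ throughout the inductive definition, its homomorphism property transfers verbatim and no new estimate is needed. I do not expect a real obstacle here: the substance of this section is concentrated in Lemmata \ref{hpfcontk} and \ref{loopthenhomk}, and the present statement merely assembles them exactly as Lemma \ref{hpfcoch} assembled Lemmata \ref{hpfcont} and \ref{loopthenhom}.
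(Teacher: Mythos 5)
Your proof is correct and follows the paper's own route: the paper simply declares the proof analogous to Lemma \ref{hpfcoch}, which likewise gets continuity from the preceding lemma (there \ref{hpfcont}, here \ref{hpfcontk}) and deduces the alternating property from the homomorphism property of $\widehat{\sigma}$ (there Lemma \ref{loopthenhom}, here its $k$-space analogue). Your explicit remark that the sign pulls out of all $V$-arguments by additivity of $\sigma_n$ is exactly the step the paper leaves implicit, so there is nothing to add.
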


\begin{proof}
The proof is analogous to that of Lemma \ref{hpfcoch}.
\end{proof}

\begin{proposition} \label{propchechcontrhomloopk}
For any partition of unity $\{ \varphi_{q,i} \mid i \in I\}$ subordinate to 
the open cover $\{\fktop U_i^{q+1} \mid i \in I \}$ of 
$\fktop \mathfrak{U}[q]$ the homomorphisms 
\begin{eqnarray}\label{eqchechcontrhomloopk}
  h^{p,q} : \check{C}^p ( \mathfrak{U}, A^q) & \rightarrow &  
\check{C}^{p-1} ( \mathfrak{U}, A^q) \\
(h^{p,q} f)_{i_0 \ldots i_{p-1}} & = &  
\sigma_n \circ ( f_{\alpha_0 i_0 \ldots i_{p-1}} , \ldots , f_{\alpha_n i_0 \ldots i_{p-1}} , 
\varphi_{q, \alpha_0} , \ldots , \varphi_{q, \alpha_n} ) \, , \nonumber
\end{eqnarray}
-- where for each $x \in X$ the indices $\alpha_0 < \cdots < \alpha_n$ are
those satisfying $\varphi_{q, \alpha_i} (\vec{x}) \neq 0$ --
form a contraction of the augmented row 
$A^q (\mathfrak{U};V) \hookrightarrow \check{C}^* ( \mathfrak{U}, A^q)$ which
restricts to a row contraction of the augmented sub-complex 
$A_{kc}^q (\mathfrak{U};V) \hookrightarrow \check{C}^* (\mathfrak{U},A_{kc}^q)$.
\end{proposition}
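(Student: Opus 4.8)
The plan is to follow the proof of Proposition~\ref{propchechcontrhomloop} line by line, replacing each classical ingredient by its $k$-space counterpart. First I would use Lemma~\ref{loopthenhomk}, which asserts that $\fktop\widehat\sigma$ is a morphism of semi-simplicial abelian $k$-groups, to conclude that the maps $h^{p,q}$ defined by formula~\eqref{eqchechcontrhomloopk} are homomorphisms of abelian groups: each component of $h^{p,q}(f)$ is obtained by post-composing with one of the additive maps $\fktop\sigma_n$. Next I would invoke Lemmas~\ref{hpfcontk} and \ref{hpfcochk} to see that $h^{p,q}$ maps the subgroup $\check C^p(\mathfrak U,A_{kc}^q)$ of $k$-continuous cochains into $\check C^{p-1}(\mathfrak U,A_{kc}^q)$; those lemmas already take care of the extension-by-zero across the open embeddings $\fktop U_{i i_0\ldots i_{p-1}}^{q+1}\hookrightarrow\fktop U_{i_0\ldots i_{p-1}}^{q+1}$ (using that the coreflector $\fktop$ preserves open embeddings) and of the local-finiteness bookkeeping needed to patch the locally defined formulas together into a global $k$-continuous cochain.

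It then remains to verify the contraction identity $\delta h^{p,q}(f)+h^{p+1,q}(\delta f)=f$ on the augmented row, and this is the same formal computation as in the proof of Proposition~\ref{propchechcontrhomloop}. Fixing a point $\vec x\in\mathfrak U[q]$ and letting $\alpha_0<\cdots<\alpha_n$ be the indices with $\varphi_{q,\alpha_i}(\vec x)\neq 0$, one expands $(h^{p+1,q}\delta f)_{i_0\ldots i_p}(\vec x)$ using the definition of $\delta f$ together with the simplicial identities for $\fktop\widehat\sigma$; the fact that $\fktop\widehat\sigma$ is a right inverse to the vertex morphism $\fktop\lambda_V$ turns the diagonal term into $f_{i_0\ldots i_p}(\vec x)$ --- the constancy of $\widehat\sigma_n(v,\ldots,v)$ being exactly what makes this work --- while the remaining terms reassemble into $(\delta h^{p,q}f)_{i_0\ldots i_p}(\vec x)$. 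This yields a row contraction of $A^q(\mathfrak U;V)\hookrightarrow\check C^*(\mathfrak U,A^q)$ which, by the previous paragraph, restricts to a row contraction of $A_{kc}^q(\mathfrak U;V)\hookrightarrow\check C^*(\mathfrak U,A_{kc}^q)$.

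The only genuinely $k$-theoretic point is the one already disposed of in Lemma~\ref{hpfcontk}: that the pointwise prescription $x\mapsto\sigma_n(f_{\alpha_0 i_0\ldots i_{p-1}}(x),\ldots,\varphi_{q,\alpha_n}(x))$ defines a $k$-continuous map, which relies on the continuity of the adjoints $\fktop\sigma_n:\fktop V^{n+1}\times\varDelta^n\to V$ and on finite products and open subspaces being formed correctly inside $\ktop$. Since continuity, additivity, and semi-simplicial compatibility have all been re-established in the $k$-setting by the preceding results, no new difficulty arises, so I expect no essential obstacle here and the argument becomes a verbatim transcription of the proof of Proposition~\ref{propchechcontrhomloop}.
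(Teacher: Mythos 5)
Your proposal is correct and follows exactly the route the paper intends: the paper's own proof is just the remark that the argument is analogous to the continuous case (Proposition \ref{propchechcontrhomloop}), and your write-up spells out that transcription, citing the right $k$-space ingredients (Lemma \ref{loopthenhomk} for additivity, Lemmas \ref{hpfcontk} and \ref{hpfcochk} for $k$-continuity and the cochain property) and the same pointwise computation of $\delta h^{p,q}(f)+h^{p+1,q}(\delta f)=f$ via the constancy of $\widehat{\sigma}_n(v,\ldots,v)$. No gaps; this is the same argument, just written out in full.
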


\begin{proof}
The proof is analogous to that of Proposition \ref{propchechcontrhomloopk}.
\end{proof}

\begin{corollary} \label{uqnumicindisoloopk}
For any open covering $\mathfrak{U}=\{ U_i \mid i \in I \}$ of a $k$-space $X$ 
for which the coverings $\{ \fktop U_i^{q+1} \mid i \in I \}$ of the 
$k$-spaces $\fktop \mathfrak{U}[q]$ are numerable the homomorphism 
$i_{kc}^*:A_{kc}^* (\mathfrak{U};V)\rightarrow 
\tot\check{C}^* (\mathfrak{U},A_{kc}^*)$ 
induces an isomorphism in cohomology.
\end{corollary}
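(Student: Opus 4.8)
The plan is to follow the proof of Corollary~\ref{uqnumicindisoloop} verbatim, working in $\ktop$ in place of $\tops$; the only ingredient special to this section is the row contraction built from the section $\fktop\widehat{\sigma}$ of the vertex morphism, and everything else is the same formal argument already used (implicitly) in Corollaries~\ref{rowsalwexact}, \ref{uqnumicindiso} and \ref{uqnumicindisoloop}.

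First I would extract what the hypothesis provides. Since each covering $\{\fktop U_i^{q+1}\mid i\in I\}$ of the $k$-space $\fktop\mathfrak{U}[q]$ is numerable, there is, for every $q$, a (locally finite, real-valued) partition of unity $\{\varphi_{q,i}\mid i\in I\}$ subordinate to it. As $V$ is loop contractible by the standing assumption of this section, Proposition~\ref{propchechcontrhomloopk} then yields homomorphisms $h^{p,q}:\check{C}^p(\mathfrak{U},A_{kc}^q)\to\check{C}^{p-1}(\mathfrak{U},A_{kc}^q)$ that form a row contraction of the augmented row $A_{kc}^q(\mathfrak{U};V)\hookrightarrow\check{C}^*(\mathfrak{U},A_{kc}^q)$; in particular each such augmented row is exact. (Here Lemmata~\ref{hpfcontk} and \ref{hpfcochk} are what guarantee that $h^{p,q}$ really takes values in continuous $k$-cochains, so that the contraction lives in the sub double complex $\check{C}^*(\mathfrak{U},A_{kc}^*)$.)

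Next I would feed this into the standard double-complex argument. Adjoining the column $A_{kc}^*(\mathfrak{U};V)$ to $\check{C}^*(\mathfrak{U},A_{kc}^*)$ on the left, together with the augmentation map, produces an augmented double complex concentrated in $p\ge -1$, $q\ge 0$, so there are no convergence subtleties; exactness of all its rows makes its total complex acyclic, and reading off the resulting long exact sequence identifies $i_{kc}^*$ as a quasi-isomorphism. Equivalently, filtering $\tot\check{C}^*(\mathfrak{U},A_{kc}^*)$ by columns, row exactness forces the associated spectral sequence to collapse onto the augmentation column, whose $E_2$-term is $H^*(A_{kc}^*(\mathfrak{U};V))$. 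Either way one concludes that $i_{kc}^*:A_{kc}^*(\mathfrak{U};V)\to\tot\check{C}^*(\mathfrak{U},A_{kc}^*)$ induces an isomorphism in cohomology.

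There is no real obstacle; the only point that requires (and has already received) attention is that $\check{C}^*(\mathfrak{U},A_{kc}^*)$ is a genuine double complex of abelian groups, so that the formal argument applies unchanged. This is set up in Section~\ref{secclck}: by Lemma~\ref{fktoppresopemb} the $\fktop U_i^{q+1}$ are open $k$-subspaces of $\fktop X^{q+1}$, so the presheaves $A_{kc}^q(-;V)$, the \v{C}ech coboundary $\delta$ and the vertical differentials are defined exactly as in the topological case, and Lemma~\ref{jindisok} already records the analogous statement for the columns. Hence the proof is indeed the same as that of Corollary~\ref{uqnumicindisoloop}.
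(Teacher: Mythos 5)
Your argument is correct and follows exactly the route the paper intends: the numerability hypothesis supplies the partitions of unity needed for Proposition \ref{propchechcontrhomloopk}, whose row contraction (landing in $\check{C}^*(\mathfrak{U},A_{kc}^*)$ by Lemmata \ref{hpfcontk} and \ref{hpfcochk}) makes the augmented rows exact, and the standard column-filtration argument then shows $i_{kc}^*$ is a quasi-isomorphism, just as in Corollaries \ref{rowsalwexact}, \ref{uqnumicindiso} and \ref{uqnumicindisoloop}. Nothing is missing.
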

Recalling the contractibility condition imposed on the coefficient group $V$ 
we proceed to show:

\begin{theorem}\label{isocontuqnumloopk}
For any loop contractible abelian $k$-group $V$ and open covering 
$\mathfrak{U}$ of a $k$-space $X$ for which each covering 
$\{ \fktop U_i^{q+1} \mid i \in I \}$ of $\fktop \mathfrak{U}[q]$ is numerable 
the inclusion $A_{kc}^* (\mathfrak{U};V) \hookrightarrow A^* (\mathfrak{U};V)$ 
induces an isomorphism in cohomology and the cohomologies 
$\check{H} (\mathfrak{U};V)$, $H_{kc} (\mathfrak{U};V)$ and 
$H (\mathfrak{U};V)$ are isomorphic.
\end{theorem}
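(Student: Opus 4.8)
The plan is to reuse, essentially verbatim, the double-complex argument from the proof of Theorem \ref{isocontuqnum}, now applied to the sub double complex $\check{C}^*(\mathfrak{U}, A_{kc}^*) \subseteq \check{C}^*(\mathfrak{U}, A^*)$. First I would record that the inclusions $A_{kc}^*(\mathfrak{U};V) \hookrightarrow A^*(\mathfrak{U};V)$ and $\tot \check{C}^*(\mathfrak{U};A_{kc}^*) \hookrightarrow \tot \check{C}^*(\mathfrak{U};A^*)$ intertwine the augmentations $i_{kc}^*$, $j_{kc}^*$ with $i^*$, $j^*$, so that one obtains a commutative diagram whose horizontal arrows are the augmentation maps, whose vertical arrows are the inclusions, and whose \v{C}ech column is the identity.

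Next I would feed in the three inputs that make the diagram collapse. The maps $j^*$ and $j_{kc}^*$ induce isomorphisms in cohomology by Lemma \ref{columnsexact} and Lemma \ref{jindisok}, since the augmented columns are exact (the standard cochain complex of any $k$-space is exact). The map $i^*$ always induces an isomorphism by Corollary \ref{rowsalwexact}. Finally — and this is the only place where loop contractibility and the $k$-space hypothesis genuinely enter — the map $i_{kc}^*$ induces an isomorphism by Corollary \ref{uqnumicindisoloopk}, whose proof rests on the twisted row contractions built from the semi-simplicial morphism $\fktop \widehat{\sigma}$ (Proposition \ref{propchechcontrhomloopk}) together with the hypothesis that each covering $\{\fktop U_i^{q+1} \mid i \in I\}$ of $\fktop \mathfrak{U}[q]$ is numerable. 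Passing to cohomology, the commutative square with three sides isomorphisms forces the fourth side, the map $H_{kc}(\mathfrak{U};V) \rightarrow H(\mathfrak{U};V)$ induced by inclusion, to be an isomorphism as well; chasing the diagram identifies both groups with $\check{H}(\mathfrak{U};V)$ via $H(j)^{-1}H(i)$.

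The only point that is not a formal diagram chase is the construction and continuity of the homotopy operator $h^{p,q}$ in the $k$-setting — verifying that $\fktop \widehat{\sigma}$ is a morphism of semi-simplicial abelian $k$-groups and that local finiteness of the partition of unity lets the locally defined continuous pieces be patched into a global continuous $\mathfrak{U}$-local cochain — but this has already been carried out in Proposition \ref{propchechcontrhomloopk} and Corollary \ref{uqnumicindisoloopk}, so the present proof is purely formal and identical in form to that of Theorem \ref{isocontuqnum}.
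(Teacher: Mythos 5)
Your proposal is correct and follows exactly the paper's intended route: the paper itself proves this theorem by declaring it analogous to Theorem \ref{isocontuqnum}, i.e.\ the same commutative diagram of augmentations with $j^*$, $j_{kc}^*$ handled by column exactness, $i^*$ by Corollary \ref{rowsalwexact}, and $i_{kc}^*$ by Corollary \ref{uqnumicindisoloopk} resting on Proposition \ref{propchechcontrhomloopk}. You correctly isolate the only nonformal ingredient (the $k$-space version of the loop-contraction row contraction), which the paper likewise delegates to those earlier results.
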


\begin{proof}
  The proof is analogous to that of Theorem \ref{isocontuqnum}.
\end{proof}

In this case the \v{C}ech Cohomology $\check{H} (\mathfrak{U};V)$ for the
covering $\mathfrak{U}$ of X can be either computed from the complex 
$A_{kc}^* (\mathfrak{U};V)$ of continuous $\mathfrak{U}$-local cochains or from 
from the complex $A^* (\mathfrak{U};V)$ of $\mathfrak{U}$-local cochains.

\begin{corollary} \label{covbycozloopk}
For any loop contractible abelian $k$-group $V$, generalised partition of 
unity $\{ \varphi_i \mid i \in I\}$ on $X$ and 
$\mathfrak{U}:=\{ \varphi_i^{-1} (R \setminus \{ 0\}) \mid i\in I\}$ 
the inclusion $A_{kc}^* (\mathfrak{U};V) \hookrightarrow A^* (\mathfrak{U};V)$ 
induces an isomorphism in cohomology and the cohomologies 
$\check{H} (\mathfrak{U};V)$, $H_{kc} (\mathfrak{U};V)$ and 
$H (\mathfrak{U};V)$ are isomorphic.
\end{corollary}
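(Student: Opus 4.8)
The plan is to derive this corollary from Theorem \ref{isocontuqnumloopk} in precisely the way Corollary \ref{covbycoz} is derived from Theorem \ref{isocontuqnum}. By Theorem \ref{isocontuqnumloopk} it suffices to show that, for every $q \in \mathbb{N}$, the open covering $\{ \fktop U_i^{q+1} \mid i \in I \}$ of the $k$-space $\fktop \mathfrak{U}[q]$ is numerable.

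First I would reproduce, on each product $\mathfrak{U}[q] \subseteq X^{q+1}$, the functions used in the proof of Corollary \ref{covbycoz}: the non-negative continuous functions $\varphi_{q,i}(\vec{x}) = |\varphi_i(x_0) \cdots \varphi_i(x_q)|$ together with their sum $\varphi_q = \sum_i \varphi_{q,i}$, continuity of the latter being guaranteed by Lemmata \ref{sumcontthensubsumcont} and \ref{sumvaprhicontthensumabsvalcont}. Since $\mathfrak{U}[q] = \bigcup_i U_i^{q+1}$ and $\sum_i \varphi_i = 1$, the function $\varphi_q$ is strictly positive on $\mathfrak{U}[q]$, so $\{ \varphi_q^{-1} \varphi_{q,i} \mid i \in I \}$ is a generalised partition of unity on $\mathfrak{U}[q]$ whose cozero sets are exactly the sets $U_i^{q+1}$; hence $\{ U_i^{q+1} \mid i \in I \}$ is a numerable covering of $\mathfrak{U}[q]$.

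The only point needing attention in the $k$-space setting is the passage from $\mathfrak{U}[q]$ to $\fktop \mathfrak{U}[q]$. Because the canonical map $\fktop \mathfrak{U}[q] \to \mathfrak{U}[q]$ is continuous, the functions $\varphi_q^{-1} \varphi_{q,i}$ remain continuous on $\fktop \mathfrak{U}[q]$; their zero sets are unchanged, and since $\fktop$ preserves open embeddings (Lemma \ref{fktoppresopemb}) their cozero sets are precisely the open $k$-subspaces $\fktop U_i^{q+1}$. Thus $\{ \fktop U_i^{q+1} \mid i \in I \}$ is numerable, Theorem \ref{isocontuqnumloopk} applies, and we obtain the asserted isomorphism induced by $A_{kc}^*(\mathfrak{U};V) \hookrightarrow A^*(\mathfrak{U};V)$ as well as $\check{H}(\mathfrak{U};V) \cong H_{kc}(\mathfrak{U};V) \cong H(\mathfrak{U};V)$.

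I do not expect a genuine obstacle here: the substantive work -- the loop-contractible row contraction for continuous $k$-cochains -- is already contained in Proposition \ref{propchechcontrhomloopk} and Theorem \ref{isocontuqnumloopk}, and what remains is the numerability bookkeeping, whose only subtlety is checking that $k$-ification preserves continuity of the partition functions and leaves their cozero sets intact; both follow at once from the continuity of $\fktop \mathfrak{U}[q] \to \mathfrak{U}[q]$ and from Lemma \ref{fktoppresopemb}.
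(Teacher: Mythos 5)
Your proposal is correct and follows exactly the route the paper intends: the paper's proof of this corollary is simply "analogous to that of Corollary \ref{covbycoz}", i.e. build the generalised partition of unity $\{\varphi_q^{-1}\varphi_{q,i}\}$ on $\mathfrak{U}[q]$ via Lemmata \ref{sumcontthensubsumcont} and \ref{sumvaprhicontthensumabsvalcont} to get numerability of $\{U_i^{q+1}\}$ and then invoke the loop-contractible analogue of Theorem \ref{isocontuqnum} (here Theorem \ref{isocontuqnumloopk}). Your additional remark that $k$-ification preserves continuity of the partition functions and, by Lemma \ref{fktoppresopemb}, identifies their cozero sets with the open $k$-subspaces $\fktop U_i^{q+1}$ is exactly the bookkeeping the paper leaves implicit.
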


\begin{proof}
  The proof is analogous to that of Corollary \ref{covbycoz}.
\end{proof}

Recall that for all metric, locally compact or Hausdorff $k_\omega$-spaces $X$ 
the products $X^{p+1}$ are already compactly Hausdorff generated 
(cf. Lemma \ref{appfinprodofko}); for these kinds of spaces the diagonal 
neighbourhoods of the form $\mathfrak{U}[q]$ are cofinal in all diagonal 
neighbourhoods.  

\begin{corollary}
For metric, locally compact or Hausdorff $k_\omega$-spaces $X$ and loop 
contractible abelian $k$-groups $V$ the \v{C}ech cohomology 
$\check{H} (X;V)$ w.r.t. numerable coverings and the continuous 
Alexander-Spanier cohomology $H_{AS,kc} (X;V)$ w.r.t. numerable coverings are 
isomorphic. 
\end{corollary}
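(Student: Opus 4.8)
The plan is to realise both $H_{AS,kc}(X;V)$ and $\check H(X;V)$ with respect to numerable coverings as colimits over one and the same cofinal family of numerable coverings — the coverings by the cozero sets of generalised partitions of unity — and then to apply Corollary~\ref{covbycozloopk} coveringwise.

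First I would record that the standing hypothesis is precisely what is needed to make the two colimit cohomologies meaningful. If $X$ is metric, locally compact or a Hausdorff $k_\omega$-space, then by Lemma~\ref{appfinprodofko} every finite power $X^{q+1}$ is already compactly Hausdorff generated, so each diagonal neighbourhood $\mathfrak U[q]$, being an open subspace of $X^{q+1}$, is a $k$-space; consequently $\fktop\mathfrak U[q]=\mathfrak U[q]$ and $\fktop U_i^{q+1}=U_i^{q+1}$, the diagonal neighbourhoods of the form $\mathfrak U[q]$ are cofinal in all diagonal neighbourhoods, and $H_{AS,kc}(X;V)$ and $\check H(X;V)$ with respect to numerable coverings are, by definition, the colimits $\colim_{\mathfrak U}H_{kc}(\mathfrak U;V)$ and $\colim_{\mathfrak U}\check H(\mathfrak U;V)$ over all numerable open coverings $\mathfrak U$ of $X$.

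Next I would verify that the numerable coverings of the form $\mathfrak U_\varphi:=\{\varphi_i^{-1}(\mathbb R\setminus\{0\})\mid i\in I\}$, for $\{\varphi_i\mid i\in I\}$ a generalised partition of unity on $X$, are cofinal in the directed set of all numerable coverings: given a numerable covering $\mathfrak V$, choose a partition of unity $\{\psi_j\}$ subordinate to it; then $\mathfrak U_\psi$ refines $\mathfrak V$ (each $\psi_j^{-1}(\mathbb R\setminus\{0\})$ lies in the member of $\mathfrak V$ containing $\supp\psi_j$), and $\mathfrak U_\psi$ is again of the stated form. For every such covering $\mathfrak U_\varphi$ Corollary~\ref{covbycozloopk} furnishes isomorphisms $H_{kc}(\mathfrak U_\varphi;V)\cong H(\mathfrak U_\varphi;V)\cong\check H(\mathfrak U_\varphi;V)$, and these are natural in $\mathfrak U_\varphi$ since they are induced by the inclusion $A_{kc}^*\hookrightarrow A^*$ and by the row- and column-augmentations of the double complex $\check C^*(\mathfrak U_\varphi,A_{kc}^*)$.

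Passing to the colimit over this cofinal subfamily and combining with the first step then yields
\[
H_{AS,kc}(X;V)=\colim_{\mathfrak U_\varphi}H_{kc}(\mathfrak U_\varphi;V)\;\cong\;\colim_{\mathfrak U_\varphi}\check H(\mathfrak U_\varphi;V)=\check H(X;V),
\]
which is the assertion. I do not anticipate a real obstacle: the analytic content — the construction of the row contractions for loop contractible coefficients in Proposition~\ref{propchechcontrhomloopk} and the coveringwise comparison in Corollary~\ref{covbycozloopk} — has already been carried out, so what remains is the bookkeeping that $H_{AS,kc}(X;V)$ is defined under the stated hypothesis (the only place where being metric, locally compact or $k_\omega$ enters, via Lemma~\ref{appfinprodofko}) and the verification that the colimit over the cofinal family of cozero coverings coincides with the colimit over all numerable coverings, exactly as in the proof of the corresponding statement for topological spaces obtained from Corollary~\ref{covbycozloop}.
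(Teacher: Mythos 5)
Your argument is correct and follows the paper's (implicit) route: the corollary is obtained by passing to the colimit over numerable coverings, applying the coveringwise isomorphisms of Corollary~\ref{covbycozloopk} (via Theorem~\ref{isocontuqnumloopk}), with the metric/locally compact/Hausdorff $k_\omega$ hypothesis entering only to ensure the powers $X^{q+1}$, and hence the diagonal neighbourhoods $\mathfrak{U}[q]$, are already $k$-spaces. Your explicit cofinality argument for cozero coverings of partitions of unity inside all numerable coverings is exactly the bookkeeping the paper leaves unstated.
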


\begin{example}
Real and complex Kac-Moody groups are Hausdorff $k_\omega$-spaces 
(cf.  \cite{GGH06}). Thus for real or complex Kac-Moody groups $G$ and loop
contractible abelian $k$-groups $V$ the cohomologies 
$\check{H} (G;V)$, $H_{AS,kc} (G;V)$ and $H_{AS} (G;V)$ w.r.t. numerable 
coverings are isomorphic.
\end{example}

\begin{corollary}
For any metric, paracompact and locally compact or paracompact Hausdorff
$k_\omega $-space $X$ and loop contractible coefficient $k$-group $V$ the 
\v{C}ech cohomology $\check{H} (X;V)$ and the continuous Alexander-Spanier 
cohomology $H_{AS,c} (X;V)$ are isomorphic. 
\end{corollary}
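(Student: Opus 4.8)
The plan is to derive this corollary from the preceding one by the same cofinality argument used for the analogous statement in the $R$-module setting at the end of Section~\ref{secclck}. The hypotheses on $X$ enter in two ways. First, by Lemma~\ref{appfinprodofko} a metric space, a paracompact locally compact space, or a paracompact Hausdorff $k_\omega$-space has all of its finite products $X^{q+1}$ compactly Hausdorff generated; hence the open diagonal neighbourhoods $\mathfrak{U}[q]\subseteq X^{q+1}$ are $k$-spaces and the neighbourhoods of the form $\mathfrak{U}[q]$ are cofinal among all open neighbourhoods of the diagonals in $X^{*+1}$, so that $\check H(X;V)=\colim_{\mathfrak{U}}\check H(\mathfrak{U};V)$ and $H_{AS,c}(X;V)=\colim_{\mathfrak{U}}H_{kc}(\mathfrak{U};V)$ with the colimits taken over all open coverings $\mathfrak{U}$ of $X$. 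Second, paracompactness of $X$ guarantees that every open covering of $X$ admits a subordinate partition of unity, so the numerable coverings are cofinal in the directed set of all open coverings of $X$.

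Next I would verify that Theorem~\ref{isocontuqnumloopk} applies to each numerable covering $\mathfrak{U}$ of $X$, that is, that each covering $\{\fktop U_i^{q+1}\mid i\in I\}$ of $\fktop\mathfrak{U}[q]$ is itself numerable. This is the $k$-space instance of the normalisation argument behind Corollary~\ref{covbycozloopk} (ultimately that of Corollary~\ref{covbycoz}): starting from a partition of unity $\{\varphi_i\}$ subordinate to $\mathfrak{U}$ one forms the continuous non-negative functions $\vec x\mapsto|\varphi_i(x_0)\cdots\varphi_i(x_q)|$ on $\fktop\mathfrak{U}[q]$, whose sum is continuous and strictly positive there, and divides through by that sum to obtain a generalised partition of unity with cozero sets exactly $\{\fktop U_i^{q+1}\}$. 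With this in hand, Theorem~\ref{isocontuqnumloopk} gives, for each numerable covering $\mathfrak{U}$, an isomorphism $H_{kc}(\mathfrak{U};V)\cong\check H(\mathfrak{U};V)$ induced by the augmentations, the row contractions required here existing precisely because $V$ is loop contractible (Proposition~\ref{propchechcontrhomloopk}).

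Finally I would pass to the colimit over the cofinal directed set of numerable coverings. The isomorphisms $H_{kc}(\mathfrak{U};V)\cong\check H(\mathfrak{U};V)$ are natural in $\mathfrak{U}$, being induced by the maps $i_{kc}^*$ and $j_{kc}^*$ of cochain complexes, which are compatible with refinement of coverings; hence they assemble into an isomorphism of the colimits over numerable coverings, and by the cofinality recorded in the first paragraph these colimits agree with the colimits over all open coverings, namely with $H_{AS,c}(X;V)$ and $\check H(X;V)$ respectively. This yields the claimed isomorphism.

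I do not anticipate a genuine obstacle: the only non-formal inputs are the appendix Lemma~\ref{appfinprodofko} guaranteeing that the listed classes of spaces have compactly Hausdorff generated finite products (a result one may freely invoke) and the normalisation step of the second paragraph, which is verbatim the corresponding step in the $R$-module case. Everything else is bookkeeping with filtered colimits of the isomorphisms produced by Theorem~\ref{isocontuqnumloopk}.
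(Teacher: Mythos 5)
Your overall route---using Lemma \ref{appfinprodofko} (and its metric/locally compact analogues) to make the diagonal neighbourhoods $\mathfrak{U}[q]$ cofinal, obtaining a covering-level isomorphism $H_{kc}(\mathfrak{U};V)\cong\check H(\mathfrak{U};V)$ from the loop-contractibility machinery, and then passing to a colimit over a cofinal family of coverings supplied by paracompactness---is exactly the paper's; the corollary is stated there without proof as the combination of the preceding corollary with paracompactness. However, your second paragraph contains a step that fails as written. If $\{\varphi_i\}$ is merely a partition of unity \emph{subordinate} to $\mathfrak{U}=\{U_i\}$, the functions $\vec x\mapsto|\varphi_i(x_0)\cdots\varphi_i(x_q)|$ need not have strictly positive sum on $\mathfrak{U}[q]$: a point $\vec x\in U_j^{q+1}$ need not admit a single index $i$ with $\varphi_i(x_k)\neq 0$ for \emph{all} $k$. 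For instance, on $X=\mathbb{R}$ with $U_1=(-3,3)$ and $\varphi_1$ supported in $(-1,1)$, the point $(-2,2)\in U_1^{2}$ is annihilated by every product, since each member of the partition of unity vanishes at one of the two coordinates. Moreover the cozero set of the product function is $\bigl(\varphi_i^{-1}(\mathbb{R}\setminus\{0\})\bigr)^{q+1}$, not $U_i^{q+1}$. The normalisation in Corollaries \ref{covbycoz} and \ref{covbycozloopk} works precisely because there $U_i$ \emph{is} the cozero set of $\varphi_i$; so you cannot verify the hypotheses of Theorem \ref{isocontuqnumloopk} for an arbitrary numerable covering in this way.

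The repair is the one the paper implicitly uses. By paracompactness every open covering of $X$ is refined by the covering $\mathfrak{V}=\{\varphi_i^{-1}(\mathbb{R}\setminus\{0\})\mid i\in I\}$ of cozero sets of a subordinate partition of unity, so these cozero coverings are cofinal among all open coverings; Corollary \ref{covbycozloopk} applies to each of them directly (loop contractibility of $V$ entering through Proposition \ref{propchechcontrhomloopk}), giving natural isomorphisms $H_{kc}(\mathfrak{V};V)\cong\check H(\mathfrak{V};V)$; and the colimit over this cofinal family, identified with $H_{AS,kc}(X;V)$ and $\check H(X;V)$ by the cofinality of the neighbourhoods $\mathfrak{V}[q]$ established in your first paragraph, yields the claimed isomorphism. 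With that substitution---arguing over cozero coverings rather than over arbitrary numerable coverings---your argument coincides with the paper's.
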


\begin{example}
If a  metric, paracompact and locally compact or paracompact Hausdorff
$k_\omega $-space $X$ has trivial \v{C}ech cohomology $\check{H} (X;V)$ 
(e.g. if $X$ is contractible) and $V$ is loop contractible, then the
  continuous Alexander-Spanier cohomology $H_{AS} (X;V)$ is trivial as well.
\end{example}

As we did before, we apply these observations to uniform $k$-spaces $X$ with 
open coverings of the form $\mathfrak{U}_U :=\{ U[x] \mid \, x \in X\}$, where 
$U$ is an open entourage of the diagonal in $X \times X$.

\begin{proposition} \label{ubypmloopk}
  If $d: X \times X \rightarrow \mathbb{R}$ is a continuous pseudometric on 
$X$ then for each $\epsilon > 0$, covering 
$\mathfrak{U}=\{ B_d (x,\epsilon) \mid x \in X \}$ of $X$ by open 
$\epsilon$-balls and loop contractible abelian $k$-group $V$ 
the inclusion 
$A_{kc}^* (\mathfrak{U};V) \hookrightarrow A^* (\mathfrak{U};V)$ induces an
isomorphism in cohomology and the cohomologies 
$\check{H} (\mathfrak{U};V)$, $H_{kc} (\mathfrak{U};V)$ and 
$H (\mathfrak{U};V)$ are isomorphic.
\end{proposition}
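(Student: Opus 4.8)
The plan is to run the proof of Proposition \ref{ubypm} again, with the cozero-cover criterion Corollary \ref{covbycoz} replaced by its loop contractible $k$-space counterpart Corollary \ref{covbycozloopk}. Accordingly, the only thing that has to be produced is a presentation of the covering $\mathfrak{U}$ by open $\epsilon$-balls as the family of cozero sets of a generalised $\mathbb{R}$-valued partition of unity on $X$.

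First I would appeal to \cite[Proposition B.2]{S70}: from the continuous pseudometric $d$ and the fixed $\epsilon > 0$ one obtains a generalised partition of unity $\{ \varphi_x \mid x \in X \}$ on $X$ with $\varphi_x^{-1} ((0,1]) = B_d (x,\epsilon)$, so that $\mathfrak{U} = \{ \varphi_x^{-1} (\mathbb{R} \setminus \{ 0 \}) \mid x \in X \}$. Since $V$ is a loop contractible abelian $k$-group, Corollary \ref{covbycozloopk} then applies directly and gives both that the inclusion $A_{kc}^* (\mathfrak{U};V) \hookrightarrow A^* (\mathfrak{U};V)$ induces an isomorphism in cohomology and that $\check{H} (\mathfrak{U};V)$, $H_{kc} (\mathfrak{U};V)$ and $H (\mathfrak{U};V)$ are isomorphic, which is the assertion.

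The step I expect to need the most care is the one buried inside Corollary \ref{covbycozloopk} and Theorem \ref{isocontuqnumloopk}, namely checking that each covering $\{ \fktop U_x^{q+1} \mid x \in X \}$ of the $k$-space $\fktop \mathfrak{U}[q]$ is numerable. This is done exactly as in the proof of Corollary \ref{covbycozk}: the maps $\vec{x} \mapsto | \varphi_x (x_0) \cdots \varphi_x (x_q) |$ are continuous on $\mathfrak{U}[q]$, hence also on $\fktop \mathfrak{U}[q]$ because $k$-ification only refines the topology; their sum is strictly positive on $\mathfrak{U}[q]$, and normalising by it produces a generalised partition of unity whose cozero sets are precisely $\{ U_x^{q+1} \mid x \in X \}$. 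Granting this numerability, the row contractions of Proposition \ref{propchechcontrhomloopk} and the diagram chase of Theorem \ref{isocontuqnum} carry over without change, completing the proof.
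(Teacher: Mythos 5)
Your proposal is correct and matches the paper's argument, which proves this proposition exactly as Proposition \ref{ubypm}: obtain the generalised partition of unity $\{\varphi_x\}$ with cozero sets the $\epsilon$-balls from \cite[Proposition B.2]{S70} and invoke the cozero-cover criterion, here in its loop contractible $k$-space form (Corollary \ref{covbycozloopk}). Your extra remarks on the numerability of $\{\fktop U_x^{q+1}\}$ are consistent with the paper's treatment of Corollary \ref{covbycozk}/\ref{covbycozloopk}.
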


\begin{proof}
  The proof is analogous to that of Proposition \ref{ubypm}.
\end{proof}

\begin{example}
If $\mathfrak{U}=\{ B (x,\epsilon) \mid x \in X \}$ is an open covering of a 
finite dimensional Riemannian manifold $M$ by open $\epsilon$-balls and 
the coefficient $k$-group $V$ is loop contractible, 
then the cohomology $H_{kc} (\mathfrak{U};V)$ of the complex 
$A_{kc}^* (\mathfrak{U};V)$ is isomorphic to cohomology $H (\mathfrak{U} ;V)$ and 
to the \v{C}ech and singular cohomologies of $M$ (cf. Ex \ref{exugeodconv}). 
If $M$ is an infinite dimensional Riemannian manifold one has to require the 
local existence of geodesics.  
\end{example}

\begin{corollary}
For any open entourage $U$ of a uniform $k$-space $X$ 
and loop contractible abelian $k$-group $V$ the inclusion 
$A_{kc}^* (\mathfrak{U}_U;V)\hookrightarrow A^* (\mathfrak{U}_U;V)$ induces an 
isomorphism in cohomology and the cohomologies 
$\check{H} (\mathfrak{U}_U;V)$, $H_{kc} (\mathfrak{U}_U;V)$ and 
$H (\mathfrak{U}_U;V)$ are isomorphic.
\end{corollary}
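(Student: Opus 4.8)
The plan is to deduce this corollary from Proposition \ref{ubypmloopk} in exactly the same way the corresponding statement for ordinary topological spaces (the corollary immediately following Proposition \ref{ubypmloop}) was deduced from the pseudometric case, the only extra ingredient being the classical fact that every open entourage of a uniform space is induced by a continuous pseudometric.

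First I would recall from \cite[Proposition B.2]{S70} that for any open entourage $U$ of the uniform (in particular uniform $k$-) space $X$ there is a continuous pseudometric $d_U : X \times X \rightarrow \mathbb{R}$ with $U = d_U^{-1}([0,1))$. For such a $d_U$ one has $U[x] = \{\, y \mid d_U(x,y) < 1 \,\} = B_{d_U}(x,1)$, so the covering $\mathfrak{U}_U = \{\, U[x] \mid x \in X \,\}$ of $X$ is precisely the covering $\{\, B_{d_U}(x,1) \mid x \in X \,\}$ of $X$ by open $d_U$-balls of radius $1$ (it covers $X$ since $d_U(x,x)=0<1$). Then I would apply Proposition \ref{ubypmloopk} with $\epsilon = 1$ and coefficient group the loop contractible abelian $k$-group $V$: that proposition already yields that the inclusion $A_{kc}^*(\mathfrak{U}_U;V) \hookrightarrow A^*(\mathfrak{U}_U;V)$ induces an isomorphism in cohomology and that $\check{H}(\mathfrak{U}_U;V)$, $H_{kc}(\mathfrak{U}_U;V)$ and $H(\mathfrak{U}_U;V)$ are isomorphic, which is exactly the assertion.

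The only point that needs a moment's attention — the "main obstacle", though a mild one — is to be sure the pseudometric supplied by \cite{S70} is continuous in the sense required to run the machinery behind Proposition \ref{ubypmloopk}, which ultimately rests on Corollary \ref{covbycozloopk} together with the construction of a generalised partition of unity whose cozero sets are the $\epsilon$-balls. Since \cite{S70} produces a uniformly continuous pseudometric, it is in particular continuous on the topological product $X\times X$, hence also on the $k$-product; and as Proposition \ref{ubypmloopk} is already stated and proved for $k$-modules, no further verification is required — one merely invokes it. No new estimates or constructions beyond those already in Section \ref{seclcck} are needed.
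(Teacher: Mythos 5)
Your proposal is correct and follows essentially the same route as the paper: the paper deduces this corollary from Proposition \ref{ubypmloopk} exactly as in the continuous case, using that every open entourage is of the form $U=d_U^{-1}([0,1))$ for a continuous pseudometric $d_U$ (so $\mathfrak{U}_U$ is a covering by open unit balls) and then invoking the pseudometric proposition with $\epsilon=1$. Nothing is missing.
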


For compactly Hausdorff generated topological groups with open coverings of the 
form $\mathfrak{U}_U :=\{ gU \mid \, g \in G\}$, where $U$ is an open identity 
neighbourhood in $G$ we observe:

\begin{corollary} \label{localfortopgrpsandvsvloopk}
For any open identity neighbourhood $U$ of a compactly Hausdorff generated 
topological group $G$ and loop contractible $k$-group $V$ the 
inclusion $A_{kc}^* (\mathfrak{U}_U ;V) \hookrightarrow A^* (\mathfrak{U}_U ;V)$
induces an isomorphism in cohomology and the cohomologies 
$\check{H} (\mathfrak{U}_U;V)$, $H_c (\mathfrak{U}_U;V)$ and 
$H (\mathfrak{U}_U;V)$ are isomorphic.
\end{corollary}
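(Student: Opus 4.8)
The plan is to deduce the statement from Theorem~\ref{isocontuqnumloopk}, exactly as Corollary~\ref{localfortopgrpsandvsvloop} is deduced from its non-$k$ counterpart. Since a topological group is a uniform $k$-space, the left-invariant covering $\mathfrak{U}_U = \{gU \mid g \in G\}$ is one of the coverings treated by Proposition~\ref{ubypmloopk}; the only point that has to be checked is that each covering $\{\fktop (gU)^{q+1} \mid g \in G\}$ of $\fktop\mathfrak{U}_U[q]$ is numerable, and then Theorem~\ref{isocontuqnumloopk} supplies the isomorphism induced by $A_{kc}^*(\mathfrak{U}_U;V) \hookrightarrow A^*(\mathfrak{U}_U;V)$ together with the isomorphisms relating it to $H(\mathfrak{U}_U;V)$ and $\check H(\mathfrak{U}_U;V)$.

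Concretely, I would first observe that $\widetilde U := \{(x,y) \in G \times G \mid x^{-1}y \in U\}$ is an open entourage of the diagonal for the left uniformity of $G$ and that $\widetilde U[x] = xU$, so that $\mathfrak{U}_U = \mathfrak{U}_{\widetilde U}$. By \cite[Proposition B.2]{S70} one may write $\widetilde U = d^{-1}([0,1))$ for a continuous pseudometric $d$ on $G$, whence $gU = \widetilde U[g] = B_d(g,1)$ and $\mathfrak{U}_U = \{B_d(g,1) \mid g \in G\}$ is literally a covering of $G$ by open $1$-balls of a continuous pseudometric. Proposition~\ref{ubypmloopk}, applied with $\epsilon = 1$, then gives the conclusion directly. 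If one prefers to see the numerability spelled out, one repeats the proof of Corollary~\ref{covbycoz} in its loop-contractible $k$-version (Corollary~\ref{covbycozloopk}): the generalised partition of unity $\{\varphi_g \mid g \in G\}$ on $G$ with $\varphi_g^{-1}((0,1]) = gU$ provided by \cite{S70} yields, after forming the products $\vec x \mapsto |\varphi_g(x_0)\cdots\varphi_g(x_q)|$ and normalising by their sum --- which is continuous and strictly positive on $\mathfrak{U}_U[q]$ by Lemmata \ref{sumcontthensubsumcont} and \ref{sumvaprhicontthensumabsvalcont} --- a generalised partition of unity on $\fktop\mathfrak{U}_U[q]$ whose cozero sets are exactly the sets $\fktop(gU)^{q+1}$.

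The one genuinely $k$-topological point is the passage between $\mathfrak{U}_U[q]$ with its subspace topology from the ordinary product $G^{q+1}$ and the $k$-space $\fktop\mathfrak{U}_U[q]$, which by Lemma~\ref{fktoppresopemb} is an open subspace of $\fktop G^{q+1}$. Here I would use that the identity map $\fktop\mathfrak{U}_U[q] \to \mathfrak{U}_U[q]$ is continuous, so a real partition of unity on $\mathfrak{U}_U[q]$ subordinate to $\{(gU)^{q+1}\}$ pulls back to one on $\fktop\mathfrak{U}_U[q]$ subordinate to $\{\fktop(gU)^{q+1}\}$: continuity of the functions is only improved by refining the domain topology, a neighbourhood that witnesses local finiteness in the coarser topology is still open in the finer one, and the closures of the cozero sets can only shrink. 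Hence numerability in the ordinary product topology --- which is exactly what is established in the non-$k$ setting --- gives numerability in the $k$-topology, and one never has to redo the construction on $\fktop G^{q+1}$ from scratch.

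I expect no obstacle beyond this bookkeeping: the argument is formally identical to that for Corollary~\ref{localfortopgrpsandvsvloop}, with ``uniform space'' replaced throughout by ``uniform $k$-space'' and the presheaf $A_c^q(-;V)$ by $A_{kc}^q(-;V)$. Loop contractibility of the coefficient $k$-group $V$ is not used in the reduction; it enters only inside Theorem~\ref{isocontuqnumloopk}, via the semi-simplicial right inverse $\fktop\widehat\sigma$ to the vertex morphism $\fktop\lambda_V$, which is what forces the rows of $\check C^*(\mathfrak{U}_U, A_{kc}^*)$ to be exact once the coverings $\{\fktop U_i^{q+1}\}$ are numerable.
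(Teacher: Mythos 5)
Your proposal is correct and follows essentially the same route the paper intends: the corollary is the specialisation of the uniform-$k$-space/pseudometric chain (Proposition~\ref{ubypmloopk}, via \cite[Proposition B.2]{S70} and the cozero-set argument of Corollary~\ref{covbycozloopk}) to the left uniformity of $G$, feeding into Theorem~\ref{isocontuqnumloopk}. Your extra observation that numerability of $\{(gU)^{q+1}\}$ in the ordinary product topology passes to the finer $k$-topology on $\fktop\mathfrak{U}_U[q]$ is exactly the bookkeeping the paper leaves implicit, and it is handled correctly.
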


Combining these results with those concerning singular cohomology 
(obtained in Section \ref{seccas}) we observe:

\begin{proposition} \label{propuqnumsingloopk}
For any loop contractible abelian $k$-group $V$ and open covering 
$\mathfrak{U}$ of a $k$-space $X$ for which each set $U_{i_0 \ldots i_p}$ is 
$V$-acyclic and each covering $\{ \fktop U_i^{q+1} \mid i \in I \}$ of 
$\fktop \mathfrak{U}[q]$ is numerable the homomorphism 
$C (\lambda_\mathfrak{U}^* ,V) : 
A_{kc}^* (\mathfrak{U};V) \rightarrow S^* (X,\mathfrak{U};V)$ 
induces an isomorphism $H_{kc} (\mathfrak{U};V) \cong H_{sing} (X;V)$ in 
cohomology and the following diagram is commutative:
\begin{equation*}
  \xymatrix{
H_{kc} (\mathfrak{U};V) \ar[r]^{H (i)}_\cong 
\ar[d]_{H (C (\lambda_\mathfrak{U}^*;V))}^\cong  &
H (\mathfrak{U};V) \ar[r]^{H (j)^{-1} H(i)}_\cong 
\ar[d]_{H (C (\lambda_\mathfrak{U}^* ;V))}^\cong & 
\check{H} (\mathfrak{U};V) \ar@{=}[d] \\
H_{sing} (\mathfrak{U};V) \ar@{=}[r] & 
H_{sing} (\mathfrak{U};V) \ar[r]^\cong_{H (j)^{-1} H(i)} & 
\check{H} (\mathfrak{U};V) 
}
\end{equation*}
In particular the \v{C}ech and the continuous $\mathfrak{U}$-local 
cohomology do not depend on the open cover $\mathfrak{U}$ subject to the 
above conditions chosen.
\end{proposition}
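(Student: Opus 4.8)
The plan is to repeat, in spirit verbatim, the proof of Proposition~\ref{propuqnumsing}, with the presheaf $A^*(-;V)$ replaced by the presheaf $A_{kc}^*(-;V)$ of continuous cochains and with the loop contractibility of $V$ playing the role that numerability of the cover played there. First I would note that the vertex morphism $\lambda_X : C(\varDelta,X) \rightarrow X^{*+1}$ of semi-simplicial spaces induces both the morphism $C(\lambda_\mathfrak{U}^*,V) : A_{kc}^*(\mathfrak{U};V) \rightarrow S^*(X,\mathfrak{U};V)$ of cochain complexes and a morphism $\check{C}^*(\mathfrak{U};C(\lambda^*;V)) : \check{C}^*(\mathfrak{U},A_{kc}^*) \rightarrow \check{C}^*(\mathfrak{U},S^*)$ of double complexes; both are defined on the \emph{continuous} subcomplexes, since precomposition with the continuous map $\lambda$ carries a continuous $\mathfrak{U}$-local cochain to a cochain on $\mathfrak{U}$-small singular simplices (and $k$-ification does not interfere, as $\fktop$ preserves the relevant open embeddings). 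These morphisms intertwine the augmentations $i_{kc}^*,j_{kc}^*$ and $i^*,j^*$ of the two double complexes, yielding the commutative ladder
\begin{equation*}
  \xymatrix{
A_{kc}^* (\mathfrak{U};V) \ar[r]^{i_{kc}^*} \ar[d]_{C (\lambda_\mathfrak{U}^* ;V)} &
\tot \check{C}^* ( \mathfrak{U}, A_{kc}^*)
\ar[d]|{\check{C}^* (\mathfrak{U};C (\lambda^*;V) )} &
\check{C} (\mathfrak{U};V) \ar[l]_{j_{kc}^*} \ar@{=}[d] \\
S^* (\mathfrak{U};V) \ar[r]^{i^*} & \tot \check{C}^* ( \mathfrak{U}, S^*) &
\check{C} (\mathfrak{U};V) \ar[l]_{j^*}
}
\end{equation*}
of cochain complexes.

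Next I would check that every morphism in this ladder except the left vertical one is an isomorphism in cohomology. The column augmentation $j_{kc}^*$ is an isomorphism in cohomology by Lemma~\ref{jindisok}: the augmented columns of $\check{C}^*(\mathfrak{U},A_{kc}^*)$ are exact, since fixing a base point in each $\fktop U_{i_0\ldots i_p}$ gives a contracting homotopy that preserves continuity. The row augmentation $i_{kc}^*$ is an isomorphism in cohomology by Corollary~\ref{uqnumicindisoloopk}, because each covering $\{\fktop U_i^{q+1}\mid i\in I\}$ of $\fktop\mathfrak{U}[q]$ is numerable and $V$ is loop contractible; this is the step where the row contractions of Proposition~\ref{propchechcontrhomloopk}, built from the semi-simplicial section $\fktop\widehat{\sigma}$, are used. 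On the singular side, $V$-acyclicity of the $U_{i_0\ldots i_p}$ makes the augmented columns of $\check{C}^*(\mathfrak{U},S^*)$ exact (as recalled in Section~\ref{seccas}), so $j^*$ is an isomorphism in cohomology, while the augmented rows of $\check{C}^*(\mathfrak{U},S^*)$ are always exact --- by the same kind of well-ordering argument as in Corollary~\ref{rowsalwexact}, now assigning to each $\mathfrak{U}$-small singular simplex the least index $i$ with $U_i$ containing its image --- so $i^*$ is an isomorphism in cohomology as well. Finally, comparing the two double complexes column by column, the middle vertical morphism is a quasi-isomorphism: for each fixed \v{C}ech degree $p$ both columns have cohomology concentrated in degree $0$ (the first by the base-point homotopy, the second by $V$-acyclicity), there equal to $\check{C}^p(\mathfrak{U};V)$, and the induced map on this degree-$0$ cohomology is the identity, as it is compatible with the common augmentation by $\check{C}^p(\mathfrak{U};V)$.

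A short diagram chase on the ladder then forces the left vertical morphism $C(\lambda_\mathfrak{U}^*;V)$ to be an isomorphism in cohomology, which is the asserted isomorphism $H_{kc}(\mathfrak{U};V)\cong H_{sing}(X;V)$. Assembling this with Theorem~\ref{isocontuqnumloopk} (which gives $H(i)\colon H_{kc}(\mathfrak{U};V)\cong H(\mathfrak{U};V)$ and the identification of both with $\check{H}(\mathfrak{U};V)$ via $H(j)^{-1}H(i)$) and with the corresponding statement for $A^*(\mathfrak{U};V)$ from Section~\ref{seccas} yields the commutative diagram of the statement; independence of $\check{H}(\mathfrak{U};V)$ and $H_{kc}(\mathfrak{U};V)$ from the chosen cover subject to the hypotheses is immediate afterwards, since every group in the diagram is canonically identified with $H_{sing}(X;V)$.

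I do not expect a genuine obstacle here: every ingredient is already available. The only step that is more than a citation is the columnwise quasi-isomorphism of the middle vertical map, and even that reduces to the acyclicity statements of Section~\ref{seccas}; the one point deserving a line of care is that $C(\lambda_\mathfrak{U}^*;V)$ and $\check{C}^*(\mathfrak{U};C(\lambda^*;V))$ really do restrict from their all-cochains counterparts to the continuous subcomplexes in the $k$-space setting, which follows from continuity of $\lambda$ together with the fact that the coreflector $\fktop$ preserves open embeddings (Lemma~\ref{fktoppresopemb}).
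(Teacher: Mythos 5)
Your proposal is correct and is essentially the paper's intended argument: the paper leaves this proposition unproved as the $k$-space/loop-contractible analogue of Proposition \ref{propuqnumsing}, whose proof is the same ladder of augmented double complexes comparing $A_{kc}^*$ with $S^*$, with Lemma \ref{jindisok}, Corollary \ref{uqnumicindisoloopk}, the $V$-acyclicity of the $U_{i_0\ldots i_p}$ and Theorem \ref{isocontuqnumloopk} supplying exactly the quasi-isomorphisms you invoke. The only difference is cosmetic: you verify the middle vertical map columnwise, whereas it already follows from the commutativity of the right-hand square once the two $j$-augmentations are known to be quasi-isomorphisms.
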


\begin{corollary}
 For loop contractible coefficient $k$-groups $V$ and any open covering 
$\mathfrak{U}$ of a $k$-space $X$ for which each set $U_{i_0 \ldots i_p}$ is 
$V$-acyclic and each covering $\{ \fktop U_i^{q+1} \mid i \in I \}$ of 
$\fktop \mathfrak{U}[q]$ is numerable the singular cohomology 
$H_{sing} (X;V)$ and the \v{C}ech cohomology $\check{H} (\mathfrak{U};V)$ 
for the covering $\mathfrak{U}$ can be computed from the complex 
$A_{kc}^* (\mathfrak{U};V)$ of continuous $\mathfrak{U}$-local cochains. 
\end{corollary}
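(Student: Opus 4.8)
The plan is to read this off directly from Proposition \ref{propuqnumsingloopk}, which already contains all the content. First I would invoke that proposition for the given covering $\mathfrak{U}$ and loop contractible $k$-group $V$: its hypotheses --- that every $U_{i_0\ldots i_p}$ is $V$-acyclic and that each covering $\{\fktop U_i^{q+1}\mid i\in I\}$ of $\fktop\mathfrak{U}[q]$ is numerable --- are exactly the hypotheses of the corollary. Proposition \ref{propuqnumsingloopk} then supplies the commutative diagram in which the top horizontal maps $H_{kc}(\mathfrak{U};V)\to H(\mathfrak{U};V)\to\check{H}(\mathfrak{U};V)$ --- induced by the augmentations $i_{kc}^*$, $j_{kc}^*$ of the double complex $\check{C}^*(\mathfrak{U},A_{kc}^*)$ --- and the vertical map $H_{kc}(\mathfrak{U};V)\to H_{sing}(X;V)$ --- induced by the vertex morphism $C(\lambda_\mathfrak{U}^*;V)$ --- are all isomorphisms.

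Second, I would simply note that $H_{kc}(\mathfrak{U};V)$ is by definition the cohomology of the complex $A_{kc}^*(\mathfrak{U};V)$ of continuous $\mathfrak{U}$-local cochains, so composing the isomorphisms above identifies both $\check{H}(\mathfrak{U};V)$ and $H_{sing}(X;V)$ with the cohomology of $A_{kc}^*(\mathfrak{U};V)$. That is precisely the assertion, and one may also record the explicit comparison maps: the \v{C}ech cohomology is recovered via $j_{kc}^*$ and $i_{kc}^*$ (Lemma \ref{jindisok} and Corollary \ref{uqnumicindisoloopk}), and the singular cohomology via $\lambda$.

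There is no genuine obstacle here: the corollary is a reformulation, and all the substance is inherited from Proposition \ref{propuqnumsingloopk}, whose proof in turn rests on the row-exactness of $\check{C}^*(\mathfrak{U},A_{kc}^*)$ furnished by the loop-contraction homotopy operators of Proposition \ref{propchechcontrhomloopk}, on Lemma \ref{jindisok}, and on the $k$-space version of the singular-cohomology comparison (Theorem \ref{isocontuqnumloopk} together with the argument of Proposition \ref{propuqnumsing}). The only point deserving a moment's attention is that the diagram in Proposition \ref{propuqnumsingloopk} commutes, so that the abstract isomorphism $H_{kc}(\mathfrak{U};V)\cong H_{sing}(X;V)$ coincides with the one induced by $\lambda$; but that commutativity is part of the cited statement, so nothing more is required.
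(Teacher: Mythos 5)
Your argument is exactly the intended one: the paper states this corollary without proof as an immediate consequence of Proposition \ref{propuqnumsingloopk}, and your reading-off of the isomorphisms $H_{sing}(X;V)\cong H_{kc}(\mathfrak{U};V)\cong\check{H}(\mathfrak{U};V)$ from that proposition (with $H_{kc}(\mathfrak{U};V)$ being by definition the cohomology of $A_{kc}^*(\mathfrak{U};V)$) is precisely what is meant. Your additional remarks on which maps induce the identifications are consistent with the cited results, so nothing is missing.
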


\begin{example}
For any loop contractible coefficient group $V$ and 'good' cover 
$\mathfrak{U}$ of a topological space $X$ for which 
each covering $\{ U_i^{q+1} \mid i \in I \}$ of $\mathfrak{U}[q]$ is numerable
the morphism $ 
A_{kc}^* (\mathfrak{U};V) \rightarrow S^* (X,\mathfrak{U};V)$ of cochain complexes
induces an isomorphism in cohomology and the cohomologies 
$\check{H} (\mathfrak{U};V)$, $H (\mathfrak{U};V)$, $H_c (\mathfrak{U};V)$ 
and $H_{sing} (X;V)$ are isomorphic.
\end{example}

\begin{lemma}
If $X$ is a $k$-space, $V$ a loop contractible $k$-group and the diagonal 
neighbourhoods $\mathfrak{U}[q]$ for open coverings $\mathfrak{U}$ 
for which the sets $U_{i_0 \ldots i_p}$ are $V$-acyclic and each covering 
$\{ \fktop U_i^{q+1} \mid i\in I\}$ of $\fktop \mathfrak{U}[q]$ 
is numerable are cofinal in all diagonal neighbourhoods, then for each such
covering $\mathfrak{U}$ the cohomology $H_{kc} (\mathfrak{U};V)$ of 
continuous $\mathfrak{U}$-local 
cochains coincides with the continuous Alexander-Spanier cohomology 
$H_{AS,kc} (X;V)$ of $X$. In particular the directed system 
$H_{kc} (\mathfrak{U};V)$ of abelian groups is co-Mittag-Leffler.
\end{lemma}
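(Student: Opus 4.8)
The plan is to imitate the proofs of Lemma~\ref{vacnumarecofin} and Lemma~\ref{vacnumarecofink}, with loop contractibility of the coefficient $k$-group $V$ now playing the role that $R$-numerability of the coverings played there. First I would use the cofinality hypothesis: the continuous Alexander-Spanier cohomology $H_{AS,kc}(X;V)$ can then be computed as the colimit $\colim_{\mathfrak{U}} H_{kc}(\mathfrak{U};V)$ over the cofinal family of those open coverings $\mathfrak{U}$ for which each $U_{i_0\ldots i_p}$ is $V$-acyclic and each covering $\{\fktop U_i^{q+1}\mid i\in I\}$ of $\fktop\mathfrak{U}[q]$ is numerable; in the same way $H_{AS}(X;V)$ and $\check H(X;V)$ are the colimits of the corresponding groups over this same family.

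Next I would invoke Proposition~\ref{propuqnumsingloopk}. For every covering $\mathfrak{U}$ in this family it supplies a commutative diagram of isomorphisms
\begin{equation*}
H_{kc}(\mathfrak{U};V)\;\cong\;H(\mathfrak{U};V)\;\cong\;\check H(\mathfrak{U};V)\;\cong\;H_{sing}(X;V)\,,
\end{equation*}
and it records that these do not depend on the chosen cover. This last clause is exactly what is needed: it says that the restriction maps of the directed system $H_{kc}(\mathfrak{U};V)$, restricted to the cofinal family, are isomorphisms that are moreover compatible with the identifications with $H_{sing}(X;V)$. Passing to the colimit then yields $H_{AS,kc}(X;V)\cong H_{sing}(X;V)$ (and, likewise, $H_{AS}(X;V)\cong H_{sing}(X;V)$ and $\check H(X;V)\cong H_{sing}(X;V)$). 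Since in a directed system all of whose transition maps are isomorphisms every term maps isomorphically onto the colimit, the canonical homomorphism $H_{kc}(\mathfrak{U};V)\to H_{AS,kc}(X;V)$ is an isomorphism for each covering $\mathfrak{U}$ in the cofinal family, which is the first assertion; the co-Mittag-Leffler statement is then immediate, since the image of each term of the directed system in any later term --- hence in the colimit --- is everything.

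The only point that requires any care --- and about which the earlier proofs are equally brief --- is the compatibility of the isomorphisms furnished by Proposition~\ref{propuqnumsingloopk} with the refinement maps of the directed system, so that they descend to the colimit; but this is precisely the ``independence of the cover'' part of that proposition, so no fresh argument is needed. I therefore do not expect any genuine obstacle: the statement is a formal consequence of Proposition~\ref{propuqnumsingloopk} together with the cofinality hypothesis, and the proof is essentially a verbatim transcription of that of Lemma~\ref{vacnumarecofin}, replacing $H_c$ by $H_{kc}$, $H_{AS,c}$ by $H_{AS,kc}$, and Proposition~\ref{propuqnumsing} by Proposition~\ref{propuqnumsingloopk}.
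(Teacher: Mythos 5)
Your proposal matches the paper's own argument: the paper proves this lemma simply by declaring it analogous to Lemma~\ref{vacnumarecofin}, whose proof is exactly the transcription you give --- compute $H_{AS,kc}(X;V)$ as the colimit over the cofinal family, invoke Proposition~\ref{propuqnumsingloopk} (in place of Proposition~\ref{propuqnumsing}) to identify every term compatibly with $H_{sing}(X;V)$, and conclude both the isomorphism with the colimit and the co-Mittag-Leffler property. The compatibility-with-refinement point you flag is treated with the same brevity in the paper, so your proof is at the same level of rigor and requires no further repair.
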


\begin{proof}
  The proof is analogous to that of Lemma \ref{vacnumarecofin}.
\end{proof}

\begin{example}
If $\mathfrak{U}$ is an open covering of a finite dimensional Riemannian 
manifold $M$ by geodetically convex sets and $V$ is loop contractible, 
then the cohomology of the complex 
$A_{kc}^* (\mathfrak{U};V)$ is isomorphic to the \v{C}ech cohomology $\check{H}
(\mathfrak{U};V)$ and to the singular cohomology $H_{sing} (M;V)$ of $M$. 
If $M$ is an infinite dimensional Riemannian manifold one has to require the 
local existence of geodesics for this argument to be applicable.
\end{example}

\begin{example}
  If $G$ is a Hilbert Lie group, $U$ a geodetically convex identity 
neighbourhood of $G$ and $V$ is loop contractible, 
then the cohomology of the complex 
$A_{kc}^* (\mathfrak{U}_U;V)$ is isomorphic to \v{C}ech cohomology $\check{H}
(\mathfrak{U};V)$ and to the singular cohomology $H_{sing} (G;V)$ of $G$.
\end{example}

As observed before, one can obtain a similar result for locally contractible
compactly Hausdorff generated topological groups without acyclicity condition 
on the open coverings:

\begin{theorem} \label{colimnbhfcontloopk}
For any locally contractible compactly Hausdorff generated group $G$ with 
open neighbourhood filterbase $\mathcal{U}_1$ for which all finite products 
$G^{p+1}$ are $k$-spaces and loop contractible abelian $k$-group $V$ the 
morphisms 
$A_{kc}^* (\mathfrak{U}_U ;V) \hookrightarrow A^* (\mathfrak{U}_U ;V)$ and 
$C (\lambda_{\mathfrak{U}_U}^* ,V) :  A_{kc}^* (\mathfrak{U}_U ; V) 
\rightarrow S^* (\mathfrak{U}_U ; V)$ for all $U \in \mathcal{U}_1$ induce 
isomorphisms  
\begin{equation*}
 \colim_{U \in \mathcal{U}_1} H_{kc} (\mathfrak{U}_U ;V) \cong 
\colim_{U \in \mathcal{U}_1} H (\mathfrak{U}_U ;V) \cong H_{sing} (G;V) \, .
\end{equation*}
in cohomology. 
\end{theorem}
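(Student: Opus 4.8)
The plan is to deduce both isomorphisms from results already established, exactly along the lines of the proofs of Theorems \ref{colimnbhfcont} and \ref{colimnbhfcontloop}. For the first isomorphism I would start from Corollary \ref{localfortopgrpsandvsvloopk}, which asserts that for each fixed open identity neighbourhood $U \in \mathcal{U}_1$ the inclusion $A_{kc}^* (\mathfrak{U}_U ;V) \hookrightarrow A^* (\mathfrak{U}_U ;V)$ induces an isomorphism $H_{kc} (\mathfrak{U}_U ;V) \cong H (\mathfrak{U}_U ;V)$; here one uses that $G$ is compactly Hausdorff generated, that by the hypothesis on the finite products $G^{p+1}$ each diagonal neighbourhood $\mathfrak{U}_U[q]$ is a $k$-space, and that the loop contractibility of $V$ supplies the row contractions of Proposition \ref{propchechcontrhomloopk}. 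Since the filterbase $\mathcal{U}_1$ is directed and filtered colimits commute with the formation of cohomology, passing to the colimit over $\mathcal{U}_1$ turns this family of isomorphisms into the isomorphism $\colim_{U \in \mathcal{U}_1} H_{kc} (\mathfrak{U}_U ;V) \cong \colim_{U \in \mathcal{U}_1} H (\mathfrak{U}_U ;V)$.

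For the second isomorphism I would invoke Theorem \ref{colimnbhf}: since $G$ is locally contractible --- a hypothesis retained here, and one untouched by the $k$-space assumptions, which only serve the continuous cochain complexes --- van Est's theorem gives $\colim_{U \in \mathcal{U}_1} H (\mathfrak{U}_U ;V) \cong H_{sing} (G;V)$, and this isomorphism is realised in the colimit by the vertex morphisms $C (\lambda_{\mathfrak{U}_U}^* ,V) : A^* (\mathfrak{U}_U ;V) \rightarrow S^* (\mathfrak{U}_U ;V)$. Note that Theorem \ref{colimnbhf} is purely topological and needs neither loop contractibility of $V$ nor any $k$-space condition, so it applies verbatim. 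It remains to check compatibility: the composite of the inclusion $A_{kc}^* (\mathfrak{U}_U ;V) \hookrightarrow A^* (\mathfrak{U}_U ;V)$ with $C (\lambda_{\mathfrak{U}_U}^* ,V)$ is literally the morphism $C (\lambda_{\mathfrak{U}_U}^* ,V) : A_{kc}^* (\mathfrak{U}_U ;V) \rightarrow S^* (\mathfrak{U}_U ;V)$ of the statement, since both arise by restricting the same vertex morphism $\lambda$ of semi-simplicial spaces. Passing to colimits and then to cohomology, the two isomorphisms above compose to the map induced by this latter morphism, which is therefore an isomorphism onto $H_{sing} (G;V)$ as claimed.

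I do not expect a genuine obstacle here; the only point requiring care is the bookkeeping of hypotheses --- verifying that the $k$-space condition on the $G^{p+1}$ is precisely what makes Corollary \ref{localfortopgrpsandvsvloopk} applicable simultaneously for every $U \in \mathcal{U}_1$, while nothing beyond local contractibility of $G$ is needed to feed into Theorem \ref{colimnbhf}. No new construction or estimate enters, so in the style of the preceding theorems one could equally record that the proof is analogous to that of Theorem \ref{colimnbhfcont}.
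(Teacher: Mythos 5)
Your proposal is correct and follows essentially the same route as the paper: the paper's proof (given by reference back through Theorems \ref{colimnbhfcontloop} and \ref{colimnbhfcont}) likewise obtains the first isomorphism from the per-neighbourhood Corollary \ref{localfortopgrpsandvsvloopk} by passing to the filtered colimit over $\mathcal{U}_1$, and the identification with $H_{sing}(G;V)$ from van Est's result as recorded in Theorem \ref{colimnbhf}. Your added remarks on the role of the $k$-space hypothesis on $G^{p+1}$ and on the compatibility of the vertex morphism with the inclusion are exactly the bookkeeping the paper leaves implicit.
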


\begin{proof}
  The proof is analogous to that of Theorem \ref{colimnbhfcontloop}.
\end{proof}

\begin{corollary}
For metrisable Lie groups $G$ with open identity neighbourhood filter base 
$\mathcal{U}_1$ and loop contractible $k$-groups $V$ 
the cohomologies $\colim_{U \in \mathcal{U}_1} H_{kc} (\mathfrak{U}_U ;V)$, 
$\colim_{U \in \mathcal{U}_1} H (\mathfrak{U}_U ;V)$, 
$\colim_{U \in \mathcal{U}_1} \check{H} (\mathfrak{U}_U;V)$ and 
$H_{sing} (G;V)$ coincide. 
\end{corollary}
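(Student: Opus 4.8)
The plan is to derive this statement directly from Theorem \ref{colimnbhfcontloopk} together with Corollary \ref{cechisolocal}; the only genuine work is checking that a metrisable Lie group satisfies the hypotheses of the former.

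First I would verify those hypotheses. A Lie group is in particular a manifold, hence locally contractible, since its model space (a locally convex space) is locally contractible and local contractibility is inherited by open subsets and transported along charts. If moreover $G$ is metrisable, then $G$ is a $k$-space and each finite product $G^{p+1}$ is again metrisable, hence compactly Hausdorff generated (Lemma \ref{appfinprodofko}); in particular $\fktop \mathfrak{U}_U[q]$ agrees with $\mathfrak{U}_U[q]$ for every open identity neighbourhood $U$. Thus $G$ is a locally contractible compactly Hausdorff generated group whose finite products are $k$-spaces, and $V$ is by hypothesis a loop contractible abelian $k$-group, so Theorem \ref{colimnbhfcontloopk} applies and yields
\[
\colim_{U \in \mathcal{U}_1} H_{kc} (\mathfrak{U}_U ;V) \cong
\colim_{U \in \mathcal{U}_1} H (\mathfrak{U}_U ;V) \cong H_{sing} (G;V).
\]

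It then remains to insert $\colim_{U \in \mathcal{U}_1} \check{H} (\mathfrak{U}_U ;V)$ into this chain. For each open identity neighbourhood $U$ the set $\mathfrak{U}_U = \{ gU \mid g \in G \}$ is an open covering of $G$, so Corollary \ref{cechisolocal} provides an isomorphism $\check{H} (\mathfrak{U}_U ;V) \cong H (\mathfrak{U}_U ;V)$. Since this isomorphism arises from the augmentations $i^*$ and $j^*$ of the \v{C}ech--Alexander--Spanier double complex of Section \ref{seccas}, which are natural with respect to refinement of coverings, it is compatible with the transition maps of the directed system indexed by the filterbase $\mathcal{U}_1$. Passing to the colimit therefore yields $\colim_{U \in \mathcal{U}_1} \check{H} (\mathfrak{U}_U ;V) \cong \colim_{U \in \mathcal{U}_1} H (\mathfrak{U}_U ;V)$, which completes the chain of isomorphisms.

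The main -- and essentially only -- point requiring attention is the verification that metrisable Lie groups meet the $k$-space hypotheses of Theorem \ref{colimnbhfcontloopk}; once that is granted, the statement is a purely formal consequence of results already established, with the \v{C}ech identification following from naturality of the double-complex augmentations.
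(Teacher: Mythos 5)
Your derivation is correct and is essentially the argument the paper intends: the corollary is the specialisation of Theorem \ref{colimnbhfcontloopk} to metrisable Lie groups (locally contractible, with all finite powers metrisable and hence compactly Hausdorff generated), with the \v{C}ech groups inserted via Corollary \ref{cechisolocal} and passage to the colimit, exactly as in the proof of Theorem \ref{colimnbhf}. One small citation nit: Lemma \ref{appfinprodofko} concerns Hausdorff $k_\omega$-spaces, so for the metrisable case you should instead invoke the standard fact that finite products of metrisable spaces are metrisable and metrisable spaces are $k$-spaces, as the paper does in Section \ref{secclck}.
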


\section{Smoothly Loop Contractible Coefficients}

For an open covering $\mathfrak{U}$ of a (possibly infinite dimensional) 
differential manifold $M$ and abelian Lie groups $V$ one can consider the 
complex $A_s^* (\mathfrak{U};V)$ of smooth $\mathfrak{U}$-local cochains. 
Analogously to the procedure for continuous cochains we impose a condition on
the abelian Lie group $V$:

\begin{definition}
  A (semi-)Lie group $G$ is called \emph{loop contractible}, 
if there exists a smooth contraction $\Phi : G \times I \rightarrow G$ to the 
identity such that $\Phi_t : G \rightarrow G, g \mapsto \Phi (g,t)$ is a 
homomorphism of (semi-)Lie groups for all $t \in I$.
\end{definition}

\begin{example}
  Any topological vector space $V$ is smoothly loop contractible via 
$\Phi (v,t)=t \cdot v$.
\end{example}

\begin{example}
  The path group $PG=C ( (I,\{ 0 \}),(G, \{e \}))$ of based paths of a 
Lie group $G$ is smoothly loop contractible via 
$\Phi_{PG} (\gamma , s) (t):= \gamma (st)$.  
\end{example}

\begin{remark}
  A Lie group $G$ is smoothly loop contractible if and only if the extension 
$\Omega G \hookrightarrow PG \twoheadrightarrow G$ is a semi-direct product: If
$\Phi_G$ is a smooth loop contraction of $G$, then the Lie group homomorphism 
$s:G \rightarrow PG, s (g)(t)=\Phi_G (g,t)$ is a right inverse to the 
evaluation $\mathrm{ev}_1 :PG \rightarrow G$ at $1$; conversely, if such a right
inverse $s$ exists, then the homotopy given by 
$\Phi_G (g,t) := \mathrm{ev}_1 \Phi_{PG} (s(g))(t)$ 
is a smooth loop contraction of $G$.
\end{remark}

Replacing continuity by smoothness, paracompactness by smooth paracompactness 
and the loop contraction by a smooth loop contraction in the previous 
discussion yields:

\begin{proposition}
If $V$ is a smoothly loop contractible abelian Lie group then there exists a 
morphism $\widehat{\sigma} : V^{*+1} \rightarrow C (\varDelta^* , V)$ of 
semi-simplicial Lie groups which is a right inverse to the vertex morphism 
$\lambda_V$ and all the adjoint functions 
$\sigma_n : V^{n+1} \times \varDelta^n \rightarrow V$ are smooth. In addition 
for all $v \in V$ the singular $n$-simplices $\widehat{\sigma}_n (v,\ldots,v)$ 
are the constant maps $\varDelta^n \rightarrow \{ v \}$.
\end{proposition}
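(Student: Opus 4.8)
The plan is to re-run the construction of Section~\ref{seclcc}, but feeding it a \emph{smooth} loop contraction in place of a merely continuous one. Choose a smooth loop contraction $\Phi : V \times I \to V$. After replacing $\Phi$ by $\Phi \circ (\id_V \times \beta)$ for a fixed smooth $\beta : I \to I$ with $\beta(0) = 0$, $\beta(1) = 1$ and $\beta - 1$ vanishing to infinite order at $1$, we may and do assume that $\Phi$ is flat in $t$ at $t = 1$, i.e.\ all $t$-derivatives of $\Phi(u,t)$ of order $\geq 1$ vanish at $t=1$ for every $u \in V$; each $\Phi_t = \Phi_{\beta(t)}$ is still a homomorphism of Lie groups, so this is again a smooth loop contraction and $\Phi_0 = \id$ is unchanged. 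As in Section~\ref{seclcc} set $F : V \times V \times I \to V$, $F(v_0,v_1,t) = v_0 + \Phi(v_1 - v_0, t)$; it is smooth, it is additive in $(v_0,v_1)$ for each fixed $t$ since $\Phi_t$ is a homomorphism, and $F(v,v,t) = v$ for all $t$ since $\Phi_t$ fixes the neutral element. Define $\widehat{\sigma}_n : V^{n+1} \to C(\varDelta^n, V)$ by the recursion in Equation~\ref{defsigman} with $\widehat{\sigma}_0(v)(t_0) = v$, and let $\sigma_n : V^{n+1} \times \varDelta^n \to V$ be the adjoints.

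The semi-simplicial assertions --- that $\widehat{\sigma}$ is a morphism of semi-simplicial spaces, that it is a right inverse to $\lambda_V$ (its ordered vertex set being $v_0,\ldots,v_n$), and that $\widehat{\sigma}_n(v,\ldots,v)$ is the constant $n$-simplex at $v$ --- are purely combinatorial consequences of the recursion together with $\Phi_0 = \id$ and $F(v,v,t) = v$, and are proved exactly as in Proposition~\ref{sigmaismorph} (equivalently \cite[Lemmata~3.0.69 and 3.0.70]{F10}), the argument being insensitive to the distinction between continuity and smoothness. Likewise each $\widehat{\sigma}_n$ is a group homomorphism by the induction of Lemma~\ref{loopthenhom}, using only additivity of $F$. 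Hence the only assertion genuinely involving the Lie structure of $V$ is smoothness of the $\sigma_n$.

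For this I would induct on $n$, the case $n = 0$ being trivial. Write $\pi(\vec{t}) = (t_1,\ldots,t_{n+1})/(1-t_0)$, a smooth map from $\{t_0 < 1\} \cap \varDelta^{n+1}$ onto $\varDelta^n$. On $\{t_0 < 1\}$ one has $\sigma_{n+1}(\vec{v},\vec{t}) = F\bigl(v_0,\, \sigma_n(v_1,\ldots,v_{n+1};\pi(\vec{t})),\, t_0\bigr)$, a composite of smooth maps, hence smooth by the inductive hypothesis and smoothness of $F$; the same applies near each vertex $\vec{e}_i$ with $i \geq 1$, since $\vec{e}_i$ lies in the face $\{t_0 = 0\}$, where $\pi$ is non-singular. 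The remaining point is the vertex $\vec{e}_0$, at which $t_0 = 1$. There $\sigma_{n+1}(\vec{v},\vec{t}) - v_0 = \Phi\bigl(\sigma_n(v_1,\ldots,v_{n+1};\pi(\vec{t})) - v_0,\, t_0\bigr)$, and differentiating, the chain rule presents any partial derivative as a sum of terms $D_1^k\Phi(\,\cdot\,,t_0)$ --- derivatives of $\Phi$ in the group variable --- applied to derivatives of $\pi$, the latter bounded near $\vec{e}_0$ by a constant multiple of $(1-t_0)^{-k}$. Since $\Phi$ is flat at $t_0 = 1$, so is each $D_1^k\Phi(\,\cdot\,,t_0)$, so every such term is $O((1-t_0)^N)$ for all $N$ uniformly for $\vec{v}$ in a compact set; thus all partial derivatives extend continuously by $0$ across $\{t_0 = 1\}$ and $\sigma_{n+1}$ is smooth at $\vec{e}_0$ too, completing the induction. (When $V$ is a topological vector space each $\Phi_t$ is linear, so $D_1\Phi(\,\cdot\,,t_0) = \Phi(\,\cdot\,,t_0) = O(1-t_0)$ and $D_1^k\Phi = 0$ for $k \geq 2$ automatically, and the reparametrisation of $\Phi$ is not needed.)

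The hard part is exactly this last step: $\pi$ is genuinely singular at $\vec{e}_0$, so passing from the continuity statement of \cite{F10} to smoothness is not automatic for a general abelian Lie group $V$, and the flatness reparametrisation of $\Phi$ is what absorbs the polynomial blow-up created by differentiating through the singularity. Everything else transcribes Section~\ref{seclcc} without change.
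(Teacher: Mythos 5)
Your proposal follows the same top-level route as the paper --- rerun the recursion of Equation~\ref{defsigman} from Section~\ref{seclcc} with a smooth loop contraction and invoke the arguments of Proposition~\ref{sigmaismorph} and Lemma~\ref{loopthenhom} for the semi-simplicial, vertex and homomorphism properties --- but it differs substantively in the one place where the paper is silent: the paper's proof consists of the single remark that $\widehat{\sigma}$ is obtained ``by smoothing out'' the earlier construction, whereas you notice that the verbatim transcription need not be smooth and you repair it. Your repair is genuinely needed: for $V=\mathbb{R}$ and the smooth loop contraction $\Phi(v,t)=\cos(\pi t/2)\,v$ the unmodified recursion gives $\sigma_2(\vec{v},\vec{t})=v_0+\cos(\pi t_0/2)\bigl[v_1-v_0+\cos\bigl(\pi t_1/(2(1-t_0))\bigr)(v_2-v_1)\bigr]$, which is not differentiable at the vertex $\vec{e}_0$; so some normalisation of $\Phi$ (or an additional argument) is unavoidable, and since the proposition only asserts the \emph{existence} of such a $\widehat{\sigma}$, your reparametrisation making $\Phi$ flat at $t=1$ is a legitimate and effective choice. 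Your chain-rule estimate then correctly shows that all partial derivatives of $\sigma_{n+1}-v_0$ tend to $0$ at the vertex, which is the only point of $\varDelta^{n+1}$ with $t_0=1$. Two points deserve tightening but do not affect the structure: passing from ``all partial derivatives extend continuously to the vertex'' to smoothness on the closed simplex should be justified by convexity of the domain (a mean-value or Whitney-type argument), and the locally uniform Taylor estimate for $D_1^k\Phi(\cdot,t_0)$ relies on compactness and must be reformulated in the calculus of \cite{BGN04} when $V$ is an infinite-dimensional Lie group. With these caveats your write-up is correct and in fact more complete than the proof given in the paper.
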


\begin{proof}
The morphism $\widehat{\sigma} : V^{*+1} \rightarrow C (\varDelta^* , V)$ is
constructed by smoothing out the construction presented in the last Section. 
\end{proof}

From now on we assume the coefficient Lie group $V$ to be smoothly loop 
contractible with smooth loop contraction $\Phi : V \times I \rightarrow V$, 
which gives rise to a morphism 
$\widehat{\sigma} : V^{*+1} \rightarrow C (\varDelta^* , V)$ of 
semi-simplicial abelian Lie groups that is a right inverse to
$\lambda_V$. Similar to the continuous case we observe:

\begin{lemma} \label{hpfsmooth}
For any smooth partition of unity $\{ \varphi_{q,i} \mid i \in I\}$ 
subordinate to the open cover $\{U_i^{q+1} \mid i \in I \}$ of 
$\mathfrak{U}[q]$ and $p$-cochain $f \in \check{C}^p (\mathfrak{U} , A^q)$ 
the maps 
  \begin{equation} \label{predefhpsmooth}
  U_{i_0 \ldots i_{p-1}}^{q+1} \rightarrow V , \quad 
x \mapsto 
\sigma_n \circ ( f_{\alpha_0 i_0 \ldots i_{p-1}} , \ldots , f_{\alpha_n i_0
  \ldots i_{p-1}} , \varphi_{q, \alpha_0} , \ldots , \varphi_{q, \alpha_n} ) 
(\vec{x})
\end{equation}
-- where for each $x \in X$ the indices $\alpha_0 < \cdots < \alpha_n$ are
those for which $\varphi_{q, \alpha_i}^{-1} (\mathbb{R} \setminus \{ 0 \})$ 
contains $x$ -- are smooth.
\end{lemma}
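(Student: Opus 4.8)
The plan is to reduce Lemma~\ref{hpfsmooth} to the already-established continuous case, Lemma~\ref{hpfcont}, by observing that smoothness is a purely local condition and that every piece of the construction we need is now smooth rather than merely continuous. Concretely, fix a point $\vec{x} \in \mathfrak{U}[q]$; since the supports of the smooth partition of unity $\{\varphi_{q,i} \mid i \in I\}$ are locally finite, $\vec{x}$ has an open neighbourhood $W$ on which only finitely many $\varphi_{q,\alpha'}$ are nonzero. Let $\alpha_0' < \cdots < \alpha_k'$ enumerate these indices. On the open set $U_{\alpha_0' \ldots \alpha_k' i_0 \ldots i_{p-1}}^{q+1}$ the map in question is literally the composite of the smooth map $x \mapsto (f_{\alpha_0' i_0 \ldots i_{p-1}}(x), \ldots, f_{\alpha_k' i_0 \ldots i_{p-1}}(x), \varphi_{q,\alpha_0'}(x), \ldots, \varphi_{q,\alpha_k'}(x))$ into $V^{k+1} \times \varDelta^k$ (using that the $\varphi_{q,\alpha_i'}$ sum to $1$ there, so the values land in the standard simplex) with the adjoint function $\sigma_k : V^{k+1} \times \varDelta^k \rightarrow V$. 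The latter is smooth by the Proposition preceding this Lemma (the smoothed-out version of Proposition~\ref{sigmaismorph}), and the former is smooth because each $f_{\alpha_j' i_0 \ldots i_{p-1}}$ is a smooth cochain and each $\varphi_{q,i}$ is a smooth function.

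The one subtlety, exactly as in the proof of Lemma~\ref{hpfcont}, is that the indices $\alpha_0 < \cdots < \alpha_n$ attached to a \emph{nearby} point $\vec{y}$ need not coincide with $\alpha_0', \ldots, \alpha_k'$: they form the (possibly proper) subset of $I_W$ consisting of those indices whose cozero set actually contains $\vec{y}$. Here I would invoke the fact that $\widehat{\sigma}$ is a morphism of semi-simplicial spaces, so that inserting a degeneracy — i.e. repeating a vertex $v_j$ together with a barycentric coordinate $t_j = 0$ — does not change the value. This gives the identity
\begin{equation*}
\sigma_k(f_{\alpha_0' i_0 \ldots i_{p-1}}(y), \ldots, f_{\alpha_k' i_0 \ldots i_{p-1}}(y), \varphi_{q,\alpha_0'}(y), \ldots, \varphi_{q,\alpha_k'}(y)) = \sigma_n(f_{\alpha_0 i_0 \ldots i_{p-1}}(y), \ldots, f_{\alpha_n i_0 \ldots i_{p-1}}(y), \varphi_{q,\alpha_0}(y), \ldots, \varphi_{q,\alpha_n}(y))
\end{equation*}
for all $\vec{y}$ in $U_{\alpha_0' \ldots \alpha_k' i_0 \ldots i_{p-1}}^{q+1}$, so the map with the "$\vec{x}$-dependent" choice of indices agrees, on a neighbourhood of each point, with a map that is manifestly smooth by the preceding paragraph. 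Since smoothness is local, this proves the claim.

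The main obstacle — and it is minor — is purely bookkeeping: one must make sure the smooth structure on $\varDelta^n$ (or on the space $C^\infty(\varDelta^n, V)$, depending on which picture one adopts) is the one for which the Proposition preceding this Lemma actually delivers smoothness of $\sigma_n$, and that the evaluation/adjunction used to pass between $\widehat{\sigma}_n : V^{n+1} \rightarrow C^\infty(\varDelta^n, V)$ and $\sigma_n : V^{n+1} \times \varDelta^n \rightarrow V$ is compatible with that structure. Given that setup, the proof is word-for-word the proof of Lemma~\ref{hpfcont} with "continuous" replaced by "smooth" throughout, which is why I would simply write:

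\begin{proof}
The proof is analogous to that of Lemma~\ref{hpfcont}, with continuity replaced by smoothness: the finiteness of $I_W$ and the smoothness of the functions $\varphi_{q,i}$ and the cochains $f_{\vec{j}}$ reduce the question to the smoothness of the adjoint functions $\sigma_n$, which holds by the preceding Proposition, while the fact that $\widehat{\sigma}$ is a morphism of semi-simplicial spaces identifies the map \ref{predefhpsmooth} locally with such a smooth composite.
\end{proof}
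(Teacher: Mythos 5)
Your proposal is correct and follows essentially the same route as the paper, whose entire proof is ``analogous to that of Lemma \ref{hpfcont}'': localise via the locally finite supports to a finite index set, compose the smooth cochains and smooth $\varphi_{q,i}$ with the smooth adjoint $\sigma_k$ from the preceding Proposition, and use that $\widehat{\sigma}$ is a morphism of semi-simplicial spaces to identify the $\vec{x}$-dependent-index map with this smooth composite near each point. The only nitpick is terminological: dropping an index whose $\varphi_{q,\alpha'}$ vanishes at the point (barycentric coordinate $0$) is compatibility with the \emph{face} maps, not a degeneracy, but this is exactly the identity the paper invokes as well.
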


\begin{proof}
  The proof is analogous to that of Lemma \ref{hpfcont}.
\end{proof}

\begin{lemma}
For any smooth partition of unity $\{ \varphi_{q,i} \mid i \in I\}$ 
subordinate to the open cover $\{U_i^{q+1} \mid i \in I \}$ of 
$\mathfrak{U}[q]$ and $p$-cochain $f \in \check{C}^p (\mathfrak{U} , A_s^q)$ 
the maps defined in \ref{predefhpsmooth} form a cochain in 
$\check{C}^{p-1} (\mathfrak{U};A_s^q)$.   
\end{lemma}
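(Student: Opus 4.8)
The plan is to follow the proof of Lemma~\ref{hpfcoch} essentially verbatim, simply replacing continuity by smoothness. Since membership in $\check{C}^{p-1}(\mathfrak{U};A_s^q)$ amounts to two conditions, there are two things to verify: that each map in \ref{predefhpsmooth} is smooth on $U_{i_0 \ldots i_{p-1}}^{q+1}$, and that the family indexed by $(i_0,\ldots,i_{p-1})$ is alternating in its lower indices, so that it lies in the alternating subgroup $\check{C}^{p-1}(\mathfrak{U};A_s^q)$ of $\check{C}^{p-1}(\mathfrak{U};A^q)$.

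The smoothness half is exactly the content of the preceding Lemma~\ref{hpfsmooth}, which I would simply cite. For the record, the mechanism there is that local finiteness of the supports of the functions $\varphi_{q,i}$ ensures that near any point of $\mathfrak{U}[q]$ only finitely many $\varphi_{q,i}$ are nonzero; one may therefore replace, on a neighbourhood, the base-point-dependent ordered index set $\alpha_0 < \cdots < \alpha_n$ by a fixed finite one, and smoothness then follows from smoothness of the adjoint maps $\sigma_n : V^{n+1}\times\varDelta^n \rightarrow V$, while the compatibility of the various local descriptions on overlaps is guaranteed by $\widehat{\sigma}$ being a morphism of semi-simplicial spaces. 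No new input is needed for this half.

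It then remains to show that applying a permutation $s$ of the lower indices $i_0,\ldots,i_{p-1}$ multiplies the resulting map by $\mathrm{sign}(s)$, and this I would check pointwise. Since $f \in \check{C}^p(\mathfrak{U};A_s^q)$ is alternating, $f_{\alpha_j i_{s(0)}\ldots i_{s(p-1)}} = \mathrm{sign}(s)\,f_{\alpha_j i_0\ldots i_{p-1}}$ for every $j = 0,\ldots,n$; if $\mathrm{sign}(s) = +1$ there is nothing to prove, and if $\mathrm{sign}(s) = -1$ one invokes the smooth counterpart of Lemma~\ref{loopthenhom} — namely that $\widehat{\sigma}$, and hence each of the maps $\sigma_n(-,\ldots,-,\varphi_{q,\alpha_0},\ldots,\varphi_{q,\alpha_n})$, is a homomorphism of abelian Lie groups, so negating all of its $V$-arguments negates the value. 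Therefore the permuted family equals $\mathrm{sign}(s)$ times the original, and the maps in \ref{predefhpsmooth} assemble to an element of $\check{C}^{p-1}(\mathfrak{U};A_s^q)$, as claimed.

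I do not expect a genuine obstacle. The only delicate point — gluing smooth maps defined on different open sets because the ordered index set $\alpha_0 < \cdots < \alpha_n$ varies with the base point — is already disposed of in Lemma~\ref{hpfsmooth}; granting that, what carries the weight of the present lemma is the purely pointwise algebraic identity above, identical in shape to the one in the continuous case and driven entirely by $\widehat{\sigma}$ being a morphism of semi-simplicial abelian Lie groups.
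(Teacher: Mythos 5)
Your proposal is correct and takes essentially the same route as the paper, which simply declares the proof analogous to that of Lemma \ref{hpfcoch}: there the alternating property is checked pointwise via the homomorphism property of $\widehat{\sigma}$ (the smooth analogue of Lemma \ref{loopthenhom}), with smoothness of the individual maps supplied by Lemma \ref{hpfsmooth}, exactly as you argue.
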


\begin{proof}
  The proof is analogous to that of Lemma \ref{hpfcoch}.
\end{proof}

\begin{proposition} \label{propchechcontrhomloopsmooth}
For any smooth partition of unity $\{ \varphi_{q,i} \mid i \in I\}$ 
subordinate to the open cover 
$\{U_i^{q+1} \mid i \in I \}$ of $\mathfrak{U}[q]$ the homomorphisms 
\begin{eqnarray}\label{eqchechcontrhomloopsmooth}
  h^{p,q} : \check{C}^p ( \mathfrak{U}, A^q) & \rightarrow &  
\check{C}^{p-1} ( \mathfrak{U}, A^q) \\
(h^{p,q} f)_{i_0 \ldots i_{p-1}} & = &  
\sigma_n \circ ( f_{\alpha_0 i_0 \ldots i_{p-1}} , \ldots , f_{\alpha_n i_0 \ldots i_{p-1}} , 
\varphi_{q, \alpha_0} , \ldots , \varphi_{q, \alpha_n} ) \, , \nonumber
\end{eqnarray}
-- where for each $x \in X$ the indices $\alpha_0 < \cdots < \alpha_n$ are
those satisfying $\varphi_{q, \alpha_i} (\vec{x}) \neq 0$ --
form a contraction of the augmented row 
$A^q (\mathfrak{U};V) \hookrightarrow \check{C}^* ( \mathfrak{U}, A^q)$ which
restricts to a row contraction of the augmented sub-complex 
$A_s^q (\mathfrak{U};V) \hookrightarrow \check{C}^* ( \mathfrak{U}, A_s^q)$.
\end{proposition}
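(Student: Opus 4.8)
The plan is to translate the proof of Proposition \ref{propchechcontrhomloop} line by line, replacing ``continuous'' by ``smooth'' everywhere and the loop contraction $\Phi$ by a smooth one. Three things must be verified: that each $h^{p,q}$ is a homomorphism of abelian groups, that each $h^{p,q}$ carries smooth cochains to smooth cochains, and that the family $h^{*,q}$ satisfies the contraction identity for the augmented row.

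First I would invoke the smooth versions of the observations above. Since $\Phi$ is a smooth loop contraction, the auxiliary map $F(v_0,v_1,t)=v_0+\Phi(v_1-v_0,t)$ is additive in $(v_0,v_1)$, so the inductive definition \eqref{defsigman} shows, exactly as in Lemma \ref{loopthenhom}, that every $\widehat\sigma_n$ is additive; hence $\widehat\sigma:V^{*+1}\to C(\varDelta^*,V)$ is a morphism of semi-simplicial abelian Lie groups, and each $h^{p,q}$ --- being $\sigma_n$ applied to the $V$-valued entries $f_{\alpha_0 i_0\ldots i_{p-1}},\ldots,f_{\alpha_n i_0\ldots i_{p-1}}$ --- is a homomorphism of abelian groups. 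That $h^{p,q}$ restricts to $\check C^p(\mathfrak U,A_s^q)\to\check C^{p-1}(\mathfrak U,A_s^q)$ is exactly the content of Lemma \ref{hpfsmooth} together with the lemma immediately following it.

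For the contraction identity I would fix $f\in\check C^p(\mathfrak U,A^q)$ and a point $\vec x\in\mathfrak U[q]$, and let $\alpha_0<\cdots<\alpha_n$ be the indices with $\varphi_{q,\alpha_i}(\vec x)\neq 0$; then $(\varphi_{q,\alpha_0}(\vec x),\ldots,\varphi_{q,\alpha_n}(\vec x))$ is a point of $\varDelta^n$ because the $\varphi_{q,i}$ sum to $1$. Expanding the \v{C}ech coboundary as $(\delta f)_{\alpha i_0\ldots i_p}=f_{i_0\ldots i_p}-\sum_m(-1)^m f_{\alpha i_0\ldots\hat i_m\ldots i_p}$ and using additivity of $\sigma_n$ in its first $n+1$ arguments yields
\begin{multline*}
(h^{p+1,q}\delta f)_{i_0\ldots i_p}(\vec x)=\\
\sigma_n\bigl(f_{i_0\ldots i_p}(\vec x),\ldots,f_{i_0\ldots i_p}(\vec x),\varphi_{q,\alpha_0}(\vec x),\ldots,\varphi_{q,\alpha_n}(\vec x)\bigr)-(\delta h^{p,q}f)_{i_0\ldots i_p}(\vec x).
\end{multline*}
Since the simplex $\widehat\sigma_n(v,\ldots,v)$ is the constant map at $v$ (smooth counterpart of Proposition \ref{sigmaismorph}, recorded at the beginning of this section), the first summand equals $f_{i_0\ldots i_p}(\vec x)$; the same computation for $p=0$, now with $h^{0,q}\colon\check C^0(\mathfrak U,A^q)\to A^q(\mathfrak U;V)$, exhibits $h^{0,q}$ as a left inverse to the augmentation. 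Thus $\delta h^{p,q}+h^{p+1,q}\delta=\id$, so $h^{*,q}$ is a row contraction of $A^q(\mathfrak U;V)\hookrightarrow\check C^*(\mathfrak U,A^q)$, and by the previous paragraph it restricts to one of $A_s^q(\mathfrak U;V)\hookrightarrow\check C^*(\mathfrak U,A_s^q)$.

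The only input not already present in the purely topological setting is the smoothness of the adjoint maps $\sigma_n$ and of the functions in \eqref{predefhpsmooth}; but this has been secured by the Proposition opening this section and by Lemma \ref{hpfsmooth}, so I anticipate no genuine obstacle --- the argument is a verbatim transcription of the proof of Proposition \ref{propchechcontrhomloop}.
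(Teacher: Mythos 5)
Your proposal is correct and follows exactly the route the paper intends: the paper's own proof is just the remark that the argument is analogous to Proposition \ref{propchechcontrhomloop}, and you carry out that analogy faithfully, citing the smooth semi-simplicial morphism $\widehat\sigma$, Lemma \ref{hpfsmooth} and its companion lemma for smoothness, and repeating the coboundary computation verbatim. Nothing is missing.
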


\begin{proof}
  The proof is analogous to that of Proposition \ref{propchechcontrhomloop}.
\end{proof}

\begin{corollary} \label{uqnumicindisoloopsmooth}
For any open covering $\mathfrak{U}=\{ U_i \mid i \in I \}$ of a manifold 
$M$ for which the coverings $\{ U_i^{q+1} \mid i \in I \}$ of the 
spaces $\mathfrak{U}[q]$ are smoothly numerable the homomorphism 
$i_c^*:A_s^* (\mathfrak{U};V)\rightarrow \tot\check{C}^* (\mathfrak{U},A_s^*)$ 
induces an isomorphism in cohomology.
\end{corollary}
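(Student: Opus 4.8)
The plan is to deduce this exactly as Corollary~\ref{uqnumicindiso} (and its variants Corollaries~\ref{uqnumicindisok} and~\ref{uqnumicindisoloop}) were deduced: by showing that the augmented rows of the double complex $\check{C}^*(\mathfrak{U},A_s^*)$ are exact and then invoking the standard assembly argument for a first-quadrant double complex. First I would unwind the hypothesis: to say that the open covering $\{U_i^{q+1}\mid i\in I\}$ of $\mathfrak{U}[q]$ is smoothly numerable is precisely to say that there is a smooth partition of unity $\{\varphi_{q,i}\mid i\in I\}$ on $\mathfrak{U}[q]$ subordinate to it, i.e.\ with $\supp\varphi_{q,i}\subseteq U_i^{q+1}$ and locally finite supports. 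Performing this for every $q\in\mathbb{N}$ supplies exactly the data required by Proposition~\ref{propchechcontrhomloopsmooth}.

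Next I would invoke Proposition~\ref{propchechcontrhomloopsmooth}: for each fixed $q$ the homomorphisms $h^{p,q}$ built from $\{\varphi_{q,i}\}$ and the smooth right inverse $\widehat{\sigma}$ to the vertex morphism $\lambda_V$ form a row contraction of the augmented row $A_s^q(\mathfrak{U};V)\hookrightarrow\check{C}^*(\mathfrak{U},A_s^q)$; that these maps do land in the smooth subcomplexes is the content of Lemma~\ref{hpfsmooth}. Hence every augmented row of $\check{C}^*(\mathfrak{U},A_s^*)$ is an exact sequence of abelian groups, and the row-augmentation is a contraction (hence a cochain-homotopy equivalence) in each fixed vertical degree $q$.

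Finally I would run the formal argument: an augmentation of a first-quadrant double complex whose augmented rows are exact induces an isomorphism from the cohomology of the augmenting complex $A_s^*(\mathfrak{U};V)$ to the cohomology of the total complex $\tot\check{C}^*(\mathfrak{U},A_s^*)$ — by the acyclic assembly lemma, or equivalently by a zig-zag lifting a total cocycle down the rows using the $h^{*,q}$, or by degeneration of one of the two spectral sequences of the augmented double complex. I expect no genuine obstacle here, since the argument is word-for-word that of Corollary~\ref{uqnumicindiso} with ``smooth'' inserted throughout; the one point that actually uses the hypothesis of the corollary is the existence of the smooth partitions of unity $\varphi_{q,i}$, and the only thing that differs from the $R$-module case is that the row contractions are no longer $R$-linear but instead arise from the smooth loop contraction of $V$ via $\widehat{\sigma}$ — which is precisely why Proposition~\ref{propchechcontrhomloopsmooth} and Lemma~\ref{hpfsmooth} are cited rather than Proposition~\ref{propchechcontrhomsmooth}.
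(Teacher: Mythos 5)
Your proposal is correct and follows exactly the route the paper intends: the smooth numerability hypothesis supplies, for each $q$, a smooth partition of unity $\{\varphi_{q,i}\}$ subordinate to $\{U_i^{q+1}\}$, Proposition~\ref{propchechcontrhomloopsmooth} (with Lemma~\ref{hpfsmooth}) then gives row contractions preserving the smooth subcomplex, and the standard double-complex assembly argument (as in Corollary~\ref{uqnumicindiso}) yields the isomorphism induced by the row augmentation. This is precisely why the paper states the corollary without further proof, so your argument matches the paper's.
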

Recalling the contractibility condition imposed on the abelian Lie group $V$
we proceed to show:

\begin{theorem}\label{isocontuqnumloopsmooth}
For any smoothly loop contractible abelian Lie group $V$ and open covering 
$\mathfrak{U}$ of a manifold $M$ for which each covering 
$\{ U_i^{q+1} \mid i \in I \}$ of $\mathfrak{U}[q]$ is smoothly numerable the 
inclusion $A_s^* (\mathfrak{U};V) \hookrightarrow A^* (\mathfrak{U};V)$ 
induces an isomorphism in cohomology and the cohomologies 
$\check{H} (\mathfrak{U};V)$, $H_s (\mathfrak{U};V)$, $H_c (\mathfrak{U};V)$ 
and $H (\mathfrak{U};V)$ are isomorphic.
\end{theorem}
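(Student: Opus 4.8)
The plan is to repeat, \emph{mutatis mutandis}, the diagram chase that proves Theorem~\ref{isocontuqnum} (and its variants \ref{isocontuqnumk}, \ref{isocontuqnumloop}, \ref{isocontuqnumloopk}), now working with the presheaf $A_s^q(-;V)=C^\infty(-^q;V)$ of smooth $q$-cochains and with the row contractions supplied by Proposition~\ref{propchechcontrhomloopsmooth} in place of those coming from a generalised partition of unity. Concretely, I would first collect the four structural facts about the augmented sub double complex $\check{C}^*(\mathfrak{U},A_s^*)$: its columns are exact, so $j_s^*\colon\check{C}^*(\mathfrak{U};V)\to\tot\check{C}^*(\mathfrak{U},A_s^*)$ is a quasi-isomorphism (Lemma~\ref{jindiso3}); likewise $j^*$ is a quasi-isomorphism (Lemma~\ref{columnsexact}); the augmentation $i^*\colon A^*(\mathfrak{U};V)\to\tot\check{C}^*(\mathfrak{U},A^*)$ is always a quasi-isomorphism (Corollary~\ref{rowsalwexact}); and, under the hypotheses of the theorem, $i_s^*\colon A_s^*(\mathfrak{U};V)\to\tot\check{C}^*(\mathfrak{U},A_s^*)$ is a quasi-isomorphism too (Corollary~\ref{uqnumicindisoloopsmooth}).

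Only the last of these uses the hypotheses, and its proof is where Proposition~\ref{propchechcontrhomloopsmooth} enters: smooth numerability of each covering $\{U_i^{q+1}\mid i\in I\}$ of $\mathfrak{U}[q]$ provides a subordinate smooth $R$-valued partition of unity $\{\varphi_{q,i}\}$, and smooth loop contractibility of $V$ provides the semi-simplicial right inverse $\widehat{\sigma}$ of the vertex morphism $\lambda_V$ with smooth adjoints $\sigma_n$; feeding these into the formula for $h^{p,q}$ yields a genuine row contraction of each augmented row $A_s^q(\mathfrak{U};V)\hookrightarrow\check{C}^*(\mathfrak{U},A_s^q)$, hence row-exactness of $\check{C}^*(\mathfrak{U},A_s^*)$, from which Corollary~\ref{uqnumicindisoloopsmooth} follows by the usual argument on a double complex with exact augmented rows. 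Since a smooth partition of unity is in particular continuous and a smooth loop contraction is in particular a continuous one, the same operators $h^{p,q}$ also restrict to a row contraction of $\check{C}^*(\mathfrak{U},A_c^*)$, so $i_c^*$ is a quasi-isomorphism as well (this is already covered by Corollary~\ref{uqnumicindisoloop}, since the hypotheses here imply its hypotheses).

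With these in hand I would write down the commutative diagram whose three rows are the augmented diagrams for $A_s^*$, $A_c^*$ and $A^*$, whose downward maps are induced by the inclusions $A_s^*\hookrightarrow A_c^*\hookrightarrow A^*$ and the corresponding inclusions of total complexes, and whose rightmost column is the identity on $\check{C}^*(\mathfrak{U};V)$; commutativity holds because these inclusions intertwine the augmentations. Passing to cohomology, every horizontal arrow and the rightmost vertical arrow become isomorphisms, and a short diagram chase — expressing each downward map between total-complex cohomologies as $j^{*}\circ(j_\bullet^{*})^{-1}$ and then each inclusion-induced map $H_s(\mathfrak{U};V)\to H_c(\mathfrak{U};V)\to H(\mathfrak{U};V)$ as the corresponding conjugate of $i_\bullet^{*}$ — forces all the remaining vertical arrows to be isomorphisms. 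Together with Corollary~\ref{cechisolocal}, which identifies $\check{H}(\mathfrak{U};V)$ with $H(\mathfrak{U};V)$, this produces the asserted chain $\check{H}(\mathfrak{U};V)\cong H_s(\mathfrak{U};V)\cong H_c(\mathfrak{U};V)\cong H(\mathfrak{U};V)$.

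The one genuinely non-formal point is the verification, inside Proposition~\ref{propchechcontrhomloopsmooth} and Corollary~\ref{uqnumicindisoloopsmooth}, that the operators $h^{p,q}$ are well defined with values in smooth cochains and satisfy the contraction identity $\delta h^{p,q}+h^{p+1,q}\delta=\id$ on the augmented row; that is precisely where smoothness of the $\sigma_n$ (hence the need for $V$ to be \emph{smoothly} loop contractible rather than merely loop contractible) and local finiteness of the supports of the $\varphi_{q,i}$ are used. Since that verification has already been carried out by smoothing the proof of Proposition~\ref{propchechcontrhomloop}, in the present theorem it suffices to cite Corollary~\ref{uqnumicindisoloopsmooth}; everything else is the diagram chase above, so the proof may simply be declared analogous to that of Theorem~\ref{isocontuqnum}.
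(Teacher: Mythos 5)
Your proposal is correct and follows essentially the same route as the paper: the paper's proof of this theorem is simply declared analogous to that of Theorem \ref{isocontuqnum}, i.e.\ the commutative augmentation diagram with $j^*$, $j_s^*$ (Lemmata \ref{columnsexact}, \ref{jindiso3}), $i^*$ (Corollary \ref{rowsalwexact}) and $i_s^*$ (Corollary \ref{uqnumicindisoloopsmooth}, resting on Proposition \ref{propchechcontrhomloopsmooth}), exactly as you describe. Your added remark that the smooth data are in particular continuous, so that the $H_c(\mathfrak{U};V)$ comparison follows from the loop-contractible continuous case, is the intended way to include that group in the chain of isomorphisms.
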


\begin{proof}
  The proof is analogous to that of Theorem \ref{isocontuqnum}.
\end{proof}

In this case the \v{C}ech Cohomology $\check{H} (\mathfrak{U};V)$ for the
covering $\mathfrak{U}$ of X can be either computed from the complex 
$A_s^* (\mathfrak{U};V)$ of smooth $\mathfrak{U}$-local cochains, from the 
complex $A_c^* (\mathfrak{U};V)$ of continuous $\mathfrak{U}$-local cochains 
or from from the complex $A^* (\mathfrak{U};V)$ of $\mathfrak{U}$-local 
cochains.

In order to obtain results similar to those for continuous cochains we
require the manifold $M$ and their finite products to be smoothly
paracompact. Then passing to the colimit over all smoothly numerable coverings 
yields the classical results: 

\begin{corollary}
  For any manifold $M$ for which all finite powers are smoothly paracompact and 
any smoothly loop contractible abelian Lie group $V$ the 
\v{C}ech cohomology $\check{H} (M;V)$ w.r.t. smoothly numerable coverings 
and the smooth Alexander-Spanier cohomology $H_{AS,s} (X;V)$ w.r.t. numerable 
coverings are isomorphic. 
\end{corollary}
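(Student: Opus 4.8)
The plan is to pass to the colimit in Theorem \ref{isocontuqnumloopsmooth}. By definition the smooth Alexander--Spanier cohomology $H_{AS,s}(M;V)$ with respect to smoothly numerable coverings is the colimit $\colim_\mathfrak{U} H(\mathfrak{U};V)$, and likewise $\check{H}(M;V)$ with respect to smoothly numerable coverings is $\colim_\mathfrak{U} \check{H}(\mathfrak{U};V)$, both colimits being taken over the directed set of smoothly numerable open coverings $\mathfrak{U}$ of $M$ (directedness being seen by intersecting coverings and multiplying the subordinate smooth partitions of unity). It therefore suffices to construct an isomorphism of these two directed systems and pass to the colimit.

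First I would check that Theorem \ref{isocontuqnumloopsmooth} is applicable to \emph{every} smoothly numerable covering $\mathfrak{U}=\{U_i \mid i\in I\}$ of $M$, i.e.\ that each covering $\{U_i^{q+1}\mid i\in I\}$ of $\mathfrak{U}[q]$ is smoothly numerable. Here $\mathfrak{U}[q]=\bigcup_{U\in\mathfrak{U}}U^{q+1}$ is an open subspace of $M^{q+1}$, which is smoothly paracompact by hypothesis; since an open submanifold of a smoothly paracompact manifold is again smoothly paracompact (a fact about the smooth calculus of \cite{BGN04} that should be recorded explicitly), $\mathfrak{U}[q]$ is smoothly paracompact, so every open covering of it---in particular $\{U_i^{q+1}\}$---admits a subordinate smooth partition of unity and is smoothly numerable. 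Alternatively, one may argue directly as in the proof of Corollary \ref{covbycozsmoothg}, after first refining $\mathfrak{U}$ to the cozero covering of a smooth partition of unity subordinate to it; such cozero coverings are cofinal among all smoothly numerable coverings, so the colimit may be computed over this cofinal subsystem.

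Given this, Theorem \ref{isocontuqnumloopsmooth} (using that $V$ is a smoothly loop contractible abelian Lie group) yields for each smoothly numerable covering $\mathfrak{U}$ an isomorphism $\check{H}(\mathfrak{U};V)\cong H(\mathfrak{U};V)$, realised through the zig--zag of the column and row augmentations $j^*,\ i^*$ of the double complex $\check{C}^*(\mathfrak{U},A^*)$ together with the inclusion $A_s^*(\mathfrak{U};V)\hookrightarrow A^*(\mathfrak{U};V)$. All the maps occurring in this zig--zag are natural in $\mathfrak{U}$ with respect to refinement (the augmentations are restrictions and \v{C}ech coboundary maps, the inclusion is the obvious one), so on cohomology the resulting isomorphisms commute with the transition maps of the two directed systems. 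Passing to the colimit over all smoothly numerable coverings then gives $\check{H}(M;V)=\colim_\mathfrak{U}\check{H}(\mathfrak{U};V)\cong\colim_\mathfrak{U}H(\mathfrak{U};V)=H_{AS,s}(M;V)$.

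The step carrying the real content is the applicability check of the second paragraph: establishing that smooth paracompactness of the finite powers $M^{q+1}$ forces the coverings $\{U_i^{q+1}\}$ of the diagonal neighbourhoods $\mathfrak{U}[q]$ to be smoothly numerable. Everything after that is the formal colimit argument already used in Lemma \ref{vacnumarecofin} and its smooth counterpart, so the remaining details are routine.
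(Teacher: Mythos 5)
Your proposal is correct and follows essentially the same route as the paper, which offers no separate argument for this corollary but presents it exactly as you do: apply Theorem \ref{isocontuqnumloopsmooth} to each smoothly numerable covering and pass to the colimit, the hypothesis on smoothly paracompact finite powers serving to guarantee the smooth numerability of the coverings $\{ U_i^{q+1} \mid i \in I\}$ of $\mathfrak{U}[q]$. Your applicability check---in particular the fallback via cozero coverings of subordinate smooth partitions of unity, which are cofinal and handled by the argument of Corollary \ref{covbycozsmoothg}---merely makes explicit what the paper leaves implicit.
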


\begin{corollary}
  For any manifold $M$ for which all finite powers are smoothly paracompact 
and any and smoothly loop contractible abelian 
Lie group $V$ the \v{C}ech cohomology $\check{H} (X;V)$ and the smooth 
Alexander-Spanier cohomology $H_{AS,s} (X;V)$ are isomorphic. 
\end{corollary}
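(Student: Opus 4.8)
The plan is to deduce this from Theorem~\ref{isocontuqnumloopsmooth} by passing to the colimit over all open coverings of $M$, in exactly the way Lemma~\ref{vacnumarecofinsmooth} and the surrounding corollaries handle the acyclic case; the statement is of course about the manifold $M$ (the stray $X$'s being typos).

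First I would verify that, under the hypothesis that $M$ and all its finite powers $M^{q+1}$ are smoothly paracompact, \emph{every} open covering $\mathfrak{U}=\{U_i\mid i\in I\}$ of $M$ already satisfies the hypothesis of Theorem~\ref{isocontuqnumloopsmooth}, i.e.\ each covering $\{U_i^{q+1}\mid i\in I\}$ of $\mathfrak{U}[q]$ is smoothly numerable. Since $\mathfrak{U}[q]=\bigcup_i U_i^{q+1}$ is an open submanifold of the smoothly paracompact manifold $M^{q+1}$, and smooth paracompactness passes to open submanifolds, $\mathfrak{U}[q]$ is smoothly paracompact; in particular its open covering $\{U_i^{q+1}\mid i\in I\}$ admits a subordinate smooth $\mathbb{R}$-valued partition of unity. (Alternatively one may avoid appealing to heredity of smooth paracompactness and argue as in the proof of Corollary~\ref{covbycozsmoothg}: starting from a smooth partition of unity $\{\varphi_i\}$ subordinate to $\mathfrak{U}$ on $M$ one forms the products $\varphi_{i_0}(m_0)\cdots\varphi_{i_q}(m_q)$ on $M^{q+1}$ and invokes the auxiliary lemmas on sums of smooth functions to produce the required smooth partition of unity on $M^{q+1}$ with cozero sets refining, and then subordinate to, $\{U_i^{q+1}\}$.)

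Granting this, Theorem~\ref{isocontuqnumloopsmooth} gives for each open covering $\mathfrak{U}$ of $M$ isomorphisms $H_s(\mathfrak{U};V)\cong H(\mathfrak{U};V)\cong\check{H}(\mathfrak{U};V)$, the first induced by the inclusion $A_s^*(\mathfrak{U};V)\hookrightarrow A^*(\mathfrak{U};V)$ and the others by the augmentations of the double complex $\check{C}^*(\mathfrak{U},A_s^*)$. These maps are compatible with the restriction homomorphisms attached to a refinement $\mathfrak{U}'\prec\mathfrak{U}$ — strictly so for the inclusion, and up to chain homotopy through the double complex, which is all that is needed — so they assemble into a morphism of directed systems indexed by the open coverings of $M$. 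Passing to the filtered colimit, and using that filtered colimits of abelian groups are exact and hence commute with cohomology, one obtains $\colim_\mathfrak{U} H_s(\mathfrak{U};V)\cong\colim_\mathfrak{U} H(\mathfrak{U};V)\cong\colim_\mathfrak{U}\check{H}(\mathfrak{U};V)$. By definition these three colimits are $H_{AS,s}(M;V)$, $H_{AS}(M;V)$ and $\check{H}(M;V)$ respectively, which yields $\check{H}(M;V)\cong H_{AS,s}(M;V)$ (and incidentally reproves $H_{AS,s}(M;V)\cong H_{AS}(M;V)$). The only step that requires genuine care is the first one — confirming that the smooth-numerability condition on the coverings $\{U_i^{q+1}\}$ of the diagonal neighbourhoods $\mathfrak{U}[q]$ holds for a cofinal (here: every) family of open covers of $M$; once that is settled the remainder is the routine colimit formalism already used repeatedly in the paper.
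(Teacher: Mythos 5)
Your proposal follows the route the paper itself intends here: the corollary is meant to be read off from Theorem~\ref{isocontuqnumloopsmooth} by passing to the colimit over a cofinal family of coverings, exactly as you set it up, and your colimit formalism (strict compatibility of the inclusion maps with refinements, exactness of filtered colimits, cofinality) is fine. The one step to be careful about is your claim that \emph{every} open covering of $M$ already satisfies the hypothesis of that theorem. The paper allows infinite-dimensional manifolds, and neither paracompactness nor smooth paracompactness is hereditary to open subspaces in that generality, so the appeal to ``smooth paracompactness passes to open submanifolds'' for $\mathfrak{U}[q]\subseteq M^{q+1}$ is not justified as stated; and your alternative product-of-partitions argument only yields a smooth partition of unity on $\mathfrak{U}[q]$ subordinate to $\{U_i^{q+1}\mid i\in I\}$ when the $U_i$ are precisely the cozero sets of the $\varphi_i$ -- for a merely subordinate partition of unity all the diagonal products $\varphi_i(x_0)\cdots\varphi_i(x_q)$ can vanish at a point of $\mathfrak{U}[q]$ (take two overlapping intervals and a point of the overlap whose coordinates are separated by the two bump functions), so they need not generate the required cover. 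This is harmless for the corollary: since $M$ is smoothly paracompact, every open covering is refined by the cozero covering of a subordinate smooth partition of unity, the argument of Corollary~\ref{covbycozsmoothg} (in its loop-contractible version) applies to these refinements, and such coverings are cofinal in all open coverings -- which, as your final sentence already acknowledges, is all the colimit step actually uses.
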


\begin{example}
  If a manifold $M$ with smoothly paracompact powers has trivial 
\v{C}ech cohomology 
$\check{H} (M;V)$ (e.g. if $M$ is contractible) and $V$ is smoothly loop 
contractible, then the smooth Alexander-Spanier cohomology $H_{AS,s} (M;V)$ is 
trivial as well.
\end{example}

\begin{proposition} \label{lemappcechcontrhomressmooth}
For any smoothly loop contractible abelian lie group $V$ and open identity 
neighbourhood $U$ of a Lie group $G$ for which all finite products are 
smoothly paracompact, there exists an open identity neighbourhood 
$W \subseteq U$ and homomorphisms 
$h^{p,q} : \check{C}^p ( \mathfrak{U}_U, A^q) \rightarrow  
\check{C}^{p-1} ( \mathfrak{U}_U , A^q)$ satisfying the equation
\begin{equation*} \label{eqappcechcontrhomressmooth}
\res_{\mathfrak{U}_W ,\mathfrak{U}_U}^{p,q} 
\left[ \delta h^{p,q} + h^{p+1,q} \delta \right]
= \res_{\mathfrak{U}_W ,\mathfrak{U}_U}^{p,q} \,
\end{equation*}  
and which leave the sub-rows $\check{C}^* ( \mathfrak{U}_U , A_s^q)$ and 
$\check{C}^* ( \mathfrak{U}_U , A_c^q)$ invariant. In particular the colimit
double complex 
$\colim_{U \in \mathcal{U}_1} \check{C}^* ( \mathfrak{U}_U ;A_s^*)$ is row-exact.
\end{proposition}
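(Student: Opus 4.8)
The plan is to transplant the row contraction of Proposition~\ref{propchechcontrhomloopsmooth} to the left‑invariant coverings $\mathfrak{U}_U$, conceding --- exactly as Proposition~\ref{lemappcechcontrhomres} does for the linear homotopy --- only an \emph{approximate} contraction that becomes a genuine one after restriction to a smaller $\mathfrak{U}_W$. First I would invoke Lemma~\ref{foropenuexistsvandphi} with $R=\mathbb{R}$ (its neighbourhood, there called $V$, renamed $W$ to avoid clash with the coefficient group): since all finite products of $G$ are smoothly paracompact, there are an open identity neighbourhood $W\subseteq U$ and, for each $q$, non‑negative smooth real functions $\{\varphi_{q,g}\mid g\in G\}$ together with $\psi_q$ forming a partition of unity of $G^{q+1}$, with $\supp\varphi_{q,g}\subseteq(gU)^{q+1}$ and $\supp\psi_q\subseteq G^{q+1}\setminus\overline{\mathfrak{U}_W[q]}$, so that $\sum_g\varphi_{q,g}\equiv 1$ and $\psi_q\equiv 0$ on $\mathfrak{U}_W[q]$; the same $W$ works for all $q$. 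For $f\in\check{C}^p(\mathfrak{U}_U,A^q)$, each $f_{gi_0\ldots i_{p-1}}$ extended by zero outside its domain, I would set for $\vec x\in U_{i_0\ldots i_{p-1}}^{q+1}$
\begin{equation*}
h^{p,q}(f)_{i_0\ldots i_{p-1}}(\vec x):=\sigma_{n+1}\bigl(f_{\alpha_0 i_0\ldots i_{p-1}}(\vec x),\ldots,f_{\alpha_n i_0\ldots i_{p-1}}(\vec x),0\bigr)\bigl(\varphi_{q,\alpha_0}(\vec x),\ldots,\varphi_{q,\alpha_n}(\vec x),\psi_q(\vec x)\bigr),
\end{equation*}
where $\alpha_0<\cdots<\alpha_n$ are the indices $g$ with $\varphi_{q,g}(\vec x)\neq0$. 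Since the weights are non‑negative and sum to $1$, the point $(\varphi_{q,\alpha_0}(\vec x),\ldots,\varphi_{q,\alpha_n}(\vec x),\psi_q(\vec x))$ lies in $\varDelta^{n+1}$, so $h^{p,q}(f)_{i_0\ldots i_{p-1}}$ is honestly defined on all of $U_{i_0\ldots i_{p-1}}^{q+1}$; this appended vertex is the one new ingredient compared to Proposition~\ref{lemappcechcontrhomres}.

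Second, I would verify the formal properties of $h^{p,q}$. Continuity (resp.\ smoothness) of $h^{p,q}(f)_{i_0\ldots i_{p-1}}$ for continuous (resp.\ smooth) $f$, the fact that $h^{p,q}(f)$ is an alternating cochain, and additivity of $h^{p,q}$ in $f$ all carry over from Lemmas~\ref{hpfcont}, \ref{hpfcoch}, \ref{hpfsmooth} and~\ref{loopthenhom} (in their smooth forms, using that $\widehat\sigma$ is here a morphism of semi‑simplicial abelian Lie groups): near any point only finitely many $\varphi_{q,g}$ are non‑zero, dropping an $\alpha_k$ as it crosses its zero set changes nothing because $\widehat\sigma$ is a semi‑simplicial morphism, and the extra constant vertex $0$ carrying weight $\psi_q$ is respected by the last face map, so it is inert throughout. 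In particular $h^{p,q}$ leaves $\check{C}^*(\mathfrak{U}_U,A_s^q)$ and $\check{C}^*(\mathfrak{U}_U,A_c^q)$ invariant. Repeating the computation in the proof of Proposition~\ref{propchechcontrhomloop} verbatim, with additivity of $\sigma_{n+1}$ and $(\delta f)_{gi_0\ldots i_p}=f_{i_0\ldots i_p}-\sum_k(-1)^k f_{gi_0\ldots\hat i_k\ldots i_p}$, gives for $\vec x\in U_{i_0\ldots i_p}^{q+1}$
\begin{equation*}
\bigl(\delta h^{p,q}f+h^{p+1,q}\delta f\bigr)_{i_0\ldots i_p}(\vec x)=\sigma_{n+1}\bigl(f_{i_0\ldots i_p}(\vec x),\ldots,f_{i_0\ldots i_p}(\vec x),0\bigr)\bigl(\varphi_{q,\alpha_0}(\vec x),\ldots,\varphi_{q,\alpha_n}(\vec x),\psi_q(\vec x)\bigr).
\end{equation*}
For $\vec x\in\mathfrak{U}_W[q]$ one has $\psi_q(\vec x)=0$, so this evaluation lies on the last face of $\varDelta^{n+1}$, where $\widehat\sigma_{n+1}(v,\ldots,v,0)$ restricts to the constant simplex $\widehat\sigma_n(v,\ldots,v)=v$ (Proposition~\ref{sigmaismorph}); hence the right‑hand side is $f_{i_0\ldots i_p}(\vec x)$. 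As $(\res_{\mathfrak{U}_W,\mathfrak{U}_U}^{p,q}g)_{i_0\ldots i_p}$ is $g_{i_0\ldots i_p}$ restricted to $(i_0W\cap\cdots\cap i_pW)^{q+1}\subseteq(i_0W)^{q+1}\subseteq\mathfrak{U}_W[q]$, this is precisely $\res_{\mathfrak{U}_W,\mathfrak{U}_U}^{p,q}\bigl[\delta h^{p,q}+h^{p+1,q}\delta\bigr]=\res_{\mathfrak{U}_W,\mathfrak{U}_U}^{p,q}$.

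Third, for row‑exactness of the colimit: since $W=W(U)$ is independent of $q$, a $\delta$‑cocycle $f\in\check{C}^p(\mathfrak{U}_U,A_s^q)$ satisfies $\res_{\mathfrak{U}_W,\mathfrak{U}_U}f=\delta(\res_{\mathfrak{U}_W,\mathfrak{U}_U}h^{p,q}f)$; passing further along $\res_{\mathfrak{U}_{W'},\mathfrak{U}_W}$ for some $W'\in\mathcal{U}_1$ with $W'\subseteq W$ shows that the class of $f$ in $\colim_{U\in\mathcal{U}_1}\check{C}^p(\mathfrak{U}_U,A_s^q)$ is a $\delta$‑coboundary, and the same argument works with $A_c^*$ in place of $A_s^*$. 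The main obstacle is exactly what forces $\psi_q$ into the picture: the $\sigma_{n+1}$‑valued homotopy is only meaningful where its weight vector is a genuine barycentric coordinate, whereas Lemma~\ref{foropenuexistsvandphi} only produces weights summing to $1$ on the shrunken $\mathfrak{U}_W[q]$; one must therefore add the slack vertex and then check carefully that this extra, constant vertex is invisible to every face‑map identity on which both the regularity of $h^{p,q}$ and the contraction formula rely.
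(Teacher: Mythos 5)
Your proposal is correct and follows the same overall strategy as the paper's proof: invoke Lemma \ref{foropenuexistsvandphi} to obtain $W\subseteq U$ and the functions $\varphi_{q,g}$ summing to $1$ on $\mathfrak{U}_W[q]$, define an approximate row contraction via the semi-simplicial morphism $\widehat{\sigma}$, and observe that the contraction identity becomes exact after applying $\res_{\mathfrak{U}_W,\mathfrak{U}_U}$. Where you differ is in the homotopy operator itself: the paper simply reuses the formula $(h^{p,q}f)_{i_0\ldots i_{p-1}}=\sigma_n\circ(f_{\alpha_0 i_0\ldots i_{p-1}},\ldots,f_{\alpha_n i_0\ldots i_{p-1}},\varphi_{q,\alpha_0},\ldots,\varphi_{q,\alpha_n})$ with the weights produced by Lemma \ref{foropenuexistsvandphi}, even though off $\mathfrak{U}_W[q]$ these weights need only satisfy $\sum_g\varphi_{q,g}\leq 1$, so the weight vector need not lie in $\varDelta^n$ and $\sigma_n$ (defined on $V^{n+1}\times\varDelta^n$) is, read literally, not being evaluated on its domain there; your device of appending the slack vertex $0\in V$ with weight $\psi_q=1-\sum_g\varphi_{q,g}$ and using $\sigma_{n+1}$ repairs exactly this, makes $h^{p,q}$ well defined on all of $\check{C}^p(\mathfrak{U}_U,A^q)$, and your use of the last face identity $\widehat{\sigma}_{n+1}(v,\ldots,v,0)\circ\epsilon_{n+1}=\widehat{\sigma}_n(v,\ldots,v)=v$ recovers the exact contraction precisely where $\psi_q=0$, i.e.\ after restriction to $\mathfrak{U}_W$. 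The remaining verifications (regularity via local finiteness and the semi-simplicial face identities, the alternating property and additivity via Lemma \ref{loopthenhom}, invariance of the smooth and continuous sub-rows, and the passage to the colimit for row-exactness) are carried out as in Lemmas \ref{hpfcont}--\ref{hpfcoch} and Proposition \ref{propchechcontrhomloop}, exactly as the paper intends; so your argument buys a formally watertight definition of $h^{p,q}$ at the cost of one extra vertex, while the paper's version is shorter but leaves this domain issue implicit. One small point to keep explicit: you need the $\varphi_{q,g}$ (and $\psi_q$) non-negative for the weight vector to lie in $\varDelta^{n+1}$; for $R=\mathbb{R}$ this can always be arranged (e.g.\ replacing $\chi_i$ by $\chi_i^2/\sum_j\chi_j^2$), and the same implicit assumption is already present in the paper's Proposition \ref{propchechcontrhomloop}.
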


\begin{proof}
  For any open identity neighbourhood $U$ of a Lie group $G$ for which all
finite products are smoothly paracompact Lemma \ref{foropenuexistsvandphi} 
shows the existence of an open identity neighbourhood $W \subseteq U$ and 
smooth real-valued functions $\{\varphi_{q,g} \mid G \in G\}$ with locally 
finite supports in $(gU) \times \cdots \times (gU)$ respectively such that 
the restriction of each function $\varphi_q = \sum_{g \in G} \varphi_{q,g}$ to 
$\mathfrak{W}[q]$ is the constant function $1$. Then the homomorphisms 
\begin{eqnarray}
  h^{p,q} : \check{C}^p ( \mathfrak{U}, A^q) & \rightarrow &  
\check{C}^{p-1} ( \mathfrak{U}, A^q) \\
(h^{p,q} f)_{i_0 \ldots i_{p-1}} & = &  
\sigma_n \circ ( f_{\alpha_0 i_0 \ldots i_{p-1}} , \ldots , f_{\alpha_n i_0 \ldots i_{p-1}} , 
\varphi_{q, \alpha_0} , \ldots , \varphi_{q, \alpha_n} ) \, , \nonumber
\end{eqnarray}
-- where for each $x \in X$ the indices $\alpha_0 < \cdots < \alpha_n$ are
those satisfying $\varphi_{q, \alpha_i} (\vec{x}) \neq 0$ -- have the desired 
property.
\end{proof}

\begin{corollary} \label{corfinprodpcthenjsindisoloop}
For any open neighbourhood filterbase $\mathcal{U}_1$ of a Lie group $G$ whose 
finite products are smoothly paracompact and smoothly loop contractible
abelian Lie group $V$ the morphisms $i_s^*$ induce an isomorphism 
$\colim_{U \in \mathcal{U}_1} H_s (\mathfrak{U}_U ;V) \cong 
\colim_{U \in \mathcal{U}_1} H (\tot\check{C}^* (\mathfrak{U}_U ;A_s^*))$.  
\end{corollary}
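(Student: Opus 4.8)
The plan is to deduce this corollary from Proposition \ref{lemappcechcontrhomressmooth} together with the elementary fact that filtered colimits are exact. First I would note that the morphisms $i_s^* : A_s^* (\mathfrak{U}_U ;V) \rightarrow \tot \check{C}^* (\mathfrak{U}_U ; A_s^*)$, being induced by the row augmentations, are compatible with the restriction maps $\res_{\mathfrak{U}_W , \mathfrak{U}_U}$ for $W \subseteq U$ in $\mathcal{U}_1$; hence they assemble into a morphism $\colim_{U \in \mathcal{U}_1} i_s^*$ of cochain complexes. Since a filtered colimit of a double complex, formed in each bidegree, is again a double complex and the total complex is in each degree a finite direct sum over bidegrees, the target $\colim_{U \in \mathcal{U}_1} \tot \check{C}^* (\mathfrak{U}_U ; A_s^*)$ is canonically the total complex of the colimit double complex $\colim_{U \in \mathcal{U}_1} \check{C}^* (\mathfrak{U}_U ; A_s^*)$, and $\colim_{U \in \mathcal{U}_1} i_s^*$ is its row augmentation.

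Next I would invoke Proposition \ref{lemappcechcontrhomressmooth}, which states precisely that this colimit double complex is row-exact — the approximate row contractions available at each finite stage $U$, valid after restriction to a suitable smaller $W \subseteq U$, combine to honest contractions of the augmented rows in the colimit. Given row-exactness, the standard argument for first-quadrant double complexes (the one already used in the proof of Corollary \ref{rowsalwexact}; e.g.\ the spectral sequence whose $E_1$-term is the horizontal cohomology of the double complex is then concentrated in horizontal degree zero) shows that the row augmentation $\colim_{U \in \mathcal{U}_1} i_s^*$ induces an isomorphism in cohomology:
\begin{equation*}
H \left( \colim_{U \in \mathcal{U}_1} A_s^* (\mathfrak{U}_U ;V) \right) \cong
H \left( \tot \, \colim_{U \in \mathcal{U}_1} \check{C}^* (\mathfrak{U}_U ; A_s^*) \right) .
\end{equation*}

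Finally, since filtered colimits commute with the formation of cohomology, the left-hand group is $\colim_{U \in \mathcal{U}_1} H_s (\mathfrak{U}_U ;V)$ and the right-hand group is $\colim_{U \in \mathcal{U}_1} H (\tot \check{C}^* (\mathfrak{U}_U ; A_s^*))$, which is the asserted isomorphism. The one substantial ingredient here is Proposition \ref{lemappcechcontrhomressmooth}; everything else is the formal bookkeeping of filtered colimits and the spectral sequence of a first-quadrant double complex, so I expect no real obstacle beyond checking that the neighbourhoods $W$ furnished by Proposition \ref{lemappcechcontrhomressmooth} may always be chosen inside $\mathcal{U}_1$ so that the contraction identities can be applied in the colimit — which is automatic since $\mathcal{U}_1$ is a filterbase.
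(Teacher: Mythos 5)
Your argument is correct and is essentially the proof the paper intends: the corollary is stated as an immediate consequence of Proposition \ref{lemappcechcontrhomressmooth}, whose row-exactness of the colimit double complex, combined with the standard augmented-row argument (as in Corollary \ref{rowsalwexact}) and the exactness of filtered colimits over the filterbase $\mathcal{U}_1$, yields the stated isomorphism. Your explicit bookkeeping (total complex commuting with the colimit, restriction to members of $\mathcal{U}_1$ inside the neighbourhoods $W$) fills in exactly the routine steps the paper leaves implicit.
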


Summarising the preceding observations for Lie groups we have shown:

\begin{theorem}
 For any open neighbourhood filterbase $\mathcal{U}_1$ of a Lie group $G$ whose 
finite products are smoothly paracompact and smoothly loop contractible abelian
Lie group $V$ the morphisms 
$A_s^* (\mathfrak{U}_U ;V) \hookrightarrow A^* (\mathfrak{U}_U ;V)$ and 
$C (\lambda_{\mathfrak{U}_U}^* ,V) :  A_s^* (\mathfrak{U}_U ; V) 
\rightarrow S^* (\mathfrak{U}_U ;V)$ for all $U \in \mathcal{U}_1$ induce 
isomorphisms  
\begin{equation*}
 \colim_{U \in \mathcal{U}_1} H_s (\mathfrak{U}_U ;V) \cong 
\colim_{U \in \mathcal{U}_1} H (\mathfrak{U}_U ;V) \cong H_{sing} (G;V)
\end{equation*}
in cohomology.
\end{theorem}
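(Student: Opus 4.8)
The plan is to reproduce the argument of Theorem~\ref{colimspclg} almost verbatim, substituting Corollary~\ref{corfinprodpcthenjsindisoloop} for Corollary~\ref{corfinprodpcthenjsindiso}. The only genuinely new point is that, for a fixed $U$, the rows of the double complex $\check{C}^*(\mathfrak{U}_U,A_s^*)$ need not be exact; exactness is recovered only after passing to the colimit over $\mathcal{U}_1$, where the approximative row contractions of Proposition~\ref{lemappcechcontrhomressmooth} assemble into honest ones.

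First I would note that the inclusions $A_s^*(\mathfrak{U}_U;V)\hookrightarrow A^*(\mathfrak{U}_U;V)$ and $\tot\check{C}^*(\mathfrak{U}_U;A_s^*)\hookrightarrow\tot\check{C}^*(\mathfrak{U}_U;A^*)$ intertwine the augmentations $i_s^*$ and $i^*$ and are compatible with $j_s^*$, $j^*$ and the identity on $\check{C}^*(\mathfrak{U}_U;V)$. Passing to the filtered colimit over $U\in\mathcal{U}_1$, and using that cohomology commutes with filtered colimits of cochain complexes, one obtains the commutative diagram
\begin{equation*}
\xymatrix{
\colim_{U} A_s^*(\mathfrak{U}_U;V)\ar[r]^{i_s^*}\ar[d] & \colim_{U}\tot\check{C}^*(\mathfrak{U}_U,A_s^*)\ar[d] & \colim_{U}\check{C}(\mathfrak{U}_U;V)\ar[l]_{j_s^*}\ar@{=}[d]\\
\colim_{U} A^*(\mathfrak{U}_U;V)\ar[r]^{i^*} & \colim_{U}\tot\check{C}^*(\mathfrak{U}_U,A^*) & \colim_{U}\check{C}(\mathfrak{U}_U;V)\ar[l]_{j^*}
}
\end{equation*}
of cochain complexes.

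Next I would establish the cohomology isomorphisms in this diagram. By Lemmata~\ref{columnsexact} and~\ref{jindiso3} the maps $j^*$ and $j_s^*$ induce isomorphisms in cohomology for each $U$, hence also in the colimit; together with the right-hand vertical being the identity, this forces the middle vertical arrow to be a cohomology isomorphism. The map $i^*$ induces an isomorphism already for each $U$ by Corollary~\ref{rowsalwexact}, hence in the colimit. Finally $i_s^*$ induces an isomorphism in the colimit by Corollary~\ref{corfinprodpcthenjsindisoloop} --- this is where the hypothesis that all finite products of $G$ are smoothly paracompact enters. A diagram chase in the left square then shows that the left vertical arrow $\colim_{U}H_s(\mathfrak{U}_U;V)\to\colim_{U}H(\mathfrak{U}_U;V)$ is an isomorphism, which is the first asserted isomorphism.

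For the second isomorphism I would invoke Theorem~\ref{colimnbhf}: since every Lie group is locally contractible, that theorem yields $\colim_{U}H(\mathfrak{U}_U;V)\cong H_{sing}(G;V)$ and states that this isomorphism is induced by the vertex morphism $\lambda$; composing with the first isomorphism and noting that the composite is induced by $C(\lambda_{\mathfrak{U}_U}^*,V):A_s^*(\mathfrak{U}_U;V)\to S^*(\mathfrak{U}_U;V)$ completes the proof. I expect the only real obstacle to be the row-exactness of $\colim_U\check{C}^*(\mathfrak{U}_U;A_s^*)$: for a single $U$ one has merely the homotopies of Proposition~\ref{lemappcechcontrhomressmooth}, which satisfy the contraction identity only after restriction to a smaller $\mathfrak{U}_W$, and the point --- already carried out in Corollary~\ref{corfinprodpcthenjsindisoloop} --- is that these restriction maps become identities in the colimit, so that the homotopies glue to a genuine contracting homotopy; once that is granted, everything else is the formal diagram chase of Theorem~\ref{colimspclg}.
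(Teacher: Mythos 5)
Your proposal is correct and follows essentially the same route as the paper, which proves this theorem by repeating the argument of Theorem~\ref{colimspclg} with Corollary~\ref{corfinprodpcthenjsindisoloop} (resting on Proposition~\ref{lemappcechcontrhomressmooth}) in place of Corollary~\ref{corfinprodpcthenjsindiso}, and then invoking Theorem~\ref{colimnbhf} for the comparison with singular cohomology. Your emphasis that row-exactness of $\check{C}^*(\mathfrak{U}_U,A_s^*)$ holds only after passing to the colimit, where the restriction maps absorb the shrinking from $\mathfrak{U}_U$ to $\mathfrak{U}_W$, is exactly the point the paper's construction is designed to handle.
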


\begin{proof}
  The proof is analogous to that of Theorem \ref{colimspclg}.
\end{proof}

\appendix

\section{Partitions of Unity}

\begin{lemma} \label{varphiisummablesubssummable}
  For each summable set of function $\varphi_i :X \rightarrow V$, $i \in I$
  into a complete Hausdorff abelian group $V$ all subsets of functions are
  also summable.
\end{lemma}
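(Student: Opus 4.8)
The plan is to reduce everything to the purely algebraic--topological fact that a subfamily of a summable family in a complete Hausdorff abelian topological group is again summable, and to prove that fact through the Cauchy criterion for summability. First I would fix the notion of summability in force: a family $(\varphi_i)_{i\in I}$ of functions $X\to V$ is summable if for each $x\in X$ the net of finite partial sums $\bigl(\sum_{i\in F}\varphi_i(x)\bigr)$, indexed by the directed set of finite subsets $F\subseteq I$, converges in $V$; and "complete" is read in the sense that every Cauchy net with respect to the group uniformity of $V$ converges, which by Hausdorffness it does to a unique limit. Since summability is tested pointwise, it suffices to treat a summable family $(v_i)_{i\in I}$ in a complete Hausdorff abelian topological group $V$ and an arbitrary subset $J\subseteq I$, and to show $(v_i)_{i\in J}$ is summable. (The same argument goes through verbatim if "summable" is instead interpreted in some topology turning the functions $X\to V$ into a complete Hausdorff abelian group.)

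Next I would record the Cauchy criterion: $(v_i)_{i\in I}$ is summable if and only if for every neighbourhood $U$ of $0$ in $V$ there is a finite set $F\subseteq I$ with $\sum_{i\in G}v_i\in U$ for every finite $G\subseteq I$ with $G\cap F=\varnothing$. For the direction I need, let $s:=\sum_{i\in I}v_i$, pick a symmetric neighbourhood $U'$ of $0$ with $U'+U'\subseteq U$, and choose a finite $F$ with $\sum_{i\in F'}v_i-s\in U'$ for all finite $F'\supseteq F$; then for finite $G$ disjoint from $F$ one has $\sum_{i\in G}v_i=\bigl(\sum_{i\in F\cup G}v_i-s\bigr)-\bigl(\sum_{i\in F}v_i-s\bigr)\in U'+U'\subseteq U$.

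The key observation is then that this criterion is inherited by subfamilies: given $J\subseteq I$ and a neighbourhood $U$ of $0$, take $F$ as above for $(v_i)_{i\in I}$; the set $F\cap J$ is finite, and any finite $G\subseteq J$ disjoint from $F\cap J$ is disjoint from $F$, hence $\sum_{i\in G}v_i\in U$. To pass from this back to summability of $(v_i)_{i\in J}$ I would show that the net of finite partial sums over $J$ is Cauchy: choosing a symmetric $U'$ with $U'+U'\subseteq U$ and $F$ as above, for finite $F_1,F_2\supseteq F\cap J$ the difference $\sum_{i\in F_1}v_i-\sum_{i\in F_2}v_i=\sum_{i\in F_1\setminus F_2}v_i-\sum_{i\in F_2\setminus F_1}v_i$ lies in $U'+U'\subseteq U$, since $F_1\setminus F_2$ and $F_2\setminus F_1$ are finite subsets of $J$ disjoint from $F\cap J$. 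Completeness of $V$ supplies a limit, Hausdorffness makes it unique, and therefore $(v_i)_{i\in J}$ — and hence the original subfamily of functions — is summable.

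I expect no genuine obstacle here: the proof is routine. The only two points warranting care are being explicit about which completeness/uniformity notion is used, so that "Cauchy net $\Rightarrow$ convergent" is actually available for a possibly non-metrizable $V$, and the bookkeeping with symmetric neighbourhoods when translating between summability, the Cauchy criterion, and Cauchyness of the net of partial sums.
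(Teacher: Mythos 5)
Your proof is correct and follows essentially the same route as the paper: both reduce to a fixed point $x$, show that the net of finite partial sums over the subset $J$ is Cauchy by comparing against finite partial sums of the full family inside a suitable identity neighbourhood, and then invoke completeness and Hausdorffness of $V$. The only cosmetic difference is that you factor out an explicit "tails are small" Cauchy criterion before treating $J$, whereas the paper achieves the same effect in one step by padding the two finite subsets of $J$ with $I_{x,W}\setminus J$ so that both become supersets of $I_{x,W}$.
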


\begin{proof}
  Let $\varphi_i :X \rightarrow V$, $i \in I$ be a summable set of functions
  into a complete abelian Hausdorff group $V$ 
and $J \subseteq I$ be a subset of $I$. We claim that for each 
$x \in X$ the net of finite partial sums of $\varphi_j (x) $, $j \in J$ is a 
Cauchy net. For each identity neighbourhood $U$ in $V$ there exists a finite 
subset $I_{x,U}$ of $I$ such that
\begin{equation} \label{sumvarphiiinvarphipu}
 \sum_{i \in I'} \varphi_i (x) \in \varphi (x) + U 
\end{equation}
for all finite supersets $I' \supset I_{x,U} $. Choose an identity
neighbourhood $W$ in $V$ satisfying $W-W \subseteq U$ and consider the finite 
subset 
$J_{x,W}:=J \cap I_{x,W}$ of $J$. For all finite supersets $J',J''$ of
$J_{x,W}$ the above relation \ref{sumvarphiiinvarphipu} implies 
\begin{equation*}
 \sum_{j \in J'} \varphi_j (x) - \sum_{j \in J''} \varphi_j (x) = 
\sum_{j \in J' \cup (I_{x,W} \setminus J )} \varphi_j (x) - 
\sum_{j \in J''\cup (I_{x,W} \setminus J )} \varphi_j (x) \in W -W \subseteq U
\, 
\end{equation*}
hence the the net of finite partial sums of $\varphi_j (x) $, $j \in J$ is a 
Cauchy net. Since $V$ is complete and Hausdorff, this Cauchy net converges and
the limit is unique.
\end{proof}

\begin{lemma}
  For every summable set of real valued functions 
$\varphi_i :X \rightarrow \mathbb{R}$, $i \in I$ the set 
$| \varphi_i |$, $i \in I$ of non-negative functions is also summable.
\end{lemma}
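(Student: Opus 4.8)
The statement to prove is: for every summable family of real‑valued functions $\varphi_i : X \rightarrow \mathbb{R}$, $i \in I$, the family $|\varphi_i|$, $i \in I$, is also summable. The natural approach is to reduce to the previous lemma (Lemma \ref{varphiisummablesubssummable}) by splitting each $\varphi_i$ into its positive and negative parts and controlling the partial sums of $|\varphi_i| = \varphi_i^+ + \varphi_i^-$ pointwise. First I would fix $x \in X$ and observe that summability of $(\varphi_i)_{i \in I}$ means the net of finite partial sums of $(\varphi_i(x))_{i \in I}$ converges in $\mathbb{R}$; in particular this net is Cauchy. The goal is to show the net of finite partial sums of $(|\varphi_i(x)|)_{i \in I}$ is Cauchy as well, which by completeness of $\mathbb{R}$ gives a limit, and uniqueness is automatic by the Hausdorff property.

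**Key steps.** Let $I_+(x) := \{ i \in I \mid \varphi_i(x) > 0 \}$ and $I_-(x) := \{ i \in I \mid \varphi_i(x) < 0 \}$. By Lemma \ref{varphiisummablesubssummable} the subfamilies $(\varphi_i)_{i \in I_+(x)}$ and $(\varphi_i)_{i \in I_-(x)}$ are each summable — more precisely, since summability of the whole family entails summability of every subfamily, the nets of finite partial sums over these index subsets converge. Write $s_+ := \sum_{i \in I_+(x)} \varphi_i(x)$ and $s_- := \sum_{i \in I_-(x)} \varphi_i(x)$. Now for any finite subset $F \subseteq I$ we have
\begin{equation*}
\sum_{i \in F} |\varphi_i(x)| = \sum_{i \in F \cap I_+(x)} \varphi_i(x) - \sum_{i \in F \cap I_-(x)} \varphi_i(x),
\end{equation*}
and as $F$ runs over finite subsets of $I$, the two terms on the right run (cofinally) over finite partial sums of the respective convergent subfamily nets; hence the net of finite partial sums of $(|\varphi_i(x)|)_{i \in I}$ converges to $s_+ - s_-$. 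Since this holds for every $x \in X$, the family $(|\varphi_i|)_{i \in I}$ is summable as a family of functions $X \rightarrow \mathbb{R}$, with pointwise sum $x \mapsto s_+(x) - s_-(x)$.

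**Main obstacle.** The only subtlety is a bookkeeping one: one must check that the Cauchy/convergence property for the full net $(\sum_{i \in F} \cdot)_F$ indexed by finite subsets $F \subseteq I$ transfers correctly to the nets indexed by finite subsets of $I_+(x)$ and $I_-(x)$, and back. Concretely, given an $\varepsilon > 0$, choose finite sets $F_\pm \subseteq I_\pm(x)$ witnessing Cauchyness of the two subnets within $\varepsilon/2$; then for any two finite $F', F'' \supseteq F_+ \cup F_-$ in $I$, splitting into positive and negative indices and using the triangle inequality gives $\bigl| \sum_{i \in F'} |\varphi_i(x)| - \sum_{i \in F''} |\varphi_i(x)| \bigr| < \varepsilon$. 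This is the same style of estimate as in the proof of Lemma \ref{varphiisummablesubssummable}, just applied twice, so no genuinely new difficulty arises. A reader who prefers to avoid invoking subfamily summability explicitly can instead argue directly: a Cauchy net of reals is bounded, so the partial sums $\sum_{i \in F} \varphi_i^+(x)$ are bounded above (being dominated, for $F$ large, by finitely‑perturbed partial sums of the full family), monotone in $F$, hence convergent; likewise for $\varphi_i^-$.
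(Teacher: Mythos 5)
Your proposal is correct and follows essentially the same route as the paper: fix $x \in X$, split the index set into the indices where $\varphi_i(x)$ is non-negative respectively negative, invoke Lemma \ref{varphiisummablesubssummable} for the two subfamilies, and conclude that $\sum_i |\varphi_i(x)|$ exists and equals the difference of the two subsums. The extra bookkeeping you supply on transferring the Cauchy estimates between the full net and the subnets (and the alternative monotone-boundedness argument) is just a more detailed rendering of what the paper leaves implicit.
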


\begin{proof}
  Let $\varphi_i$, $i \in I$ be a summable set of real 
functions. For each point $x \in X$ split the index set $I$ into 
$I_{x,+}:= \{ i \in I \mid \varphi_i (x) \geq 0 \}$ and 
$I_{x,-}:= \{ i \in I \mid \varphi_i (x) < 0 \}$. The sums 
$\sum_{i \in I_{x,+}} \varphi_i (x)$ and $\sum_{i \in I_{x,-}} \varphi_i (x)$
exist by Lemma \ref{varphiisummablesubssummable}, hence $|\varphi_i (x)|$ is
summable with sum  $\sum_{i \in I} |\varphi_i (x)| = \sum_{i \in I_{x,+}}
\varphi_i (x) - \sum_{i \in I_{x,-}} \varphi_i (x)$.
\end{proof}

\begin{lemma} \label{sumvaprhicontthensumabsvalcont}
For every summable set of real valued functions 
$\varphi_i :X \rightarrow \mathbb{R}$, $i \in I$ with continuous sum the sum  
$\sum_{i \in I} | \varphi_i |$ is continuous as well. If 
$\varphi_i$, $i \in I$ is a (generalised) partition of unity, then 
$|\varphi_i| / \sum |\varphi_i|$, $i \in I$ is a non-negative (generalised) 
partition of unity. 
\end{lemma}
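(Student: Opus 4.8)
The plan is to prove the two assertions in turn: the continuity of $\sum_{i\in I}|\varphi_i|$ is the substantial point, and the statement about partitions of unity then follows by a short formal argument.

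For the first assertion I would argue as follows. By the two preceding lemmas the family $\{|\varphi_i|\mid i\in I\}$ is summable, so $g:=\sum_{i\in I}|\varphi_i|$ is well defined pointwise; being the pointwise supremum of the monotone net of continuous finite partial sums $g_F:=\sum_{i\in F}|\varphi_i|$ ($F\subseteq I$ finite), it is automatically lower semicontinuous. Hence it suffices to show that near every point $x_0$ the tail $g-g_F=\sum_{i\notin F}|\varphi_i|$ can be made uniformly small by a suitable finite $F$. I would fix $x_0$ and $\epsilon>0$, use summability of $\{|\varphi_i|\}$ to choose a finite $F$ with $\sum_{i\notin F}|\varphi_i(x_0)|<\epsilon$, and then, at each point $x$ near $x_0$, split $I\setminus F$ into $\{i:\varphi_i(x)\ge 0\}$ and $\{i:\varphi_i(x)<0\}$; summing over these two sets separately bounds $\sum_{i\notin F}|\varphi_i(x)|$ by $2\sup_{F'}\bigl|\sum_{i\in F'}\varphi_i(x)\bigr|$, the supremum taken over finite $F'\subseteq I\setminus F$. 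Since $\sum_{i\in F'}\varphi_i=\bigl(s-\sum_{i\in F}\varphi_i\bigr)-\sum_{i\in I\setminus(F\cup F')}\varphi_i$ with $s:=\sum_{i\in I}\varphi_i$ continuous, the first bracket is continuous and already within $\epsilon$ of $0$ at $x_0$, and the last term is again a tail of the summable family $\{\varphi_i\}$.

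The step I expect to be the main obstacle is the passage from this pointwise estimate at $x_0$ to an estimate valid on a whole neighbourhood of $x_0$ and uniform over the infinitely many indices outside $F$: one needs the net of finite partial sums of $\{\varphi_i\}$ to converge to $s$ not merely pointwise but uniformly near $x_0$. This is exactly where I would invoke that $\{\varphi_i\}$ is a \emph{summable} family of functions rather than a merely pointwise summable one, making it precise via $\sup_x\sum_{i\notin F}|\varphi_i(x)|\le 2\sup_x\sup_{F'}\bigl|\sum_{i\in F'}\varphi_i(x)\bigr|$ (supremum over finite $F'\subseteq I\setminus F$), the right-hand side being a tail of the Cauchy net of partial sums of $\{\varphi_i\}$ in the topology of (locally) uniform convergence. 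Granting this, $g$ differs from the continuous function $g_F$ by a uniformly small error near $x_0$, whence $g$ is continuous there.

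For the partition-of-unity statement I would argue formally. If $\{\varphi_i\}$ is a (generalised) partition of unity then $\sum_{i\in I}|\varphi_i(x)|\ge\bigl|\sum_{i\in I}\varphi_i(x)\bigr|=1>0$ for every $x$, so the continuous function $g=\sum_i|\varphi_i|$ is everywhere strictly positive; the functions $\psi_i:=|\varphi_i|/g$ are therefore well defined, continuous and non-negative, with $\sum_i\psi_i=g/g=1$, so $\{\psi_i\}$ is a non-negative generalised partition of unity. In the non-generalised case, since $g>0$ everywhere one has $\supp\psi_i=\overline{\psi_i^{-1}(\mathbb{R}\setminus\{0\})}=\overline{\varphi_i^{-1}(\mathbb{R}\setminus\{0\})}=\supp\varphi_i$, so the supports of the $\psi_i$ agree with those of the $\varphi_i$ and hence still form a locally finite covering of $X$; thus $\{\psi_i\}$ is an ordinary non-negative partition of unity.
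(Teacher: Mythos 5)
Your treatment of the second assertion (normalising by $\sum_i|\varphi_i|$ and noting that the zero sets, hence the supports, are unchanged) is fine, and your overall strategy for the first assertion --- control the tail $\sum_{i\notin F}|\varphi_i|$ near a point by splitting $I\setminus F$ into the indices where $\varphi_i\ge 0$ and where $\varphi_i<0$, so that the tail of absolute values is bounded by twice a supremum of finite partial sums of the $\varphi_i$ themselves --- is the same device the paper uses. But the decisive step is the one you yourself flag and then settle by decree: you ``invoke'' that summability of $\{\varphi_i\}$ means the net of finite partial sums is Cauchy for (locally) uniform convergence. That is not what the hypothesis provides. In this paper summability of a set of functions is used pointwise (the proofs of Lemmata \ref{varphiisummablesubssummable} and \ref{sumcontthensubsumcont} argue at each fixed point), and the families the lemma is applied to --- for instance the functions $\varphi_{q,i}(\vec{x})=|\varphi_i(x_0)\cdots\varphi_i(x_q)|$ built from a generalised partition of unity in Corollary \ref{covbycoz} --- are only known to be pointwise summable with continuous sum. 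Pointwise summability with continuous sum does not give the locally uniform smallness of tails you need: in your decomposition $\sum_{i\in F'}\varphi_i=\bigl(s-\sum_{i\in F}\varphi_i\bigr)-\sum_{i\in I\setminus(F\cup F')}\varphi_i$ the last tail is small at $x_0$ but uncontrolled at nearby points, and this cannot be repaired from pointwise summability alone: on $X=[0,1]$ take tents $T_n$ supported on $[1/(n+1),1/n]$ of height $n$ and the family $\{T_n\}\cup\{-T_n\}$; it is pointwise summable with (continuous) sum $0$, yet $\sum_i|\varphi_i|=2\sum_n T_n$ is unbounded on every neighbourhood of $0$. So the step you propose to ``grant'' carries the real mathematical content, and your proof does not supply it; reading ``summable'' as ``locally uniformly summable'' changes the statement being proved rather than proving it.

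For comparison, the paper proceeds differently at exactly this point: it chooses a finite set $I_{x,\epsilon}$ with $|\sum_{i\in I'}\varphi_i(x)-\varphi(x)|<\epsilon$ for all finite $I'\supseteq I_{x,\epsilon}$, passes to the neighbourhood $V_{x,\epsilon}$ on which the continuous function $\sum_{i\in I_{x,\epsilon}}\varphi_i-\varphi$ has absolute value $<\epsilon$, asserts that on $V_{x,\epsilon}$ every finite partial sum over $J\subseteq I\setminus I_{x,\epsilon}$ is $<2\epsilon$ in absolute value (hence $\sum_{i\notin I_{x,\epsilon}}|\varphi_i|\le 4\epsilon$ there), and then shrinks once more to control the finite sum of absolute values, obtaining $|\psi(x')-\psi(x)|\le 8\epsilon$ for $\psi=\sum_i|\varphi_i|$. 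So instead of appealing to a uniform notion of summability it tries to extract the neighbourhood tail bound from continuity of $\varphi$ and of a single finite partial sum. Whichever route one takes, that neighbourhood bound on the tails of $\{\varphi_i\}$ is the crux --- your worry about it is well founded --- and it must either be derived from hypotheses actually available or obtained by strengthening them (non-negative $\varphi_i$, locally finite supports, or an explicitly locally uniform summability assumption); as written, your proposal leaves it unproven.
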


\begin{proof}
   Let $\varphi_i : X \rightarrow \mathbb{R}$, $i \in I$ be a summable set of 
real valued functions with continuous sum $\varphi$ and let $\psi$ denote the 
sum of absolute values $|\varphi_i|$. The convergence 
$\sum \varphi_i = \varphi$ means that for all $x \in X$ and $\epsilon > 0$ 
there exists a finite subset $I_{m,\epsilon} \subseteq I$ such that for all 
supersets $I' \supseteq I_{m,\epsilon}$ the inequality 
\begin{equation*}
  \left| \sum_{i \in I'} \varphi_i (x) - \varphi (x) \right| < \epsilon
\end{equation*}
is satisfied.
The set $V_{x,\epsilon} := \{ x' \in X \mid \, 
| \sum_{i \in I_{x,\epsilon }} \varphi_i (x') - \varphi (x')| < \epsilon \}$ is
an open neighbourhood of $x$. For every $x' \in V_{x,\epsilon}$
and finite subset $J \subset I \setminus I_{x,\epsilon}$ the absolute
value of the sum $\sum_{i \in J} \varphi_i (x')$ is less than $2 \epsilon$, 
which implies that the sum $\sum_{i \notin I_{x,\epsilon }} |\varphi_i|$ is
less than $4 \epsilon$. The intersection 
\begin{equation}
W_{x,\epsilon} := V_{x,\epsilon} \cap 
\left( \left[ \sum_{i \in I_{x,\epsilon}} |\varphi_i | \right] -
\psi (x) \right)^{-1} ( (- 4 \epsilon, 4 \epsilon )) 
\end{equation}
is an even smaller open neighbourhood of $x$. 
For all points $x' \in W_{x,\epsilon}$ we observe  
\begin{equation*}
  | \psi (x') - \psi (x) | \leq 
\sum_{i \notin I_{x,\epsilon}} | \varphi_i | (x')  + 
\left| \sum_{i \in I_{x,\epsilon}} | \varphi_i | (x') - \psi (x)
\right| \leq 4 \epsilon + 4 \epsilon = 8 \epsilon \, .
\end{equation*}
Thus for every point $x \in X$ and $\epsilon > 0$ there exists a neighbourhood
$W$ of $x$ such that $\psi (W) \subseteq B_\epsilon ( \psi (x))$, i.e. $\psi$ 
is continuous.
\end{proof}

\begin{lemma} \label{sumcontthensubsumcont}
For each summable set of function $\varphi_i :X \rightarrow V$, $i \in I$ with
continuous sum $\varphi$ into a complete Hausdorff abelian group $V$ the sum
of any subset of functions is also continuous.
\end{lemma}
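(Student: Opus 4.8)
The plan is to run the argument of Lemma~\ref{sumvaprhicontthensumabsvalcont}, with the manipulations of absolute values on $\mathbb{R}$ replaced by neighbourhood arithmetic in the complete Hausdorff abelian group $V$. Fix a subset $J\subseteq I$; by Lemma~\ref{varphiisummablesubssummable} the family $\{\varphi_j\mid j\in J\}$ is summable, so the function $\varphi_J:=\sum_{j\in J}\varphi_j$ is defined pointwise (and is a genuine partial sum of the given one, $\varphi=\varphi_J+\sum_{i\in I\setminus J}\varphi_i$). It suffices to show that $\varphi_J$ is continuous at an arbitrary point $x_0\in X$, so fix such an $x_0$ together with a neighbourhood $U$ of $0$ in $V$ and choose a symmetric neighbourhood $W$ of $0$ small enough that the sum of any ten translates of $\overline{W}$ is contained in $U$.

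First I would extract a uniform control of the tails, exactly as in the proof of Lemma~\ref{sumvaprhicontthensumabsvalcont}: summability of $\{\varphi_i\mid i\in I\}$ with sum $\varphi$ provides a finite set $F\subseteq I$ with $\sum_{i\in F'}\varphi_i(x)\in\varphi(x)+W$ for every finite $F'\supseteq F$ and every $x\in X$; subtracting the partial sum over $F$ gives $\sum_{i\in G}\varphi_i(x)\in W+W$ for every finite $G\subseteq I\setminus F$ and every $x\in X$. Letting $G$ range over the finite subsets of $J\setminus F$ and using that $V$ is complete while $\overline{W+W}$ is closed, the tail satisfies $\varphi_{J\setminus F}(x):=\sum_{j\in J\setminus F}\varphi_j(x)\in\overline{W+W}$ for every $x\in X$. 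This is the analogue of the estimate ``$\sum_{i\notin I_{x,\epsilon}}|\varphi_i|<4\epsilon$'' in the real-valued case, and it is the step where the hypotheses really enter.

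Now write $\varphi_J=\bigl(\sum_{j\in F\cap J}\varphi_j\bigr)+\varphi_{J\setminus F}$. The first summand is a finite sum of continuous functions, hence continuous, so there is a neighbourhood $N$ of $x_0$ on which it differs from its value at $x_0$ by an element of $W$. For $x\in N$ one therefore has
\[
\varphi_J(x)-\varphi_J(x_0)\in W+\bigl(\overline{W+W}-\overline{W+W}\bigr)\subseteq U ,
\]
using that $W$ is symmetric together with the choice of $W$; hence $\varphi_J$ is continuous at $x_0$, and $x_0$ was arbitrary. The one genuine obstacle is the uniform tail estimate of the second paragraph: one needs a \emph{single} finite index set $F$ that works simultaneously for all $x\in X$, and then the passage from finite partial sums lying in a prescribed neighbourhood to the infinite tail sum lying in its closure (here completeness of $V$ is used). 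In the real-valued case this was made transparent by splitting $I\setminus F$ according to the sign of $\varphi_i(x)$; in a general abelian group there is no such ordering, so one argues directly with neighbourhoods of $0$ and their closures. Everything else is routine neighbourhood arithmetic.
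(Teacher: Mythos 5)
The decisive step of your argument fails: you assert that summability yields a \emph{single} finite set $F\subseteq I$ with $\sum_{i\in F'}\varphi_i(x)\in\varphi(x)+W$ for every finite $F'\supseteq F$ and \emph{every} $x\in X$. Summability here is a pointwise notion; it only provides, for each $x$, a finite set $I_{x,W}$ depending on $x$, and no uniform choice exists in general. Indeed, in exactly the situations this lemma is meant for (generalised partitions of unity with infinitely many members) it cannot exist: take $X=\mathbb{R}$, $V=\mathbb{R}$ and a locally finite partition of unity $\{\varphi_n\mid n\in\mathbb{N}\}$ with $\supp\varphi_n\subseteq[n,n+2]$ and $\sum_n\varphi_n=1$; for any finite $F$ and $x$ far to the right one has $\sum_{i\in F}\varphi_i(x)=0$ while $\varphi(x)=1$, so the tail $\varphi_{J\setminus F}$ is not uniformly small. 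Hence the decomposition $\varphi_J=\sum_{j\in F\cap J}\varphi_j+\varphi_{J\setminus F}$ with a globally small second summand, on which your whole estimate rests, is unavailable. Note also that your appeal to the proof of Lemma~\ref{sumvaprhicontthensumabsvalcont} misreads it: there the finite set $I_{x,\epsilon}$ depends on the point, and the tail is controlled only on a neighbourhood $V_{x,\epsilon}$ of that point, obtained from the continuity of $\varphi$ and of the finite partial sum over $I_{x,\epsilon}$; it is a local statement, not a uniform one.

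The paper's proof of the present lemma is correspondingly local. Fix $x$ and the finite set $I_{x,W}$ coming from summability at $x$ (after choosing $W$ with $W+W+W-W-W\subseteq U$), and use the continuity of $\varphi$, of $\sum_{i\in I_{x,W}}\varphi_i$ and of $\sum_{i\in I_{x,W}\setminus J}\varphi_i$ to produce a neighbourhood $V_{x,W}$ of $x$. For $x'\in V_{x,W}$ and any finite $J'\supseteq I_{x,W}\cap J$ one writes
\begin{equation*}
\sum_{j\in J'}\varphi_j(x')-\sum_{j\in J'}\varphi_j(x)
=\sum_{j\in J'\cup(I_{x,W}\setminus J)}\varphi_j(x')
-\sum_{j\in J'\cup(I_{x,W}\setminus J)}\varphi_j(x)
-\sum_{j\in I_{x,W}\setminus J}\varphi_j(x')
+\sum_{j\in I_{x,W}\setminus J}\varphi_j(x)\, ,
\end{equation*}
so that the enlarged index sets contain $I_{x,W}$, estimates each term against $\varphi(x')$ resp.\ $\varphi(x)$ resp.\ via the definition of $V_{x,W}$, and then passes to the limit over $J'$, landing in the closure of $W+W-W-W$, which is contained in $U$; completeness and the Hausdorff property enter in this limit step, just as in Lemma~\ref{varphiisummablesubssummable}. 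So the repair to your proposal is to abandon the global tail bound and instead let the finite index set depend on the basepoint, trading the missing uniformity for the continuity of $\varphi$ and of the finitely many relevant $\varphi_i$ on a suitable neighbourhood.
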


\begin{proof}
  Let $\varphi_i :X \rightarrow V$, $i \in I$ be a summable set of functions
  into a complete abelian Hausdorff group $V$ with continuous sum 
$\varphi := \sum \varphi_i$ and $J \subseteq I$ be a subset of $I$. We show
that the sum $\varphi_J := \sum_{j \in J} \varphi_j$ is continuous at each
point $x \in X$. For each identity neighbourhood $U$ in $V$ there exists a 
finite subset $I_{x,U}$ of $I$ such that \ref{sumvarphiiinvarphipu} holds for
all supersets $I' \supseteq I_{x,U}$. Furthermore the set 
\begin{equation}
V_{x,U} := \left( \sum_{i \in I_{x,U} \setminus J } \varphi_i - \varphi_i (x) 
\right)^{-1} (U) \cap 
\left( \sum_{i \in I_{x,U}} \varphi_i - \varphi \right)^{-1} (U) \cap 
\cap \left( \varphi - \varphi (x) \right)^{-1} (U)
\end{equation}
is an open neighbourhood of $x$. Choose an identity neighbourhood $W$ in $V$ 
satisfying $W + W + W - W - W\subseteq U$. 
For all points $x' \in V_{x,W}$ and finite supersets $J'$ of 
$J_{x,W}:=I_{x,W} \cap J$ we observe 
  \begin{eqnarray*}
\sum_{j \in J'} \varphi_j (x') - \sum_{j \in J'} \varphi_j (x) & = & 
\sum_{j \in J' \cup (I_{x,W} \setminus J)} \varphi_j (x') - 
\sum_{j \in J' \cup (I_{x,W} \setminus J)} \varphi_j (x) \\
& &  - \sum_{j \in (I_{x,W} \setminus J)} \varphi_j (x') +    
\sum_{j \in (I_{x,W} \setminus J)} \varphi_j (x) \\ 
& \in & ( \varphi (x') + W) - ( \varphi (x) + W ) - W \\
& \subseteq & W + W - W - W
\end{eqnarray*}
Passage to the limit shows that the difference 
$\varphi_J (x') - \varphi_J (x)$ is contained in the closure of $W+W-W-W$,
which in turn is contained in $W + W + W -W -W \subseteq U$. Thus for each
point $x \in X$ and identity neighbourhood $U$ of $V$ there exists a
neighbourhood $V_{x,W}$ of $x$ such that 
$\varphi_J (x') - \varphi_J (x) \in U$ for all $x' \in V_{x,W}$, i.e. the
function $\varphi_J$ is continuous at all points $x \in X$.
\end{proof}

Similar to continuous partitions of unity (as done in \cite{tDTOP}) 
it can be shown that coverings 
by cozero sets of generalised partitions of unity are always smoothly 
numerable. For this purpose we will use the smooth function
\begin{equation}
  f:\mathbb{R} \rightarrow \mathbb{R}, \quad f (x) =
  \begin{cases}
    e^{- \frac{1}{x}} & \text{if }x > 0 \\
0 & \text{if } x \leq 0
  \end{cases}
\end{equation}
to adapt the proof for continuous functions in \cite{tDTOP} to the general
smooth context.

\begin{lemma} \label{genpartthenvarphiin}
  For every generalised smooth partition of unity 
$\{ \varphi_i \mid i \in I \}$ on a manifold $M$ and covering 
by the cozero sets $U_i:=\varphi_i^{-1} (\mathbb{R} \setminus \{ 0 \} )$ 
there exist non-negative smooth real functions 
$\{ \varphi_{i,n} \mid i \in I, \, n \in \mathbb{N} \}$ such that for all 
$n \in \mathbb{N}$  
\begin{enumerate}
\item the collection of supports 
$\{ \supp \varphi_{i,n} \mid i \in I\}$ refines $\{ U_i \mid i \in I \}$,
\item the collection 
$\{ \supp \varphi_{i,n}  \mid i \in I \}$ of
supports is locally finite, 
\end{enumerate}
and such that that for every point $m \in M$ some $\varphi_{i,n}$ satisfies 
$\varphi_{i,n} (m) > 0$ and for fixed $i \in I$ the supports of 
$\varphi_{i,n}$, $n \in \mathbb{N}$ exhaust the open set $U_i$.
\end{lemma}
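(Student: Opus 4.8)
The plan is to write the functions $\varphi_{i,n}$ down explicitly, by smoothly truncating the smooth functions $\varphi_i^{2}$ at the levels $\tfrac1{n+1}$; the function $f$ serves precisely to replace the non-smooth clamping $t\mapsto\max(0,t)$ used in the continuous argument of \cite{tDTOP} by the smooth map $t\mapsto f(t)$, which has the same zero set $(-\infty,0]$ and the same positivity set $(0,\infty)$. So I would set $\varphi_{i,n}:=f\circ(\varphi_i^{2}-\tfrac1{n+1})$ for $i\in I$ and $n\in\mathbb N$. Each $\varphi_{i,n}$ is non-negative and smooth, being the composite of $f$ with the smooth function $\varphi_i^{2}-\tfrac1{n+1}$, and $\{\varphi_{i,n}>0\}=\{\varphi_i^{2}>\tfrac1{n+1}\}$; hence $\supp\varphi_{i,n}\subseteq\{\varphi_i^{2}\geq\tfrac1{n+1}\}\subseteq\{\varphi_i\neq0\}=U_i$, which is property (1), and since $\bigcup_{n}\{\varphi_i^{2}>\tfrac1{n+1}\}=\{\varphi_i\neq0\}=U_i$ the supports $\supp\varphi_{i,n}$ increase with $n$ to $U_i$ for each fixed $i$, while — the $U_i$ forming a covering of $M$ because $\sum_i\varphi_i=1$ — every point of $M$ lies in $U_i=\bigcup_n\{\varphi_{i,n}>0\}$ for some $i$, so some $\varphi_{i,n}$ is positive there.

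The only step that is not a direct unwinding of definitions is the local finiteness (2) of $\{\supp\varphi_{i,n}\mid i\in I\}$ for a fixed $n$, and this I would deduce from the summability results of the appendix. Since the constant function $1=\sum_i\varphi_i$ is continuous, Lemma \ref{sumvaprhicontthensumabsvalcont} (together with the uniform tail estimate appearing in its proof, compare also Lemma \ref{sumcontthensubsumcont}) provides, for any point $m\in M$, a neighbourhood $W$ and a finite set $F\subseteq I$ with $\sum_{j\notin F}|\varphi_j|<\tfrac1{2\sqrt{n+1}}$ on $W$; then $\varphi_j^{2}<\tfrac1{n+1}$ on $W$ for every $j\notin F$, so $\supp\varphi_{j,n}\cap W\subseteq\{\varphi_j^{2}\geq\tfrac1{n+1}\}\cap W=\emptyset$, i.e. $W$ meets $\supp\varphi_{j,n}$ only for the finitely many $j\in F$. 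I expect this to be the main (indeed essentially the only) obstacle: it amounts to extracting from the appendix lemmas exactly the uniform smallness of tails near a point that the argument needs — which is the same combinatorial input that makes the classical continuous proof work.

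Finally I would note that nothing else is required: no normalisation of the $\varphi_i$ is needed and no infinite sum is ever differentiated, so no smooth paracompactness assumption enters; the hypothesis $\sum_i\varphi_i=1$ is used only through the covering property of the cozero sets $U_i$ and through the uniform tail control just described. This is precisely the smooth counterpart of the argument of \cite{tDTOP}, with $f$ supplying the smoothing.
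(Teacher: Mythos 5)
Your construction $\varphi_{i,n}=f\circ\bigl(\varphi_i^{2}-\tfrac{1}{n+1}\bigr)$ is, up to the cosmetic choice of truncation level (the paper uses $\tfrac{1}{(n+1)^{2}}$), exactly the paper's proof: properties (1), the exhaustion of $U_i$ and the pointwise positivity follow by the same direct unwinding, and local finiteness is obtained from the same uniform smallness of the tails of $\sum_i\varphi_i$ on a neighbourhood of each point, which the paper establishes inline via the neighbourhood $V_{m,n}$ rather than by invoking the appendix lemmas. So the proposal is correct and takes essentially the same approach as the paper.
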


\begin{proof}
Let $\{ \varphi_i \mid i \in I \}$ be a smooth 
generalised partition of unity on a manifold $M$. We define real valued
functions $\varphi_{i,n}$ on $M$ via
\begin{equation*}
  \varphi_{i,n} := f \circ \left( \varphi_{i}^2 - \frac{1}{(n+1)^2} \right) \, .
\end{equation*}
The support of $\varphi_{i,n}$ is contained in the cozero set 
$\varphi_{i,n+1}^{-1} ((0,\infty))$ and for fixed $i \in I$ the supports of 
$\varphi_{i,n} \, n \in \mathbb{N}$ exhaust $U_i$ by construction; 
this proves (1) and the last claim. 
Furthermore for every point $m \in M$ the convergence 
$\sum \varphi_i (m)= 1$ means that for all
$\epsilon > 0$ there exists a finite subset $I_{m,\epsilon} \subseteq I$ such
that for all supersets $I' \supseteq I_{m,\epsilon}$ the inequality 
\begin{equation*}
  \left| \sum_{i \in I'} \varphi_i (m) -1 \right| < \epsilon
\end{equation*}
is satisfied. For every $n$  
the set 
$V_{m,n} :=\{ m' \in M \mid \, |\sum_{i \in I_{m,1 / 2n}} (m')-1|< 1/2n \}$ 
is an open neighbourhood of $m$ on which
  all functions $\varphi_i$, $i \notin I_{m,1 / n}$ satisfy 
$|\varphi_i | <  1 / n$. This implies $\varphi_i^2 \leq 1 / n^2$
and $\varphi_{i,n} =0$ on $V_{m,n}$ for all $i$ which are not contained in the
finite set $I_{m, 1 / 2n}$. Therefore the collection 
$\{ \varphi^{-1}_{i,n+1}  ( (0,\infty) ) \mid i \in I \}$ of cozero sets and
the collection 
$\{ \supp \varphi_{i,n} \mid i \in I \}$ of
supports are locally finite, which proves (2). 
Finally, for each point $m
\in M$ there exists some $\varphi_i$ satisfying $\varphi_i (m) \neq 0$. For
all $n \in \mathbb{N}$ satisfying $1 / n < | \varphi_i (m) |$ the
function $\varphi_{i,n}$ also satisfies $\varphi_{i,n} (m) \neq 0$.
\end{proof}

\begin{proposition} \label{uisnumiffvarphiin}
  An open covering $\mathfrak{U}$ of a smooth manifold $M$ is smoothly 
numerable if and only if there exist non-negative smooth real functions 
$\varphi_{i,n}$, $i \in I, \, n \in \mathbb{N}$ such that for all 
$n \in \mathbb{N}$  
\begin{enumerate}
\item the collection of supports $\{ \supp \varphi_{i,n} \mid i \in I\}$ 
refines $\mathfrak{U}$, 
\item the collection 
$\{ \supp \varphi_{i,n} \mid i \in I \}$ of supports is locally finite, 
\end{enumerate}
and such that that for every point $m \in M$ some $\varphi_{i,n}$ satisfies 
$\varphi_{i,n} (m) > 0$.
\end{proposition}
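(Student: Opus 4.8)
The plan is to prove the two implications separately; the forward one is immediate and the reverse one adapts tom Dieck's shrinking trick for numerable coverings (cf. \cite{tDTOP}), with the non-smooth cut-offs replaced by compositions with the function $f$ introduced above.

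For ``smoothly numerable $\Rightarrow$ the stated functions exist'' I would start from a smooth partition of unity $\{\psi_i \mid i \in I\}$ subordinate to $\mathfrak{U}$; after replacing it by $\{\psi_i^2 / \sum_j \psi_j^2 \mid i \in I\}$ (a locally finite, hence smooth, strictly positive denominator) we may assume each $\psi_i$ is non-negative. Putting $\varphi_{i,n} := \psi_i$ for every $n \in \mathbb{N}$ then gives functions for which (1) and (2) hold for each $n$, and, since $\sum_i \psi_i(m) = 1$ forces $\psi_i(m) > 0$ for some $i$, the positivity condition holds as well.

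For the converse, suppose non-negative smooth $\varphi_{i,n}$ with (1), (2) and the positivity condition are given. I would first pass to $s_n := \sum_{i \in I} \varphi_{i,n}$, which by (2) is a locally finite sum of smooth functions, hence smooth; the positivity condition says that the cozero sets $\{s_n > 0\}$ cover $M$, and $\{s_n > 0\} = \bigcup_i \{\varphi_{i,n} > 0\}$ refines $\mathfrak{U}$. The crucial step is to turn the countable family $(s_n)_{n \in \mathbb{N}}$ into a locally finite one without destroying these two properties. To this end I would rescale to the bounded smooth functions $\sigma_n := s_n / (1 + s_n) \in [0,1)$ (same cozero sets as $s_n$) and set
\[
  \hat{s}_n := f\!\left( \sigma_n - n \sum_{k < n} \sigma_k \right),
\]
a non-negative smooth function with $\{\hat{s}_n > 0\} \subseteq \{s_n > 0\}$. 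Denoting by $n_0 = n_0(m)$ the least $n$ with $s_n(m) > 0$, one has $\sum_{k < n_0} \sigma_k(m) = 0$, hence $\hat{s}_{n_0}(m) = f(\sigma_{n_0}(m)) > 0$, so the cozero sets of the $\hat{s}_n$ still cover $M$. For local finiteness, near a point $m$ set $\delta := \sigma_{n_0(m)}(m) > 0$; on a neighbourhood where $\sigma_{n_0(m)} > \delta / 2$ the inequality $\hat{s}_n > 0$ for $n > n_0(m)$ would force $\sigma_n > n\, \sigma_{n_0(m)} > n \delta / 2$, which is impossible once $n \geq 2/\delta$ because $\sigma_n < 1$; thus only finitely many of the $\hat{s}_n$ are non-zero near $m$.

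Finally I would descend from the $s_n$ back to $\mathfrak{U}$ by forming $\psi_{i,n} := \hat{s}_n \cdot \varphi_{i,n}$. These are smooth, $\supp \psi_{i,n} \subseteq \supp \varphi_{i,n}$ (so the family refines $\mathfrak{U}$), and $\{\supp \psi_{i,n} \mid (i,n) \in I \times \mathbb{N}\}$ is locally finite, combining the local finiteness in $i$ from (2) with the local finiteness in $n$ just obtained. As $\sum_{i,n} \psi_{i,n} = \sum_n \hat{s}_n s_n$ is again a locally finite sum of smooth functions and is strictly positive (the $n_0(m)$-th term is positive and all terms are non-negative), normalising produces a smooth partition of unity whose supports refine $\mathfrak{U}$; reindexing it along a choice function assigning to each pair $(i,n)$ a member of $\mathfrak{U}$ containing $\supp \varphi_{i,n}$ exhibits $\mathfrak{U}$ as smoothly numerable. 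I expect the local-finiteness step to be the main obstacle: the hypothesis only provides local finiteness for each fixed $n$, and the device of playing the unbounded weight $n$ against the uniform bound $\sigma_n < 1$ is exactly what upgrades this to joint local finiteness in $(i,n)$.
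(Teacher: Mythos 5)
Your proof is correct and follows essentially the same route as the paper: it is tom Dieck's shrinking argument adapted smoothly, i.e.\ compose the bump $f$ with a $[0,1)$-valued truncation minus $n$ times the monotone cumulative sum of the earlier stages, so that boundedness by $1$ forces all but finitely many stages to vanish near any point, and then normalise the resulting locally finite family whose supports refine $\mathfrak{U}$. The only differences are cosmetic: you aggregate over $i$ into $s_n=\sum_i\varphi_{i,n}$ (and $\sigma_n$, $\hat{s}_n$) before applying the cut-off and multiply back by $\varphi_{i,n}$ afterwards, whereas the paper forms $\psi_{i,n}=f\circ(\varphi_{i,n}-n\,q_n)$ with $q_n=\sum_{i,\,k<n}\varphi_{i,k}$ directly, and you prove the forward implication by the simple constant-in-$n$ choice instead of invoking Lemma \ref{genpartthenvarphiin}.
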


\begin{proof}
The proof is an adaption of the proof of \cite[Lemmata 5.5, 4.6]{tDTOP} to 
the general smooth context. The forward implication is proved in Lemma 
\ref{genpartthenvarphiin}, so only the backward implication requires proof. Let 
$\varphi_{i,n}$, $i \in I, \, n \in \mathbb{N}$ be smooth real valued
functions with the above properties. Replacing $\varphi_{in,n}$ with 
$\varphi_{i,n}^2 / (1 + \varphi_{i,n}^2)$ we can w.l.o.g. assume that the 
functions $\varphi_{i,n}$ take values in the uni interval. 
For each $n \in \mathbb{N}$ the collection 
$\{ \supp \varphi_{i,k} \mid i \in I ,\, k < n \}$ of supports is locally 
finite, hence the sum
\begin{equation*}
 q_n := \sum_{i \in I, k < n} \varphi_{i,k}
\end{equation*}
(where $q_0 =0$) is a smooth real valued function on $M$. 
As a consequence the composition 
$\psi_{i,n}:= f \circ (\varphi_{i,n} - n \cdot q_n)$ is smooth as well. We claim
that the collection 
$\{ \psi_{i,n}^{-1} (0,\infty)  \mid i \in I ,\, n \in \mathbb{N} \}$ of
cozero sets an open covering of $M$ and that the supports 
$\{ \supp \psi_{i,n} \mid i \in I ,\, n \in \mathbb{N} \}$ form a 
locally finite covering of $M$ which refines the open covering $\mathfrak{U}$. 
If $n$ is minimal such there exists a function $\varphi_{i,n}$ satisfying 
$\varphi_{i,n} (m) \neq 0$, then $q_n (m) =0$ and 
$\psi_{i,n} (m)= \varphi_{i,n} -0 \neq 0$. Thus for each point $m \in M$ there
exists an index $(i,n)$ such that $\psi_{i,n} (m) \neq 0$, i.e. the collection 
$\{ \psi^{-1}_{i,n}  ( (0,\infty) ) \mid i \in I ,\, n \in \mathbb{N} \}$ of
cozero sets is an open covering of $M$. Moreover the sequence of functions
$q_n$ is monotone increasing, so for each $m \in M$ there exists $N \in
\mathbb{N}$ such that $N \cdot q_N (m) > 1$ hence also $N \cdot q_N (m) > 1$ 
in a neighbourhood $V_m$ of $m$. 
As a consequence the supports of all the functions $\psi_{i,n}$ with $n > N$ 
do not intersect $V_m$. So the collection 
$\{ \supp \psi_{i,n}  \mid i \in I ,\, n \in \mathbb{N} \}$ of supports is 
locally finite. Normalisation of 
$\{ \psi_{i,n} \mid i \in I, n \in \mathbb{N} \}$ yields a smooth partition of 
unity subordinate to $\mathfrak{U}$.
\end{proof}

\begin{theorem}
Every covering of a manifold $M$ by the cozero sets of a generalised partition 
of unity $\{ \varphi_i \mid i \in I \}$ is numerable. 
\end{theorem}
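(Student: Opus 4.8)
The plan is to derive the statement as an immediate corollary of the two results established earlier in this appendix, namely Lemma~\ref{genpartthenvarphiin} and Proposition~\ref{uisnumiffvarphiin}. First I would record the (trivial) observation that the cozero sets $U_i:=\varphi_i^{-1}(\mathbb{R}\setminus\{0\})$ really do cover $M$: at any point $m\in M$ the identity $\sum_{i\in I}\varphi_i(m)=1$ forces $\varphi_i(m)\neq 0$ for at least one index $i$, so $m\in U_i$. Hence $\mathfrak{U}:=\{U_i\mid i\in I\}$ is exactly a covering by the cozero sets of a generalised partition of unity, which is the setting to which Lemma~\ref{genpartthenvarphiin} applies.

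Second, I would feed $\{\varphi_i\mid i\in I\}$ into Lemma~\ref{genpartthenvarphiin}. This produces a family of non-negative smooth real functions $\{\varphi_{i,n}\mid i\in I,\ n\in\mathbb{N}\}$ such that, for every $n\in\mathbb{N}$, the collection of supports $\{\supp\varphi_{i,n}\mid i\in I\}$ refines $\mathfrak{U}$ and is locally finite, and such that every point of $M$ lies in $\varphi_{i,n}^{-1}((0,\infty))$ for some pair $(i,n)$. These are precisely the data whose existence is characterised, via its backward implication, in Proposition~\ref{uisnumiffvarphiin}; applying that proposition yields that $\mathfrak{U}$ is (smoothly, hence a fortiori) numerable, which is the assertion. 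So the proof is simply the composition ``Lemma~\ref{genpartthenvarphiin} then Proposition~\ref{uisnumiffvarphiin}''.

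I expect no genuine obstacle here: all the substance has already been carried out, namely the construction of the $\varphi_{i,n}$ through the bump function $f$ together with the local finiteness estimate coming from $\sum\varphi_i=1$ (Lemma~\ref{genpartthenvarphiin}), and the cutting-off trick $\psi_{i,n}=f\circ(\varphi_{i,n}-n q_n)$ followed by normalisation (Proposition~\ref{uisnumiffvarphiin}). The only point deserving a word of care is regularity: if one wants the conclusion merely for a \emph{continuous} generalised partition of unity on an arbitrary topological space, one invokes the continuous analogues of these two lemmas (the results of \cite{tDTOP} on which the smooth versions were modelled) in their place, the argument being otherwise verbatim the same.
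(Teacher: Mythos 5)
Your proposal is correct and follows exactly the paper's own argument: feed the generalised (smooth) partition of unity into Lemma~\ref{genpartthenvarphiin} to obtain the family $\varphi_{i,n}$, then apply the backward implication of Proposition~\ref{uisnumiffvarphiin} to produce a (smooth) partition of unity subordinate to the cozero cover. The extra remarks (that the cozero sets cover $M$, and the comment on the continuous variant) are harmless additions; nothing is missing.
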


\begin{proof}
  Let $\{ \varphi_i \mid i \in I \}$ be a generalised smooth partition of
  unity on a manifold $M$ and construct non-negative functions $\varphi_{i,n}$ 
as in the proof of Lemma \ref{genpartthenvarphiin} and further a partition of 
unity $\{ \psi_{i,n} \mid i \in I, \, n \in \mathbb{N} \}$ as in Proposition
\ref{uisnumiffvarphiin}. Then $\{ \psi_i \mid i \in I, \, n \in \mathbb{N} \}$ 
is a partition of unity subordinate to 
$\{ \varphi_i^{-1} ( \mathbb{R} \setminus 0 ) \mid i \in I \}$. 
\end{proof}

\section{$k$-Spaces}

\begin{definition}
A continuous function from a compact Hausdorff space into an arbitrary 
topological space $X$ is called a \emph{probe over $X$}.
\end{definition}

\begin{definition}
  The \emph{$k$-topology} on a topological space $X$ is the final topology of 
all probes over X. The underlying set of $X$ equipped with the $k$-topology is 
denoted by $\fktop X$.
\end{definition}

\begin{lemma}
  The  $k$-topology on a topological space $X$ is finer
  than the original topology of $X$, i.e. the set theoretic identity map 
$\fktop X \rightarrow X$ is continuous.
\end{lemma}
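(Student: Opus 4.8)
The plan is to unwind the definition of the $k$-topology as a final topology and check directly that it refines the original topology. Concretely, I will show that every subset of $X$ which is open in the original topology is also open in the $k$-topology; since the two topologies share the same underlying set, this is exactly the statement that the identity map $\fktop X \rightarrow X$ is continuous.

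First I would fix an open set $U \subseteq X$ (in the original topology) and recall that, by definition, $U$ is open in $\fktop X$ if and only if $p^{-1}(U)$ is open in $K$ for every probe $p : K \rightarrow X$, where $K$ ranges over compact Hausdorff spaces. Then I would observe that each probe $p$ is by definition a continuous map into $X$ equipped with its original topology, so $p^{-1}(U)$ is open in $K$ for the trivial reason that preimages of open sets under continuous maps are open. Hence $U$ is open in $\fktop X$, and since $U$ was an arbitrary open set of $X$, the original topology is contained in the $k$-topology.

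Alternatively — and perhaps more conceptually — I would phrase this via the universal property: the $k$-topology is the finest topology on the underlying set of $X$ for which all probes over $X$ are continuous, while the original topology is \emph{one} topology for which all probes are continuous (again by the very definition of a probe). Therefore the original topology is coarser than or equal to the $k$-topology, which is precisely the assertion that $\fktop X \rightarrow X$ is continuous.

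There is no real obstacle here; the only thing to be careful about is not to conflate "finer" and "coarser" and to make sure the direction of the identity map matches the inclusion of topologies (the identity from the space with the finer topology to the space with the coarser topology is the continuous one). I would simply state the argument in one short paragraph, citing the definition of the $k$-topology as a final topology and the definition of a probe.
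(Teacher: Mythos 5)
Your argument is correct: every open set of $X$ pulls back to an open set under each probe (probes are by definition continuous into $X$ with its original topology), so it is open in the final topology, which is exactly the continuity of the identity $\fktop X \rightarrow X$. The paper states this lemma without proof, and your one-line verification (in either of your two equivalent phrasings) is precisely the intended argument.
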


\begin{definition}
  A topological space is called a $k$-spaces if $\fktop X=X$. The full 
subcategory of $\tops$ with objects all $k$-spaces is denoted by $\ktop$. 
\end{definition}

Any continuous function $f:X \rightarrow Y$ between topological space $X$ and
$Y$ gives rise to a continuous function 
$\fktop (f):\fktop X \rightarrow \fktop Y$, which coincides with 
$f$ on the underlying set. These assignments constitute a functor 
\begin{equation*}
  \fktop : \tops \rightarrow \ktop.
\end{equation*}

\begin{proposition} \label{ktopiscrh}
The category $\ktop$ is a coreflective subcategory of $\tops$ with coreflector 
$\fktop$, i.e. the functor $\fktop$ is a right adjoint to the inclusion 
$\ktop \rightarrow \tops$.
\end{proposition}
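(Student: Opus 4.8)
The plan is to produce, for each topological space $X$, the couniversal arrow of the desired adjunction in the form of the set-theoretic identity $\varepsilon_X \colon \fktop X \to X$, which is continuous by the Lemma above. The one piece of input this requires is the observation that \emph{if $Y$ is a $k$-space and $g \colon Y \to X$ is continuous, then $g$ is still continuous as a map $Y \to \fktop X$}. I would prove this first: since $Y = \fktop Y$ carries the final topology with respect to all probes $p \colon K \to Y$, it suffices that every composite $g \circ p \colon K \to \fktop X$ be continuous; but $g \circ p$ is a probe over $X$, hence continuous into $\fktop X$ by the very definition of the $k$-topology.

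Next I would collect the easy consequences. First, every compact Hausdorff space $K$ is a $k$-space, because $\id_K$ is itself a probe over $K$ and the final topology with respect to a family of maps containing the identity agrees with the original topology; in particular $\fktop K = K$. Applying the observation with $Y = K$ then shows that every probe $q \colon K \to X$ is also a probe over $\fktop X$; since conversely every probe over $\fktop X$ is a probe over $X$ (because $\fktop X \to X$ is continuous), the spaces $X$ and $\fktop X$ have the same probes, so $\fktop(\fktop X) = \fktop X$ and $\fktop$ indeed takes values in $\ktop$. Applying the observation to $g = f \colon X \to X'$ precomposed with probes over $X$ likewise shows that $\fktop(f) \colon \fktop X \to \fktop X'$ is continuous, so $\fktop$ is a functor $\tops \to \ktop$; and since $\varepsilon_X$ and $\varepsilon_{X'}$ are identities on underlying sets, $\varepsilon_{X'} \circ \fktop(f) = f \circ \varepsilon_X$, i.e. $\varepsilon$ is natural.

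It remains to verify the universal property. Given a $k$-space $Y$ and a continuous map $g \colon Y \to X$ in $\tops$, the observation supplies a continuous map $\bar g \colon Y \to \fktop X$ with $\varepsilon_X \circ \bar g = g$, and $\bar g$ is unique with this property since $\varepsilon_X$ is a bijection on points. Hence the assignment $g \mapsto \bar g$ is a natural bijection $\tops(\iota Y, X) \cong \ktop(Y, \fktop X)$, so $\fktop$ is right adjoint to the inclusion $\iota \colon \ktop \hookrightarrow \tops$, with counit $\varepsilon$ and unit the identity $Y \to \fktop \iota Y = Y$ (an isomorphism, as $\iota$ is full and faithful). This exhibits $\ktop$ as a coreflective subcategory of $\tops$ with coreflector $\fktop$. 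I do not expect a genuine obstacle here; the only point that needs care is bookkeeping of which topology lives on which underlying set — in particular the fact that replacing $X$ by $\fktop X$ does not change the available probes, which is exactly what makes $\fktop$ idempotent.
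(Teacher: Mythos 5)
Your proof is correct and follows essentially the same route as the paper: the counit is the set-theoretic identity $\varepsilon_X\colon \fktop X \to X$, and the hom-set bijection rests on the fact that a continuous map out of a $k$-space into $X$ remains continuous into $\fktop X$ (which in the paper is packaged as applying the functor $\fktop$ and using $Y=\fktop Y$). The only difference is that you additionally verify the functoriality and idempotence of $\fktop$ via probes, facts the paper asserts without proof just before the proposition.
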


\begin{proof}
Let $X$ be a $k$-space and $Y$ be an arbitrary topological space. The 
continuous function $\epsilon_Y :\fktop Y \rightarrow Y$ induces an injective 
map
 \begin{equation*}
    i_* : \hom_\ktop (X,\fktop Y) \rightarrow \hom_\tops (X,Y)
  \end{equation*} 
which is natural in $X$ and $Y$. It remains to show that $i_*$ is surjective. 
Let $f:X \rightarrow Y$ be a continuous function. Then the function 
$\fktop (f): X = \fktop X \rightarrow \fktop Y$ is continuous and 
$i_* \fktop (f)=f$. Therefore $i_*$ is bijective.
\end{proof}

\begin{corollary} \label{colinktoptops}
The inclusion $\ktop \rightarrow \tops$ is cocontinuous, i.e. the colimits of 
$k$-spaces in $\ktop$ coincide with those in $\tops$.
\end{corollary}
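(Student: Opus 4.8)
The plan is to obtain the corollary as a formal consequence of Proposition~\ref{ktopiscrh}. That proposition exhibits $\fktop$ as a right adjoint to the inclusion functor $\iota : \ktop \rightarrow \tops$; equivalently, $\iota$ is a left adjoint, and left adjoints preserve colimits. So $\iota$ preserves every colimit that exists in $\ktop$, which is exactly the statement that $\iota$ is cocontinuous. The content therefore reduces to two small points: that $\ktop$ actually has all small colimits, and that the colimit in $\ktop$ of a diagram of $k$-spaces is literally the colimit of that diagram computed in $\tops$.

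Both points follow once one knows that $\ktop$ is closed under small colimits inside $\tops$. I would establish this in a self-contained way by checking closure under the two generating constructions -- coproducts and quotients -- since every small colimit in $\tops$ is a quotient of a coproduct. For coproducts: if each $X_i$ is a $k$-space and $A \subseteq \coprod_i X_i$ is $k$-closed (that is, $\phi^{-1}(A)$ is closed for every probe $\phi$), then for each $i$ and every probe $\psi : K \rightarrow X_i$ the composite $K \rightarrow X_i \hookrightarrow \coprod_i X_i$ is again a probe, so $\psi^{-1}(A \cap X_i)$ is closed; hence $A \cap X_i$ is $k$-closed in the $k$-space $X_i$, hence closed, hence $A$ is closed. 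For quotients: if $q : X \twoheadrightarrow Y$ is a quotient map with $X$ a $k$-space and $A \subseteq Y$ is $k$-closed, then for every probe $\phi : K \rightarrow X$ the composite $q \phi$ is a probe over $Y$, so $\phi^{-1}(q^{-1}(A)) = (q\phi)^{-1}(A)$ is closed; thus $q^{-1}(A)$ is $k$-closed in $X$, hence closed in $X$, hence $A$ is closed in $Y$. Alternatively one can derive the same closure abstractly: for a diagram $D : J \rightarrow \ktop$ and $C := \colim_{\tops}(\iota \circ D)$, the counit $\epsilon_C : \fktop C \rightarrow C$ is shown to be an isomorphism by factoring the colimit cocone through it and invoking the universal property of $C$ twice.

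With this closure in hand the corollary is immediate: given a diagram of $k$-spaces, its colimit $C$ formed in $\tops$ is again a $k$-space, and the $\tops$-universal property of $C$ restricts -- because $\ktop$ is a full subcategory of $\tops$ -- to the universal property of a colimit in $\ktop$; hence the $\ktop$-colimit exists and equals $C$, and $\iota$ preserves it. The only thing to be careful about, and it is bookkeeping rather than a genuine obstacle, is keeping the two readings of ``colimits coincide'' separate: that the ambient $\tops$-colimit stays inside $\ktop$, and that it then also solves the $\ktop$-universal problem.
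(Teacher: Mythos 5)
Your proof is correct and follows essentially the route the paper intends: the corollary is stated there without its own proof, as a formal consequence of Proposition \ref{ktopiscrh}, namely that the inclusion $\ktop \rightarrow \tops$ is a left adjoint and that the full coreflective subcategory $\ktop$ is closed under colimits formed in $\tops$. The additional detail you supply (closure under coproducts and quotients checked via probes, or equivalently the counit argument) is exactly the standard verification the paper leaves implicit, so your approach does not differ in substance.
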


\begin{corollary} \label{corefliscont}
  The coreflector $\fktop :\tops \rightarrow \ktop$ is continuous, i.e. it 
preserves limits.
\end{corollary}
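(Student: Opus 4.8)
The plan is to deduce this directly from Proposition \ref{ktopiscrh} together with the general principle that right adjoints preserve limits. Since $\fktop$ is a right adjoint to the inclusion $\ktop \rightarrow \tops$, it suffices to recall why a right adjoint preserves limits and to check that the hom-set bijection recorded in Proposition \ref{ktopiscrh} supplies exactly what that argument needs — namely naturality in both variables.

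Concretely, I would argue as follows. Let $D : J \rightarrow \tops$ be any diagram admitting a limit $L = \lim_j D_j$ in $\tops$, with limiting cone $(\pi_j : L \rightarrow D_j)_j$. Applying the functor $\fktop$ produces a cone $(\fktop \pi_j : \fktop L \rightarrow \fktop D_j)_j$ in $\ktop$, and the task is to show that this cone is limiting there. For an arbitrary $k$-space $X$ the natural bijection $\hom_\ktop (X, \fktop Y) \cong \hom_\tops (X, Y)$ of Proposition \ref{ktopiscrh} (applied with $Y = L$ and with $Y = D_j$) yields a chain of bijections, natural in $X$,
\[
\hom_\ktop (X, \fktop L) \;\cong\; \hom_\tops (X, L) \;\cong\; \lim_j \hom_\tops (X, D_j) \;\cong\; \lim_j \hom_\ktop (X, \fktop D_j),
\]
where the middle isomorphism is the universal property of $L$ in $\tops$ (legitimate since $X$, being a $k$-space, is in particular an object of $\tops$). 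By the Yoneda lemma this identifies $\fktop L$, together with the cone $(\fktop \pi_j)_j$, as the limit of $\fktop \circ D$ in $\ktop$; hence $\fktop$ preserves limits.

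The single point worth emphasising — and essentially the only place a reader could be tripped up — is that limits in $\ktop$ need not coincide with the corresponding limits in $\tops$ (for example the product of $k$-spaces in $\ktop$ is the $k$-ification of their product in $\tops$), so the assertion ``$\fktop$ is continuous'' must be read as: $\fktop$ carries a limit cone in $\tops$ to a limit cone in $\ktop$. That is precisely what the computation above delivers, and no genuine obstacle arises beyond keeping this distinction in mind.
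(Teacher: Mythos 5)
Your argument is correct and follows the same route as the paper, which derives this corollary directly from Proposition \ref{ktopiscrh} via the standard fact that right adjoints preserve limits; you have merely written out that standard fact explicitly, including the correct caveat that the limit cone lands in $\ktop$, not $\tops$.
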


\begin{proposition} \label{prodinktop}
  The product of spaces $Y_i$ in $\ktop$ is given by 
$\fktop \left( \prod Y_i \right)$, 
where $\prod Y_i$ is the product in $\tops$.
\end{proposition}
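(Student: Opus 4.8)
The plan is to deduce this from the fact, recorded in Corollary~\ref{corefliscont}, that the coreflector $\fktop \colon \tops \to \ktop$ is a right adjoint (Proposition~\ref{ktopiscrh}) and hence preserves all limits. First I would note that the product $\prod Y_i$ in $\tops$ is nothing but the limit of the discrete diagram given by the spaces $Y_i$, and that each $Y_i$, being a $k$-space, satisfies $\fktop Y_i = Y_i$. Applying the limit-preserving functor $\fktop$ to this limit cone then yields a limit cone in $\ktop$ for the same diagram; concretely, $\fktop\left(\prod Y_i\right)$, equipped with the continuous projections $p_i = \fktop(\pi_i)$ (equivalently, the composites of the continuous set-theoretic identity $\fktop\left(\prod Y_i\right) \to \prod Y_i$ with the projections $\pi_i$ of the product in $\tops$), is the product of the $Y_i$ in $\ktop$.

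For a reader who prefers an elementary argument I would then unwind the universal property by hand, the one input being the adjunction bijection $\hom_{\ktop}\!\left(Z,\fktop\left(\prod Y_i\right)\right) \cong \hom_{\tops}\!\left(Z,\prod Y_i\right)$ from Proposition~\ref{ktopiscrh}, valid for every $k$-space $Z$. Given continuous maps $f_i \colon Z \to Y_i$, they assemble into a unique continuous $f \colon Z \to \prod Y_i$ into the product in $\tops$; the adjunction lifts $f$ to a unique continuous $\bar f \colon Z \to \fktop\left(\prod Y_i\right)$ over the counit (here the set-theoretic identity), and $p_i \circ \bar f = f_i$ is then immediate. Uniqueness of $\bar f$ follows from uniqueness in the product in $\tops$ together with the injectivity of the comparison map $i_*$ established in the proof of Proposition~\ref{ktopiscrh}.

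There is essentially no hard step. The only point worth flagging is that one must \emph{not} argue via the inclusion $\ktop \hookrightarrow \tops$ preserving products — it does not, which is precisely why the product in $\tops$ of $k$-spaces need not itself be a $k$-space and why the corrective coreflection is needed. The argument goes through exactly because it is the coreflector, a \emph{right} adjoint, that preserves limits, while (by Corollary~\ref{colinktoptops}) the inclusion preserves colimits instead.
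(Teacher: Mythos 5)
Your proof is correct and follows essentially the paper's own route: the paper verifies the universal property by the chain of natural isomorphisms $\hom_\ktop ( X , \fktop \prod Y_i ) = \hom_\tops ( X ,\prod Y_i ) \cong \prod \hom_\tops (X,Y_i) \cong \prod \hom_\ktop (X, Y_i)$ coming from the adjunction in Proposition~\ref{ktopiscrh}, which is exactly the content of your elementary unwinding, and your first paragraph merely repackages this via Corollary~\ref{corefliscont} (the coreflector, being a right adjoint, preserves limits, and $\fktop Y_i = Y_i$). Your closing remark that it is the coreflector rather than the inclusion that preserves limits is a correct and worthwhile caveat, but it adds nothing beyond the paper's argument.
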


\begin{proof}
  By the use of Lemma \ref{ktopiscrh} one obtains for every $k$-space $X$ the 
following chain of natural isomorphisms:
  \begin{eqnarray*}
    \hom_\ktop \left( X , \fktop \prod Y_i \right) & = & 
\hom_\tops \left( X ,\prod Y_i \right) \\ 
& \cong & 
\prod \hom_\tops (X,Y_i) \cong \prod \hom_\ktop (X, Y_i),
  \end{eqnarray*}
which is what was to be proved.
\end{proof}

\begin{theorem}  \label{closedssofksp}
closed subspaces of $k$-spaces are $k$-spaces.
\end{theorem}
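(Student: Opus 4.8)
The plan is to verify the defining equality $\fktop A = A$ directly. Since the identity map $\fktop A \to A$ is always continuous (the $k$-topology refines the original one, by the Lemma following the definition of the $k$-topology), it suffices to show that every subset $C \subseteq A$ that is closed in $\fktop A$ is closed in $A$; and because $A$ is assumed closed in $X$, this amounts to showing that such a $C$ is closed in $X$. As $X$ is a $k$-space, $C$ is closed in $X$ if and only if $p^{-1}(C)$ is closed in $K$ for every probe $p : K \to X$, so this is what I would check.

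Fix a probe $p : K \to X$. The key point is that $p^{-1}(A)$ is a closed subset of the compact Hausdorff space $K$, hence compact Hausdorff itself, so the corestriction $p' : p^{-1}(A) \to A$ of $p$ is a probe over $A$. Since $C$ is closed in $\fktop A$ and $C \subseteq A$, the set $(p')^{-1}(C) = p^{-1}(C)$ is closed in $p^{-1}(A)$; being closed in the closed subspace $p^{-1}(A)$ of $K$, it is then closed in $K$ as well.

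Letting $p$ range over all probes over $X$ and invoking $\fktop X = X$, we conclude that $C$ is closed in $X$, hence closed in the closed subspace $A$. Thus $\fktop A = A$, i.e.\ $A$ is a $k$-space. I do not expect a genuine obstacle here: the argument rests only on the elementary facts that a closed subspace of a compact Hausdorff space is again compact Hausdorff and that a closed subset of a closed subspace is closed in the ambient space, combined with the characterisation of closed sets in a $k$-space via probes.
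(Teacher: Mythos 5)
Your proposal is correct and follows essentially the same route as the paper's proof: for any probe $p\colon K\to X$ you restrict to the closed (hence compact Hausdorff) set $p^{-1}(A)$ to obtain a probe over $A$, use closedness of $C$ in $\fktop A$ to see $p^{-1}(C)$ is closed in $p^{-1}(A)$ and hence in $K$, and conclude via $\fktop X=X$ that $C$ is closed in $X$ and therefore in $A$. No gap.
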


\begin{proof}
  Let $X$ be a $k$-space and $A \subset X$ be a closed subspace. It is to show 
that $\fktop A \rightarrow A$ is an isomorphism, i.e. every subset $B$ of $A$ 
that is closed in $\ktop A$ is already closed in $A$. Let $B$ be such a closed 
subset of $\fktop A$. 
Since $A$ is closed in $X$, the inverse image $p^{-1} (A)$ under 
any probe $p:C \rightarrow X$ is closed in the compact Hausdorff space $C$,
hence the inverse image $p^{-1} (A)$ also is compact. The restriction 
$p_{\mid p^{-1} (A)}$ of $p$ to the compact subspace $p^{-1} (A)$ of $C$ is a 
probe on $A$. The inverse image $p^{-1} (B)$ is equal to the inverse image 
$p_{\mid p^{-1} (A)}^{-1} (B)$ under the probe $p_{\mid p^{-1} (A)}$. 
The latter inverse image is closed in $p^{-1} (A)$ by assumption.
Being a closed subspace of a closed subspace $p^{-1} (A)$ of $C$ it is closed 
in $C$. Thus the inverse image of $B$ under any probe $p:C \rightarrow X$ is 
closed in $C$, i.e. $B$ is closed in $X$.
\end{proof}

\begin{lemma} \label{fktoppresclemb}
  The coreflector $\fktop : \tops \rightarrow \ktop$ preserves closed 
embeddings.
\end{lemma}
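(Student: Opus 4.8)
The plan is to reduce everything to Theorem~\ref{closedssofksp} (closed subspaces of $k$-spaces are $k$-spaces) together with the formal behaviour of the coreflector. Let $i\colon A\hookrightarrow X$ be a closed embedding; without loss of generality $A$ is a closed subset of $X$ equipped with the subspace topology, and we must show that $\fktop(i)\colon\fktop A\to\fktop X$ is again a closed embedding. Since on underlying sets $\fktop(i)$ is still the injection $A\hookrightarrow X$, only two things need to be verified: that the image $A$ is closed in $\fktop X$, and that the subspace topology $A$ inherits from $\fktop X$ agrees with the $k$-topology of $A$.

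Closedness is immediate: the $k$-topology is finer than the original one, so the set-theoretic identity $\fktop X\to X$ is continuous and the preimage of the closed set $A\subseteq X$ — namely $A$ itself — is closed in $\fktop X$. For the topologies, write $\tau_1$ for the subspace topology on $A$ coming from $X$, $\tau_2$ for the subspace topology coming from $\fktop X$, and $\tau_3$ for the $k$-topology of $(A,\tau_1)$, so that $\fktop A=(A,\tau_3)$ and the goal is $\tau_2=\tau_3$. One inclusion is pure functoriality: $\fktop(i)\colon(A,\tau_3)\to\fktop X$ is continuous, and the preimages of opens of $\fktop X$ are exactly the members of $\tau_2$, whence $\tau_2\subseteq\tau_3$.

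The reverse inclusion is the only substantial step, and it is precisely where Theorem~\ref{closedssofksp} is used. The space $(A,\tau_2)$ is a closed subspace of the $k$-space $\fktop X$, hence is itself a $k$-space; in particular $\fktop(A,\tau_2)=(A,\tau_2)$. Applying the coreflector to the continuous identity map $(A,\tau_2)\to(A,\tau_1)$ — which is continuous because $\tau_1\subseteq\tau_2$, the subspace topology from a finer space being finer — yields a continuous map $(A,\tau_2)=\fktop(A,\tau_2)\to\fktop(A,\tau_1)=(A,\tau_3)$, i.e. $\tau_3\subseteq\tau_2$. Combining the two inclusions gives $\tau_2=\tau_3$, so $\fktop(i)$ is a homeomorphism of $\fktop A$ onto the closed subset $A$ of $\fktop X$, which is the assertion.

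I do not anticipate any genuine obstacle here: once it is observed that $(A,\tau_2)$ is a $k$-space by Theorem~\ref{closedssofksp}, the remainder is the routine universal property of the coreflector $\fktop$ (Proposition~\ref{ktopiscrh}). The only point one must be a little careful about is not to confuse the three topologies $\tau_1,\tau_2,\tau_3$ on the set $A$, and to note that $\tau_2\subseteq\tau_3$ is automatic from continuity of $\fktop(i)$ while $\tau_3\subseteq\tau_2$ genuinely requires the closed-subspace theorem.
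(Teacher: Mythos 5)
Your proof is correct, but it takes a different route from the paper. The paper argues directly with probes: given a closed set $B\subseteq \fktop A$, it checks that for every probe $f\colon C\to X$ the preimage $f^{-1}(B)$ is closed in $C$, by noting that $C':=f^{-1}(A)$ is compact (as $A$ is closed in $X$) and that the restriction $f_{\mid C'}$ is a probe into $A$; this shows $\fktop(i_A)$ is a closed map, the embedding part being left implicit. You instead encapsulate all probe reasoning in Theorem~\ref{closedssofksp}: you first observe $A$ is closed in $\fktop X$ via continuity of $\epsilon_X\colon\fktop X\to X$, then identify the subspace topology $\tau_2$ induced by $\fktop X$ with the $k$-topology $\tau_3$ of $(A,\tau_1)$ by a two-inclusion argument, the nontrivial inclusion $\tau_3\subseteq\tau_2$ coming from the fact that $(A,\tau_2)$ is a $k$-space (closed subspace of the $k$-space $\fktop X$) together with functoriality applied to the continuous identity $(A,\tau_2)\to(A,\tau_1)$. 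This is essentially the strategy the paper itself uses for the open-embedding case (Lemma~\ref{fktoppresopemb}, which relies on ``open subspaces of $k$-spaces are $k$-spaces'' and the counit $\epsilon$), transported to the closed case. What your version buys is a cleaner, more formal argument that avoids repeating the probe computation and makes explicit both the closedness of the image and the homeomorphism onto it; the paper's direct probe argument is self-contained at the point level but only explicitly establishes that the map is closed. Both are valid; yours is arguably the more systematic of the two.
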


\begin{proof}
Let $A$ be a closed subspace of a topological space $X$ and let 
$i_A:A \hookrightarrow X$ denote the inclusion. To show that 
$\fktop (i_A):\fktop A \rightarrow \fktop X$ is closed, it suffices to prove
that the image $\fktop (i_A) (B)$ of every closed subset $B$ of $\fktop A$ is
closed in $\fktop X$. If $B \subseteq \fktop A$ is such a closed subset, then
every inverse image ${f'}^{-1} (B)$ under a probe $f':C' \rightarrow A$ from a
compact Hausdorff space $C'$ into $A$ is closed. 
If $f:C \rightarrow X$ is a probe, then the inverse image $C':=f^{-1} (A)$ is 
closed in $C$, hence a compact subspace of $C$. Therefore the 
restriction and corestriction $f_{\mid C'}^{\mid A}$ is a probe into $A$; 
The inverse image $f^{-1} (B)$ of $B$ under the probe $f:C \rightarrow X$
coincides with the inverse image ${f_{\mid C'}^{\mid A}}^{-1} (B)$ of $B$
under the probe $f_{\mid C'}^{\mid A}$, which is closed in $C'$. 
Because $C'$ is a closed subspace of $C$, the inverse image $f^{-1} (B)$ also 
is closed in $C$. Thus the inverse image $f^{-1} (B)$ of $B$
under any probe $f:C \rightarrow X$ is closed, hence $B$ is closed in 
$\fktop X$.
\end{proof}

\begin{lemma}
  Any open subspace of a k-space is a k-space.
\end{lemma}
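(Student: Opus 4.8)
The plan is to prove directly that $\fktop U = U$ for every open subset $U$ of a $k$-space $X$. Since the set-theoretic identity $\fktop U \rightarrow U$ is always continuous, it suffices to show that every subset $V \subseteq U$ which is open in $\fktop U$ is already open in $U$; and because $U$ is open in $X$, a subset of $U$ is open in $U$ if and only if it is open in $X$. As $X = \fktop X$ is a $k$-space, I would therefore reduce the claim to showing that $q^{-1}(V)$ is open in $C$ for every probe $q : C \rightarrow X$ from a compact Hausdorff space $C$.

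So fix such a probe $q : C \rightarrow X$ and set $C' := q^{-1}(U)$, which is open in $C$. The only real subtlety is that $C'$ need not be compact, so that $q|_{C'} : C' \rightarrow U$ is in general \emph{not} a probe over $U$ and the hypothesis on $V$ cannot be applied to it directly. To get around this I would use that $C$, being compact Hausdorff, is locally compact and regular: each point of $C'$ possesses an open neighbourhood $W$ whose closure $\overline{W}$ is compact and contained in $C'$. For each such $W$ the restriction $q|_{\overline{W}} : \overline{W} \rightarrow X$ has image in $U$ (because $\overline{W} \subseteq C' = q^{-1}(U)$), hence $q|_{\overline{W}} : \overline{W} \rightarrow U$ is a genuine probe over $U$; by the hypothesis on $V$ the set $(q|_{\overline{W}})^{-1}(V) = q^{-1}(V) \cap \overline{W}$ is open in $\overline{W}$, and intersecting with the open set $W$ shows that $q^{-1}(V) \cap W$ is open in $W$ and therefore in $C$.

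Finally, since $q^{-1}(V) \subseteq q^{-1}(U) = C'$ and the neighbourhoods $W$ above cover $C'$, one has $q^{-1}(V) = \bigcup_{W} \bigl( q^{-1}(V) \cap W \bigr)$, a union of open subsets of $C$, so $q^{-1}(V)$ is open in $C$. As this holds for every probe $q$ over $X$, the set $V$ is open in $\fktop X = X$, hence open in $U$, and we conclude $\fktop U = U$. The one genuine obstacle is the non-compactness of $C'$; the remainder is a routine unravelling of the $k$-topology as a final topology, running entirely parallel to the proofs of Theorem~\ref{closedssofksp} and Lemma~\ref{fktoppresclemb}.
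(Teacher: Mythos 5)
Your proof is correct. The paper gives no argument of its own for this lemma --- it simply cites \cite[Satz 6.6]{tDTOP} --- and your argument is essentially the standard one found in that reference: the only genuine difficulty is that $q^{-1}(U)$ need not be compact, so $q$ cannot be restricted to it directly, and you resolve this exactly as one should, using regularity (local compactness) of the compact Hausdorff domain to cover $q^{-1}(U)$ by open sets $W$ with $\overline{W}$ compact and contained in $q^{-1}(U)$, restricting the probe to these closures, and then recovering openness of $q^{-1}(V)$ as a union of the resulting open pieces. This runs parallel in spirit to the paper's own proofs of Theorem \ref{closedssofksp} and Lemma \ref{fktoppresclemb}, and all the individual steps (openness in $U$ versus openness in $X$, the final-topology characterisation, the restriction--corestriction of probes) check out.
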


\begin{proof}
  See \cite[Satz 6.6]{tDTOP} for a proof.
\end{proof}

\begin{lemma} \label{fktoppresopemb}
  The coreflector $\fktop : \tops \rightarrow \ktop$ preserves open embeddings.
\end{lemma}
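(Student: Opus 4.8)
The plan is to fix an open subspace $A \subseteq X$ with inclusion $i_A : A \hookrightarrow X$ and show that $\fktop(i_A) : \fktop A \to \fktop X$ is a homeomorphism onto an open subspace of $\fktop X$. Since $\fktop$ leaves underlying sets and set-theoretic maps untouched, $\fktop(i_A)$ is automatically injective, and it is continuous by functoriality of $\fktop$. By the usual characterisation of open embeddings (an injective continuous open map is an open embedding), I would then only need to verify that $\fktop(i_A)$ is an open map; and because $\fktop(i_A)$ sends a subset $B \subseteq A$ to the same subset of $X$, this reduces to the single claim: if $B$ is open in $\fktop A$, then $B$ is open in $\fktop X$.

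To prove this I would test against an arbitrary probe $p : C \to X$ over $X$, with $C$ compact Hausdorff, and show that $p^{-1}(B)$ is open in $C$. Here $p^{-1}(A)$ is open in $C$ since $A$ is open in $X$. The main obstacle --- and the point where the argument genuinely differs from the closed-embedding case of Lemma \ref{fktoppresclemb} --- is that $p^{-1}(A)$ need not be compact, so $p$ does not restrict to a probe over $A$. I would get around this using local compactness of the compact Hausdorff space $C$: given $c \in p^{-1}(B)$, choose a compact neighbourhood $K$ of $c$ with $K \subseteq p^{-1}(A)$; then $p|_K : K \to A$ is a genuine probe over $A$ (compact Hausdorff domain, image in $A$), so $(p|_K)^{-1}(B) = K \cap p^{-1}(B)$ is open in $K$, and intersecting a $C$-open set witnessing this with the $C$-interior of $K$ produces a $C$-open neighbourhood of $c$ contained in $p^{-1}(B)$. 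Since $c$ and $p$ were arbitrary, $B$ is open in $\fktop X$, finishing the argument.

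A cleaner alternative, which I would also be happy to present, bypasses probes altogether: since $\epsilon_X : \fktop X \to X$ is continuous and $A$ is open in $X$, the subset $A$ is open in $\fktop X$, yielding an open subspace $A' \subseteq \fktop X$; by the lemma that open subspaces of $k$-spaces are $k$-spaces, $A'$ is a $k$-space. The identity $A' \to A$ into the $X$-subspace $A$ is continuous (the topology of $\fktop X$ is finer), so by the coreflection adjunction (Proposition \ref{ktopiscrh}) it factors through $\epsilon_A : \fktop A \to A$, giving a continuous bijection $A' \to \fktop A$; conversely $\fktop(i_A) : \fktop A \to \fktop X$ has image $A$ and hence corestricts to a continuous map $\fktop A \to A'$. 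These two maps are mutually inverse, so $\fktop(i_A)$ identifies $\fktop A$ with the open subspace $A'$ of $\fktop X$. In either form the one real subtlety is the non-compactness of $p^{-1}(A)$ (equivalently, the appeal to ``open subspaces of $k$-spaces are $k$-spaces''), and local compactness of compact Hausdorff spaces is what resolves it.
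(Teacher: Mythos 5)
Both of your arguments are correct, and your ``cleaner alternative'' is essentially the paper's own proof: the paper observes that $\epsilon_X^{-1}(U)$ is open in $\fktop X$ (continuity of $\epsilon_X$), invokes the immediately preceding lemma that open subspaces of $k$-spaces are $k$-spaces, and then uses the coreflection of Proposition \ref{ktopiscrh} to identify $\fktop U$ with this open subspace, so that $\fktop(j_U)$ is an open embedding --- exactly your identification of $\fktop A$ with $A'$. Your primary, probe-based argument is a genuinely different route: it never quotes that lemma, but instead checks directly that a $\fktop A$-open set $B$ has open preimage under every probe $p:C\rightarrow X$, using local compactness of the compact Hausdorff space $C$ to shrink $p$ to a probe $p|_K$ landing in $A$; the step of intersecting the witnessing open set with the interior of $K$ is exactly what is needed, since $K$ being a neighbourhood of $c$ puts $c$ in that interior. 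In effect you are re-proving inside the argument the fact that open subspaces of $k$-spaces are $k$-spaces (which the paper only cites to tom Dieck), which makes your version self-contained and correctly isolates where the open case differs from the closed case of Lemma \ref{fktoppresclemb} (non-compactness of $p^{-1}(A)$), at the cost of a longer point-set verification; the paper's (and your second) route is shorter but leans on the quoted subspace lemma and the adjunction.
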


\begin{proof}
  Let $U$ be an open subspace of a topological space $X$ and let $j_U : U
  \hookrightarrow X$ denote the inclusion. For the natural transformation
  $\epsilon : \fktop \rightarrow \id_\tops$ one obtains the commutative
  diagram 
  \begin{equation*}
    \xymatrix{
\fktop U \ar[d]_{\epsilon_U} \ar[r]^{\fktop (j_U)} & \fktop X \ar[d]^{\epsilon_X} \\
U \ar[r]^{j_U} & X 
}
  \end{equation*}
The open subspace $\epsilon_X^{-1} (U)$ of the $k$-space $\fktop X$ is a
$k$-space itself. Restricting the morphism $\epsilon_X$ to the $k$-space 
$\epsilon_X^{-1} (U)$ of $\fktop X$ and corestricting it to $U$ we obtain the 
equality 
$\id_U = {\epsilon_X}_{\mid \epsilon_X^{-1} (U)}^{\mid U} \circ \fktop (j_U)$,
i.e. $\fktop U$ is homeomorphic to $\epsilon_X^{-1} (U)$ and $\fktop (j_U)$ is
the inclusion of an open subspace.
\end{proof}

\section{$k_\omega$-Spaces} \label{appkotops}

There exists a variant of the category $\ktop$ of $k$-spaces, which shares
many of the properties of the category $\ktop$. It is formed by the class of
all topological spaces whose topology is the weak topology with respect to
some countable set of compact Hausdorff spaces:

\begin{definition}
  A topological space $X$ is called a \emph{$k_\omega$-space} if it has the 
weak topology w.r.t. a countable set $\{ f_n (K_n) \}$ of images of compact 
Hausdorff spaces $K_n$ under continuous functions $f_n:K_n \rightarrow X$.
The full subcategory of $\tops$ consisting with objects all 
$k_\omega$-spaces is denoted by $\kotop$.
\end{definition}
Since the images $f_n (K_n)$ in the above definition are compact subspaces of
the topological space $X$, this implies that $X$ has the 
colimit topology of the ascending sequence $\bigcup_{i=1}^n K_i$ of compact
subspaces of $X$. Conversely, if $X$ is the direct limit of an ascending
sequence of compact subspaces, then it is a $k_\omega$-space.
Similar to the class of $k$-spaces the class of $k_\omega$-spaces can be 
described as quotients of compact Hausdorff-spaces:

\begin{lemma} \label{appcharofko}
  A topological space is a $k_\omega$-space if and only if it is the quotient 
of a disjoint union of at most countably many compact Hausdorff spaces.
\end{lemma}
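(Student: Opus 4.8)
The plan is to prove the two implications separately, in each case using the disjoint union $\coprod_n K_n$ as the mediating space and reducing everything to the identification of which subsets of $X$ are closed.

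For the direction ``$k_\omega$-space $\Rightarrow$ quotient'', suppose $X$ carries the weak topology with respect to a countable family $\{f_n(K_n)\}$ of images of continuous maps $f_n \colon K_n \to X$ from compact Hausdorff spaces; in particular the $f_n(K_n)$ cover $X$. First I would assemble the $f_n$ into a single continuous surjection $q := \coprod_n f_n \colon \coprod_n K_n \to X$, where $\coprod_n K_n$ is a disjoint union of at most countably many compact Hausdorff spaces, as required. The remaining task is to show that $q$ is a quotient map, i.e.\ that a set $C \subseteq X$ is closed as soon as $q^{-1}(C)$ is closed in $\coprod_n K_n$. Here $q^{-1}(C) = \coprod_n f_n^{-1}(C)$, so the hypothesis says each $f_n^{-1}(C)$ is closed, hence compact, in $K_n$; consequently $C \cap f_n(K_n) = f_n(f_n^{-1}(C))$ is compact, and I would deduce that it is closed in the subspace $f_n(K_n)$, using that $f_n \colon K_n \to f_n(K_n)$ is a closed (hence quotient) map, which is where compactness of $K_n$ enters. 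Since $X$ has the weak topology with respect to $\{f_n(K_n)\}$, the fact that $C \cap f_n(K_n)$ is closed in $f_n(K_n)$ for every $n$ then forces $C$ to be closed in $X$.

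For the converse, let $q \colon Y \to X$ be a quotient map with $Y = \coprod_n K_n$ a disjoint union of at most countably many compact Hausdorff spaces. I would set $L_n := q(K_n)$; these are compact subspaces of $X$, they cover $X$, and I claim that $X$ carries the weak topology with respect to $\{L_n\}$, which exhibits $X$ as a $k_\omega$-space (cf.\ the remark following the definition). Indeed, for $C \subseteq X$ one has $C$ closed in $X$ iff $q^{-1}(C)$ is closed in $Y$ iff $q^{-1}(C) \cap K_n$ is closed in $K_n$ for every $n$; and because $q|_{K_n} \colon K_n \to L_n$ is a continuous surjection from a compact space, $q^{-1}(C) \cap K_n = (q|_{K_n})^{-1}(C \cap L_n)$ is closed in $K_n$ precisely when $C \cap L_n$ is closed in $L_n$. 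Combining, $C$ is closed in $X$ iff $C \cap L_n$ is closed in $L_n$ for all $n$, as wanted.

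The only delicate point, in both directions, is the passage between ``$q^{-1}(C) \cap K_n$ closed in $K_n$'' and ``$C \cap L_n$ closed in $L_n$'', i.e.\ the assertion that the corestricted maps $K_n \to L_n$ are quotient maps. For this one uses that a continuous surjection from a compact space onto a Hausdorff space is closed; this is unproblematic for the Hausdorff $k_\omega$-spaces occurring elsewhere in the paper, and in full generality one should either read the defining ``weak topology'' as the final topology with respect to the maps $f_n$ themselves (which makes both implications immediate, since then $C$ is closed in $X$ iff each $f_n^{-1}(C)$ is closed in $K_n$) or restrict the statement accordingly.
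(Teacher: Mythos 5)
Your argument is correct and follows essentially the same route as the paper, whose proof is only a pointer to Dugundji's Theorem XI.9.4: one forms the disjoint union $\coprod_n K_n$, checks that the assembled map is a quotient map using that continuous maps from compact spaces onto Hausdorff images are closed, and conversely checks that the images $q(K_n)$ of the pieces of a quotient give the required weak topology. Your closing caveat about the non-Hausdorff case (compact subsets need not be closed, so the corestrictions $K_n\to L_n$ need not be quotient maps) is exactly the imprecision inherited from the paper's definition of a $k_\omega$-space, and the cited theorem is likewise a Hausdorff statement, so flagging it and reading the definition via the final topology (or restricting to Hausdorff spaces, which is all the paper uses) is the right resolution rather than a gap.
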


\begin{proof}
  The proof of \cite[Theorem XI.9.4]{topdug} generalises to $k_\omega$-spaces.
\end{proof}

\begin{lemma} \label{appquotofkoareko}
  Quotients of $k_\omega$-spaces are $k_\omega$-spaces.
\end{lemma}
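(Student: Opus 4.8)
The plan is to reduce the statement at once to the quotient description of $k_\omega$-spaces given by Lemma \ref{appcharofko}. I would start from a $k_\omega$-space $X$ together with a quotient map $q : X \twoheadrightarrow Y$, and invoke Lemma \ref{appcharofko} to obtain a quotient map $p : \coprod_{n \in \mathbb{N}} K_n \twoheadrightarrow X$ from a topological coproduct of at most countably many compact Hausdorff spaces $K_n$ onto $X$.

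The only step with any content is the observation that a composition of quotient maps is again a quotient map, applied to $q \circ p : \coprod_{n \in \mathbb{N}} K_n \to Y$. This composite is continuous and surjective, and if $V \subseteq Y$ is such that $(q \circ p)^{-1}(V) = p^{-1}(q^{-1}(V))$ is open, then $q^{-1}(V)$ is open in $X$ since $p$ is a quotient map, and hence $V$ is open in $Y$ since $q$ is a quotient map. Thus $q \circ p$ presents $Y$ as a quotient of a coproduct of at most countably many compact Hausdorff spaces, and a second application of Lemma \ref{appcharofko} then gives that $Y$ is a $k_\omega$-space, completing the proof.

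I do not expect a genuine obstacle here: the argument is purely formal once Lemma \ref{appcharofko} is available, the one thing to be careful about being that ``quotient'' is read as ``image under a quotient map'' and that quotient maps compose. A more hands-on alternative would be to use a presentation $X = \colim_n C_n$ of $X$ by an ascending sequence of compact subspaces $C_n$, note that the images $q(C_n)$ form an ascending sequence of compact subspaces of $Y$, and check that $Y$ carries the colimit topology of this sequence; but that last check is essentially the composition-of-quotient-maps argument once more, so the route through Lemma \ref{appcharofko} seems cleanest.
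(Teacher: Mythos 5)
Your argument is correct and is exactly the paper's proof: the paper also deduces the lemma from the characterisation in Lemma \ref{appcharofko} together with the fact that compositions of quotient maps are quotient maps. You have merely spelled out the details that the paper leaves implicit.
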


\begin{proof}
Since compositions of quotient maps are quotient maps, this is a consequence 
of the characterisation of $k_\omega$-spaces in Lemma \ref{appcharofko}.
\end{proof}

\begin{lemma} \label{appcduofkoareko}
  Finite and countable disjoint unions of $k_\omega$-spaces are 
$k_\omega$-spaces.
\end{lemma}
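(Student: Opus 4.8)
The plan is to reduce the statement to the quotient characterisation of $k_\omega$-spaces provided by Lemma \ref{appcharofko}. So let $\{ X_n \mid n \in N \}$ be a finite or countably infinite family of $k_\omega$-spaces. By Lemma \ref{appcharofko} each $X_n$ is the quotient of a disjoint union $\coprod_{m \in M_n} K_{n,m}$ of at most countably many compact Hausdorff spaces $K_{n,m}$; I would fix such a presentation together with the corresponding quotient map $q_n : \coprod_{m \in M_n} K_{n,m} \rightarrow X_n$.

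First I would form the coproduct $q := \coprod_{n \in N} q_n : \coprod_{n \in N} \coprod_{m \in M_n} K_{n,m} \rightarrow \coprod_{n \in N} X_n$ in $\tops$ and check that it is again a quotient map. This is routine: since $q^{-1}(A) = \coprod_{n} q_n^{-1}(A \cap X_n)$ for any $A \subseteq \coprod_n X_n$, and since in the coproduct topology a set is closed precisely when its trace on each summand is closed, $q^{-1}(A)$ is closed if and only if each $q_n^{-1}(A \cap X_n)$ is closed, which — as $q_n$ is a quotient map — holds if and only if each $A \cap X_n$ is closed, i.e. if and only if $A$ is closed. The domain of $q$ is a disjoint union of the compact Hausdorff spaces $K_{n,m}$ indexed by $\bigsqcup_{n \in N} M_n$, and this index set is at most countable because a countable union of at most countable sets is at most countable. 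Hence $\coprod_{n \in N} X_n$ is the quotient of a disjoint union of at most countably many compact Hausdorff spaces, and a second application of Lemma \ref{appcharofko} shows it is a $k_\omega$-space.

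If one prefers to avoid Lemma \ref{appcharofko}, one can argue directly from the definition: writing each $X_n$ with the weak topology with respect to a countable family $\{ f_{n,m}(K_{n,m}) \mid m \in M_n \}$ of images of compact Hausdorff spaces, the space $\coprod_n X_n$ carries the weak topology with respect to the at most countable family $\{ f_{n,m}(K_{n,m}) \mid n \in N,\, m \in M_n \}$, since a subset $A$ of $\coprod_n X_n$ is closed if and only if every $A \cap X_n$ is closed in $X_n$, which in turn holds if and only if every $A \cap f_{n,m}(K_{n,m})$ is closed. There is essentially no obstacle in this lemma; the only two points needing (entirely routine) care are that a coproduct of quotient maps is a quotient map and that a countable union of at most countable index sets is again at most countable.
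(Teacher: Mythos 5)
Your proof is correct and follows the same route as the paper: the paper's own proof simply cites the quotient characterisation of Lemma \ref{appcharofko}, and you spell out the (routine) details that a coproduct of quotient maps is a quotient map and that the resulting index set is still at most countable.
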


\begin{proof}
  This follows from the characterisation of $k_\omega$-spaces in Lemma 
\ref{appcharofko}.
\end{proof}
Summarising the last two lemmata we observe that the subcategory $\kotop$ of 
$\tops$ has the same countable colimits:

\begin{proposition} \label{ccolimofkoareko}
  Finite and countable colimits of $k_\omega$-spaces in $\tops$ are 
$k_\omega$-spaces.
\end{proposition}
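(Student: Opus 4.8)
The plan is to deduce this from the two preceding lemmata --- Lemma \ref{appcduofkoareko} on disjoint unions and Lemma \ref{appquotofkoareko} on quotients --- by invoking the standard presentation of an arbitrary colimit as a quotient of a coproduct.

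First I would recall that for any small diagram $D : J \to \tops$ the colimit $\colim_J D$ carries, by definition, the final topology with respect to the family of canonical cocone maps $D(j) \to \colim_J D$, $j \in \mathrm{Ob}(J)$; equivalently it carries the final topology with respect to the single induced map $\pi : \coprod_{j \in \mathrm{Ob}(J)} D(j) \to \colim_J D$ out of the coproduct in $\tops$. Since every point of $\colim_J D$ lies in the image of some $D(j)$, the map $\pi$ is surjective, and a surjection equipped with the final topology is exactly a quotient map. Hence $\colim_J D$ is a quotient of the disjoint union $\coprod_{j \in \mathrm{Ob}(J)} D(j)$.

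Now suppose the diagram $D$ is finite or countable, so that its indexing category $J$ has at most countably many objects, and assume each $D(j)$ is a $k_\omega$-space. Then $\coprod_{j \in \mathrm{Ob}(J)} D(j)$ is a finite or countable disjoint union of $k_\omega$-spaces, hence a $k_\omega$-space by Lemma \ref{appcduofkoareko}. Applying Lemma \ref{appquotofkoareko} to the quotient map $\pi$ exhibits $\colim_J D$ as a quotient of a $k_\omega$-space, and therefore as a $k_\omega$-space itself.

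There is no substantial obstacle here: the only point that warrants a moment's care is the identification of the colimit topology with the quotient topology of the coproduct, which rests on the surjectivity of $\pi$ together with the characterisation of the colimit topology as a final topology. Everything else is a direct appeal to the two lemmata already established, and the restriction to finite or countable diagrams is used solely to keep the coproduct over $\mathrm{Ob}(J)$ within the scope of Lemma \ref{appcduofkoareko}.
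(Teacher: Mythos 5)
Your proof is correct and is precisely the argument the paper intends: the proposition is stated as a summary of the two preceding lemmata, realising a finite or countable colimit as a quotient of the corresponding finite or countable disjoint union and then applying Lemmata \ref{appcduofkoareko} and \ref{appquotofkoareko}. Your explicit justification that the colimit carries the final topology with respect to the surjection from the coproduct, hence is a quotient of it, is exactly the step the paper leaves implicit.
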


\begin{definition}
  A topological space $X$ is called \emph{locally $k_\omega$} if every point
  has a neighbourhood filter basis of $k_\omega$-spaces.
\end{definition}

\begin{lemma} \label{appfinprodofko}
  Finite products of Hausdorff $k_\omega$-spaces are Hausdorff $k_\omega$-spaces.
\end{lemma}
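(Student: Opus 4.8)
The plan is to reduce to the case of two factors by induction — writing $X_{1}\times\cdots\times X_{n}=(X_{1}\times\cdots\times X_{n-1})\times X_{n}$ and using that a finite product of Hausdorff spaces is Hausdorff — and then to treat two Hausdorff $k_{\omega}$-spaces $X$ and $Y$. Hausdorffness of $X\times Y$ is immediate. For the $k_{\omega}$-property I would first pass from the defining countable families of images of compact Hausdorff spaces to \emph{ascending} sequences: taking finite unions yields compact subspaces $K_{1}\subseteq K_{2}\subseteq\cdots$ with $\bigcup_{n}K_{n}=X$ such that $X$ carries the weak (equivalently, colimit) topology with respect to $K_{1}\hookrightarrow K_{2}\hookrightarrow\cdots$, and likewise $L_{1}\subseteq L_{2}\subseteq\cdots$ for $Y$; since $X$ and $Y$ are Hausdorff, each $K_{n}$ and each $L_{n}$ is compact Hausdorff, hence normal. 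The subspaces $K_{n}\times L_{n}$ of $X\times Y$ are then compact Hausdorff, ascending, and exhaust $X\times Y$, so it suffices to show that $X\times Y$ carries the colimit topology of the sequence $(K_{n}\times L_{n})_{n}$ — this is precisely the weak topology for the inclusions $K_{n}\times L_{n}\hookrightarrow X\times Y$, which exhibits $X\times Y$ as a $k_{\omega}$-space.

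The heart of the proof is the comparison of the product topology with this colimit topology. One inclusion is free: each inclusion $K_{n}\times L_{n}\hookrightarrow X\times Y$ is continuous, so the product topology is coarser than the colimit topology. For the converse I would take $F\subseteq X\times Y$ with $F\cap(K_{n}\times L_{n})$ closed for every $n$ and show that $(X\times Y)\setminus F$ is open. Given $(a,b)\notin F$, choose $n_{0}$ with $(a,b)\in K_{n_{0}}\times L_{n_{0}}$ and run a staircase: since $F\cap(K_{n_{0}}\times L_{n_{0}})$ is closed and avoids $(a,b)$, regularity of $K_{n_{0}}\times L_{n_{0}}$ gives a basic box $A_{n_{0}}\times B_{n_{0}}\ni(a,b)$ with $\overline{A_{n_{0}}}\times\overline{B_{n_{0}}}$ disjoint from $F$; then inductively, given a compact box $\overline{A_{n}}\times\overline{B_{n}}\subseteq(K_{n+1}\times L_{n+1})\setminus F$, the tube lemma together with regularity of the compact Hausdorff space $K_{n+1}\times L_{n+1}$ produces opens $A_{n+1}\supseteq\overline{A_{n}}$, $B_{n+1}\supseteq\overline{B_{n}}$ in $K_{n+1},L_{n+1}$ with $\overline{A_{n+1}}\times\overline{B_{n+1}}$ still disjoint from $F$. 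Putting $U:=\bigcup_{n}A_{n}$ and $V:=\bigcup_{n}B_{n}$, the nesting $A_{n}\subseteq A_{n+1}$ and $B_{n}\subseteq B_{n+1}$ forces $U\cap K_{m}$ and $V\cap L_{m}$ to be open for every $m$, so $U$ and $V$ are open in $X$ and $Y$ — here I really use that $X$ and $Y$ themselves carry the colimit topologies of $(K_{n})$ and $(L_{n})$ — and $U\times V$ is a product-open neighbourhood of $(a,b)$ missing $F$, since any point of $U\times V$ lies in some $A_{k}\times B_{k}$.

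The step I expect to be the main obstacle is exactly this comparison: colimits do not commute with products in $\tops$, and one cannot take the shortcut of declaring a product of quotient maps to be a quotient map, which is false in general. What rescues the argument is the compactness of the pieces $K_{n}$, $L_{n}$ (so that the tube lemma applies) and their normality (so that the closures of the boxes stay inside the previously chosen opens), with the nested closures being what ultimately makes $U$ and $V$ open. Everything else — Hausdorffness, the replacement of the defining families by ascending sequences, and the reduction from $n$ factors to two — is routine.
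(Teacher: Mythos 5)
Your argument is correct, and it is genuinely more self-contained than what the paper does: the paper's entire proof of this lemma is a citation to \cite[Lemma 1.1 (b)]{GGH06} (cf. \cite[Proposition 4.2 (c)]{GGH06}), so it gives no argument of its own, whereas you reproduce in full the classical staircase proof that underlies the cited result. Your reductions are sound: the passage from a countable family of compact images to an ascending exhaustion $K_1\subseteq K_2\subseteq\cdots$ is exactly the remark the paper itself makes after its definition of $k_\omega$-spaces (and uses Hausdorffness so that the compact pieces are closed, hence the two weak topologies agree), and the induction on the number of factors is immediate. The core comparison of topologies is also right: continuity of the inclusions $K_n\times L_n\hookrightarrow X\times Y$ gives one inclusion, and for the converse your inductive construction works because the compact rectangle $\overline{A_n}\times\overline{B_n}$ is disjoint from the closed set $F\cap(K_{n+1}\times L_{n+1})$, so the tube lemma plus normality of the compact Hausdorff factors lets you enlarge while keeping closures disjoint from $F$; the nesting $A_n\subseteq A_{n+1}$ then makes $U\cap K_m=\bigcup_{n\ge m}(A_n\cap K_m)$ open in each $K_m$, hence $U$ open in $X$ (and likewise $V$ in $Y$), and any point of $U\times V$ lies in some $A_m\times B_m$, which misses $F$. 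What the paper's approach buys is brevity and a pointer to a more general statement (products of a $k_\omega$-space with a locally $k_\omega$-space, etc.); what yours buys is a complete, elementary proof inside the text, correctly identifying the one non-formal point, namely that colimits do not commute with products in $\tops$ and that compactness of the pieces is what rescues the comparison.
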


\begin{proof}
This is a special case of \cite[Lemma 1.1 (b)]{GGH06}, cf. 
\cite[Proposition 4.2 (c)]{GGH06}.
\end{proof}

\bibliographystyle{amsalpha}
\bibliography{CohomologyOfLocalCochains}

\end{document}